\definecolor{seagreen}{RGB}{46,139,87}
\definecolor{maroon}{RGB}{128,0,0}
\definecolor{darkviolet}{RGB}{148,0,211}
\definecolor{twelve}{RGB}{100,100,170}
\definecolor{thirteen}{RGB}{100,150,50}
\definecolor{fourteen}{RGB}{200,0,0}
\definecolor{fifteen}{RGB}{0,200,0}
\definecolor{sixteen}{RGB}{0,0,200}
\definecolor{seventeen}{RGB}{200,0,200}
\definecolor{eighteen}{RGB}{0,200,200}
\newtheorem{thm}{Theorem}[section]
\newtheorem*{theorem*}{Theorem}
\newtheorem*{conjecture*}{Conjecture}
\newtheorem*{corollary*}{Corollary}
\newtheorem{lem}[thm]{Lemma}
\newtheorem{cor}[thm]{Corollary}
\newtheorem{prop}[thm]{Proposition}
\newtheorem{conv}[thm]{Convention}
\theoremstyle{definition}
\newtheorem{defin}[thm]{Definition}
\newtheorem{exm}[thm]{Example}
\newtheorem{rem2}[thm]{Remark}
\newtheorem{thmx}{Theorem}
\def\c{\mathbb{C}}
\def\e{\mathbb{E}}
\def\k{\mathbb{K}}
\def\m{\mathbb{M}}
\def\p{\mathbb{P}}
\def\q{\mathbb{Q}}
\def\r{\mathbb{R}}
\def\z{\mathbb{Z}}
\def\cf{\mathcal{F}}
\def\holim{\operatorname{lim}}
\def\hocolim{\operatorname{colim}}
\def\colim{\operatorname{colim}}
\def\ker{\operatorname{ker}}
\def\coker{\operatorname{coker}}
\def\id{\operatorname{id}}
\newcommand{\limt}[1]{\underset{#1}{\lim \,}}
\def\vbar{\overline{v}}
\author{Guchuan Li}\address{University of Michigan}\email{guchuan@umich.edu}
\author{Vitaly Lorman}\address{Children's Hospital of Philadelphia}\email{lormanv@chop.edu}
\author{J.D. Quigley}\address{Cornell University}\email{jdq27@cornell.edu}
\title{Tate blueshift and vanishing for Real oriented cohomology}
\begin{document}
\maketitle

\begin{abstract}
We study transchromatic phenomena for the Tate construction of Real oriented cohomology theories. First, we show that after suitable completion, the Tate construction with respect to a trivial $\z/2$-action on height $n$ Real Johnson--Wilson theory splits into a wedge of height $n-1$ Real Johnson--Wilson theories. This is the first example of Tate blueshift at all chromatic heights outside of the complex oriented setting. Second, we prove that the Tate construction with respect to a trivial finite group action on Real Morava K-theory vanishes, refining a classical Tate vanishing result of Greenlees--Sadofsky. In the course of proving these results, we develop some ideas in equivariant chromatic homotopy theory (e.g., completions of module spectra over Real cobordism, $C_2$-equivariant chromatic Bousfield localizations) and apply the parametrized Tate construction. 
\end{abstract}

\tableofcontents

\section{Introduction} 

\subsection{History, motivation, and main results}

Chromatic homotopy theory provides a filtration of the stable homotopy category of finite $p$-local complexes. If $X$ is a finite $p$-local complex, then the Hopkins--Smith thick subcategory theorem \cite{HS98} and Hopkins--Ravenel chromatic convergence theorem \cite{Rav92} provide a filtration of $X$ whose $n$-th level is given by the Bousfield localization $L_{E(n)}X$ of $X$ with respect to the $n$-th Johnson--Wilson theory $E(n)$. On homotopy groups, these localizations decompose $\pi_*(X)$ into periodic families of elements of increasing periodicity \cite{DHS88}. However, even for the simplest finite complexes like the sphere, very little is known about $\pi_*L_{E(n)}X$ for $n \geq 3$, cf. the computations of Miller--Ravenel--Wilson \cite{MRW77}. Because of the difficulties in analyzing the stable homotopy category beyond filtration 2, it is desirable to find ways of comparing information from different levels of the filtration; this is sometimes called \emph{transchromatic homotopy theory}. 

The purpose of this paper is to study transchromatic phenomena for the Tate construction of Real oriented cohomology theories. Let $E$ be a cohomology theory equipped with an action by a finite group $\Sigma$. The \emph{$\Sigma$-Tate construction} of $E$, denoted $E^{t\Sigma}$, is the cofiber of the additive norm map from the $\Sigma$-homotopy orbits $E_{h\Sigma}$ to the $\Sigma$-homotopy fixed points $E^{h\Sigma}$ \cite{GM95}. We discuss Real oriented cohomology theories further in Section \ref{SSS:RealOr}.

Over the past four decades, the Tate construction has been observed to produce interesting transchromatic phenomena when applied to chromatic cohomology theories equipped with a trivial $\Sigma$-action. We mention several examples here\footnote{Our list of examples expands on the list appearing in the introduction to \cite{BR19}.}:
\begin{enumerate}
	\item[\textbf{1984:}] In \cite{DM84}, Davis--Mahowald showed $BP\langle 1 \rangle^{t\Sigma_2}$ and $ko^{t\Sigma_2}$ split into wedges of suspensions of $H\z_2$. Note that $BP\langle 1 \rangle$ and $ko$ have chromatic height one and $H\z_2$ has chromatic height zero. This sort of decrease in chromatic height after applying the Tate construction is referred to as \emph{Tate blueshift}. 
	\item[\textbf{1986:}] In \cite{DJKMW86}, Davis--Johnson--Klippenstein--Mahowald--Wegmann showed  $BP\langle 2 \rangle^{t\Sigma_2}$ splits into a wedge of suspensions of $BP\langle 1 \rangle$ after $p$-completion. This was the first example of Tate blueshift from height two to height one; it motivated the conjecture that similar blueshift results hold at all chromatic heights. 
	\item[\textbf{1995:}] In \cite{GM95}, Greenlees--May proved the first non-connective Tate blueshift result, showing that $KU^{t\Sigma_2}$ and $KO^{t\Sigma_2}$ split into wedges of suspensions of $H\q$. 
	\item[\textbf{1996:}] In \cite{GS96}, Greenlees--Sadofsky observed the first instance of \emph{Tate vanishing} by showing that $K(n)^{t\Sigma} \simeq *$ for any finite group $\Sigma$. 
	\item[\textbf{1998:}] In \cite{AMS98}, Ando--Morava--Sadofsky proved the first instance of Tate blueshift at all chromatic heights: after $K(n-1)$-localization, $E(n)^{t\Sigma_2}$ splits into a wedge of suspensions of $E(n-1)$. 
	\item[\textbf{2019:}] In \cite{BR19}, Bailey--Ricka showed that $\mathit{tmf}^{t\Sigma_2}$ splits into a wedge of suspensions of $ko$ after $2$-completion. Note that $\mathit{tmf}$ has chromatic height two and $ko$ has chromatic height one, and neither cohomology theory is complex orientable. This is the first example of Tate blueshift where neither cohomology theory is complex orientable.
\end{enumerate}
In particular, Tate blueshift and vanishing have been observed at all chromatic heights in the complex oriented setting, but only at heights one and two outside of the complex oriented setting (where computations tend to be far more difficult and much less is known). Our main theorems provide the first examples of Tate blueshift and vanishing at all chromatic heights for cohomology theories which are not complex orientable:

\begin{thmx}[Real oriented Tate blueshift, Theorem \ref{ERtatesplitting}]\label{ThmA} Let $\lambda_n := 2^{n+2}(2^{n-1}-1)+1$ and let $\Sigma_2$ act trivially on the $n$-th \emph{Real Johnson--Wilson theory} $ER(n)$ (see Section \ref{Sec:RJW} for a definition).
\begin{enumerate}
\item
 For $n \geq 2$ there is a map of spectra
$$\limt{i} \bigvee_{j \leq i} \Sigma^{(1-\lambda_{n-1}) j} ER(n-1) \longrightarrow (ER(n)^{t\Sigma_2})^\wedge_{\widehat{I}_{n-1}}$$
that becomes an isomorphism on homotopy groups after completion at $\widehat{I}_{n-1}$, or equivalently, after $K(n-1)$-localization. Here, $\widehat{I}_{n-1}$ is the ideal $\bar{I}_{n-1} = (p,\bar{v}_1,\ldots,\bar{v}_{n-2})$ in $\e(n)_\star$ shifted into integer degrees using the invertible class $y \in \pi_{\lambda+\sigma}\e(n)$. (See Section \ref{sec:coefficients} for more on $ER(n)$, $\e(n)$, and their coefficients.)
\item
For $n=1$, there is an equivalence of spectra
$$\limt{i} \bigvee_{j \leq i} \Sigma^{4j} ER(0) \simeq ER(1)^{t\Sigma_2},$$
where $ER(0)\simeq H\mathbb{Q}$ and $ER(1) \simeq KO_{(2)}$.
\end{enumerate}
\end{thmx}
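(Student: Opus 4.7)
\medskip

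\noindent\textbf{Proof proposal.} The plan is to lift the Ando--Morava--Sadofsky blueshift argument for classical Johnson--Wilson theory to the $C_2$-equivariant setting, working with $\e(n)$ itself as a genuine $C_2$-spectrum (where $C_2$ acts by the Real structure) carrying a second, trivial $\Sigma_2$-action, and only at the very end to pass to $C_2$-fixed points to recover $ER(n)$. Concretely, I would first establish a $C_2$-equivariant splitting
$$\limt{i} \bigvee_{j \leq i} \Sigma^{(1-\lambda_{n-1}) j \rho} \e(n-1) \lra \bigl(\e(n)^{t\Sigma_2}\bigr)^\wedge_{\bar I_{n-1}}$$
where $\rho$ is an appropriate regular-representation-style grading element (so that taking $C_2$-fixed points absorbs the shift $\lambda_{n-1}$ into an integer degree via the invertible class $y$), and then apply $(-)^{C_2}$. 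The parametrized Tate constructions and $C_2$-equivariant chromatic Bousfield localizations announced in the abstract are precisely the machinery tailored to carry this out.

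For the construction of the splitting map I plan to imitate the formal-group-law proof of Ando--Morava--Sadofsky Real-orientedly. The Tate construction of $\e(n)$ with respect to the trivial $\Sigma_2$-action can be described through the $C_2$-equivariant Tate spectral sequence for $B\Sigma_2$; the Real orientation of $\e(n)$ identifies $\e(n)^\star_{C_2}(B\Sigma_2)$ with $\e(n)^\star[[x]]/[2]_F(x)$, where $F$ is the formal group law of $\e(n)$. Modulo $\bar I_{n-1}$ the $2$-series satisfies $[2]_F(x) \equiv \bar v_{n-1} x^{2^{n-1}} \pmod{\text{higher order}}$, so after inverting $\bar v_{n-1}$ (equivalently, $K(n-1)$-localizing) the class $x$ becomes algebraic and the Tate construction breaks up $\bar I_{n-1}$-completely into a product of shifted copies of $\e(n-1)$ in the $\e(n)$-module category. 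The ranges, degrees of the summands, and the passage from product to $\limt{i}\bigvee_{j\le i}$ follow once one matches the leading behavior of $[2]_F(x)$ with the corresponding generator coming from the height $n-1$ formal group, exactly as in the classical proof but performed in the category of $\m\r$-module spectra.

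Having the $C_2$-equivariant splitting, I would then take homotopy $C_2$-fixed points. Because $\e(n)^{tC_2}\simeq *$ by Hu--Kriz, the homotopy fixed point and actual fixed point functors agree on $\e(n)$ and its completions, so $\pi_*^{C_2}$ of each side computes the desired groups $\pi_*$ of Tate constructions of $ER(n)$. The invertible class $y \in \pi_{\lambda+\sigma}\e(n)$ converts the $RO(C_2)$-graded shift $(1-\lambda_{n-1})j\rho$ into the integer shift $(1-\lambda_{n-1})j$ appearing in the statement, which is compatible with the periodicity of $ER(n-1)$. A formal check that the inverse limit and wedge on the source commute with $(-)^{C_2}$ up to the claimed map, using that $\bar I_{n-1}$-completion and $K(n-1)$-localization on $\e(n)$-modules descend to $ER(n)$-modules via the $C_2$-equivariant chromatic localization theory, concludes part (1). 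For part (2), the case $n=1$ is essentially the Greenlees--May computation: $\e(1)\simeq K\r_{(2)}$ and $\e(0)\simeq H\underline{\q}$, so the Tate spectral sequence collapses and no completion is needed; one identifies the resulting splitting with the one built above.

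The main obstacle I expect is the bookkeeping between the three independent operations involved: taking $C_2$-fixed points for the Real structure, forming the $\Sigma_2$-Tate construction for the trivial action, and $K(n-1)$-localizing. Each is well understood in isolation, but one needs to know they commute in the right order, which is what the parametrized Tate construction framework and the theory of completions of $\m\r$-modules developed in the paper are designed to provide. A secondary technical point is controlling convergence of the equivariant Tate spectral sequence after $\bar I_{n-1}$-completion; this requires that the Real-oriented analogue of the Landweber filtration on $\e(n)_\star$ behaves well enough to guarantee that the spectral sequence converges to $\pi_\star$ of the completed Tate construction and that the splitting map it produces is the one constructed cell-by-cell above.
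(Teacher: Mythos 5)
Your outline does track the paper's strategy (an equivariant AMS-style splitting, passage to $C_2$-fixed points, the invertible class $y$ converting representation-sphere shifts into the integer shifts $(1-\lambda_{n-1})j$, and the completion/localization comparison), but there is a genuine gap at the central step. You run the formal-group-law identification directly on the ordinary $\Sigma_2$-Tate construction of the $C_2$-spectrum $\e(n)$, in effect asserting that $\pi_\star(\e(n)^{t\Sigma_2})$ is $\e(n)^\star((x))/([2]_F(x))$. That formula is not automatic: $\e(n)^{t\Sigma_2} \simeq \limt{k}\,(\Sigma\e(n)\wedge RP^\infty_{-k})$ is built from Thom spectra of negative multiples of the real line bundle $\xi_V$ over $B\Sigma_2$ (with trivial $C_2$-action), and $\xi_V$ is not a Real bundle, so a Real orientation of $\e(n)$ provides no Thom isomorphism for it. This is precisely where the Ando--Morava--Sadofsky argument breaks for $\e(n)$, since it is not complex oriented. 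The paper's resolution has two ingredients your proposal omits: (i) the clean formula $\pi_{-\star}\e^{t_{C_2}\Sigma_2}\cong\e^\star((\bar u))/([2](\bar u))$ with $|\bar u|=\rho$ holds for the \emph{parametrized} Tate construction, because the bundle over $B_{C_2}\Sigma_2$ is a Real bundle (Theorem \ref{Thm:ParamTateCoeffs}), and the AMS formal-group computations and the construction of the map (via the cell structure of $Q^\infty_{-i}$ and the $\e(n-1)$-module structure of Proposition \ref{modulestructure}) are carried out there; (ii) to relate this to the ordinary Tate construction one needs the orientability theorem that $2^{n+1}\xi$ is $ER(n)$-orientable (Theorem \ref{Thm:Or}), which both computes $\pi_\star\e(n)^{t\Sigma_2}$ (the inverted Euler class lands in integer degree $1-\lambda$, not $\rho$) and shows the comparison map $\e(n)^{t\Sigma_2}\to\e(n)^{t_{C_2}\Sigma_2}$ is a $C_2$-equivalence (Theorem \ref{Thm:Agree}). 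Without these, or a substitute producing Thom isomorphisms in the $B\Sigma_2$ tower, your identification of the Tate homotopy groups is unjustified; indeed it fails for a general Real oriented theory, so some special property of $\e(n)$ must enter. Your degree bookkeeping reflects the same confusion: the equivariant summands should be $\Sigma^{j\rho}\e(n-1)$ (the class $x$ has degree $\rho$), becoming $\Sigma^{(1-\lambda_{n-1})j}ER(n-1)$ only after multiplying by powers of $y$ and taking fixed points, not suspensions by $(1-\lambda_{n-1})j\rho$.

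Two secondary points. The passage to fixed points is not governed by the Hu--Kriz vanishing $\e(n)^{tC_2}\simeq *$ as you suggest: what is needed is that $(\e\wedge X)^{C_2}\simeq \e^{C_2}\wedge X$ for the infinite trivial-action spectra $X=RP^\infty_{-k}$, together with commuting $(-)^{C_2}$ past the homotopy limit; the paper proves this for an arbitrary $C_2$-spectrum (Proposition \ref{Tatefixed}) using geometric fixed points and the Adams isomorphism, with no appeal to cofreeness at that stage. Finally, for $n=1$ you should identify where the $\Sigma^{4j}$ comes from: the paper computes $(\Sigma^{j\rho}\e(0))^{C_2}$ directly from $\pi^{C_2}_\star\e(0)\cong\q[u_{2\sigma}^{\pm 1}]$, finding it contractible for $j$ odd and equivalent to $\Sigma^{-4k}H\q$ for $j=2k$; "collapse of the Tate spectral sequence" by itself does not produce these shifts.
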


\begin{thmx}[Real oriented Tate vanishing, Theorem \ref{Thm:realvanishing}]\label{ThmB}
Let $\Sigma$ be a finite group and let $\Sigma$ act trivially on the \emph{$n$-th Real Morava K-theory} $KR(n)$ (see Section \ref{Sec:RJW} for a definition). Then 
$$KR(n)^{t\Sigma} \simeq *.$$
\end{thmx}

Before discussing these results in depth, we mention three applications:

\begin{rem2}
The $n=1$ case of Theorem \ref{ThmA} recovers Greenlees and May's identification of $KO^{t\Sigma_2}$ mentioned above. Unlike their proof, which relied on the Atiyah--Segal Completion Theorem, our proof is purely homotopical. 
\end{rem2}

\begin{rem2}
Recent work of the first author with Shi, Wang, and Xu \cite{LSWX19} suggests that the Hurewicz image of $ER(n)$ becomes increasingly large as $n$ increases.\footnote{For example, $ER(n)$ detects the first $n$ Hopf invariant one and Kervaire invariant one elements and the first $n-1$ $\bar{\kappa}$-elements in the stable homotopy groups of spheres, if they are nontrivial.} Theorem \ref{ThmA} therefore provides a means of analyzing the complicated Hurewicz image of $ER(n)$ using the simpler Hurewicz image of $ER(n-1)$. 
\end{rem2}

\begin{rem2}
The proof of \cite[Cor. 1.2]{GS96} holds with $K=KR(n)$, so Theorem \ref{ThmB} implies that the duality map induced by the transfer
$$KR(n) \wedge B\Sigma_+ \to F(B\Sigma_+, KR(n))$$
is an equivalence, i.e., $B\Sigma_+$ is self-dual with respect to $KR(n)$. 
\end{rem2}

\subsection{Overview of the main ideas}

There are two key ideas going into the proofs of Theorems \ref{ThmA} and \ref{ThmB}. The first is the use of \emph{Real oriented cohomology theories}, which are genuine $C_2$-equivariant refinements of complex oriented cohomology theories. The second is the parametrized Tate construction, a genuine $C_2$-equivariant refinement of the ordinary Tate construction, which we discuss further in Section \ref{SS:Gen}.

\subsubsection{Real oriented cohomology theories}\label{SSS:RealOr}

Real oriented cohomology theories are genuine $C_2$-equivariant cohomology theories equipped with a choice of Thom class for Real vector bundles. The primary examples are the K-theory of Real vector bundles $K\r$ \cite{Ati66}, Real cobordism $\m\r$ \cite{Lan68}, certain forms of topological modular forms with level structure \cite{HM17}, and the two examples most relevant to this paper, Real Johnson--Wilson theory $\e(n)$ and Real Morava K-theory $\k(n)$ \cite{HK01}. The cohomology theories $ER(n)$ and $KR(n)$ appearing above are the categorical fixed points of $\e(n)$ and $\k(n)$, respectively. 

As their names suggest, $\e(n)$ and $\k(n)$ are genuine $C_2$-equivariant lifts of classical Johnson--Wilson theory $E(n)$ and classical Morava K-theory $K(n)$. We mentioned Tate blueshift and vanishing results for these cohomology theories already:
\begin{itemize}

\item Ando--Morava--Sadofsky \cite{AMS98} proved that there is a map of spectra
\begin{equation}\label{AMSEqn}
\limt{i} \bigvee_{j \leq i} \Sigma^{2j} E(n-1) \to (E(n)^{t\Sigma_2})^{\wedge}_{I_{n-1}}
\end{equation}
which induces an isomorphism on homotopy groups after completion at $I_{n-1}$, or equivalently after Bousfield localization with respect to $K(n-1)$.

\item Greenlees--Sadofsky \cite{GS96} showed that 
\begin{equation}\label{GSEqn}
K(n)^{t\Sigma} \simeq *,
\end{equation}
where $\Sigma$ is any finite group acting trivially on $K(n)$. 

\end{itemize}

\subsubsection{Sketch of Proof of Theorem \ref{ThmB}}

It turns out that Theorem \ref{ThmB} follows fairly directly from \eqref{GSEqn}. Kitchloo--Wilson \cite{KW07} have produced a cofiber sequence
$$\Sigma^\lambda KR(n) \overset{y}{\to} KR(n) \to K(n)$$
relating $KR(n)$ to $K(n)$. We prove Theorem \ref{ThmB} by using this cofiber sequence to bootstrap the Greenlees--Sadofsky Tate vanishing result for $K(n)$ \eqref{GSEqn} to $KR(n)$. This requires a careful analysis of the Real Morava K-theory of inverse limits.

\subsubsection{The parametrized Tate construction}\label{SS:Gen}

Surprisingly, Theorem \ref{ThmA} does \emph{not} follow easily from \eqref{AMSEqn} in the same way that Theorem \ref{ThmB} follows from \eqref{GSEqn}. Moreover, since the techniques of \cite{AMS98} rely heavily on the fact that $E(n)$ is complex oriented (e.g., to use formal group law computations in their arguments), the arguments of Ando--Morava--Sadofsky cannot be applied \emph{mutatis mutandis} with $ER(n)$ in place of $E(n)$. Therefore we need genuinely new ideas to prove Theorem \ref{ThmA}.

We prove Theorem \ref{ThmA} by producing a genuine $C_2$-equivariant refinement of the Ando--Morava--Sadofsky equivalence \eqref{AMSEqn} and then passing to categorical fixed points. The key to producing this refinement is the \emph{parametrized Tate construction}, denoted $(-)^{t_{C_2}\Sigma_2}$, which is a genuine $C_2$-equivariant refinement of the ordinary Tate construction $(-)^{t\Sigma_2}$ \cite{QS21a}.\footnote{This is the generalized Tate construction \cite{GM95} for the family $\cf$ of subgroups $H \subseteq \Sigma_2 \rtimes C_2$ with $H \cap \Sigma_2 = 1$.} The parametrized Tate construction is obtained by replacing the universal space $E\Sigma_2$ by the $C_2$-equivariant universal space $E_{C_2}\Sigma_2$ in the definition of the Tate construction; we discuss the idea behind this replacement further in Section \ref{SSS:Replacements}.

We now state our main equivariant theorem.

\begin{thmx}[Parametrized blueshift, Theorem \ref{C2eqvtERsplit}]\label{ThmC}
Let $\Sigma_2$ act trivially on $\e(n)$. There is a map of genuine $C_2$-spectra
$$\limt{i} \bigvee_{j \leq i} \Sigma^{\rho j} \e(n-1) \to (\e(n)^{t_{C_2}\Sigma_2})^{\wedge}_{{I}_{n-1}}$$
which induces an isomorphism on $C_2$-equivariant homotopy groups after completion at $\overline{I}_{n-1} = (p,\bar{v}_1,\ldots,\bar{v}_{n-2})$ or equivalently after $\k(n-1)$-localization. Here, $\rho$ denotes the regular representation of $C_2$. 
\end{thmx}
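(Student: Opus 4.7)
The plan is to adapt the Ando-Morava-Sadofsky argument \cite{AMS98} to the $C_2$-parametrized setting, using the Real orientation of $\e(n)$ and the equivariant chromatic machinery developed earlier in the paper.

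The first step is to compute the parametrized Tate cohomology ring. Since $\e(n)$ is Real oriented, the Real Euler class $\bar{u}$ of the tautological Real line bundle over $B_{C_2}\Sigma_2$ produces an isomorphism $\e(n)^\star_{C_2}(B_{C_2}\Sigma_2) \cong \e(n)_\star[[\bar{u}]]$ with $|\bar{u}| = \rho$. The parametrized Tate construction is then obtained, up to completion, by inverting $\bar{u}$ subject to the 2-series relation:
\begin{equation*}
\pi_\star^{C_2}\bigl(\e(n)^{t_{C_2}\Sigma_2}\bigr) \cong \e(n)_\star((\bar{u}))/\bigl([2]_\cf(\bar{u})\bigr),
\end{equation*}
where $\cf$ is the Real formal group law of $\e(n)$. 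Modulo $\overline{I}_{n-1} = (p,\bar{v}_1,\ldots,\bar{v}_{n-2})$, the 2-series has lowest-order term $\bar{v}_{n-1}\bar{u}^{2^{n-1}}$, so once $\bar{u}$ is inverted the class $\bar{v}_{n-1}$ becomes a unit (using that $\bar{v}_n$ is already invertible in $\e(n)_\star$). This is the chromatic blueshift: the resulting coefficient ring is a formal Laurent series ring in $\bar{u}$ over $\e(n-1)_\star/\overline{I}_{n-1}$.

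To realize this at the level of $C_2$-spectra, I would use the blueshifted formal group law to equip the completed Tate construction with an $\e(n-1)$-module structure (via the universal characterization of $\e(n-1)$ as a Real-oriented Landweber exact theory of height $n-1$). Combined with multiplication by powers of $\bar{u}$, this yields compatible maps
\begin{equation*}
\bigvee_{j \leq i}\Sigma^{\rho j}\e(n-1) \longrightarrow (\e(n)^{t_{C_2}\Sigma_2})^\wedge_{\overline{I}_{n-1}}
\end{equation*}
indexed by $i$, whose (co)limit is the desired map.

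The main obstacle will be verifying that this map is an isomorphism on $\pi_\star^{C_2}$ after completion at $\overline{I}_{n-1}$, equivalently after $\k(n-1)$-localization. This relies on three pieces of machinery developed earlier in the paper: the completion theory for $\m\r$-modules (to interpret $(-)^\wedge_{\overline{I}_{n-1}}$), the identification of $\overline{I}_{n-1}$-completion with $\k(n-1)$-localization (the $C_2$-equivariant analog of Hovey-Strickland), and convergence of the parametrized Tate spectral sequence. With those in place, the comparison reduces to the coefficient-level Laurent-series calculation above, and Theorem \ref{ThmA} can subsequently be extracted by passing to $C_2$-fixed points.
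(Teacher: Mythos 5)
Your overall strategy tracks the paper's: compute $\pi_\star^{C_2}$ of the parametrized Tate construction from the Real orientation (this is Theorem \ref{Thm:ParamTateCoeffs}; note the identification $\pi_{-\star}\e(n)^{t_{C_2}\Sigma_2}\cong \e(n)^\star((\bar{u}))/([2](\bar{u}))$ holds on the nose, with no completion, because the Real bundle over $B_{C_2}\Sigma_2$ is itself $\e(n)$-orientable, and $\e(n)^\star(B_{C_2}\Sigma_2)$ is $\e(n)^\star[[\bar{u}]]/([2](\bar{u}))$ rather than a free power series ring), transport the Ando--Morava--Sadofsky $2$-series analysis with $v_k$ replaced by $\bar{v}_k$, build the map using an $\e(n-1)$-module structure and powers of $\bar{u}$, and finish with the completion/localization comparison of Theorem \ref{prop:localizationcomparison}.

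The genuine gap is your justification of the $\e(n-1)$-module structure "via the universal characterization of $\e(n-1)$ as a Real-oriented Landweber exact theory of height $n-1$." No such characterization is available: $\e(n-1)$ is not known to be even a homotopy associative ring spectrum (only up to phantom maps), and Landweber exactness does not by itself yield a topological module structure on $(\e(n)^{t_{C_2}\Sigma_2})^\wedge_{\overline{I}_{n-1}}$. This is precisely what Proposition \ref{modulestructure} supplies: the module structure on $(\bar{v}_{n-1}^{-1}BP\r)^\wedge_{\bar{I}_{n-1}}$ is built by lifting the AMS splitting of $v_{n-1}^{-1}BP^\wedge_{I_{n-1}}$ into a wedge of suspensions of $BP\langle n-1\rangle$ to the Real setting, using the equivariant lift of the relevant $BP$-module map from \cite{HK01} and the strongly even comparison criterion (Proposition \ref{HMLemma}); the completed Tate construction then inherits it from its $BP\r$-module structure. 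Without this (or an equivalent construction) your wedge maps do not exist. A secondary weak point is the closing verification: appealing to "convergence of the parametrized Tate spectral sequence" plus the Laurent-series computation hides the $\lim^1$ and $\overline{I}_{n-1}$-adic versus $\bar{u}$-adic filtration issues that AMS had to handle even nonequivariantly. The paper instead constructs the map stagewise into $Q^\infty_{-i}\wedge\Sigma\e(n)$ via Theorem \ref{TheoremInverseLimit}, using a spherical cell of $Q^\infty_{-i}$ in dimension $-i\rho+\sigma$, takes inverse limits, and then notes that the underlying map is the already-established AMS equivalence, so that strong evenness of both sides upgrades it to a $C_2$-equivalence by Proposition \ref{HMLemma}. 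Your direct coefficient comparison could be made rigorous, but only after supplying the module structure and the filtration comparison that this trick avoids.
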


We defer an in-depth discussion of Theorem \ref{ThmC} and the parametrized Tate construction until Section \ref{SS:ThmC}. However, we note that the parametrized Tate construction also fits into a genuine $C_2$-equivariant refinement of Theorem \ref{ThmB}:

\begin{thmx}[Parametrized vanishing, Theorem \ref{Thm:TVMain}]\label{ThmRV}
Let $G$ be a finite abelian group and let $G$ act trivially on $\k(n)$. Then 
$$\k(n)^{t_{C_2}G} \simeq *.$$
\end{thmx}

\begin{rem2}
We emphasize that Theorem \ref{ThmRV} is \emph{not} necessary for the proof of Theorem \ref{ThmB} since we can use the Kitchloo--Wilson cofiber sequence to obtain a more direct proof. However, Theorem \ref{ThmRV} does have a concrete application in $C_2$-equivariant homotopy theory. By replacing the components in the proof of \cite[Cor. 1.2]{GS96} by their parametrized counterparts, we find that the duality map induced by the transfer
$$\k(n) \wedge B_{C_2}\Sigma_+ \to F(B_{C_2}\Sigma_+, \k(n))$$
is an equivalence of $C_2$-spectra, i.e., the $C_2$-equivariant classifying space $B_{C_2}\Sigma_+$ is self-dual with respect to $\k(n)$. 
\end{rem2}

\subsubsection{Sketch of Proof of Theorem \ref{ThmA}}

The proof of Theorem \ref{ThmA} from Theorem \ref{ThmC} is not immediate: the $C_2$-fixed points of the parametrized Tate construction are not generally the ordinary Tate construction of the $C_2$-fixed points. In general, there is not even a comparison map between these two constructions; however, we show that using certain homotopy limit models for the Tate constructions (e.g., Corollary \ref{Cor:TLim}) and cofreeness (Definition \ref{Def:FreeCofree}) of $\e(n)$, one can construct a comparison map for $\e(n)$. 

\begin{thmx}[Tate comparison, Theorem \ref{Thm:Agree}]\label{ThmD}
The map
$$\e(n)^{t\Sigma_2} \to \e(n)^{t_{C_2}\Sigma_2}$$
is a $C_2$-equivariant weak equivalence. 
\end{thmx}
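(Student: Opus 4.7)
My plan is to construct the comparison map via concrete homotopy-limit presentations of both Tate constructions, and then reduce proving it is a $C_2$-equivalence to a Real-orientability calculation.

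First, I would realize both sides as homotopy inverse limits of function $C_2$-spectra built from classifying spaces. The ordinary Tate construction $\e(n)^{t\Sigma_2}$ (obtained by taking the $\Sigma_2$-Tate in spectra and remembering the residual $C_2$-action coming from $\e(n)$ itself) can be expressed via $F(B\Sigma_{2+}, \e(n))$, where $B\Sigma_2 = \mathbb{R}P^\infty$ carries the trivial $C_2$-action, suitably localized to collapse the orbits piece. The parametrized Tate $\e(n)^{t_{C_2}\Sigma_2}$, in the sense of \cite{Qui19b,QS19}, admits an analogous model built from a $C_2$-equivariant classifying space $B_{C_2}\Sigma_2$ whose underlying space is $B\Sigma_2$ but which carries extra $C_2$-equivariant cells. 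The natural $C_2$-map from $B\Sigma_2$ (trivial action) to $B_{C_2}\Sigma_2$ produces the comparison map by functoriality of $F(-,\e(n))$ and compatibility with the Tate localizations.

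It suffices to verify the map is both an underlying equivalence and a geometric $C_2$-fixed-point equivalence. Underlying, both sides restrict to the classical Ando--Morava--Sadofsky Tate $E(n)^{t\Sigma_2}$ and the comparison map restricts to an equivalence essentially by design of the parametrized Tate construction \cite{Qui19b,QS19}.

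The main step, and the main obstacle, is the geometric $C_2$-fixed-point comparison. The cofiber of $B\Sigma_{2+}\to (B_{C_2}\Sigma_2)_+$ in $C_2$-spaces is controlled cell-by-cell by suspensions involving $C_2$-representation spheres, in particular multiples of the regular representation sphere $S^\rho$. I would invoke the Real-orientability of $\e(n)$ from \cite{KLW18,KW15}: the invertible class $y \in \pi_{\lambda+\sigma}\e(n)$ implements a Thom-isomorphism-style periodicity converting $\rho$-suspensions into ordinary integer suspensions after smashing with $\e(n)$. Applied through the skeletal filtration that defines the Tate models, this periodicity identifies each relative cell of the cofiber, after mapping into $\e(n)$, with a shift already present on the underlying level. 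Combined with the underlying equivalence, this forces the cofiber of the comparison map to be $C_2$-contractible. The hard part is executing this carefully: tracking the $C_2$-cell structure of $B_{C_2}\Sigma_2$ relative to $B\Sigma_2$, verifying that the Real Thom isomorphisms of \cite{KLW18,KW15} apply uniformly across the filtration, and checking compatibility with the inverse limits and the Tate localization.
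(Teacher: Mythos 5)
Your construction of the comparison map matches the paper's (it comes from the underlying equivalence $E\Sigma_2 \to E_{C_2}\Sigma_2$ together with cofreeness of $\e(n)$, as in Lemma \ref{tatecomparison2}), and reducing to an underlying equivalence plus a geometric fixed point equivalence is a legitimate criterion; the underlying statement is indeed essentially formal. The gap is in your equivariant step. Under the inverse limit presentations, the comparison is the limit of stagewise maps of Thom spectra $B\Sigma_2^{-k\xi_V} \wedge \Sigma\e(n) \to B_{C_2}\Sigma_2^{-k\xi_W} \wedge \Sigma\e(n)$, where $\xi_W$ restricts over $B\Sigma_2$ to $2\xi_V$. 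At every finite stage this map is \emph{not} an equivalence -- not even on underlying spectra -- and the cofiber of $B\Sigma_{2+} \to (B_{C_2}\Sigma_2)_+$ smashed with $\e(n)$ is genuinely nontrivial (for instance, $(B_{C_2}\Sigma_2)^{C_2}$ contains an extra copy of $B\Sigma_2$). So no cell-by-cell identification of relative cells, however the $y$-periodicity is deployed, can force levelwise contractibility; the equivalence only emerges in the limit, and the actual mechanism is that the stagewise comparison is multiplication by an Euler class which becomes a unit there. Concretely, the paper computes $\pi_{-\star}\e(n)^{t_{C_2}\Sigma_2} \cong \e(n)^\star((\overline{u}))/([2](\overline{u}))$ using the Real orientation, computes $\pi_{-\star}\e(n)^{t\Sigma_2} \cong \e(n)^\star((\widehat{u}))/([2](\widehat{u}))$ using Theorem \ref{Thm:Or} (orientability of $2^{n+1}$-multiples of real bundles for $ER(n)$, with control of the Euler class under the forgetful map), and then identifies the image of $1$ under the stagewise comparison with a unit multiple of $\overline{u}^{2^{n+1}j}$, using injectivity of $ER(n)^*(B\Sigma_2) \to E(n)^*(B\Sigma_2)$ in degrees divisible by $2^{n+1}$. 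Your proposal cites \cite{KLW18}\cite{KW15} only for the invertible class $y$, i.e.\ $\rho$-to-integer periodicity; that input alone cannot produce the Euler class identification, which is where the theorem's content lies.

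A second, independent problem is your plan to verify the geometric fixed point condition ``through the skeletal filtration that defines the Tate models.'' Geometric fixed points do not commute with these homotopy inverse limits: levelwise one has $\Phi^{C_2}(RP^\infty_{-k} \wedge \e(n)) \simeq RP^\infty_{-k} \wedge E(n)^{tC_2} \simeq *$ by Hu--Kriz \cite{HK01} (and similarly for $Q^\infty_{-k}$), so a filtration-by-filtration argument yields no information about $\Phi^{C_2}$ of either limit; compare the standard example $F(EC_{2+},X) = \limt{k} F((EC_2^{(k)})_+,X)$, whose terms have trivial geometric fixed points while $\Phi^{C_2}F(EC_{2+},X) \simeq X^{tC_2}$ is generally nontrivial. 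So the hard part of your plan -- controlling $\Phi^{C_2}$ of the two inverse limits and of the cofiber -- is exactly what is missing, and the paper's proof avoids it entirely by computing genuine $RO(C_2)$-graded homotopy groups of both sides and checking the map there.
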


We prove this comparison theorem by analyzing the $RO(C_2)$-graded homotopy groups of both sides. While the homotopy of the right-hand side follows from a fairly standard formal group law argument (albeit adapted to the Real oriented setting), the homotopy of the left-hand side is more subtle. In particular, we need to employ $\e(n)$-orientability results from \cite{KW15, KLW18}, properties of the coefficients of $ER(n)$, and the $ER(n)$-cohomology of stunted projective spectra. After deducing that the homotopy groups of both sides are abstractly isomorphic, we use a cofinality argument to show that the two sides, viewed as certain inverse limits, are equivalent. 

\begin{rem2}
Some results in this work apply to general Real oriented cohomology theories, such as Theorem \ref{ThmE} below. However, the combination of ingredients needed to prove Theorem \ref{ThmD} mentioned above (cofreeness, orientability results, and understanding of the coefficients) is at present known specifically only for $\e(n)$. Proving similar results for other equivariant cohomology theories and their fixed points could be a first step towards expanding our understanding our blueshift to more spectra. 
\end{rem2}

\subsection{Further discussion of Theorem \ref{ThmC}}\label{SS:ThmC}

To explain the proof of Theorem \ref{ThmC}, we first recall in Section \ref{SSS:AMSE} the proof of the analogous classical result of Ando--Morava--Sadofsky. Each step in their proof requires a novel construction or technique in equivariant homotopy theory; we summarize these changes in Section \ref{SSS:Replacements}.

\subsubsection{The Ando--Morava--Sadofsky equivalence}\label{SSS:AMSE}

If $E$ is a complex oriented cohomology theory equipped with a trivial $\Sigma_2$-action, then there is an isomorphism
\begin{equation}\label{Eqn3}
\pi_{-*}(E^{t\Sigma_2}) \cong E^*((x))/[2](x)
\end{equation}
with $|x| =2$, where $[2](x)$ is the $2$-series of the formal group law associated to $E$. By examining $[2](x)$ for $E(n)$, one produces an isomorphism
\begin{equation}\label{Eqn4}
\pi_*(E(n)^{t\Sigma_2})_{I_{n-1}}^\wedge \cong E(n-1)_*((x))_{I_{n-1}}^\wedge.
\end{equation}
Next, one must produce the map \eqref{AMSEqn}. The map is constructed using the formula for trivial $\Sigma_2$-Tate constructions \cite{GM95}
\begin{equation}\label{GMEqn}
X^{t\Sigma_2} \simeq \lim_i (\Sigma X \wedge P^\infty_{-i}),
\end{equation}
where $P^\infty_{-i} = Th(-i\gamma \to \r \p^\infty)$. The map \eqref{AMSEqn} is obtained as the inverse limit of maps from bounded below wedges to bounded below stunted projective spectra produced using special properties of $MU$-module spectra. Finally, one identifies $I_{n-1}$-completion with $K(n-1)$-localization using the equivalence
\begin{equation}\label{MUEqn}
L_{K(j-1)}M \simeq (v_{j-1}^{-1}M)_{I_{j-1}}^\wedge
\end{equation}
which holds for any $MU$-module spectrum $M$. 

\subsubsection{Sketch of Proof of Theorem \ref{ThmC}}\label{SSS:Replacements}

The technical core of this work is concerned with proving Real oriented analogs of \eqref{Eqn3}-\eqref{MUEqn}. Our first key observation is that an analog of \eqref{Eqn3} holds for Real oriented cohomology theories if one replaces the (classical) Tate construction by its $C_2$-equivariant enrichment, the parametrized Tate construction.

\begin{thmx}[Theorem \ref{Thm:ParamTateCoeffs}]\label{ThmE}
Let $\e$ be a Real oriented cohomology theory. Then there is an $RO(C_2)$-graded isomorphism
\begin{equation}\tag{3'}\label{Eqn32}
\pi_{-\star}\e^{t_{C_2}\Sigma_2} \cong \e^\star((\bar{x}))/([2](\bar{x}))
\end{equation}
where $|\bar{x}| = \rho$. Here, $\rho$ is the regular representation of $C_2$. 
\end{thmx}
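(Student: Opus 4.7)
The plan is to implement the Greenlees–May approach to Tate constructions in the parametrized $C_2$-equivariant setting and then apply the Real orientation to carry out the resulting $RO(C_2)$-graded computation. First, I would establish a parametrized analog of \eqref{GMEqn}. Let $B_{C_2}\Sigma_2$ denote a model for the classifying space of the family $\cf$; concretely, one can take $\c P^\infty$ with the complex conjugation $C_2$-action, which carries a canonical Real tautological line bundle $\bar{\gamma}$. Writing $P^\infty_{-i,C_2} := \operatorname{Th}(-i\bar{\gamma} \to B_{C_2}\Sigma_2)$ for the associated stunted $C_2$-spectrum, the goal of this step is an equivalence of $C_2$-spectra
$$\e^{t_{C_2}\Sigma_2} \;\simeq\; \limt{i}\bigl(\Sigma \e \wedge P^\infty_{-i,C_2}\bigr),$$
obtained from a cell filtration of $\widetilde{E\cf}$ by skeleta of $B_{C_2}\Sigma_2$.

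Second, I would compute the right-hand side using the Real orientation of $\e$. This supplies a class $\bar{x} \in \e^{\rho}(\c P^\infty_{C_2})$ and a completed power series presentation $\e^\star(\c P^\infty_{C_2}) \cong \e^\star[[\bar{x}]]$. The $H$-space doubling map on $\c P^\infty$ produces a cofiber sequence
$$\c P^\infty_{C_2} \xrightarrow{\times 2} \c P^\infty_{C_2} \to B_{C_2}\Sigma_2$$
whose first map pulls $\bar{x}$ back to the $2$-series $[2](\bar{x})$ of the Real formal group law of $\e$. Consequently
$$\e^\star(B_{C_2}\Sigma_2) \cong \e^\star[[\bar{x}]]/([2](\bar{x})),$$
and the $RO(C_2)$-graded Thom isomorphism identifies $\e^\star(P^\infty_{-i,C_2})$ with an appropriate shift (by $i\rho$) of this quotient.

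Third, passing to the inverse limit assembles the shifted pieces into a formal Laurent series in $\bar{x}$, giving
$$\pi_{-\star}\e^{t_{C_2}\Sigma_2} \cong \e^\star((\bar{x}))/([2](\bar{x})).$$
The requisite $\lim^1$ vanishes by Mittag–Leffler: in any fixed $RO(C_2)$-degree, the connecting maps in the tower are eventually zero because the successive cofibers are shifted by multiples of $\rho$ and so are increasingly connective.

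The main obstacle I expect is establishing the parametrized Greenlees–May equivalence and verifying that the stunted $C_2$-spectra $P^\infty_{-i,C_2}$ correctly model the Tate filtration in the Quigley–Shah framework; this requires care about how the family $\cf$ interacts with the $C_2$-equivariant cell structure on $\c P^\infty$ and $\widetilde{E\cf}$. Once that infrastructure is in place, the Real orientation reduces the rest of the argument to standard formal group law manipulation, mirroring the classical Greenlees–May computation degree-by-degree in $RO(C_2)$.
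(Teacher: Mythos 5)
There are two genuine problems here. First, your model for the parametrized classifying space is wrong: $\c P^\infty$ with the conjugation action is $B_{C_2}S^1$, not $B_{C_2}\Sigma_2$. The classifying space for the family $\cf(C_2,\Sigma_2)$ is $S(\c^\infty)/\Sigma_2$, the space of \emph{real} lines in $\c^\infty$ (underlying space $\r P^\infty$) with $C_2$ acting by conjugation; the relevant bundle is the rank-one Real bundle $\xi_W$ associated to $W=\c$ with $\Sigma_2$ acting by $-1$, which is pulled back from the tautological Real bundle on $\c P^\infty$ along the squaring map but does not live on $\c P^\infty$ itself. If you literally build the tower out of Thom spectra of $-i\bar\gamma$ over $\c P^\infty$, the inverse limit computes a parametrized $S^1$-Tate construction and yields $\e^\star((\bar x))$ with no quotient by $[2](\bar x)$ --- the wrong answer. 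Your step two, where you distinguish $\c P^\infty$ from $B_{C_2}\Sigma_2$ and run the Gysin/doubling-map argument (this is exactly the content of the cited Kitchloo--Wilson computation $\e^\star(B_{C_2}\Sigma_2)\cong\e^\star[[\bar x]]/([2](\bar x))$, which also needs $[2](\bar x)$ to be a non-zero-divisor), shows you know the right statement, but the tower in step one must be built over $B_{C_2}\Sigma_2$ with $\xi_W$, i.e.\ out of the spectra $Q^\infty_{-i}$.

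Second, the passage to the limit is where the real gap lies. Your Mittag--Leffler justification is false: the successive cofibers in the tower $\{\Sigma\e\wedge Q^\infty_{-i}\}$ are (suspensions of) cells in dimensions near $-i\rho$, so after smashing with $\e$ their homotopy in a fixed $RO(C_2)$-degree is a homotopy group of $\e$ in a large \emph{positive} degree, which does not vanish for a periodic (or even connective) Real oriented theory; the tower of homotopy groups genuinely fails to stabilize (already for $K$-theory the limit produces completions), so ``eventually zero connecting maps'' is not available and the $\lim^1$/Milnor-sequence analysis would need a real argument about stabilizing images. The paper sidesteps this entirely by never computing $\pi_\star$ of the homotopy inverse limit: it works from the definition of the Tate construction, writes $\widetilde{E_{C_2}\Sigma_2}$ as a filtered colimit of representation spheres $S^{kW}$, and commutes $\pi_\star$ with the filtered colimit to get $\pi_{-\star}\e^{t_{C_2}\Sigma_2}\cong\colim_k\e^\star(B_{C_2}\Sigma_2^{-k\xi_W})$, where by the Thom isomorphism for the Real-orientable bundle $\xi_W$ each transition map is multiplication by the Euler class $\bar u\in\e^\rho(B_{C_2}\Sigma_2)$; the colimit simply inverts $\bar u$ in $\e^\star[[\bar u]]/([2](\bar u))$, giving the Laurent-series formula with no $\lim^1$ terms to control. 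You should either adopt this colimit/localization route or supply a genuine Mittag--Leffler argument for your tower; as written the last step of your proposal does not go through.
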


\begin{rem2}
As we have mentioned above, the parametrized Tate construction is obtained by replacing the universal space $E\Sigma_2$ by the $C_2$-equivariant universal space $E_{C_2}\Sigma_2$. With this in mind, readers familiar with Real oriented homotopy theory may not be surprised by Theorem \ref{ThmE}; $C_2$-equivariant universal spaces already appear in conjunction with Real oriented cohomology theories in the work of Hu--Kriz \cite{HK01} and Kitchloo--Wilson \cite{KW07}. Indeed, Theorem \ref{ThmE} follows from a result of Kitchloo--Wilson which can be stated in our language using  \emph{parametrized homotopy fixed points}. 
\end{rem2}

With this formula available, we adapt the formal group law computations of \cite{AMS98} to obtain an $RO(C_2)$-graded isomorphism
\begin{equation}\tag{4'}\label{Eqn42}
\pi_\star(\e(n)^{t_{C_2}\Sigma_2})_{\bar{I}_{n-1}}^\wedge \cong \e(n-1)_\star((x))_{\bar{I}_{n-1}}^\wedge.
\end{equation}
These computations are fairly similar to their classical counterparts; the key point is that we need to work with the parametrized Tate construction instead of the ordinary Tate construction in order to bring formal group laws into the picture.

To construct a map of $C_2$-spectra, we use a version of \eqref{GMEqn} for the parametrized Tate construction from work Shah and the third author \cite{QS21a}, which gives an equivalence of genuine $C_2$-spectra
\begin{equation}\tag{5'}\label{GMEqn2}
X^{t_{C_2}\Sigma_2} \simeq \lim_i (\Sigma X \wedge Q^\infty_{-i})
\end{equation}
where $Q^\infty_{-i} = Th(-i\gamma \to B_{C_2}\Sigma_2)$ is the Thom spectrum of the $C_2$-equivariant classifying space of $\Sigma_2$. 

We obtain the map in Theorem \ref{ThmC} using the formula \eqref{GMEqn2} along with several new results on completions of $\m\r$-modules which refine classical results of Greenlees--May for $MU$-modules \cite{GM97}. These results may be of independent interest.  

Finally, we note that the last statement of Theorem \ref{ThmC} relies on the following identification of certain equivariant Bousfield localizations and completions. 

\begin{thmx}[Localization and completion comparison, Theorem \ref{prop:localizationcomparison}]\label{ThmF} Let $\e$ be any $\m\r(n) := \bar{v}_n^{-1}\m\r_{(2)}$-module spectrum such that the underlying canonical map $E \to E_{I_m}^\wedge$ factors through $L_{K(m)}E$ and gives an equivalence $L_{K(m)}E \simeq E_{I_m}^\wedge$. 
\begin{enumerate}[label=(a)]
\item The canonical equivariant map $\e \to \e_{\bar{I}_m}^\wedge$ factors through $L_{\k(m)}\e$ and gives an equivalence 
\begin{equation}\tag{6'}\label{MUEqn2}
L_{\k(m)}\e\simeq \e_{\bar{I}_m}^\wedge.
\end{equation}
\item[(b)] The canonical map $ER \to ER_{\widehat{I}_m}^\wedge$ factors through $L_{K(m)}ER$ and gives an equivalence $L_{K(m)}ER \simeq ER_{\widehat{I}_m}^\wedge$ (where $ER$ denotes the categorical fixed points of $\e$).
\end{enumerate}
\end{thmx}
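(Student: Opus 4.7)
The plan is to adapt the classical Greenlees--May--Hovey--Strickland comparison of $L_{K(m)}E$ with $E_{I_m}^\wedge$ for $MU$-modules (under an invertibility hypothesis on $v_m$) to the $C_2$-equivariant setting. The hypothesis controls only the underlying spectrum, so the real task is to promote this to a genuine $C_2$-equivariant statement in part (a), and then to deduce part (b) by taking $C_2$-fixed points.

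For part (a), I would argue in two steps. First, I would show that $\e_{\bar{I}_m}^\wedge$ is $\k(m)$-local; the factoring through $L_{\k(m)}\e$ then follows from the universal property of localization. Writing $\e_{\bar{I}_m}^\wedge \simeq \lim_k \e/\bar{I}_m^k\e$ via the $\m\r$-completion theory developed earlier in the paper, an induction on $k$ reduces $\k(m)$-locality of the quotients to $\k(m)$-locality of $\e/\bar{I}_m\e$, which holds because this spectrum is a module over an $(\m\r/\bar{I}_m)$-type spectrum built from copies of $\k(m)$. Closure of $\k(m)$-locality of $C_2$-spectra under inverse limits completes the first step. Second, I would show that the resulting map $L_{\k(m)}\e \to \e_{\bar{I}_m}^\wedge$ is an equivalence, equivalently that the fiber $F$ of $\e \to \e_{\bar{I}_m}^\wedge$ satisfies $\k(m)\wedge F \simeq \ast$. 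Nullity of a genuine $C_2$-spectrum is detected by its underlying spectrum together with its geometric $C_2$-fixed points. On the underlying, the hypothesis gives $L_{K(m)}E\simeq E_{I_m}^\wedge$ and the underlying of $\k(m)$ is $K(m)$, giving $K(m)$-acyclicity of $F^e$. For the geometric fixed points, $\Phi^{C_2}\m\r \simeq MO$ and a parallel classical $K(m)$-localization-equals-$I_m$-completion statement for $MO$-modules applied to $\Phi^{C_2}\e$ should yield the needed acyclicity.

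For part (b), I would apply part (a) to $\e = \e(n)$ and take $C_2$-fixed points of the resulting equivalence $L_{\k(m)}\e(n) \simeq \e(n)_{\bar{I}_m}^\wedge$. Two identifications are required. First, $(\e(n)_{\bar{I}_m}^\wedge)^{C_2} \simeq ER(n)_{\widehat{I}_m}^\wedge$, obtained by using the invertible class $y \in \pi_{\lambda+\sigma}\e(n)$ to convert the $RO(C_2)$-graded ideal $\bar{I}_m$ into the integer-graded ideal $\widehat{I}_m$ on $C_2$-fixed points. Second, $(L_{\k(m)}\e(n))^{C_2}\simeq L_{K(m)}ER(n)$, which follows from the Real orientation of $ER(n)$ recalled from \cite{KLW18, KW15} together with a Bousfield class comparison identifying the action of $\k(m)^{C_2}$ with that of $K(m)$ on $ER(n)$-modules.

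The main obstacle will be the geometric-fixed-point portion of the $\k(m)$-equivalence step in part (a): the hypothesis gives control only on the underlying spectrum, so we need an independent argument that the fiber $\Phi^{C_2}F$ is $\Phi^{C_2}\k(m)$-acyclic. Aligning the $\m\r$-completion theory cleanly with geometric fixed points, most likely via the isotropy separation sequence together with the cofiber sequence $\Sigma^\lambda \e \xrightarrow{y} \e \to \e/y$ and its interplay with $\bar{I}_m$-completion, is where the main technical effort must go.
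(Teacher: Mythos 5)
Your plan for part (a) has a genuine flaw in its first step, and the step you flag as the ``main obstacle'' is a misdiagnosis. The claim that $\e/\bar{I}_m\e$ is $\k(m)$-local because it is a module over ``an $(\m\r/\bar{I}_m)$-type spectrum built from copies of $\k(m)$'' is false: $\m\r/\bar{I}_m$ (or $\m\r(n)/\bar{I}_m$) is not built from $\k(m)$'s --- for that one must also invert $\bar{v}_m$ and kill the higher $\bar{v}_i$'s. Classically $E(n)/I_m$ for $m<n$ is not $K(m)$-local, and indeed your step-1 argument never uses the hypothesis on $\e$, so it would prove the false statement that $\e^\wedge_{\bar{I}_m}$ is always $\k(m)$-local (take $\e=\e(n)$ with $m<n$: the underlying spectrum of $\e(n)^\wedge_{\bar I_m}$ is $E(n)^\wedge_{I_m}$, which is not $K(m)$-local since $L_{K(m)}E(n)\simeq (v_m^{-1}E(n))^\wedge_{I_m}$, and the underlying spectrum of a $\k(m)$-local $C_2$-spectrum is $K(m)$-local). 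The paper instead derives locality \emph{from} the hypothesis: $E^\wedge_{I_m}\simeq L_{K(m)}E$ is $K(m)$-local, and Corollary \ref{cor:local} (resting on Proposition \ref{prop:bousfieldclasses}: $a_\sigma$ acts nilpotently, so $\k(m)$ has the same Bousfield class as its Borel version) upgrades underlying locality to genuine $\k(m)$-locality. For your step 2, there is no need for any geometric-fixed-point or $MO$-module argument: $\k(m)$ is $C_2$-free, so $\Phi^{C_2}\k(m)\simeq\ast$ and $\k(m)$-acyclicity is detected on underlying spectra; the equivariant completion map is a $\k(m)$-equivalence simply because its underlying map is a $K(m)$-equivalence. (There is also no meaningful ``$K(m)$-localization equals $I_m$-completion'' statement for $MO$-modules to appeal to, since $MO$ is a wedge of suspensions of $H\mathbb{F}_2$.)

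Part (b) also has a gap. First, the hypothesis fails for $\e=\e(n)$ itself unless $m=n$ (the theorem's force is for spectra such as $\bar{v}_m^{-1}\e(n)$ or $\e(n)^{t_{C_2}\Sigma_2}$, where the relevant $\bar{v}_m$ is inverted), so ``apply (a) to $\e(n)$ and take fixed points'' does not cover the intended generality. Second, and more fundamentally, fixed points do not commute with Bousfield localization: the fiber of $\e\to L_{\k(m)}\e$ is $\k(m)$-acyclic, but its $C_2$-fixed points need not be $K(m)$-acyclic (the geometric part can contribute, since $\k(m)$-acyclicity sees only the underlying spectrum), so the identification $(L_{\k(m)}\e)^{C_2}\simeq L_{K(m)}ER$ is essentially what must be proved, and your appeal to ``a Bousfield class comparison'' does not supply it. The paper circumvents this by working directly with the completion map on fixed points: $ER^\wedge_{\widehat{I}_m}$ is $K(m)$-local by the second half of Corollary \ref{cor:local} (via the Kitchloo--Wilson fibration $\Sigma^\lambda ER\xrightarrow{x} ER\to E$ with $x$ nilpotent), and the map $ER\to ER^\wedge_{\widehat{I}_m}$ is a $K(m)$-equivalence because completion preserves that cofiber sequence, so the cofiber $C_{ER}$ of the completion map carries a nilpotent self-map with cofiber $C_E$; Lemma \ref{lemma:BousfieldClass} then gives $\langle C_{ER}\rangle=\langle C_E\rangle$, and $C_E$ is $K(m)$-acyclic by hypothesis. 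Your use of $y$ to identify $(\e^\wedge_{\bar I_m})^{C_2}$ with $ER^\wedge_{\widehat I_m}$ is fine, but the localization-versus-fixed-points step needs the nilpotence/Bousfield-class mechanism above rather than a formal commutation.
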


\subsection{Future work}
We conclude with some directions for future work.

\subsubsection{Redshift for Mahowald invariants}
In \cite{MR93}, Mahowald and Ravenel applied the Davis--Mahowald Tate splitting for $ko^{t\Sigma_2}$ to calculate the Mahowald invariants $M(2^i)$, $i \geq 1$. In particular, they showed that $M(2^i)$ is $v_1$-periodic for all $i \geq 1$. Work of the third author \cite{Qui19d} applies the Bailey--Ricka splitting for $tmf^{t\Sigma_2}$ towards calculating the iterated Mahowald invariants $M(M(2^i))$, and preliminary work along with the low-dimensional calculations of Behrens \cite{Beh07} suggest that $M(M(2^i))$ is $v_2$-periodic. 

The Mahowald invariant has been conjectured by Mahowald and Ravenel to take $v_n$-periodic classes to $v_n$-torsion classes (with some exceptions) \cite{MR84}, and empirical evidence suggests that this redshift is closely intertwined with blueshift for the Tate construction. 
The fixed point spectra $ER(n)$ have been shown to detect interesting elements of height $n$ by the work of the first author with Shi, Wang, and Xu \cite{LSWX19}. It would be interesting to apply the Tate splitting for $ER(n)^{t\Sigma_2}$ above in order to calculate Mahowald invariants of $v_n$-periodic elements; a natural starting point would be $M(\bar{\kappa})$ where $\bar{\kappa}$ is the ($v_2$-periodic) generator of $\pi_{20}(S^0)$ which is first detected in the Hurewicz image of $ER(2)$. 

\subsubsection{Tate blueshift for fixed points of Lubin-Tate theory}
Ando, Morava, and Sadofsky remark in \cite{AMS98} that after appropriate completion, analogs of their blueshift results hold for Lubin--Tate theory. Since Lubin--Tate theory is Real oriented \cite{HS17}, we expect blueshift for the Tate construction of the $C_2$-fixed points of Lubin-Tate theory. It would also be interesting to investigate this question for fixed points of Lubin-Tate spectra with respect to larger subgroups of the Morava stabilizer group. 

\subsubsection{Tate blueshift for larger group analogs of $\e(n)$}
Recent work of Hahn--Shi \cite{HS17} and Beaudry--Hill--Shi--Zeng \cite{BHSZ20} produces genuine $C_{2^m}$-spectra $\e^m(n)$ (denoted as $BP^{(\!(C_{2^m})\!)}\langle n \rangle$ in \cite{BHSZ20}) for all $m \geq 1$ such that $\e^1(m) = \e(m)$. In other words, $\e^m(n)$ is an analog of Real Johnson--Wilson theory where $C_2$ is replaced by $C_{2^m}$. We also note that the chromatic height of $\e^m(n)$ is $n\cdot2^{m-1}$. 

We expect that there is a map of genuine $C_{2^m}$-spectra between a wedge of $RO(C_{2^m})$-graded suspensions of $\e^m(n-1)$ and $(\e^m(n)^{t_{C_{2^m}}\Sigma_2})_{\tilde{I}_{n-1}}^\wedge$ which becomes an equivalence of $C_{2^m}$-spectra after completion at an appropriate ideal $\tilde{I}_{n-1}$ obtained from $\bar{I}_{n-1}$. Moreover, we expect the $C_{2^m}$-fixed points of this equivalence to be a $K(n\cdot2^{m-1})$-local equivalence between a wedge of suspensions of $(\e^m(n-1))^{C_{2^m}}$ and $(\e^m(n)^{C_{2^m}})^{t\Sigma_2}$.

\subsubsection{Bounded below blueshift at higher heights}
One may recover connective real K-theory $ko$ as the $C_2$-fixed points of the slice cover of $\mathbb{E}(1)$. Slice covers do not generally interact well with Tate constructions, but perhaps the Davis--Mahowald splitting for $ko^{t\Sigma_2}$ can be recovered from our results. This would lead to a new splitting for the Tate constructions of connective covers of Real Johnson--Wilson theories. 

\subsubsection{$\k(n)$-local Tate vanishing}
Greenlees and Sadofsky's Tate vanishing for Morava K-theory \cite{GS96} was used to show that the Tate construction vanishes $K(n)$-locally by Hovey and Sadofsky in \cite{HS96}. An analogous result should hold for the parametrized Tate construction in the $\mathbb{K}(n)$-local setting. As in the classical setting, this $\mathbb{K}(n)$-local parametrized Tate vanishing ought to be an example of parametrized ambidexterity \cite[Sec. 4]{QS21a}. 

\subsection{Organization}\label{Sec:Organization}

In Section \ref{real}, we recall the results from Real oriented homotopy theory necessary for later sections. In particular, we recall the theory of Real representations, Real vector bundles, Real orientations, and the usual examples of Real oriented cohomology theories. We then recall several important results about Real Johnson--Wilson theory $\e(n)$, Real Morava K-theory $\k(n)$, and their $C_2$-fixed point spectra $ER(n)$ and $KR(n)$. 

In Section \ref{Sec:completionlocalization}, we study certain completions of the Real Johnson--Wilson spectrum $\e(n)$ and its fixed point spectrum $ER(n)$. We then discuss Bousfield localizations with respect to Real Morava K-theory $\k(n)$ and its fixed point spectrum $KR(n)$. The constructions are compared in Theorem \ref{ThmF} (Theorem \ref{prop:localizationcomparison}), which is used in the statement of Theorems \ref{ThmA} and \ref{ThmC}. 

In Section \ref{Sec:Tate}, we discuss variations of the Tate construction. We begin with a rapid review of the classical Tate construction and prove some useful results about its interaction with completion. We then turn to the ordinary and parametrized Tate constructions in the genuine equivariant setting. 

In Section \ref{blueshift}, we prove Theorems \ref{ThmA}, \ref{ThmC}, and \ref{ThmD}. We start with a quick outline of Ando, Morava, and Sadofsky's proof of Tate blueshift for (ordinary) Johnson--Wilson theories. We then explain how certain key ingredients of their proof can be upgraded to the genuine $C_2$-equivariant setting and prove Theorem \ref{ThmE} (Theorem \ref{Thm:ParamTateCoeffs}) and its analog for the ordinary Tate construction (Theorem \ref{Thm:TateCoeffsE}). This discussion culminates in the proof of Theorem \ref{ThmD} (Theorem \ref{Thm:Agree}), which is another key ingredient in the proof of Theorems \ref{ThmA} and \ref{ThmC}. With all of these ingredients in place, we then prove Theorems \ref{ThmC} (Theorem \ref{C2eqvtERsplit}) and \ref{ThmA} (Theorem \ref{ERtatesplitting}). 

In Section \ref{Sec:TateVanishing}, we turn to Tate vanishing. We begin by proving Theorem \ref{ThmRV} (Theorem \ref{Thm:TVMain}) by adapting the arguments of Greenlees--Sadofsky in the classical setting \cite{GS96} to the $C_2$-equivariant setting. Our argument is quite similar in spirit to theirs, but we require several new technical results. We then use the aforementioned results of Greenlees--Sadofsky in conjunction with the Kitchloo--Wilson cofiber sequence \cite{KW07} to prove Theorem \ref{ThmB} (Theorem \ref{Thm:realvanishing}); this requires a careful analysis of certain inverse limits.

\subsection{Notation}\label{Sec:Notation}
We will use bold face $\e$ to denote a $C_2$-equivariant spectrum and ordinary $E$ to denote its underlying nonequivariant spectrum. We will use $ER$ to denote its fixed points $ER=\e^{C_2}$. The $\e$-cohomology of a $C_2$-space $X$ is $RO(C_2)$-graded, and we will use $a+b\sigma$ to denote the direct sum of $a$ copies of the trivial representation with $b$ copies of the sign representation. We will use $\rho$ to denote the regular representation, $1+\sigma$. Classes in degrees $k\rho$ will generally have a bar over them, e.g. the classes $\overline{v}_i \in \pi_{(2^i-1)\rho}\e(n)$ above. Whenever a class in degree $k\rho$ is shifted into integer degree via multiplying by a power of the (invertible) class $y$ in degree $\lambda+\sigma$ (defined in Section \ref{sec:coefficients}), it ends up in degree $k(1-\lambda)$ and the result will have a hat over it, e.g. the classes $\widehat{v}_i \in \pi_{(2^i-1)(1-\lambda)}\e(n)$.

The ordinary $G$-Tate construction of a spectrum $X$ equipped with a $C_2$-action will be denoted $X^{tG}$. The $C_2$-parametrized $G$-Tate construction of a $C_2$-spectrum $X$ equipped with a $G$-action will be denoted $X^{t_{C_2}G}$. 

All limits taken in the category of spectra should be interpreted as homotopy limits, and the indexing category of $\limt{k}$ is always $\mathbb{N}^{\text{op}}$.

\subsection{Acknowledgments} 
The authors thank William Balderrama, Mark Behrens, Nitu Kitchloo, Doug Ravenel, Jay Shah, Nat Stapleton, Guozhen Wang, Dylan Wilson, Steve Wilson, Ningchuan Zhang, and Foling Zou for helpful discussions. The first author was supported by the Danish National Research Foundation through the Centre for Symmetry and Deformation (DNRF92) and the European Research Council (ERC) under the European Union’s Horizon 2020 research and innovation programme (grant agreement No 682922). The third author was partially supported by NSF grant DMS-1547292.

\section{Real representations and Real oriented cohomology theories}\label{real}
In this section, we discuss Real oriented homotopy theory. In Section \ref{Sec:RealReps}, we recall Real representation theory and discuss a special class of examples. We recall some definitions and results concerning Real oriented cohomology theories and especially the Real Johnson--Wilson theories in Sections \ref{Sec:RealCob} - \ref{sec:kwfibration}. 

\subsection{Real representations, spaces, and vector bundles}\label{Sec:RealReps}

We begin by recalling the theory of Real representations, Real spaces, and Real vector bundles. 

\begin{defin}\cite{AS69}\cite[Def. 2.2.1]{Fok14}
A \emph{Real Lie group} is a pair $(G,\sigma_G)$ where $G$ is a Lie group and $\sigma_G$ is a Lie group involution on it. A \emph{Real representation} $V$ of a Real Lie group $(G,\sigma_G)$ is a finite-dimensional complex representation of $G$ equipped with an anti-linear involution $\sigma_V$ such that $\sigma_V(gv) = \sigma_G(g) \sigma_V(v)$. 
\end{defin}

\begin{exm}\label{Exm:RealReps}
If $G$ is an abelian group, then inversion $(-)^{-1} : G \to G$ defines an involution on $G$ which equips $G$ with the structure of a Real Lie group. 
\end{exm}

We will also need the notion of Real spaces and Real vector bundles when we discuss Real orientations:

\begin{defin}\label{Def:RealVB}\cite[Def. 1.3.1]{Fok14}
A \emph{Real space} is a pair $(X,\sigma_X)$ where $X$ is a topological space equipped with an involutive homeomorphism $\sigma_X$. 

A \emph{Real vector bundle} over $X$ is a complex vector bundle $E$ over $X$ which itself is also a Real space with involutive homeomorphism $\sigma_E$ satisfying
\begin{enumerate}
\item $\sigma_X \circ p = p \circ \sigma_E$, where $p : E \to X$ is the projection map, and
\item $\sigma_E$ maps $E_x$ to $E_{\sigma_X(x)}$ anti-linearly. 
\end{enumerate}
\end{defin}

\subsection{Real cobordism and Real orientations}\label{Sec:RealCob}

We now recall the notion of Real orientation from \cite{HK01}. 

\begin{defin}\cite[Def. 2.2]{HK01}
A $C_2$-spectrum $\e$ is said to be \emph{Real oriented} if there is a class $x \in  \e^{\rho}(\mathbb{C}P^\infty)$ which restricts to 1 in $\e^\rho(\mathbb{C}P^1)=\e^\rho(S^\rho) \cong \e^0(S^0)$. 
\end{defin}

The \emph{Real cobordism spectrum} $\mathbb{MR}$ was first studied by Araki and Landweber; see \cite[Sec. 2]{HK01} for references. If $\e$ is a $C_2$-equivariant ring spectrum, Real orientability is equivalent to the existence of a map of ring spectra $\mathbb{MR} \longrightarrow \e$.

A Real oriented theory $\e$ has Thom isomorphisms for Real vector bundles. We would also like to record a general result from \cite{HM17} that we will find useful for proving when two $C_2$-spectra are weakly equivalent:

\begin{prop}\label{HMLemma}\cite[Lem. 3.4]{HM17}
Let $f: \e \longrightarrow \mathbb{F}$ be a natural transformation of $C_2$-equivariant homology theories with underlying theories $E$ and $F$. Assume that $f$ induces isomorphisms $\e_{k\rho} \longrightarrow \mathbb{F}_{k\rho}$ and $E_{2k} \longrightarrow F_{2k}$ for all $k \in \mathbb{Z}$. Assume further that $\e_{k\rho-1} \longrightarrow \mathbb{F}_{k\rho-1}$ is mono for all $k \in \mathbb{Z}$. Then $f$ is a natural isomorphism.
\end{prop}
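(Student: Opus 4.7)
The strategy is to show that the cofiber $\mathbb{D} := \mathrm{cofib}(f)$ is contractible as a $C_2$-spectrum, which by cellular detection reduces to checking $\pi^{C_2}_n(\mathbb{D}) = 0$ and $D_n := \pi^e_n(\mathbb{D}) = 0$ for every $n \in \mathbb{Z}$.

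First I would extract the diagonal vanishing $\mathbb{D}_{k\rho} = 0$. At $V = k\rho$ the cofiber long exact sequence
$$\e_{k\rho} \xrightarrow{f_\ast} \mathbb{F}_{k\rho} \to \mathbb{D}_{k\rho} \to \e_{k\rho - 1} \xrightarrow{f_\ast} \mathbb{F}_{k\rho - 1}$$
combines hypothesis (1) (iso, hence surjective on the left) with hypothesis (3) (mono, hence injective on the right) to force $\mathbb{D}_{k\rho} = 0$ for every $k \in \mathbb{Z}$.

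Next I would bring in the isotropy cofiber sequence $C_{2+} \to S^0 \to S^\sigma$. Smashing with $\mathbb{D}$ and taking $\pi_V^{C_2}$ gives the long exact sequence
$$\cdots \to D_{V^u} \to \mathbb{D}_V \to \mathbb{D}_{V - \sigma} \to D_{V^u - 1} \to \mathbb{D}_{V - 1} \to \cdots,$$
where $V^u$ is the underlying integer dimension. At $V = k\rho$ the diagonal vanishings $\mathbb{D}_{k\rho} = \mathbb{D}_{(k-1)\rho} = 0$ collapse this to a short exact sequence
$$0 \to \mathbb{D}_{(k-1)\rho + 1} \to D_{2k - 1} \to \mathbb{D}_{k\rho - 1} \to 0.$$
In parallel, the same cofiber sequence applied to $\e$ and $\mathbb{F}$ yields a ladder of long exact sequences. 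On the segment
$$\e_{k\rho} \to \e_{(k-1)\rho + 1} \to E_{2k-1} \to \e_{k\rho - 1} \to \e_{(k-1)\rho},$$
a combination of 5-lemma and 4-lemma applications using the given isos on the outer terms (and on $E_{2k}, E_{2k-2}$) together with the mono on $\e_{k\rho - 1}$ relates iso-ness of $f_{E_{2k-1}}$, of $f_{\e_{(k-1)\rho + 1}}$, and the upgrade of $f_{\e_{k\rho - 1}}$ from mono to iso.

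Finally, hypothesis (2) together with the underlying cofiber LES identifies $D_{2k} = \ker(f_{E_{2k-1}})$ and $D_{2k-1} = \mathrm{coker}(f_{E_{2k-1}})$, so $f_{E_{2k-1}}$ is iso iff $D_{2k} = D_{2k-1} = 0$. A joint induction on $|k|$ and on the $\sigma$-degree, anchored by the diagonal vanishing and propagated through the short exact sequence and the 5-lemma ladder, then forces all $\mathbb{D}_V$ and $D_n$ to vanish, giving $\mathbb{D} \simeq \ast$ and hence the desired equivalence. The main obstacle is the interlocking circularity in this step: the vanishings of $\mathbb{D}_{(k-1)\rho + 1}$, $\mathbb{D}_{k\rho - 1}$, and $D_{2k-1}$ each depend on the others through the LES ladder, and what breaks the loop is precisely the mono hypothesis (3), which ensures the above short exact sequence has trivial extension and lets the induction propagate outward from the diagonal $\{k\rho\}$ to all of $RO(C_2)$.
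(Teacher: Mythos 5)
Your first three steps are correct: the cofiber long exact sequence does give $\mathbb{D}_{k\rho}=0$ (this is exactly where the epi at $k\rho$ and the mono at $k\rho-1$ are used), the isotropy sequence does collapse to $0 \to \mathbb{D}_{(k-1)\rho+1} \to D_{2k-1} \to \mathbb{D}_{k\rho-1} \to 0$, and $D_{2k}=\ker(f_{2k-1}^e)$, $D_{2k-1}=\operatorname{coker}(f_{2k-1}^e)$. The gap is the final ``joint induction,'' which is not an argument and in fact cannot be made into one from the three stated hypotheses alone: nothing in them controls the \emph{odd-degree underlying} homotopy. Concretely, take $\mathbb{E}=0$ and $\mathbb{F}$ the theory represented by $C_{2_+}\wedge \Sigma H\mathbb{Z}$. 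Then $\pi^{C_2}_{k\rho}\mathbb{F}\cong \pi^e_{2k}(\Sigma H\mathbb{Z})=0$, $\pi^e_{2k}\mathbb{F}=0$, and the mono condition at $k\rho-1$ is vacuous, so all hypotheses hold for $f=0$; yet $\pi^e_1\mathbb{F}\cong\mathbb{Z}^2\neq 0$, so $f$ is not an isomorphism. Your own short exact sequence displays exactly why the loop cannot be broken: $\mathbb{D}_{(k-1)\rho+1}$, $D_{2k-1}$ and $\mathbb{D}_{k\rho-1}$ can all be nonzero simultaneously, and the mono hypothesis has already been spent on $\mathbb{D}_{k\rho}=0$. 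The missing ingredient is the context of \cite[Lem. 3.4]{HM17} and of every application in this paper (see the conditions listed immediately after the Proposition): the underlying theories are even, i.e. $E_{2k-1}=F_{2k-1}=0$, equivalently $f$ is an underlying isomorphism in \emph{all} integer degrees.

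Once that evenness is granted, your interlocking induction is unnecessary and the proof closes quickly: $\mathbb{D}$ is underlying contractible, so $C_{2_+}\wedge\mathbb{D}\simeq *$ and hence $\mathbb{D}\to S^\sigma\wedge\mathbb{D}$ is an equivalence, giving $\pi^{C_2}_V(\mathbb{D})\cong\pi^{C_2}_{V-\sigma}(\mathbb{D})$ for every $V\in RO(C_2)$; in particular $\pi^{C_2}_n(\mathbb{D})\cong\pi^{C_2}_{n\rho}(\mathbb{D})=0$ by the diagonal vanishing you established, and the equivariant Whitehead theorem finishes. (Two smaller caveats: forming the cofiber presupposes that $f$ is realized by a map of genuine $C_2$-spectra; for a bare natural transformation one runs the same exact-sequence argument on the values at the cells $C_2/H_+\wedge S^n$, which is harmless in the paper's applications where $f$ is a spectrum map. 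Also note the paper itself gives no proof of this Proposition---it is quoted from \cite{HM17}---so the statement should be read together with the evenness hypotheses under which it is applied there.)
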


We will always apply this proposition in the case where $\e$ and $\mathbb{F}$ satisfy the following:
\begin{enumerate}
\item Both $\e$ and $\mathbb{F}$ are $\mathbb{MR}$-modules,
\item The forgetful maps $\e_{k \rho}\longrightarrow E_{2k}$ and $\mathbb{F}_{k\rho} \longrightarrow F_{2k}$ are isomorphisms, and 
\item $\e_{k\rho-1}=\mathbb{F}_{k\rho-1}=0$. 
\end{enumerate}
To apply the proposition to $f : \e \to \mathbb{F}$, it then suffices to check that $f$ induces an isomorphism on coefficients in degrees $k \rho$. 

\subsection{Real Johnson--Wilson theories and Real Morava K-theories}\label{Sec:RJW}
We now recall the $C_2$-spectra which are the focus of this paper. In degrees $k \rho$, the forgetful map $\pi_{k\rho}\mathbb{MR}_{(2)} \longrightarrow \pi_{2k}MU_{(2)}$ is an isomorphism \cite{HK01}. It follows that there are $C_2$-equivariant refinements $\overline{v}_k \in \pi_{(2^k-1)\rho}\mathbb{MR}_{(2)}$ of the ordinary nonequivariant $v_k \in \pi_{2(2^k-1)}MU_{(2)}$. 

We let $\mathbb{MR}(n)$ denote the spectrum $\mathbb{MR}_{(2)}[\overline{v}_n^{-1}]$. It is a $C_2$-equivariant commutative ring spectrum \cite[Lemma 4.2]{KLW18}.

The \emph{$n$-th Real Johnson--Wilson theory} $\e(n)$ is constructed from $\mathbb{MR}(n)$ by killing the ideal generated by $\overline{v}_i$ for $i>n$. Its underlying spectrum is the $n$-th Johnson--Wilson theory $E(n)$ whose coefficients are $\pi_*E(n)=\mathbb{Z}_{(2)}[v_1, \dots, v_{n-1}, v_n^{\pm 1}]$. 

The \emph{$n$-th Real Morava K-theory} $\mathbb{K}(n)$ is obtained from $\e(n)$ by further quotienting by $\overline{v_i}$ for $0 \leq i \leq n-1$. The underlying spectrum of $\mathbb{K}(n)$ is the $n$-th Morava K-theory $K(n)$ whose coefficients are $\pi_*K(n)=\mathbb{F}_2[v_n^{\pm1}]$.

\subsubsection{Multiplicative structure} 
It is not known whether $\e(n)$ or $\mathbb{K}(n)$ are homotopy commutative, associative, and unital ring spectra. In the case of $\e(n)$, Kitchloo, Wilson, and the second author have shown in \cite{KLW17} that it is homotopy commutative, associative, and unital up to phantom maps. Moreover, $\e(n)$ represents a multiplicative cohomology theory on the category of spaces valued in commutative rings. This turns out to be enough structure for our purposes. We also note that the $C_2$-fixed points $ER(n)$ have an $E_\infty$-ring structure after $K(n)$-localization by work of Hahn and Shi \cite{HS17}. In situations where more structure is needed, the spectrum $\mathbb{MR}(n)$ is a good replacement for $\mathbb{E}(n)$.

Even less is known about the multiplicative structure of Real Morava K-theory. It is not known if $\mathbb{K}(n)$ is a homotopy commutative and associative ring spectrum, but we do know that the homotopy groups of its fixed points $\pi_*KR(n)$ cannot support a ring structure which is compatible with the inclusion of fixed points map $\pi_*KR(n) \longrightarrow \pi_*K(n)$. Indeed, this is visible even when $n=1$, i.e. for mod 2 real $K$-theory. One can check that there is a class in $\pi_2(KO/2)$ which maps to $v_1\in \pi_2(KU/2)$. This class must cube to zero in the source since $\pi_6(KO/2)=0$, yet $v_1^3 \neq 0$ in $\pi_6(KU/2)$. We thank Steve Wilson for pointing this out to us. We do however know that $\mathbb{K}(n)$ is a module spectrum over $\mathbb{MR}(n)$,  which is all that we will require.

\subsubsection{Freeness and cofreeness}

As a $C_2$-spectrum, $\e(n)$ has some nice properties. The natural maps $E{C_2}_+ \wedge \e(n) \longrightarrow \e(n)$ and $\e(n) \longrightarrow F(E{C_2}_+, \e(n))$ are both equivariant equivalences, making $\e(n)$ $C_2$-free and cofree (Definition \ref{Def:FreeCofree}), respectively \cite[Comment (4) on p. 349]{HK01}. These properties will be important to our computation of the Tate construction.

\subsection{The coefficients of $ER(n)$}\label{sec:coefficients}
The homotopy of $\e(n)$ may be computed by a homotopy fixed point spectral sequence, a Bockstein spectral sequence, or a slice spectral sequence (\cite[Section 4]{KW07}, \cite[Theorem 3.1]{KW15}, and \cite[Theorem 9.9]{HHR16}, respectively). In degrees multiples of the regular representation, the forgetful map gives an isomorphism $\pi_{k \rho} \e(n) \longrightarrow \pi_{2k}E(n)$. Outside of these degrees, the coefficients are considerably more interesting and contain some 2-torsion. The reader may consult \cite[Section 3]{KW15} for a full description. 

For our purposes, it will suffice to describe some distinguished classes in $\pi_{\star}\e(n)$ and $\pi_k \e(n)=\pi_k ER(n)$ for $n \geq 1$. Let $\lambda$ denote the positive integer $2^{n+2}(2^{n-1}-1)+1$.
 
 \begin{enumerate}
 \item There are equivariant refinements $\overline{v}_i \in \pi_{(2^i-1)\rho}\e(n)$ of the classical nonequivariant $v_i$ in $\pi_{2(2^i-1)}E(n)$.
 \item There is an invertible class $v_n^{2^{n+1}} \in \pi_{2^{n+1}(2^n-1)}\e(n)$ which makes the fixed-point theory $2^{n+1}(2^n-1)$-periodic. When $n=1$, this gives the $8$-periodicity of $KO$, and when $n=2$, we get the 48-periodicity of $ER(2)$.
 \item There is an invertible class $y \in \pi_{\lambda+\sigma} \e(n)$. By multiplying by a power of $y$, we may move any class into integer grading. In particular, given a class in degree a multiple of the regular representation $z \in \pi_{k \rho}\e(n)$, we may multiply by $y^{-k}$ to obtain $\widehat{z}:=zy^{-k} \in \pi_{(1-\lambda)k}ER(n)$.
 \item There is a class $x \in \pi_{\lambda}ER(n)$ which is 2-torsion (and generates all of the 2-torsion in the coefficients). We have $x^{2^{n+1}-1}=0$. When $n=1$, this class is $\eta \in \pi_1KO_{(2)}$.
 \end{enumerate}


Finally, we note that pieces of the Hurewicz image of $ER(n)$ can be computed using knowledge of the stable stems. Work of Shi, Wang, Xu, and the first author \cite[Thm. 1.8]{LSWX19} says that if the $i$-th Hopf invariant one element, the $i$-th Kervaire invariant one element, or the $j$-th $\bar{\kappa}$-element survives in the Adams spectral sequence for the sphere, then its image under the Hurewicz map $\pi_*(S^0) \to \pi_*(ER(n))$ is nontrivial when $i \leq n$ or $j \leq n-1$. 


\subsection{The Kitchloo--Wilson fibration}\label{sec:kwfibration}
Let $n \geq 1$. In \cite{KW07}, Kitchloo and Wilson show that multiplication by the class $x \in \pi_{\lambda}ER(n)$ yields a fibration of spectra
$$\Sigma^{\lambda}ER(n) \rightarrow ER(n) \rightarrow E(n).$$
In fact, for any $C_2$-spectrum $\e$, one can take the equivariant cofibration $C_{2_+} \rightarrow S^0 \rightarrow S^\sigma$ and derive from it the fibration
$$(\Sigma^{-\sigma}\e)^{C_2} =F(S^{\sigma}, \e)^{C_2} \rightarrow ER=\e^{C_2} \rightarrow F(C_{2_+}, \e)^{C_2} = E$$
in which the first map is multiplication by $a_{\sigma}$. If the spectrum $\e$ is further an $\mathbb{MR}(n)$-module spectrum, then the results of \cite{KW07} show that the invertible class $y$ described in the previous section is in fact an invertible class in $\pi_{\lambda+\sigma}\mathbb{MR}(n)$. Multiplication by this class gives an equivariant equivalence that allows one to shift the suspension in the first term into integer grading, yielding the Kitchloo--Wilson fibration
$$\Sigma^{\lambda}ER \rightarrow ER \rightarrow E$$
for \emph{any} $\mathbb{MR}(n)$-module spectrum $\mathbb{E}$. We will use this extensively when we study Bousfield localizations of $\e(n)$, $ER(n)$, and related spectra in Section \ref{Sec:completionlocalization}.

\subsection{$ER(n)$ orientations}
When identifying the Tate construction for $ER(n)$ in Section \ref{blueshift}, we will require the following result concerning when real vector bundles are $ER(n)$-orientable:

\begin{thm}\label{Thm:Or} \cite[Theorem 6.1]{KLW18}, \cite[Theorem 1.4]{KW15}
For any real vector bundle $\xi$, the bundle $2^{n+1}\xi$ is orientable (and thus has a Thom class) with respect to $ER(n)$. There is a corresponding $E(n)$-orientation such that the $ER(n)$-based Euler class for $2^{n+1}\xi$ is sent under the forgetful map to a unit (in the cohomology of the base) multiple of the $E(n)$-based Euler class.
\end{thm}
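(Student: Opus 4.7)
The plan is to reduce to the universal tautological bundle $\gamma \to BO(r)$, exploit the fact that any real bundle becomes a complex bundle after doubling (via complexification), and then promote the resulting Real Thom class of $2\gamma$ in $RO(C_2)$-graded degree $r\rho$ to a genuine integer-graded $ER(n)$-Thom class of $2^{n+1}\gamma$ by running the Kitchloo-Wilson fibration $\Sigma^{\lambda}ER(n) \xrightarrow{x} ER(n) \to E(n)$ against the nilpotence relation $x^{2^{n+1}-1}=0$ in $\pi_\ast ER(n)$.

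First, by naturality of Thom classes it suffices to construct a Thom class for $2^{n+1}\gamma$, where $\gamma \to BO(r)$ is the tautological rank-$r$ real bundle. The complexification map $BO(r) \to BU(r)$ classifies $\gamma \otimes_{\r} \c$, which is a complex rank-$r$ bundle whose underlying real bundle is $2\gamma$. Pulling back the canonical $\m\r$-Real Thom class of the universal complex $r$-plane bundle along the induced map of Thom spectra yields a Real Thom class $U \in \e(n)^{r\rho}(Th(2\gamma))$.

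Second, I would use the $2^n$-fold Whitney sum identification $2^{n+1}\gamma \cong 2^{n} \cdot (2\gamma)$ to form the external product $U^{\wedge 2^n} \in \e(n)^{2^n r \rho}(Th(2^{n+1}\gamma))$ and shift it into integer grading via a suitable power of the invertible class $y \in \pi_{\lambda+\sigma}\e(n)$ introduced in Section \ref{sec:coefficients}. The image of the resulting class under the forgetful map $\e(n) \to E(n)$ is a unit multiple of the complex-oriented $E(n)$-Thom class of $2^{n+1}\gamma$; this takes care of the compatibility clause in the theorem. To see that the candidate actually lifts from $E(n)$ to $ER(n)$, I would analyze the long exact sequence coming from the Kitchloo-Wilson fibration applied to $Th(2^{n+1}\gamma)$: the obstruction to lifting a given $E(n)$-Thom class to $ER(n)$ sits in a tower of groups related by multiplication by $x$, and the powers of $x$ that measure these successive obstructions are eventually killed by $x^{2^{n+1}-1}=0$. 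The Real refinement $U^{\wedge 2^n}$ supplies a compatible system of partial lifts through this tower.

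The main obstacle will be controlling the $x$-obstructions sharply enough to see why exactly $2^{n+1}$ copies of $\xi$, and no fewer, suffice. Concretely, one has to carry out a careful bookkeeping of the $RO(C_2)$-degrees occupied by the 2-torsion part of $\pi_\star \e(n)$ (where powers of $x$ live) relative to the shift induced by $y$, and then show that the number of nontrivial Bockstein-type obstructions produced by the Real Thom class of $2\gamma$ is precisely the nilpotence exponent of $x$. This is the technical heart of the argument and requires the detailed structure of $ER(n)^\ast(BO)$ from \cite{KW15, KLW18}; once it is in place, the compatibility with the $E(n)$-Euler class follows immediately from the naturality of the forgetful map and the fact that the Euler class is the zero-section restriction of the Thom class.
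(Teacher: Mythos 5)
You are attempting to prove a statement that this paper does not prove: Theorem \ref{Thm:Or} is imported verbatim from \cite{KW15} and \cite{KLW18}, so there is no internal argument to compare against. Your outline does echo the strategy of those papers (get the $E(n)$-orientation of $2\xi$ from complexification, then lift its $2^n$-th power along the fibration $\Sigma^{\lambda}ER(n)\xrightarrow{x}ER(n)\to E(n)$), but as written it has two genuine gaps, one of which is exactly the content of the theorem.

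First, the class you call $U$ is a Thom class for the \emph{Real} bundle $\gamma\otimes_{\r}\c$, on whose total space $C_2$ acts by conjugation; as a $C_2$-equivariant bundle over the trivial-action base this is $\gamma\oplus\gamma\sigma\cong\gamma\otimes\rho$, not $2\gamma$ with trivial action. Its Thom spectrum has underlying spectrum $Th(2\gamma)$ but fixed points $Th(\gamma)$, so $U$ (and likewise $U^{\wedge 2^n}$) is an element of $\e(n)^{\star}$ of the wrong object. Multiplying by powers of the invertible class $y\in\pi_{\lambda+\sigma}\e(n)$ only shifts the $RO(C_2)$-degree of a class; it does not convert a class on the conjugation-action Thom spectrum into an element of $ER(n)^{*}(Th(2^{n+1}\gamma))$ for the trivial-action Thom space, which is what an $ER(n)$-orientation of the plain real bundle $2^{n+1}\xi$ means. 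Bridging the discrepancy between $\xi\otimes\rho$-type twists and $\xi\otimes 2$-type twists is precisely where the factor $2^{n+1}$ and the hard work enter. Second, your lifting step is not an argument: the assertion that the successive obstructions "are eventually killed by $x^{2^{n+1}-1}=0$" cannot be right as stated, because $x$ is nilpotent for every bundle and every multiple of $\xi$, yet $2^{n}\xi$ need not be $ER(n)$-orientable (as the paper remarks immediately after the theorem, citing \cite{KLW18}). The vanishing of the Bockstein-type obstructions on the $2^n$-th power of the complexified Thom class has to come from the explicit differential computations in $ER(n)^{*}(BO(q))$ and the corresponding Thom spectra — which you acknowledge and defer to \cite{KW15}, \cite{KLW18}. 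So what you have is a plausible plan whose decisive step is exactly the cited computation, preceded by an unjustified identification of equivariant Thom spectra; the final compatibility clause about Euler classes is indeed immediate once a Thom class mapping to a unit multiple of the $E(n)$-Thom class has actually been constructed.
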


We note in passing that while the exponent of $2$ in this result cannot be improved (it is shown in \cite{KLW18} that there exists a bundle such that $2^n$ times it is not $ER(n)$-orientable), this result is certainly not optimal. For $n=1$, it states that $4\xi$ is $KO_{(2)}$-orientable for any $\xi$; however, the stronger statement that Spin bundles are $KO_{(2)}$-orientable is known. It remains an interesting open problem to identify criteria under which a bundle possesses an $ER(n)$-orientation.

\section{Completion and Bousfield localization}\label{Sec:completionlocalization}
In this section, we collect some results concerning completions and Bousfield localizations of $\mathbb{E}(n)$ and its fixed points that we will use in later sections. Some may be of independent interest.

\subsection{Completion of $\e(n)$}\label{Sec:Completion}
We begin by defining completion for $\mathbb{MR}$-module spectra and describing its effect on homotopy groups.

\begin{defin}\label{definition:completion}(Compare with \cite{GM97})
Let $a \in \pi_\star^{C_2} \mathbb{MR}$ and let $M$ be an $\mathbb{MR}$-module. We define the completion $M_a^\wedge$ by
$$M_a^\wedge := \limt{k} Ca^k,$$
where $Ca^k$ is the cofiber of the composite (where we suppress suspensions)
$$M\simeq S^0\wedge M \xrightarrow{unit \wedge id_M} \mathbb{MR} \wedge M \xrightarrow{a^k\wedge id_M} \mathbb{MR} \wedge M \rightarrow M.$$
Let $I$ be an ideal in $\pi_\star^{C_2} \mathbb{MR}$ generated by $(a_1,\cdots,a_k)$. We define $M_I^\wedge$ by setting
$$M^\wedge_I := ((M_{a_1}^\wedge)_{a_2}^\wedge \cdots)_{a_k}^\wedge.$$
\end{defin}

\begin{rem2}
It is straightforward to verify that 
$$(M_a^\wedge)_b^\wedge \simeq (M_b^\wedge)_a^\wedge.$$
\end{rem2}

\begin{lem}\label{lemma:completion}
Let $a$ and $M$ be as in Definition \ref{definition:completion}. If there exists an $N>0$ such that no nontrivial element in $\ker(\pi_\star^{C_2}M\xrightarrow{\cdot a} \pi_\star^{C_2}M)$ is $a^N$-divisible, then 
$$\pi_\star^{C_2}(M_a^\wedge) = (\pi_\star^{C_2}M)_a^\wedge.$$
\end{lem}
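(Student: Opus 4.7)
The plan is to apply the Milnor $\lim^1$ short exact sequence to $M_a^\wedge = \lim_k Ca^k$ and analyze the homotopy of the tower $\{Ca^k\}$ via the defining cofiber sequences $\Sigma^{k|a|}M \xrightarrow{a^k} M \to Ca^k$. These cofiber sequences produce a short exact sequence of towers
\begin{equation*}
0 \to \{(\pi_\star^{C_2} M)/a^k\}_k \to \{\pi_\star^{C_2} Ca^k\}_k \to \{(\pi_{\star-1}^{C_2} M)[a^k]\}_k \to 0,
\end{equation*}
where $(-)[a^k]$ denotes the $a^k$-torsion subgroup (with the appropriate degree shift suppressed). The quotient tower has surjective transition maps, so its $\lim^1$ vanishes and its $\lim$ is the algebraic completion $(\pi_\star^{C_2}M)_a^\wedge$; the torsion tower has transition maps given by multiplication by $a$. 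The problem therefore reduces to showing that both $\lim_k (\pi_{\star-1}^{C_2}M)[a^k]$ and $\lim^1_k (\pi_{\star-1}^{C_2}M)[a^k]$ vanish. Once this is in hand, the six-term exact sequence attached to the short exact sequence of towers identifies $\lim_k \pi_\star^{C_2} Ca^k$ with $(\pi_\star^{C_2}M)_a^\wedge$ and forces $\lim^1_k \pi_\star^{C_2} Ca^k = 0$, after which the Milnor sequence delivers the conclusion.

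For the vanishing of $\lim$, note that a compatible sequence $(x_k)$ satisfies $a x_{k+1} = x_k$ together with $a^k x_k = 0$, so $x_1 = a x_2 = \cdots = a^N x_{N+1}$; since $x_1 \in \ker(\cdot a)$ is $a^N$-divisible, the hypothesis forces $x_1 = 0$. One then shows by induction that each $x_k$ lies in $\ker(\cdot a)$ (because $a x_k = x_{k-1} = 0$) and is simultaneously $a^N$-divisible (by $x_{k+N}$), so $x_k = 0$ for all $k$.

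For the vanishing of $\lim^1$ I would verify the Mittag--Leffler condition by showing that for each fixed $k$, the image of the transition map $(\pi_{\star-1}^{C_2}M)[a^{k+j}] \xrightarrow{a^j} (\pi_{\star-1}^{C_2}M)[a^k]$ is zero as soon as $j \geq N$. Concretely, this image equals $\ker(\cdot a^k) \cap a^j(\pi_{\star-1}^{C_2}M)$, and the key step is a descent argument: given $y = a^j z$ with $a^k y = 0$, the element $a^{k-1} y = a^{k-1+j} z$ lies in $\ker(\cdot a)$ and is $a^N$-divisible, hence vanishes by hypothesis, placing $y$ in $\ker(\cdot a^{k-1})$; iterating $k$ times yields $y = 0$.

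The main obstacle I anticipate is precisely this descent: the hypothesis only controls $\ker(\cdot a)$, whereas the torsion groups appearing in the tower are $\ker(\cdot a^k)$ for every $k$. Bridging that gap via the stepwise argument above, while keeping track of the degree conventions inherited from $a \in \pi_\star^{C_2}\mathbb{MR}$ so that the short exact sequence of towers lives in the correct $RO(C_2)$-graded degree, is what makes the hypothesis ``no nontrivial element of $\ker(\cdot a)$ is $a^N$-divisible'' (rather than the a priori weaker condition $\bigcap_n a^n \pi_\star^{C_2}M = 0$) the right assumption for identifying the topological and algebraic completions.
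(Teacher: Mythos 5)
Your argument is correct and is essentially the paper's own proof: both apply the Milnor exact sequence to $M_a^\wedge=\lim_k Ca^k$, split the tower $\{\pi_\star^{C_2}(Ca^k)\}$ into the quotient tower $\{\pi_\star^{C_2}M/a^k\}$ and the $a^k$-torsion tower, and then kill the $\lim$ and $\lim^1$ of the torsion tower (Mittag--Leffler with vanishing stable image) using the hypothesis on $\ker(\times a)$. The only difference is cosmetic: you spell out the descent from $\ker(\times a^k)$ to $\ker(\times a)$ needed to invoke the hypothesis, a step the paper asserts more tersely.
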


\begin{proof}
By the Milnor exact sequence for homotopy groups, the left-hand side
$$\pi_\star^{C_2}(M_a^\wedge) = \pi_\star^{C_2}(\limt{k} Ca^k) $$
fits into a short exact sequence
$$0 \rightarrow \text{lim}^1 \pi_{\star+1}^{C_2}(Ca^k) \rightarrow \pi_\star^{C_2}(M_a^\wedge) \rightarrow \limt{k} \pi_\star^{C_2}(Ca^k) \rightarrow 0.$$
Consider the diagram
\begin{center}
\begin{tikzcd}
0 \arrow[r] & \pi_{\star+1}^{C_2}M/a^{k+1}\pi_{\star+1}^{C_2}M \arrow[r] \arrow[d] & \pi_{\star+1}^{C_2}(Ca^{k+1}) \arrow[r] \arrow[d]& \ker(\pi_\star^{C_2}M \xrightarrow{\cdot a^{k+1}} \pi_\star^{C_2}M) \arrow[r] \arrow[d,"\cdot a"]& 0 \\

0 \arrow[r] & \pi_{\star+1}^{C_2}M/a^k\pi_{\star+1}^{C_2}M \arrow[r] & \pi_{\star+1}^{C_2}(Ca^k) \arrow[r] & \ker(\pi_\star^{C_2}M \xrightarrow{\cdot a^k} \pi_\star^{C_2}M) \arrow[r] & 0. 
\end{tikzcd}
\end{center}
The left vertical map is always surjective, so by \cite[\href{https://stacks.math.columbia.edu/tag/0598}{Tag 0598}]{stacks-project}, we have a short exact sequence
$$0 \rightarrow \limt{k} \pi_{\star+1}^{C_2}M/a^k\pi_{\star+1}^{C_2}M \rightarrow \limt{k} \pi_{\star+1}^{C_2}(Ca^k) \rightarrow \limt{k} \ker(\pi_\star^{C_2}M \xrightarrow{\cdot a^k} \pi_\star^{C_2}M) \rightarrow 0.$$
Note that
$$\limt{k} \pi_{\star+1}^{C_2}M/a^k\pi_{\star+1}^{C_2}M= (\pi_{\star+1}^{C_2}M)_a^\wedge$$
and
$$\limt{k} \ker(\pi_\star^{C_2}M \xrightarrow{\cdot a^k} \pi_\star^{C_2}M) = 0$$
by the assumption that $\pi_\star^{C_2}M$ has no infinitely $a$-divisible element. Hence, we have 
$$\limt{k} \pi_\star^{C_2}(Ca^k)=(\pi_{\star+1}^{C_2}M)_a^\wedge.$$
The diagram also shows that $\{\pi_\star^{C_2}(Ca^k)\}$ is Mittag-Leffler, so we only need to show the right-hand inverse system is Mittag-Leffler. In fact, the stable image is $0$. Indeed, for all nontrivial elements $x \in \ker(\pi_\star^{C_2}(M) \xrightarrow{\cdot a^k} \pi_\star^{C_2}(M))$, there is no $y \in \ker(\pi_\star^{C_2}M \xrightarrow{\cdot a^{k+N}} \pi_\star^{C_2}M)$ satisfying $x=a^Ny$ by assumption, so $x$ is not in the image of $\ker(\pi_\star^{C_2}M \xrightarrow{\cdot a^{k+n}} \pi_\star^{C_2}M)$ for $n>N$.
\end{proof}

Now consider the ideal $\bar{I}_n:=(\vbar_0, \vbar_1, \dots, \vbar_{n-1})$. We have that the homotopy groups of the spectrum $\e(n)^\wedge_{\bar{I}_n}$ are exactly the $\bar{I}_n$-adic completion of $\pi_\star \e(n)$. We use this to prove a $C_2$-equivariant analog of a proposition of Ando, Morava, and Sadofsky \cite[Section 3]{AMS98}

\begin{prop}\label{modulestructure} There is a $C_2$-equivalence
$$(\bar{v}_{n-1}^{-1}BP\r)^\wedge_{\bar{I}_{n-1}} \simeq \left( \bigvee_{R \in \mathcal{R}} \Sigma^{|\sigma R|}\e(n-1) \right)_{\bar{I}_{n-1}}^{\wedge}.$$
In particular, $(\bar{v}_{n-1}^{-1}BP\r)^\wedge_{\bar{I}_{n-1}}$ is a module spectrum over $\e(n-1)$.\end{prop}

\begin{proof}
Nonequivariantly $(v_{n-1}^{-1}BP)^\wedge_{I_{n-1}}$ is an $E(n-1)$-module. As in \cite{AMS98} ($p=2$), the module structure is given as follows. Let $R=(r_1,r_2,\cdots)$ range over multi-indices of nonnegative integers (with only finitely many positive coordinates) and let 
$$|R|=2(r_1(2-1)+r_2(2^2-1)+\cdots).$$
Let $\mathcal{R}$ be the set of multi-indices with the first $n-2$ indices $0$, and set
$$\sigma R=(2^{n-1}r_{n-1}, 2^{n-1}r_n,\cdots).$$
Let $q$ be the $BP_*$-module quotient map corresponding to 
$$BP_*BP=BP_*[t_1,t_2,\cdots] \rightarrow BP_*[t_1^{2^{n-1}},t_2^{2^{n-1}},\cdots].$$
Then the composite $BP$-module map
\begin{equation}\label{equation:3.4}
BP \xrightarrow{\eta_R} BP \wedge BP \xrightarrow{\sim} \bigvee_{R \in \mathcal{R}} \Sigma^{|R|}BP \xrightarrow{q}  \bigvee_{R \in \mathcal{R}} \Sigma^{|\sigma R|}BP \xrightarrow{\theta}  \bigvee_{R \in \mathcal{R}} \Sigma^{|\sigma R|}BP\langle n-1 \rangle
\end{equation}
is a homotopy equivalence after inverting $v_{n-1}$ and completing at $I_{n-1}$. The right-hand side becomes
$$\left( \bigvee_{R \in \mathcal{R}} \Sigma^{|\sigma R|}E(n-1) \right)^\wedge_{I_{n-1}},$$
which admits an $E(n-1)$-module structure. Hence $(v_{n-1}^{-1}BP)^\wedge_{I_{n-1}}$ is an $E(n-1)$-module.


From \cite[Thm. 4.11]{HK01}, we have a $C_2$-equivariant lift of the map (\ref{equation:3.4}), in which 
$$|R|=(r_1(2-1)+r_2(2^2-1)+\cdots)\rho$$
now lies in in $RO(C_2)$. After applying $\bar{v}_{n-1}(-/\bar{I}_n^k)$, both sides are strongly even and the underlying map is a homotopy equivalence, so it is a $C_2$-equivalence by Proposition \ref{HMLemma}. From here, the $C_2$-equivariant case proceeds as in the nonequivariant setting.
\end{proof}

\begin{rem2}
Proposition \ref{modulestructure} states that after $K(n)$-localization, $\bar{v}_n^{-1}BP\mathbb{R}$ splits as a wedge of $RO(C_2)$-graded suspensions of $\e(n)$. It is an interesting open question whether an analogous splitting holds for cyclic groups of order $2^m$, where $\e(n)$ is replaced by a certain genuine $C_{2^m}$-spectrum constructed by Beaudry--Hill--Shi--Zeng in \cite{BHSZ20}. 
\end{rem2}


\subsection{Bousfield localization of $\e(n)$}\label{Sec:Localization}
We will be interested both in the $C_2$-equivariant Bousfield localization $L_{\mathbb{K}(m)}\mathbb{E}(n)$ as well as the Bousfield localization involving the fixed points $L_{KR(m)}ER(n)$. We refer the reader to \cite{Bou79} for Bousfield localization, \cite{Rav84} for chromatic Bousfield localizations, and \cite{Hov07} for Bousfield localization in model categories. 

We begin by reviewing free and cofree spectra.

\begin{defin}\label{Def:FreeCofree}
A $C_2$-spectrum $\e$ is \emph{$C_2$-free} if the natural map ${EC_2}_+ \wedge \e \to \e$ is a $C_2$-equivariant equivalence. We say $\e$ is \emph{$C_2$-cofree} if the natural map $\e \to F({EC_2}_+,\e)$ is a $C_2$-equivariant equivalence. 
\end{defin}

\begin{lem}\label{freecofreecomparison} If $f: X \rightarrow Y$ is an equivariant map which is an underlying equivalence, then
\begin{enumerate}
\item if $\mathbb{E}$ is $C_2$-free, then $\mathbb{E} \wedge X \rightarrow \mathbb{E} \wedge Y$ is an equivariant equivalence, and
\item if $\mathbb{E}$ is $C_2$-cofree, then $F(Y, \mathbb{E}) \rightarrow F(X, \mathbb{E})$ is an equivariant equivalence.
\end{enumerate}
\end{lem}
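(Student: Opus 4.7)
The plan is to reduce both parts of the lemma to the following preliminary observation: if $g: A \to B$ is any equivariant map which is an underlying equivalence, then the map $EC_{2+} \wedge g: EC_{2+} \wedge A \to EC_{2+} \wedge B$ is an equivariant equivalence. Once this is established, both parts of the lemma follow formally from the free, respectively cofree, hypothesis on $\mathbb{E}$.

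To prove the preliminary observation, I would use that a map of genuine $C_2$-spectra is an equivariant equivalence precisely when it is an equivalence both on underlying spectra and on $C_2$-fixed points. The underlying of $EC_{2+} \wedge g$ agrees with $g$ up to equivalence, since $EC_2$ is contractible as a (nonequivariant) space and so $EC_{2+} \simeq S^0$ underlying; hence the underlying map is an equivalence by hypothesis on $f$. For fixed points, $(EC_{2+} \wedge -)^{C_2}$ computes the homotopy-orbit construction $(-)_{hC_2}$, which only depends on the underlying spectrum with its homotopy $C_2$-action, so the induced map $A_{hC_2} \to B_{hC_2}$ is again an equivalence. Equivalently, one may filter $EC_{2+}$ by cells of the form $C_{2+} \wedge S^n$, and smashing with such free cells manifestly only uses the underlying homotopy type.

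For part (1), the $C_2$-free hypothesis on $\mathbb{E}$ provides equivariant equivalences $\mathbb{E} \wedge X \simeq EC_{2+} \wedge \mathbb{E} \wedge X$ and $\mathbb{E} \wedge Y \simeq EC_{2+} \wedge \mathbb{E} \wedge Y$. Since smashing with $\mathbb{E}$ preserves underlying equivalences, $\mathbb{E} \wedge f$ is still an underlying equivalence, and the preliminary observation (applied to $g = \mathbb{E} \wedge f$) shows that $EC_{2+} \wedge \mathbb{E} \wedge f$ is an equivariant equivalence. Combining gives that $\mathbb{E} \wedge f: \mathbb{E} \wedge X \to \mathbb{E} \wedge Y$ is equivariantly an equivalence. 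For part (2), the $C_2$-cofree hypothesis together with the usual smash-hom adjunction supplies equivariant equivalences
$$F(Y, \mathbb{E}) \simeq F(Y, F(EC_{2+}, \mathbb{E})) \simeq F(EC_{2+} \wedge Y, \mathbb{E}),$$
and similarly for $X$. By the preliminary observation, $EC_{2+} \wedge f$ is an equivariant equivalence, and applying $F(-, \mathbb{E})$ (which preserves equivariant equivalences) produces the desired equivariant equivalence $F(Y, \mathbb{E}) \to F(X, \mathbb{E})$.

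I do not anticipate a serious obstacle: the preliminary observation is a standard folklore fact expressing that $EC_{2+}$ is the universal ``free'' $C_2$-object, and the remaining steps are formal manipulations with the free/cofree hypothesis and adjunction. The only mildly delicate point is making the fixed-point identification $(EC_{2+} \wedge A)^{C_2} \simeq A_{hC_2}$ precise, which one can either cite from the standard references on genuine $C_2$-spectra or handle directly using the cellular filtration of $EC_{2+}$.
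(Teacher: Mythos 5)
Your proposal is correct and follows essentially the same route as the paper: both arguments reduce to the standard fact that an underlying equivalence becomes a genuine equivariant equivalence after smashing with $EC_{2+}$, and then conclude via the free/cofree hypotheses and the adjunction $F(EC_{2+}\wedge Y,\mathbb{E})\simeq F(Y,F(EC_{2+},\mathbb{E}))$. The only difference is cosmetic (you smash with $\mathbb{E}$ before invoking the key fact and supply a proof of it via fixed points/homotopy orbits or the cell filtration, where the paper simply asserts it), so no further comparison is needed.
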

\begin{proof}Since $X \longrightarrow Y$ is a $C_2$-equivariant map which is an underlying nonequivariant equivalence, the induced map
$$X \wedge {EC_2}_+ \longrightarrow Y \wedge {EC_2}_+$$
is a $C_2$-equivariant equivalence. Smashing with $\e$ and mapping into $\e$, respectively, yield maps
\begin{align*}X \wedge {EC_2}_+ \wedge \e &\longrightarrow Y \wedge {EC_2}_+ \wedge \e,\\
F(Y, F({EC_2}_+, \e)) \simeq F({Y \wedge EC_2}_+ , \e) &\longrightarrow F({Y \wedge EC_2}_+ , \e) \simeq F(Y, F({EC_2}_+, \e))\end{align*}
which are also $C_2$-equivariant equivalences. If $\e$ is free, the canonical map ${EC_2}_+ \wedge \e \longrightarrow \e$ is an equivalence. If $\e$ is cofree, then the canonical map $\e \longrightarrow F({EC_2}_+, \e)$ is an equivalence. 
\end{proof}

Recall that $\mathbb{E}(n)$ and $\mathbb{K}(n)$ are both free and cofree. This has the following consequence.

\begin{cor} An equivariant map $X \rightarrow Y$ is a $\mathbb{K}(m)$- (resp. $\mathbb{E}(m)$)-equivalence if and only if the underlying nonequivariant map is a $K(m)$- (resp. $E(m)$-) equivalence. An equivariant spectrum $X$ is $\mathbb{K}(m)$- (resp. $\mathbb{E}(m)$)-acyclic if and only if its underlying nonequivariant spectrum is $K(m)$- (resp. $E(m)$-)acyclic.
\end{cor}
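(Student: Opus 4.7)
The plan is to leverage the freeness of $\mathbb{K}(m)$ and $\mathbb{E}(m)$ together with an elementary detection principle for free $C_2$-spectra. First I would reduce the map statement to the acyclicity statement by passing to cofibers: an equivariant map $f \colon X \to Y$ is a $\mathbb{K}(m)$-equivalence if and only if its cofiber $C_f$ is $\mathbb{K}(m)$-acyclic, and since the forgetful functor to nonequivariant spectra preserves cofibers, $f^e$ is a $K(m)$-equivalence if and only if $C_f^e$ is $K(m)$-acyclic. The $\mathbb{E}(m)$ case is identical, so it suffices to treat acyclicity for $\mathbb{K}(m)$.

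For the acyclicity claim, the forward direction is immediate: if $\mathbb{K}(m) \wedge X \simeq *$ as $C_2$-spectra, then passing to underlying spectra yields $K(m) \wedge X^e \simeq *$. For the reverse direction, I would first observe that since $\mathbb{K}(m)$ is $C_2$-free, so is $\mathbb{K}(m) \wedge X$ for any $C_2$-spectrum $X$, because
\[
(\mathbb{K}(m) \wedge X) \wedge {EC_2}_+ \simeq (\mathbb{K}(m) \wedge {EC_2}_+) \wedge X \simeq \mathbb{K}(m) \wedge X.
\]
The reverse direction then reduces to the auxiliary fact that a $C_2$-free spectrum $Z$ with $Z^e \simeq *$ is itself equivariantly contractible, applied to $Z = \mathbb{K}(m) \wedge X$.

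To verify this auxiliary fact, I would check the two genuine fixed point spectra separately: $Z^e \simeq *$ holds by hypothesis, and for $Z^{C_2}$ I would use freeness to write $Z \simeq {EC_2}_+ \wedge Z$ and then invoke the standard identification
\[
({EC_2}_+ \wedge Z)^{C_2} \simeq Z^e_{hC_2},
\]
which vanishes under the hypothesis $Z^e \simeq *$. The main (mild) obstacle is this last identification, which can be established by a $C_2$-cellular induction on ${EC_2}_+$ using the Wirthm\"uller isomorphism $(C_{2+} \wedge W)^{C_2} \simeq W$ on free cells; beyond this, the argument is an immediate consequence of the freeness of $\mathbb{K}(m)$ and $\mathbb{E}(m)$ recorded just before the corollary.
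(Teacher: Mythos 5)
Your proposal is correct and runs along essentially the same lines the paper intends: everything comes down to the freeness of $\mathbb{K}(m)$ and $\mathbb{E}(m)$ together with the principle (the mechanism behind Lemma \ref{freecofreecomparison}) that smashing with a $C_2$-free spectrum only sees underlying homotopy, so equivalences and acyclicity are detected nonequivariantly. The only cosmetic difference is that you verify this detection principle by hand, reducing to acyclicity and computing $({EC_2}_+ \wedge Z)^{C_2} \simeq Z^e_{hC_2}$ via the Adams/Wirthm\"uller identification, rather than quoting the first step of the proof of Lemma \ref{freecofreecomparison}.
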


We state one more lemma, which is true in both the equivariant and nonequivariant category:

\begin{lem} \label{lemma:BousfieldClass}
If 
$$\Sigma^V A \xrightarrow{a} A \rightarrow B$$
is a cofiber sequence of spectra in which the map $a$ is nilpotent, then $A$ and $B$ have the same homological and cohomological Bousfield classes: $\langle A \rangle = \langle B \rangle$ and $\langle A^* \rangle=\langle B^* \rangle$. 
\end{lem}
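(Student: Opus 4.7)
The plan is to establish both equalities by the same general mechanism. Applying $-\wedge X$ (respectively $F(X,-)$) to the cofiber sequence produces a cofiber (respectively fiber) triangle in which the connecting map between two copies of $A\wedge X$ (respectively $F(X,A)$) is given by multiplication by $a$. The hypothesis that some iterate $a^n$ is null will be exactly what is needed to promote the trivial vanishing implication into its converse.

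For the homological Bousfield class $\langle A\rangle=\langle B\rangle$, I would fix a test spectrum $X$ and smash the cofiber sequence with $X$ to obtain
\[
\Sigma^V(A\wedge X) \xrightarrow{a\wedge\mathrm{id}} A\wedge X \to B\wedge X.
\]
The easy direction is that $A\wedge X\simeq *$ forces $B\wedge X\simeq *$. For the converse, if $B\wedge X\simeq *$ then $a\wedge\mathrm{id}$ is a weak equivalence, hence so is its $n$-fold iterate $a^n\wedge\mathrm{id}$; but $a^n=0$ by hypothesis for $n$ large, so this iterate is simultaneously null and an equivalence, forcing $A\wedge X\simeq *$.

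The cohomological statement is proven dually, using that $F(X,-)$ turns cofiber sequences into fiber sequences. I would apply it to the given cofiber sequence to obtain a fiber sequence $F(X,\Sigma^V A)\xrightarrow{a_*} F(X,A)\to F(X,B)$, where $a_*$ denotes post-composition with $a$. Observing that $F(X,A)\simeq *$ is equivalent to $A^*(X)=0$, the same dichotomy (trivial vanishing implication one way, nilpotency upgrading an equivalence to a null map the other way) yields $\langle A^*\rangle=\langle B^*\rangle$.

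The only piece of bookkeeping worth flagging as a potential obstacle is the naturality identity $(a\wedge\mathrm{id})^n=a^n\wedge\mathrm{id}$, together with its analogue $(a_*)^n=(a^n)_*$ for mapping spectra; these are immediate in the stable homotopy category, so there is no substantive difficulty. Since every manipulation above takes place in a stable setting, the argument works verbatim in both the equivariant and nonequivariant categories, matching the stated generality of the lemma.
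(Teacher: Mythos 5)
Your proof is correct and follows essentially the same route as the paper: smash the test spectrum with (respectively, map it into) the cofiber sequence, note the trivial vanishing implication in one direction, and in the other use that the connecting self-map is simultaneously an equivalence and nilpotent to force contractibility.
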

\begin{proof}
If $C \in \langle A \rangle$, then $C \wedge A \simeq *$. The cofiber sequence
$$C \wedge A \xrightarrow{\text{id}_C \wedge a} C \wedge A \rightarrow C \wedge B$$
implies that $C \wedge B \simeq *$ and $C \in \langle B \rangle$.

If $C \in \langle B \rangle$, then $C \wedge B \simeq *$. The cofiber sequence
$$C \wedge A \xrightarrow{\text{id}_C \wedge a} C \wedge A \rightarrow C \wedge B$$
implies that the self map $\text{id}_C \wedge a$ is a weak equivalence. Because $a$ is nilpotent, so is $\text{id}_C \wedge a$. Therefore, $C \wedge A \simeq *$ and $C \in \langle A \rangle$.

Since mapping into a cofiber sequence of spectra also gives a long exact sequence, the corresponding result for cohomological Bousfield classes follows along the same lines. 
\end{proof}

\begin{prop}\label{prop:bousfieldclasses} If $\mathbb{F}$ is any spectrum on which $a_{\sigma}\in \pi_\sigma \mathbb{S}$ acts nilpotently, then $\mathbb{F}$ and $F(C_{2_+}, \mathbb{F})$ have the same homological and cohomological Bousfield classes. If we further have that $\mathbb{F}$ is an $\mathbb{MR}(n)$-module spectrum (for instance if $\mathbb{F}$ is $\mathbb{E}(m)$ or $\mathbb{K}(m)$), then $F$ and $FR$ have the same (homological and cohomological) Bousfield classes. \end{prop}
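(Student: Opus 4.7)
The plan is to produce both statements from the same mechanism, namely Lemma \ref{lemma:BousfieldClass}, by exhibiting each pair of spectra as the two outer terms of a cofiber sequence whose self-map is nilpotent.

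For the first statement, the idea is to apply $F(-,\mathbb{F})$ to the basic equivariant cofiber sequence $C_{2+}\to S^{0}\to S^{\sigma}$ (recalled in Section \ref{sec:kwfibration}), obtaining the cofiber sequence of $C_{2}$-spectra
\[
\Sigma^{-\sigma}\mathbb{F}\xrightarrow{\ a_{\sigma}\ }\mathbb{F}\longrightarrow F(C_{2+},\mathbb{F}),
\]
in which the first map is multiplication by $a_{\sigma}$, essentially by the definition of $a_{\sigma}$. Since $a_{\sigma}$ is assumed to act nilpotently on $\mathbb{F}$, Lemma \ref{lemma:BousfieldClass} directly gives $\langle \mathbb{F}\rangle=\langle F(C_{2+},\mathbb{F})\rangle$ and likewise for cohomological Bousfield classes. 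The only point to check is that Lemma \ref{lemma:BousfieldClass} applies in the $C_{2}$-equivariant setting, but its proof is purely triangulated and smashing/mapping in any stable $\infty$-category preserves cofiber sequences, so this is immediate.

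For the second statement, I would invoke the Kitchloo-Wilson fibration from Section \ref{sec:kwfibration}: when $\mathbb{F}$ is an $\mathbb{MR}(n)$-module, the invertible class $y\in\pi_{\lambda+\sigma}\mathbb{MR}(n)$ allows the cofiber sequence above to be rewritten on $C_{2}$-fixed points as the nonequivariant cofiber sequence
\[
\Sigma^{\lambda}FR\xrightarrow{\ x\ }FR\longrightarrow F,
\]
with $x\in\pi_{\lambda}ER(n)$ acting on $FR$ through the $MR(n)$-module structure inherited from $\mathbb{F}$. From Section \ref{sec:coefficients} we have $x^{2^{n+1}-1}=0$, so multiplication by $x$ on $FR$ is nilpotent, and Lemma \ref{lemma:BousfieldClass} now applied in the ordinary stable homotopy category yields $\langle FR\rangle=\langle F\rangle$ and the corresponding cohomological equality.

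Neither step looks like a genuine obstacle; the only thing worth double-checking is bookkeeping, namely that the connecting map in the first cofiber sequence is really multiplication by $a_{\sigma}$ rather than some sign or suspension shift, and that $x$ acts on $FR$ compatibly with its nilpotence in $\pi_{\star}\mathbb{MR}(n)$ so that the nilpotence hypothesis of Lemma \ref{lemma:BousfieldClass} is met. Both are formal once one unwinds the module structures, so the proposition should follow quickly from the two cofiber sequences and Lemma \ref{lemma:BousfieldClass}.
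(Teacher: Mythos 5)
Your proposal is correct and follows essentially the same route as the paper: both statements are obtained by feeding the cofiber sequences $\Sigma^{-\sigma}\mathbb{F}\xrightarrow{a_\sigma}\mathbb{F}\to F(C_{2+},\mathbb{F})$ and (after shifting by the invertible class $y$ and taking fixed points) $\Sigma^{\lambda}FR\xrightarrow{x}FR\to F$ into Lemma \ref{lemma:BousfieldClass}. One small bookkeeping remark: for a general $\mathbb{MR}(n)$-module $\mathbb{F}$ the nilpotence of $x$ on $FR$ is most cleanly deduced from the retained hypothesis that $a_\sigma$ acts nilpotently on $\mathbb{F}$ together with invertibility of $y$ (or from nilpotence of $a_\sigma$ in $\pi_\star\mathbb{MR}(n)$), rather than from the relation $x^{2^{n+1}-1}=0$ of Section \ref{sec:coefficients}, which is stated there for $\e(n)$ specifically.
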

\begin{proof} Applying the previous lemma to the cofiber sequence
$$\xymatrix{ \Sigma^{-\sigma} \mathbb{F} \ar@{->}[r]^-{a_{\sigma}} &\mathbb{F} \ar@{->}[r] & F(C_{2_+}, \mathbb{F}) }$$
proves the first statement. For the second statement, note that the coefficients of $\mathbb{MR}_{(2)}$ contain the invertible class $y$.  Multiplying by a power of this class to shift the $\sigma$-suspension into integer degree and taking fixed points yields the cofiber sequence
$$\xymatrix{\Sigma^{\lambda} FR  \ar@{->}[r]^-{x} & FR \ar@{->}[r] & F}.$$
Applying the previous lemma to this cofiber sequence yields the second statement.
\end{proof}

\begin{cor} \label{cor:local} Let $\mathbb{X}$ be a $C_2$-spectrum on which $a_{\sigma}$ acts nilpotently. For any $C_2$-spectrum $\mathbb{F}$, if $X$ is $F$-local, then $\mathbb{X}$ if $\mathbb{F}$-local. If we further have that $\mathbb{X}$ is an $\mathbb{MR}(n)$-module spectrum, then if $X$ is $F$-local, then $XR$ is $F$-local.
\end{cor}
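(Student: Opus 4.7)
The plan is to deduce both claims from Proposition \ref{prop:bousfieldclasses}, applied not to the ``coefficient'' spectrum $\mathbb{F}$ but to $\mathbb{X}$ itself, exploiting the fact that equality of cohomological Bousfield classes lets us test vanishing of $[-,\mathbb{X}]^{C_2}$ on a more convenient companion spectrum. Recall that cohomological Bousfield class equality $\langle \mathbb{Y}^*\rangle = \langle \mathbb{Z}^*\rangle$ means exactly that for every test $\mathbb{W}$, $[\mathbb{W},\mathbb{Y}]^{C_2}_* = 0$ if and only if $[\mathbb{W},\mathbb{Z}]^{C_2}_* = 0$; this is the technical fact that makes the argument work.

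For the first claim, since $a_\sigma$ acts nilpotently on $\mathbb{X}$, Proposition \ref{prop:bousfieldclasses} gives
$$\langle \mathbb{X}^*\rangle = \langle F(C_{2+},\mathbb{X})^*\rangle.$$
Let $\mathbb{W}$ be any $\mathbb{F}$-acyclic $C_2$-spectrum. Then its underlying spectrum $W$ is $F$-acyclic, because restriction to the underlying category is symmetric monoidal and exact. By the induction/restriction adjunction (or by self-duality of $C_{2+}$), I have a natural isomorphism
$$[\mathbb{W}, F(C_{2+},\mathbb{X})]^{C_2}_* \;\cong\; [\mathbb{W}\wedge C_{2+}, \mathbb{X}]^{C_2}_* \;\cong\; [W,X]_*,$$
which vanishes by $F$-locality of $X$. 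The cohomological Bousfield class equality then forces $[\mathbb{W},\mathbb{X}]^{C_2}_* = 0$, and since $\mathbb{W}$ was an arbitrary $\mathbb{F}$-acyclic $C_2$-spectrum, this is exactly the statement that $\mathbb{X}$ is $\mathbb{F}$-local.

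For the second claim, the additional hypothesis that $\mathbb{X}$ is an $\mathbb{MR}(n)$-module lets me invoke the second half of Proposition \ref{prop:bousfieldclasses} with $\mathbb{F}$ taken to be $\mathbb{X}$, yielding $\langle X^*\rangle = \langle XR^*\rangle$ as nonequivariant spectra. Thus for any $F$-acyclic $W$, the vanishing $[W,X]_* = 0$ coming from $F$-locality of $X$ is equivalent to $[W,XR]_* = 0$, which proves $XR$ is $F$-local.

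There is no real obstacle here: the corollary is essentially a formal bookkeeping consequence of Proposition \ref{prop:bousfieldclasses}. The only place one needs to be careful is the direction of reasoning — locality is governed by the \emph{homological} Bousfield class of the coefficient theory, but we are using the \emph{cohomological} Bousfield class equality provided by the proposition to transfer vanishing of $[-,\mathbb{X}]^{C_2}$ between $\mathbb{X}$ and its Borel-cofree replacement (respectively between $X$ and $XR$). Both flavors of Bousfield equivalence are given by Proposition \ref{prop:bousfieldclasses}, so either can be used, but the cohomological one is what renders the adjunction computation above immediate.
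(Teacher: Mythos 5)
Your proof is correct and takes essentially the same route as the paper: you apply Proposition \ref{prop:bousfieldclasses} to $\mathbb{X}$ itself and use the $C_{2_+}$ smash/function adjunction to identify $[\mathbb{W},F(C_{2_+},\mathbb{X})]^{C_2}_*$ with $[W,X]_*$, which is exactly the paper's computation $F(\mathbb{Y},F(C_{2_+},\mathbb{X}))^{C_2}\simeq F(Y,X)$. For the fixed-point statement, your appeal to $\langle X^*\rangle=\langle XR^*\rangle$ from the second half of the proposition is the intended argument, which the paper's written proof leaves implicit.
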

\begin{proof} Let $\mathbb{Y}$ be $\mathbb{F}$-acyclic. Then for underlying spectra, we have that $Y$ is $F$-acyclic. By the previous proposition, $\mathbb{X}$ and $ F(C_{2_+}, \mathbb{X})$ have the same cohomological Bousfield class, so it suffices to show that $F(\mathbb{Y}, F(C_{2_+}, \mathbb{X}))^{C_2} \simeq \ast$. But we have
$$F(\mathbb{Y}, F(C_{2_+}, \mathbb{X}))^{C_2}\simeq F(C_{2_+}, F(\mathbb{Y}, \mathbb{X}))^{C_2} \simeq F(Y, X)$$
and the right hand side is contractible since $Y$ is $F$-acyclic and $X$ is $F$-local.
\end{proof}

\subsection{Comparing completion and Bousfield localization}
Recall that we have ideals $\overline{I}_m=(2, \overline{v}_1, \dots, \overline{v}_{m-1})$ of $\pi_{\star}\mathbb{MR}(m)$ and $\widehat{I}_m=(2, \widehat{v}_1, \dots, \widehat{v}_{m-1})$ of $\pi_*MR(m)$. Note that if we view $\pi_*MR(m)$ as a subring of $\pi_{\star}\mathbb{MR}(m)$, the inclusion of the ideal $\widehat{I}_m$ generates the ideal $\overline{I}_m$. In this section, we show for a class of spectra that includes $\e(n)$ as well as the parametrized Tate construction $\e(n)^{t_{C_2}\Sigma_2}$ (to be defined in the next section) that $\overline{I}_n$-completion and $\mathbb{K}(n)$-localization are equivalent. Similarly, we show that for a class of spectra that includes the fixed points $ER(n)$, as well as the Tate construction on the fixed points $ER(n)^{t\Sigma_2}$, that $\widehat{I}_n$-completion and $K(n)$-localization are equivalent.

\begin{thm}\label{prop:localizationcomparison} Let $\e$ be any $\mathbb{MR}(m)$-module spectrum such that the underlying canonical map $E \rightarrow E^{\wedge}_{I_m}$ factors through $L_{K(m)}E$ and gives an equivalence $L_{K(m)}E \simeq E^{\wedge}_{I_m}$. 
 \begin{enumerate}
\item The canonical equivariant map $\e \rightarrow \e^{\wedge}_{\overline{I}_m}$ factors through $L_{\k(m)}\e$ and gives an equivalence $L_{\k(m)}\e \simeq \e^{\wedge}_{\overline{I}_m}$.
\item 
The canonical map $ER \rightarrow ER^{\wedge}_{\widehat{I}_m}$ factors through $L_{K(m)}ER$ and gives an equivalence $L_{K(m)}ER \simeq ER^{\wedge}_{\widehat{I}_m}$.
\end{enumerate}
\end{thm}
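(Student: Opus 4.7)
The plan for part (1) is to proceed in two steps corresponding to the two conditions characterizing a Bousfield localization. First, I would verify that the canonical completion map $\e \to \e^{\wedge}_{\overline{I}_m}$ is a $\k(m)$-equivalence. By the corollary following Lemma \ref{freecofreecomparison}, equivariant $\k(m)$-equivalences are precisely those maps whose underlying nonequivariant maps are $K(m)$-equivalences; the underlying of the completion map is $E \to E^{\wedge}_{I_m}$, which by hypothesis coincides with the $K(m)$-localization. Second, I would show that $\e^{\wedge}_{\overline{I}_m}$ is $\k(m)$-local: its underlying spectrum is $E^{\wedge}_{I_m} \simeq L_{K(m)}E$, which is $K(m)$-local, and since $\e^{\wedge}_{\overline{I}_m}$ is an $\mathbb{MR}(m)$-module the class $a_\sigma = xy^{-1}$ acts nilpotently on it (the nilpotence of $x$ after inverting $\bar v_m$ being a standard consequence of the Hu-Kriz analysis), so Corollary \ref{cor:local} applies. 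The universal property of Bousfield localization then produces the factorization $\e \to L_{\k(m)}\e \to \e^{\wedge}_{\overline{I}_m}$ and upgrades the $\k(m)$-equivalence to the claimed equivalence $L_{\k(m)}\e \simeq \e^{\wedge}_{\overline{I}_m}$.

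For part (2), the plan is to reduce to the classical, nonequivariant statement via the Kitchloo-Wilson fibration $\Sigma^\lambda ER(n) \xrightarrow{x} ER(n) \to E(n)$, exploiting the nilpotence $x^{2^{n+1}-1}=0$ in $\pi_\star\e(n)$. Applying $\widehat{I}_m$-completion (which commutes with cofiber sequences of spectra, being a sequential inverse limit) would yield
\[\Sigma^\lambda ER(n)^{\wedge}_{\widehat{I}_m} \xrightarrow{x} ER(n)^{\wedge}_{\widehat{I}_m} \to E(n)^{\wedge}_{I_m},\]
where I identify $E(n)^{\wedge}_{\widehat{I}_m}$ with $E(n)^{\wedge}_{I_m}$ by noting that under the Kitchloo-Wilson map each integer-graded class $\widehat v_i = \bar v_i y^{-(2^i-1)}$ is sent to a unit multiple of $v_i$. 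The rightmost term is then $L_{K(m)}E(n)$, in particular $K(m)$-local. Using nilpotence of $x$, I would argue that $ER(n)^{\wedge}_{\widehat{I}_m}$ is built from finitely many extensions of shifts of $E(n)^{\wedge}_{I_m}$ and hence is $K(m)$-local; moreover, comparing the original and the completed cofiber sequences after smashing with $K(m)$ and invoking the five lemma shows that the canonical map $ER(n) \to ER(n)^{\wedge}_{\widehat{I}_m}$ is a $K(m)$-equivalence. The universal property of $K(m)$-localization then closes the argument.

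The hardest step is the ideal-level bookkeeping in part (2): identifying $E(n)^{\wedge}_{\widehat{I}_m}$ with $E(n)^{\wedge}_{I_m}$ requires careful tracking of how the shifted classes $\widehat v_i$, which live in negative integer degrees for $n \geq 2$, relate to the standard generators $v_i \in \pi_* E(n)$ using the invertibility of $v_n$. A secondary subtlety, relevant for part (1), is ensuring that $a_\sigma$ acts nilpotently on general $\mathbb{MR}(m)$-modules (including the completed spectrum $\e^{\wedge}_{\overline{I}_m}$), which relies on the nilpotence of the equivariant class $x$ in $\pi_\lambda\mathbb{MR}(m)$ after $\bar v_m$-inversion. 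Once these two pieces of equivariant-to-classical translation are in place, both parts reduce to the standard two-step Bousfield argument (equivalence plus locality) together with the universal property.
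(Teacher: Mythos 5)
Your part (1) is exactly the paper's argument: the underlying map $E \to E^{\wedge}_{I_m} \simeq L_{K(m)}E$ is a $K(m)$-equivalence, so the equivariant map is a $\k(m)$-equivalence by the free/cofree comparison corollary, and $\e^{\wedge}_{\overline{I}_m}$ is $\k(m)$-local by Corollary \ref{cor:local}; your extra remark that $a_\sigma$ acts nilpotently on any $\mathbb{MR}(m)$-module (via Hu--Kriz) is precisely the hypothesis the paper is implicitly using there, so that step is fine. In part (2) your ingredients also match the paper (Kitchloo--Wilson fibration, nilpotence of $x$, reduction to the underlying statement), and your locality argument---$ER(n)^{\wedge}_{\widehat{I}_m}$ lies in the thick subcategory generated by $E(n)^{\wedge}_{I_m}$ because $x$ is nilpotent---is a legitimate variant of the paper's route through Corollary \ref{cor:local} (whose proof rests on the same cofiber sequence and nilpotence, packaged as an equality of Bousfield classes). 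Also, the ideal bookkeeping you worry about is lighter than you think: $y$ forgets to a unit multiple of a power of $v_n$, so $\widehat{I}_m$ and $I_m$ generate the same ideal of $\pi_*E$ and $E^{\wedge}_{\widehat{I}_m} = E^{\wedge}_{I_m}$ immediately, as the paper notes when introducing $\widehat{I}_m$.

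The one genuine soft spot is your final step: ``comparing the original and the completed cofiber sequences after smashing with $K(m)$ and invoking the five lemma'' does not work as stated, because the map you are trying to prove is a $K(m)_*$-isomorphism occupies two of the three columns of the comparison (the left column is just the $\lambda$-suspension of the middle one), so the five lemma is circular. You need the nilpotence of $x$ a second time, applied to the \emph{cofibers} of the completion maps: writing $C_{ER(n)}$ and $C_E$ for the cofibers of $ER(n) \to ER(n)^{\wedge}_{\widehat{I}_m}$ and $E \to E^{\wedge}_{I_m}$, the map of cofiber sequences produces a cofiber sequence $\Sigma^{\lambda}C_{ER(n)} \xrightarrow{x} C_{ER(n)} \to C_E$; since $C_E$ is $K(m)$-acyclic, multiplication by $x$ is a $K(m)_*$-equivalence on $C_{ER(n)}$, and being nilpotent it forces $K(m) \wedge C_{ER(n)} \simeq *$. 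This is exactly the paper's argument via Lemma \ref{lemma:BousfieldClass} ($\langle C_{ER(n)} \rangle = \langle C_E \rangle$). Alternatively you could salvage your plan by inducting over the cofibers of $x^k$ and extracting $ER(n)$ as a retract of the cofiber of $x^N$ with $x^N = 0$, but the cofiber-of-completions argument is shorter. With that replacement your proof is complete and essentially coincides with the paper's.
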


\begin{proof} 
\begin{enumerate}
\item Since on underlying spectra we have $E^{\wedge}_{I_m} \simeq L_{K(m)}E$, we have by Corollary \ref{cor:local} that $\e^{\wedge}_{\overline{I}_m}$ is $\mathbb{K}(m)$-local. Thus, $\e^{\wedge}_{\overline{I}_m} \simeq L_{\mathbb{K}(m)}(\e^{\wedge}_{\overline{I}_m})$. Since the underlying nonequivariant map $E \rightarrow E^{\wedge}_{I_m}\simeq L_{K(m)}E$ is a $K(m)$-equivalence, it follows that the equivariant map $\e \rightarrow \e^{\wedge}_{\overline{I}_m}$ is a $\mathbb{K}(m)$-equivalence. Thus, we have
$$\e^{\wedge}_{\overline{I}_m} \simeq L_{\mathbb{K}(m)}(\e^{\wedge}_{\overline{I}_m}) \simeq L_{\mathbb{K}(m)}\e.$$
\item Again, we first apply Corollary \ref{cor:local} to conclude that $ER^\wedge_{\widehat{I}_m}$ is $K(m)$-local.


Now, denote the cofiber of $ER \rightarrow ER^\wedge_{\widehat{I}_m}$ by $C_{ER}$ and the cofiber of $E\rightarrow E^\wedge_{\widehat{I}_m}$ by $C_E$. Because completion and taking cofibers preserves cofiber sequences, the cofiber sequence (which exists for any $\mathbb{MR}(m)$-module spectrum $\e$)
$$\Sigma^{\lambda}ER \xrightarrow{x} ER \rightarrow E$$
gives a cofiber sequence
$$\Sigma^{\lambda}C_{ER} \xrightarrow{x} C_{ER} \rightarrow C_E.$$
Lemma \ref{lemma:BousfieldClass} applies to this cofiber sequence, so we have $\langle C_{ER} \rangle = \langle C_{E} \rangle$. We know $E^\wedge_{\widehat{I}_m} = E^\wedge_{I_m} = L_{K(m)}E$ so $C_E$ is $K(m)$-acyclic. Hence, $K(m) \in \langle C_{E} \rangle = \langle C_{ER} \rangle$. The cofiber $C_{ER}$ is $K(m)$-acyclic, so the map
$L_{K(m)}ER \rightarrow L_{K(m)}ER^\wedge_{\widehat{I}_m}$ is a $K(m)$-equivalence.
From the above lemmas, we have
$$ER^\wedge_{\widehat{I}_m} \simeq L_{K(m)} ER^\wedge_{\widehat{I}_m} \simeq L_{K(m)} ER.$$
\end{enumerate}
\end{proof}

\begin{cor} Let $\e = \e(n)$. We have equivalences
\begin{align*}
L_{\k(m)}\e &\simeq (\bar{v}_m^{-1}\e)^{\wedge}_{\overline{I}_m},\\
L_{K(m)}ER & \simeq (\widehat{v}_m^{-1}ER)^{\wedge}_{\widehat{I}_m}.
\end{align*}
In particular, when $m=n$ we have
\begin{align*}
L_{\k(n)}\e(n) &\simeq \e(n)^{\wedge}_{\overline{I}_n},\\
L_{K(n)}ER(n) & \simeq ER(n)^{\wedge}_{\widehat{I}_n}.
\end{align*}
\end{cor}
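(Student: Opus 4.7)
The plan is to derive the corollary as a direct specialization of Theorem~\ref{prop:localizationcomparison}, applied to the auxiliary $\mathbb{MR}(m)$-module spectrum $\e' := \bar{v}_m^{-1}\e(n)$.

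First, I would verify that $\e'$ satisfies the hypothesis of Theorem~\ref{prop:localizationcomparison}. It is naturally an $\mathbb{MR}(m)$-module spectrum: it is an $\mathbb{MR}(n)$-module in which $\bar{v}_m$ has been inverted, and $\mathbb{MR}(m) = \bar{v}_m^{-1}\mathbb{MR}_{(2)}$ by definition. On underlying nonequivariant spectra, $E' = v_m^{-1}E(n)$ is an $MU$-module, so the classical chromatic localization identity \eqref{MUEqn} gives an equivalence $L_{K(m)}E' \simeq (E')^\wedge_{I_m}$ through which the canonical map factors. This is precisely the input that Theorem~\ref{prop:localizationcomparison} requires.

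Next, I would apply parts (a) and (b) of Theorem~\ref{prop:localizationcomparison} to $\e'$, obtaining the equivalences
\[
L_{\k(m)}\bigl(\bar{v}_m^{-1}\e(n)\bigr) \simeq \bigl(\bar{v}_m^{-1}\e(n)\bigr)^\wedge_{\bar{I}_m}
\quad\text{and}\quad
L_{K(m)}\bigl(\widehat{v}_m^{-1}ER(n)\bigr) \simeq \bigl(\widehat{v}_m^{-1}ER(n)\bigr)^\wedge_{\widehat{I}_m}.
\]
To replace the left-hand sides with $L_{\k(m)}\e(n)$ and $L_{K(m)}ER(n)$, I would use that $\bar{v}_m$ acts invertibly on $\k(m)$: smashing the cofiber sequence $\e(n) \to \bar{v}_m^{-1}\e(n) \to C$ with $\k(m)$ kills $C$, so $\e(n) \to \bar{v}_m^{-1}\e(n)$ is a $\k(m)$-equivalence. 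The analogous argument at the level of fixed points, using that $\widehat{v}_m$ is a unit multiple of $v_m$ and hence invertible on $K(m)$, shows that $ER(n) \to \widehat{v}_m^{-1}ER(n)$ is a $K(m)$-equivalence. Combined with the previous display, this yields the first two equivalences in the corollary. The ``in particular'' case $m = n$ is then immediate: $\e(n)$ already has $\bar{v}_n$ inverted by construction, so $\bar{v}_n^{-1}\e(n) = \e(n)$ and $\widehat{v}_n^{-1}ER(n) = ER(n)$.

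I do not anticipate a serious obstacle; the corollary is a bookkeeping consequence of Theorem~\ref{prop:localizationcomparison} once the hypothesis is verified on the underlying spectrum. The two things to watch are (i) that the indexing of the ideals $\bar{I}_m, \widehat{I}_m, I_m$ lines up correctly with the conventions of the cited classical formula \eqref{MUEqn}, and (ii) that the $\mathbb{MR}(m)$-module structure really does extend across the localization $\e(n) \to \bar{v}_m^{-1}\e(n)$. Both are routine compatibilities rather than substantial difficulties.
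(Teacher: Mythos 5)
Your proposal is correct and matches the intended reading: the paper states this corollary without a separate proof, as the specialization of Theorem \ref{prop:localizationcomparison} to the $\m\r(m)$-module $\bar{v}_m^{-1}\e(n)$, whose underlying spectrum $v_m^{-1}E(n)$ satisfies the classical identification $L_{K(m)}(v_m^{-1}E(n)) \simeq (v_m^{-1}E(n))^{\wedge}_{I_m}$. The one step to phrase more carefully is the pair of comparisons $L_{\k(m)}\e(n) \simeq L_{\k(m)}(\bar{v}_m^{-1}\e(n))$ and $L_{K(m)}ER(n) \simeq L_{K(m)}(\widehat{v}_m^{-1}ER(n))$: the element being inverted acts through the module factor (a right-unit action), not through $\k(m)$ or $K(m)$ itself, so rather than saying it ``acts invertibly on $\k(m)$'' it is cleaner to note that the underlying map $E(n) \to v_m^{-1}E(n)$ is a $K(m)$-equivalence classically, deduce the equivariant statement from the corollary following Lemma \ref{freecofreecomparison} ($\k(m)$-equivalences and acyclicity are detected on underlying spectra), and deduce the fixed-point statement by applying Proposition \ref{prop:bousfieldclasses} to the cofiber of $\e(n) \to \bar{v}_m^{-1}\e(n)$, which is an $\m\r(n)$-module and hence has the same Bousfield class as its fixed points.
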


Note also that Proposition \ref{prop:bousfieldclasses} tells us that $KR(m)$ and $K(m)$ have the same Bousfield class. It follows that $L_{K(m)}(-)=L_{KR(m)}(-)$.

\section{Tate constructions}\label{Sec:Tate}
In this section, we recall three variations of the Tate construction. Each will be defined as the cofiber of a norm map between homotopy fixed points and homotopy orbits which depend on the category of the input and the $C_2$-action on the relevant universal spaces. 

\begin{itemize}
\item $E^{t\Sigma_2}$, \emph{the classical Tate construction for a nonequivariant spectrum} $E$ \emph{equipped with a trivial $\Sigma_2$-action}. This will be defined using the universal space $E\Sigma_2$ and may be identified with
$$E^{t \Sigma_2}\simeq \limt{n} RP_{-n}^\infty \wedge \Sigma E.$$
We discuss this further in Section \ref{SS:ClassicalTate}.

\item $\e^{t\Sigma_2}$, \emph{the classical Tate construction for a $C_2$-equivariant spectrum} $\e$ \emph{equipped with a trivial $\Sigma_2$-action}. This will be defined using the universal space $E\Sigma_2$ equipped with a trivial $C_2$-action, so $C_2$ will only act nontrivially on the spectrum $\e$. It can also be identified with 
$$\mathbb{E}^{t\Sigma_2} \simeq \limt{n} RP_{-n}^\infty \wedge \Sigma \mathbb{E}$$
with $C_2$ acting trivially on $RP_{-n}^\infty$. The resulting Tate construction will be a $C_2$-equivariant spectrum. We discuss this further in Section \ref{SS:TateC2}.

\item $\e^{t_{C_2}\Sigma_2}$, \emph{the parametrized Tate construction for a $C_2$-equivariant spectrum} $\e$ \emph{equipped with a trivial $\Sigma_2$-action}. This will be defined using a $C_2$-equivariant universal space $E_{C_2}\Sigma_2$ which has a nontrivial $C_2$-action, so $C_2$ will act nontrivially on on the universal space and on the spectrum. This may be identified with
$$\e^{t_{C_2}\Sigma_2}:=\limt{n} Q_{-n}^\infty \wedge \Sigma \e.$$
Here $Q_{-n}^\infty$ is nonequivariantly equivalent to $RP_{-n}^\infty$ but it carries a nontrivial $C_2$-action. We discuss this further in Sections \ref{SS:ParamTate} and \ref{Sec:ParamSigma2}.
\end{itemize}

We discuss the interaction between the Tate construction and completions in Section \ref{SS:TateCompletion}, and we compare the ordinary and parametrized Tate constructions in Section \ref{SS:Compare}. 



\subsection{The classical Tate construction}\label{SS:ClassicalTate}
Let $E$ be a nonequivariant spectrum. Let $U$ be a complete $\Sigma_2$ universe and let $i: U^{\Sigma_2} \longrightarrow U$ be the inclusion of fixed points.  


Let $E\Sigma_2$ denote the free contractible $\Sigma_2$-space and let $\widetilde{E\Sigma_2}$ denote the cofiber in the sequence 
$${E\Sigma_2}_+ \longrightarrow S^0 \longrightarrow \widetilde{E\Sigma_2}.$$
Then we have the \emph{$\Sigma_2$-Tate diagram}:
$$\xymatrix{
{E\Sigma_2}_+ \wedge i_*E \ar@{->}[r] \ar@{->}[d]^-{\simeq} & i_*E \ar@{->}[r] \ar@{->}[d] & \widetilde{E\Sigma_2} \wedge i_*E \ar@{->}[d] \\
{E\Sigma_2}_+ \wedge F({E\Sigma_2}_+, i_*E) \ar@{->}[r] & F({E\Sigma_2}_+, i_*E) \ar@{->}[r] & \widetilde{E\Sigma_2} \wedge F({E\Sigma_2}_+, i_*E)
}$$

\begin{defin} We define the $\Sigma_2$-Tate construction for a nonequivariant spectrum $E$ to be the lower right corner of this diagram. That is,
$$E^{t\Sigma_2}:=\widetilde{E\Sigma_2} \wedge F({E\Sigma_2}_+, i_*E).$$
\end{defin}

This may be identified with a homotopy limit:
\begin{prop}\cite[Thm. 16.1]{GM95} There is an equivalence of spectra
$$E^{t\Sigma_2} \simeq \limt{n} RP_{-n}^\infty \wedge \Sigma E.$$
 \end{prop}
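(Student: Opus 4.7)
The statement is the classical identification of Greenlees and May, and my plan is to follow their approach. I begin by fixing a cellular model for $\widetilde{E\Sigma_2}$ as the homotopy colimit $\hocolim_n S^{n\sigma}$, where $\sigma$ is the sign representation of $\Sigma_2$, arising from the cofiber sequences $S(n\sigma)_+ \to S^0 \to S^{n\sigma}$ with $S(n\sigma)$ the unit sphere in $n\sigma$. Passing to $\Sigma_2$-orbits of the unit spheres recovers the standard skeletal filtration of $B\Sigma_2 \simeq RP^\infty$ by $RP^{n-1}_+$, which is the non-equivariant filtration that will manifest on the right-hand side.

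Next, substituting this model into $E^{t\Sigma_2} := (\widetilde{E\Sigma_2} \wedge F(E\Sigma_{2+}, i_*E))^{\Sigma_2}$ and using that $\Sigma_2$-fixed points commute with filtered colimits of genuine $\Sigma_2$-spectra, the Tate spectrum reorganizes into a tower whose $n$-th stage has the form $F(E\Sigma_{2+}, S^{n\sigma} \wedge i_*E)^{\Sigma_2}$. The key inputs for identifying this stage with a stunted projective spectrum are (i) the Thom spectrum formula $E\Sigma_{2+} \wedge_{\Sigma_2} S^{-n\sigma} \simeq RP^\infty_{-n}$, obtained by recognizing $n\sigma$ as $n$ copies of the tautological line bundle $\gamma$ over $B\Sigma_2$, and (ii) the Atiyah duality $D(RP^{n-1}_+) \simeq \Sigma RP^{n-1}_{-n}$, coming from the relation $TRP^{n-1} \oplus 1 \cong n\gamma$. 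Combining these, the $n$-th stage of the tower is identified with $RP^\infty_{-n} \wedge \Sigma E$, and the suspension $\Sigma$ on the right-hand side is precisely the Atiyah-duality dimension shift.

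The main technical obstacle is the bookkeeping of the direction reversal between the equivariant and non-equivariant presentations: the equivariant side produces a $\hocolim$ over representation spheres, whereas the right-hand side is the inverse limit $\limt{n}$ over stunted projective spectra. This reversal arises from the function spectrum $F(E\Sigma_{2+}, -)$ together with $\Sigma_2$-fixed points, which together transform the filtered colimit presentation $E\Sigma_{2+} \simeq \hocolim_n S(n\sigma)_+$ into an inverse limit of mapping spectra out of the finite dualizable skeleta $S(n\sigma)_+$; Atiyah duality then re-expresses these mapping spectra as the stunted projective spectra on the right. Since this identification is classical, I would cite Greenlees-May directly for the detailed verification of the tower structure maps and the final passage to the limit.
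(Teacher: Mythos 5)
You should first note that the paper itself gives no proof of this proposition: it is quoted from Greenlees--May, and the only engagement with the argument occurs at Proposition \ref{Prop:TrivTateForm}, where the authors remark that each equivalence in the proof of \cite[Thm.~16.1]{GM95} promotes to the $C_2$-equivariant setting. So your overall plan --- run the two standard filtrations and cite \cite{GM95} for the details --- is the same as the paper's. The problem is that your middle step is false, and it is exactly where the content of the theorem sits. Writing $\widetilde{E\Sigma_2}\simeq \colim_n S^{n\sigma}$ and commuting genuine fixed points with the filtered colimit produces a directed \emph{colimit} (not a tower) with terms $F({E\Sigma_2}_+, S^{n\sigma}\wedge i_*E)^{\Sigma_2}$, and this term is not $RP^\infty_{-n}\wedge\Sigma E$. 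Since ${E\Sigma_2}_+\wedge S^{-n\sigma}$ is a free $\Sigma_2$-spectrum with orbit spectrum $RP^\infty_{-n}$ and the $\Sigma_2$-action on $E$ is trivial (so $i_*E$ is split), the term is the \emph{function} spectrum $F(RP^\infty_{-n},E)$. At $n=0$ your stage-wise identification would assert $F({B\Sigma_2}_+,E)\simeq \Sigma({B\Sigma_2}_+\wedge E)$, i.e.\ that homotopy fixed points agree with a shift of homotopy orbits; this already fails for $E=H\mathbb{F}_2$. What the representation-sphere filtration actually gives is $E^{t\Sigma_2}\simeq \colim_n F(RP^\infty_{-n},E)$ --- the ``cohomological'' description the paper itself uses to compute homotopy groups of its Tate constructions --- whereas the proposition asserts that this colimit of function spectra agrees with the inverse limit of smash products $\limt{n}\, RP^\infty_{-n}\wedge\Sigma E$.

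That comparison (Greenlees--May's ``Warwick duality'') is the theorem, and it is not term-by-term Atiyah duality plus bookkeeping. Your last paragraph has the right raw ingredients: filtering ${E\Sigma_2}_+$ by the finite free skeleta $S(m\sigma)_+$ and using their equivariant self-duality $D(S(m\sigma)_+)\simeq \Sigma\, S(m\sigma)_+\wedge S^{-m\sigma}$ converts $F({E\Sigma_2}_+,S^{n\sigma}\wedge i_*E)^{\Sigma_2}$ into an inverse limit over $m$ of spectra $\Sigma\, Th((n-m)\gamma\to RP^{m-1})\wedge E$, finite stunted pieces whose top cell sits in a fixed dimension (roughly $n$) and whose bottom cell tends to $-\infty$ as $m$ grows. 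The target, by contrast, is an inverse limit of spectra $RP^\infty_{-n}\wedge\Sigma E$ whose bottom cell is fixed and whose cells extend to $+\infty$. Passing from the colimit over $n$ of inverse limits over $m$ to the single inverse limit in the statement requires interchanging the colimit with the inverse limit and controlling the resulting error terms; that interchange is the substantive step in \cite[Thm.~16.1]{GM95} and cannot be carried out one stage at a time. If you are going to defer to Greenlees--May, the deferred step should be precisely this interchange; the stage-wise identification of $F({E\Sigma_2}_+, S^{n\sigma}\wedge i_*E)^{\Sigma_2}$ with $RP^\infty_{-n}\wedge\Sigma E$ should be removed, since as stated it is incorrect.
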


In the definition of the Tate construction, we may write the space $\widetilde{E\Sigma_2}$ as a colimit and this is useful for computations. More precisely, let $V$ denote the $\Sigma_2$ sign representation. Then $\widetilde{E\Sigma_2} =\colim_n S^{nV}$.

Let $e: S^0 \longrightarrow S^V$ be the natural inclusion, and let $\alpha_V \in E^V(S^0)$ be the image of the identity element under this map. Let $c(E):=F({E\Sigma_2}_+, i_*E)$. When $E$ is a (homotopy) ring spectrum, the homotopy groups of the Tate construction may be identified with a localization of $c(E)$ away from $\alpha_V$:

\begin{prop}\cite[Cor. 16.3]{GM95} Let $E$ be a ring spectrum (which we view as a $\Sigma_2$-spectrum with trivial action). Then $\pi_*E^{t\Sigma_2}$ is the localization of $\pi_*c(E)=\pi_*E^{h\Sigma_2}$ away from $\alpha_V$. 
\end{prop}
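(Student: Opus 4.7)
\emph{Proof plan.} The strategy is to realize $E^{t\Sigma_2}$ as a sequential colimit of shifted copies of $c(E)$ and to identify the transition maps with multiplication by the Euler class $\alpha_V$. The first step is to use the colimit presentation $\widetilde{E\Sigma_2} = \colim_n S^{nV}$, whose structure maps are $e \wedge \id \colon S^{nV} \to S^V \wedge S^{nV} = S^{(n+1)V}$. Since smashing with $c(E)$ preserves sequential colimits of $\Sigma_2$-spectra, this gives
$$E^{t\Sigma_2} = \widetilde{E\Sigma_2} \wedge c(E) \simeq \colim_n \bigl(S^{nV} \wedge c(E)\bigr),$$
with structure maps $e \wedge \id_{c(E)}$.

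Next, I would pass to homotopy groups. Because $\pi_\star^{\Sigma_2}$ commutes with sequential colimits of spectra,
$$\pi_\star E^{t\Sigma_2} \cong \colim_n \pi_\star\bigl(S^{nV} \wedge c(E)\bigr) \cong \colim_n \pi_{\star - nV} c(E),$$
where the last isomorphism is the usual desuspension. The hypothesis that $E$ is a ring spectrum gives a unit map $S^0 \to c(E)$, and smashing $e$ with this unit shows that the transition map at stage $n$ is precisely the action of $\alpha_V \in \pi_V c(E)$ on the $\pi_\star c(E)$-module $\pi_\star c(E)$. Thus the telescope above is exactly the sequential diagram defining the $\alpha_V$-localization.

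Finally, the standard identification of the telescope of iterated multiplication by a single homogeneous element with the localization inverting that element yields
$$\pi_\star E^{t\Sigma_2} \cong \bigl(\pi_\star c(E)\bigr)[\alpha_V^{-1}] = \bigl(\pi_\star E^{h\Sigma_2}\bigr)[\alpha_V^{-1}],$$
and restriction to integer degrees recovers the statement. The only real obstacle is bookkeeping: keeping straight that each $S^V$-suspension shifts indices by $-V$ while $\alpha_V$-multiplication shifts by $+V$, so the two agree only after the standard telescope re-indexing. Once this is set up correctly, the rest is formal.
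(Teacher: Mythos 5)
Your argument is correct, and it is essentially the standard proof: the paper states this result as a citation to Greenlees--May without reproving it, and the GM95 proof is exactly your telescope argument, writing $\widetilde{E\Sigma_2}=\colim_n S^{nV}$, using that smashing with $c(E)$ and taking equivariant homotopy groups commute with sequential homotopy colimits, and identifying the transition maps with multiplication by the Euler class $\alpha_V$ so that the colimit is the $\alpha_V$-localization (understood $RO(\Sigma_2)$-gradedly). The same Euler-class-inversion mechanism reappears later in the paper's computations of $\e(n)^{t\Sigma_2}$ and $\e(n)^{t_{C_2}\Sigma_2}$, so your bookkeeping caveat about degree shifts is the only point requiring care and you have handled it correctly.
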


When the bundle associated to the representation $V$ is orientable with respect to $E$, the above proposition may be simplified further. We will return to this point when we discuss the Tate construction for $E(n)$ in Section \ref{blueshift}.

\subsection{The classical Tate construction for a $C_2$-equivariant spectrum}\label{SS:TateC2}
Now we take $\e$ to be a $C_2$-spectrum. We let $U$ denote a complete $\Sigma_2 \rtimes C_2$ universe. Then $U^{\Sigma_2}$ is a complete $C_2$ universe and we let $i: U^{\Sigma_2} \longrightarrow U$ be the inclusion.

\begin{defin}
We define the \emph{$\Sigma_2$-Tate construction of $\e$} to be the $C_2$-spectrum
$$\e^{t\Sigma_2}:=\widetilde{E\Sigma_2}\wedge F(E{\Sigma_2}_+, i_*\e).$$
\end{defin}

The results concerning the ordinary Tate construction in the previous section carry over \emph{mutatis mutandis}. In particular, we have the same Tate diagram, now in the category of $(\Sigma_2 \rtimes C_2)$-spectra. Moreover, we may calculate the $\Sigma_2$-Tate construction of a $C_2$-spectrum via homotopy inverse limit. This follows from observing that each equivalence in the proof of \cite[Thm. 16.1]{GM95} may be promoted to an equivalence of $C_2$-spaces or genuine $C_2$-spectra when we assume that $C_2$ acts trivially on $EG$ and $V$. 

\begin{prop}\label{Prop:TrivTateForm}
Let $\e$ be a genuine $C_2$-spectrum equipped with a trivial $\Sigma_2$-action. There is an equivalence of genuine $C_2$-spectra
$$\e^{t\Sigma_2} \simeq \limt{n} RP^\infty_{-n} \wedge \Sigma \e.$$
\end{prop}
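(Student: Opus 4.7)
The plan is to adapt the proof of \cite[Thm.~16.1]{GM95}, which establishes the analogous nonequivariant identification $E^{t\Sigma_2} \simeq \limt{n} RP^\infty_{-n} \wedge \Sigma E$, and to track $C_2$-equivariance throughout. The crux is that because the $\Sigma_2$-action on $\e$ is trivial, all of the auxiliary spaces appearing in the Greenlees--May argument---the universal space $E\Sigma_2$, its complement $\widetilde{E\Sigma_2}$, the sign representation $V$ of $\Sigma_2$, its unit spheres $S(nV)$, the representation spheres $S^{nV}$, and the stunted projective spectra $RP^\infty_{-n}$---may all be equipped with the trivial $C_2$-action. They are therefore $(\Sigma_2 \rtimes C_2)$-objects in an essentially nonequivariant way, and every structural map in the Greenlees--May argument is simultaneously $\Sigma_2$-equivariant, natural in $\e$, and respects the trivial $C_2$-action on the ambient spaces.

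Concretely, I would proceed in three steps. First, model $\widetilde{E\Sigma_2}$ as $\colim_n S^{nV}$ and $E\Sigma_2$ as $\colim_n S(nV)$, both as $(\Sigma_2 \rtimes C_2)$-objects with trivial $C_2$-action, so that the cofiber sequences $S(nV)_+ \to S^0 \to S^{nV}$ give, upon mapping into $i_*\e$, a tower of genuine $(\Sigma_2 \rtimes C_2)$-spectra with homotopy limit $F({E\Sigma_2}_+, i_*\e)$. Second, smash with $\widetilde{E\Sigma_2} = \colim_k S^{kV}$ and apply the Greenlees--May Atiyah duality / Thom isomorphism, which identifies $S^{kV} \wedge F(S(nV)_+, i_*\e)$, up to a unit suspension, with $\e$ smashed with the Thom spectrum of a virtual bundle over a truncated real projective space; taking $\Sigma_2$-orbits in the $(\Sigma_2 \rtimes C_2)$-equivariant setting yields $\Sigma \e \wedge RP^\infty_{-n}$ as a genuine $C_2$-spectrum. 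Third, assemble the finite stages into the desired homotopy limit $\limt{n} RP^\infty_{-n} \wedge \Sigma \e$.

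The main obstacle is ensuring that the Atiyah duality / Thom spectrum identification from \cite[Thm.~16.1]{GM95}, stated originally for genuine $\Sigma_2$-spectra, lifts cleanly to genuine $(\Sigma_2 \rtimes C_2)$-spectra. Since the twisting representation $V$ lives purely on the $\Sigma_2$ side and carries the trivial $C_2$-action, this lift is essentially formal: each map in the Greenlees--May argument is $\Sigma_2$-equivariant and natural in $\e$, hence automatically upgrades to a $(\Sigma_2 \rtimes C_2)$-equivariant map once we endow the auxiliary spaces with the trivial $C_2$-action and let $C_2$ act on $\e$ through its given action. A secondary point to check is the commutation of the colimit defining $\widetilde{E\Sigma_2}$ with the homotopy inverse limit defining $F({E\Sigma_2}_+, i_*\e)$, which, as in the nonequivariant case, is handled by the usual cofinality arguments applied stage-by-stage to the tower.
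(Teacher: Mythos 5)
Your proposal is correct and is essentially the argument the paper gives: the paper justifies Proposition \ref{Prop:TrivTateForm} precisely by observing that each equivalence in the proof of \cite[Thm. 16.1]{GM95} promotes to an equivalence of $C_2$-spaces or genuine $C_2$-spectra once $E\Sigma_2$ and the sign representation $V$ are given the trivial $C_2$-action, which is what your three steps carry out in more detail. No substantive difference in approach.
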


%
 


We can identify the $C_2$-fixed points of the classical $\Sigma_2$-Tate construction of $\e$ with the classical Tate construction of the fixed points of $\e$. 

\begin{prop}\label{Tatefixed} For any $C_2$-spectrum $\e$ (e.g. for $\e=\e(n)$) we have
$$\left(\e^{t\Sigma_2}\right)^{C_2}=ER^{t\Sigma_2}.$$
\end{prop}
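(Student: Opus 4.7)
The plan is to reduce the statement to a pointwise identification using the homotopy-limit model for the Tate construction provided by Proposition \ref{Prop:TrivTateForm}. That proposition identifies $\e^{t\Sigma_2}$ with $\limt{n} RP^\infty_{-n} \wedge \Sigma \e$ as genuine $C_2$-spectra, where $RP^\infty_{-n}$ carries trivial $C_2$-action (since $\Sigma_2$ acts trivially on $\e$, the $C_2$-action comes only from $\e$).

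First I would apply the categorical $C_2$-fixed-point functor $(-)^{C_2}$. Since this is a right adjoint (to the trivial-action functor $p^{*}$), it commutes with homotopy limits, giving
$$(\e^{t\Sigma_2})^{C_2} \;\simeq\; \limt{n}\, \bigl(RP^\infty_{-n} \wedge \Sigma \e\bigr)^{C_2}.$$
The main step is then to establish, for each $n$, the pointwise identification
$$\bigl(RP^\infty_{-n} \wedge \Sigma \e\bigr)^{C_2} \;\simeq\; RP^\infty_{-n} \wedge \Sigma ER.$$
Since $RP^\infty_{-n}$ has trivial $C_2$-action, this reduces to the general fact that smashing with a $C_2$-spectrum in the image of $p^{*}$ commutes with categorical fixed points: $(p^{*}Y \wedge \e)^{C_2} \simeq Y \wedge \e^{C_2}$. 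This holds tautologically for $Y = S^{0}$, and extends to $RP^\infty_{-n}$ by nonequivariant CW induction, using that $RP^\infty_{-n}$ is built by attaching cells $S^{k}$ (equipped with trivial $C_2$-action) along pushouts and a sequential homotopy colimit, and that $(-)^{C_2}$ preserves both homotopy pushouts along cofibrations and filtered homotopy colimits.

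Finally, I would reassemble the limit. Applying the nonequivariant Greenlees--May formula (the underlying statement of Proposition \ref{Prop:TrivTateForm}) to $ER = \e^{C_2}$ gives
$$\limt{n} RP^\infty_{-n} \wedge \Sigma ER \;\simeq\; ER^{t\Sigma_2},$$
and stringing together the three equivalences above completes the proof.

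The only substantive step is the pointwise identification $(RP^\infty_{-n} \wedge \Sigma \e)^{C_2} \simeq RP^\infty_{-n} \wedge \Sigma ER$, which is a standard cellular-induction argument but is the one place where the hypothesis that $C_2$ acts trivially on $RP^\infty_{-n}$ is essential; without it, there is no reason for smash-and-fix to factor as fix-then-smash, and indeed the analogous statement will fail for the parametrized Tate construction (which is precisely why Theorem \ref{ThmD} is needed later).
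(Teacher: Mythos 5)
Your proposal is correct, and its skeleton (write $\e^{t\Sigma_2}$ as $\limt{n} RP^\infty_{-n}\wedge\Sigma\e$ via Proposition \ref{Prop:TrivTateForm}, pull the right-adjoint $(-)^{C_2}$ inside the homotopy limit, identify each term, and reassemble using the nonequivariant Greenlees--May formula for $ER$) is exactly the paper's. Where you genuinely diverge is in the proof of the termwise identification $(p^*Y\wedge\e)^{C_2}\simeq Y\wedge\e^{C_2}$: the paper proves $(\e\wedge X)^{C_2}\simeq ER\wedge X$ for a trivial-action $X$ in one stroke by comparing the top rows of the isotropy-separation (Tate) diagrams for $\e\wedge X$ and $\e$, using that geometric fixed points are monoidal (so the $\widetilde{EC_2}$-terms agree) and the Adams isomorphism together with $(EC_{2+}\wedge\e\wedge X)/C_2\simeq((EC_{2+}\wedge\e)/C_2)\wedge X$ (so the $EC_{2+}$-terms agree), whence the middle map is an equivalence; you instead run a cellular induction over the trivial cells of $RP^\infty_{-n}$, using that genuine fixed points are exact and preserve filtered homotopy colimits. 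Your route is more elementary in that it avoids the Adams isomorphism and geometric fixed points, at the cost of invoking the (standard, but worth citing) compactness fact that categorical fixed points of genuine $C_2$-spectra commute with filtered homotopy colimits; the paper's argument is cell-free and applies verbatim to any trivial $C_2$-space at once. One small expository gap in your write-up: to make the induction close up you need a \emph{natural} comparison map, e.g.\ $Y\wedge\e^{C_2}\to(p^*Y\wedge\e)^{C_2}$ obtained by adjunction from $p^*Y\wedge p^*(\e^{C_2})\to p^*Y\wedge\e$, since termwise agreement without naturality does not assemble over pushouts and sequential colimits; with that map in hand your argument goes through, and your closing remark about why this fails for the parametrized Tate construction is exactly the point of Theorem \ref{Thm:Agree}.
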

\begin{proof} We write the Tate construction as a homotopy limit and take fixed points:
$$\left(\e^{t\Sigma_2}\right)^{C_2}=\left(\limt{k} \e \wedge RP_{-k}^\infty\right)^{C_2}=\limt{k} (\e \wedge RP_{-k}^\infty)^{C_2}.$$
The claim will follow after we show that for any space $X$ with trivial $C_2$-action (or integral suspension thereof) that $(\e \wedge X)^{C_2}=ER \wedge X$. Indeed, compare the top rows of the Tate diagrams:
$$\xymatrix{
(E{C_2}_+ \wedge \e \wedge X)^{C_2}  \ar@{->}[r] & (\e \wedge X)^{C_2}  \ar@{->}[r] & (\widetilde{EC_2} \wedge \e \wedge X)^{C_2}  \\
(E{C_2}_+ \wedge \e)^{C_2} \wedge X \ar@{->}[u] \ar@{->}[r] & \e^{C_2} \wedge X \ar@{->}[u] \ar@{->}[r] & (\widetilde{EC_2} \wedge \e)^{C_2} \wedge X. \ar@{->}[u]}$$
Identifying the terms in the right column with geometric fixed points, we see that the right-hand map is an equivalence since geometric fixed points commute with smash products. To see the left-hand map is an equivalence, we apply the Adams isomorphism and note that $(E{C_2}_+ \wedge \e \wedge X)/C_2 =(E{C_2}_+ \wedge \e)/{C_2} \wedge X$ since $X$ has trivial action. It follows that the middle map is an equivalence.
\end{proof}

\subsection{Tate construction and completion}\label{SS:TateCompletion}
We show that under certain conditions, completion and the Tate construction commute. This will help us deduce blueshift for $E^\wedge(n)^{t\Sigma_2}$ from blueshift for $E(n)^{t\Sigma_2}$.

\begin{lem}\label{lemma:algebra}
Let $R$ be a commutative ring, $A$ be an $R$-algebra, $r \in R$. Then
$$A^\wedge_r/(x) \cong (A/(x))^\wedge_r.$$
\end{lem}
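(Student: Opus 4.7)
The plan is to derive the isomorphism from the right-exact sequence
\[
A \xrightarrow{\,x\,} A \longrightarrow A/(x) \longrightarrow 0
\]
exhibiting $A/(x)$ as the cokernel of multiplication by $x$. If $r$-adic completion is right exact on these modules, then applying it termwise produces
\[
A^\wedge_r \xrightarrow{\,x\,} A^\wedge_r \longrightarrow (A/(x))^\wedge_r \longrightarrow 0,
\]
and the universal property of the cokernel gives the desired isomorphism $A^\wedge_r/(x) \cong (A/(x))^\wedge_r$.

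To establish the required exactness of completion by hand, I would identify both sides with the inverse limit $\lim_k A/(r^k, x)$. The right-hand side is this limit directly from the definition: $(A/(x))^\wedge_r = \lim_k (A/(x))/(r^k(A/(x))) = \lim_k A/(r^k,x)$. For the left-hand side, apply $\lim_k$ to the short exact sequence of inverse systems
\[
0 \longrightarrow (xA + r^kA)/r^kA \longrightarrow A/r^kA \longrightarrow A/(r^k,x) \longrightarrow 0.
\]
The transition maps in the left-hand system are surjective, so the Mittag-Leffler condition holds and $\lim_k$ yields a short exact sequence $0 \to \lim_k (xA + r^kA)/r^kA \to A^\wedge_r \to (A/(x))^\wedge_r \to 0$. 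It remains to identify $\lim_k (xA + r^kA)/r^kA$ with the ideal $xA^\wedge_r$, which comes from the natural surjections $A \twoheadrightarrow (xA + r^kA)/r^kA$ sending $a \mapsto xa$.

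The main obstacle is this last identification, where one must control the inverse system of annihilators $(r^kA : x) \subseteq A$ and verify Mittag-Leffler for it. This is automatic under any reasonable finiteness hypothesis (e.g.\ $A$ Noetherian with the modules in question finitely generated), which certainly holds in the intended application to Johnson-Wilson coefficient rings. Under such finiteness assumptions one can equivalently invoke exactness of $r$-adic completion on finitely generated modules over a Noetherian ring and skip the explicit Mittag-Leffler bookkeeping, rendering the proof essentially a one-line consequence of right-exactness.
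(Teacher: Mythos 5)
Your setup is the same as the paper's: identify $(A/(x))^\wedge_r$ with $\lim_k A/(r^k,x)$, compare with $A^\wedge_r$ via the termwise short exact sequences $0 \to (xA+r^kA)/r^kA \to A/r^k \to A/(r^k,x) \to 0$, and use surjectivity of the transition maps of the left-hand system to get $0 \to \lim_k (xA+r^kA)/r^kA \to A^\wedge_r \to (A/(x))^\wedge_r \to 0$. The problem is the step you flag yourself and then decline to prove: identifying $\lim_k (xA+r^kA)/r^kA$ with $xA^\wedge_r$. The obstruction is precisely $\mathrm{lim}^1$ of the kernel system $K_k := \ker\bigl(x\colon A/r^k \to A/r^k\bigr) = (r^kA :_A x)/r^kA$: from the short exact sequences $0 \to K_k \to A/r^k \to (xA+r^kA)/r^kA \to 0$ one gets $\lim_k (xA+r^kA)/r^kA \,/\, xA^\wedge_r \cong \mathrm{lim}^1_k K_k$, so the lemma is exactly the assertion that this $\mathrm{lim}^1$ vanishes. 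You dispose of it by assuming $A$ Noetherian with finitely generated modules, but the lemma is stated for an arbitrary algebra over an arbitrary commutative ring, and it is used in that generality: Proposition \ref{prop:tatecompletion} applies it to $\pi_*X((x))$ (and its $RO(C_2)$-graded analogue) for an arbitrary $MU$- resp. $\mathbb{MR}$-algebra $X$ subject only to divisibility hypotheses on kernels, with no Noetherian or finite-generation assumptions available. So as written your argument proves a strictly weaker statement than the one needed, and the claim that the finiteness hypotheses "certainly hold in the intended application" is not justified at the level of generality at which the lemma is invoked.

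The paper closes exactly this gap without finiteness hypotheses: it splits the four-term exact sequence $0 \to K_k \to A/r^k \xrightarrow{x} A/r^k \to A/(r^k,x) \to 0$ into two short exact sequences and argues that both auxiliary systems, the images $\{(xA+r^kA)/r^kA\}$ and the kernels $\{K_k\}$, satisfy the Mittag-Leffler condition; then both $\mathrm{lim}^1$ terms vanish, the limit of the four-term sequence remains exact, and $A^\wedge_r/(x) \cong \lim_k A/(r^k,x)$ follows. To make your proof establish the lemma as stated and as used, you must supply an argument of this kind for the kernel system, i.e. prove $\mathrm{lim}^1_k K_k = 0$ in the stated generality (for instance by verifying a Mittag-Leffler property for $\{K_k\}$ directly), rather than replacing it by Noetherian hypotheses; note that the appeal to Artin--Rees/exactness of completion on finitely generated modules is not a shortcut here but a genuine restriction of the statement.
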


\begin{proof}
Let $K_k=\ker(A/r^k \xrightarrow{x} A/r^k)$, $A_k=A/(r^k,K_k)$, $B_k=A/r^k$ and $C_k=A/(r_k,x)$. The exact sequences
$$0\rightarrow K_k \rightarrow B_k \xrightarrow{x} B_k \rightarrow C_k \rightarrow 0,$$
split into short exact sequences
$$0\rightarrow K_k \rightarrow B_k \xrightarrow{x} A_k \rightarrow 0,$$
$$0\rightarrow A_k \rightarrow B_k \rightarrow C_k \rightarrow 0.$$
The map $A_{k+1} \to A_k$ is surjective since the quotient maps $B_{k+1}\rightarrow B_k$ are surjective, and surjectivity of the map $K_{k+1} \rightarrow K_k$ follows by definition. Therefore 
$\{K_k\}$ and $\{A_k\}$ satisfies the Mittag-Leffler condition and we have an exact sequence
$$0 \rightarrow \holim K_k \rightarrow \holim B_k \xrightarrow{x} \holim B_k \rightarrow \holim C_k \rightarrow 0.$$
This implies that
$$A^\wedge_r/(x)=(\holim B_k)/(x)=\holim C_k= (A/x)^\wedge_r.$$ 
\end{proof}

\begin{prop}\label{prop:tatecompletion}
\begin{enumerate}
\item
Let $a$ be an element of $\pi_*MU$, and $X$ be a $MU$-algebra equipped with trivial $\Sigma_2$ action. Assume that there exists $N>0$ such that no element in the kernel of
$$a_* \colon X_* \longrightarrow X_*,$$
or the kernel of
$$a_* \colon \pi_*(X^{t\Sigma_2}) \longrightarrow \pi_*(X^{t\Sigma_2}),$$
is $a^N$-divisible. Then
$$(X^{t\Sigma_2})_a^\wedge=(X_a^\wedge)^{t\Sigma_2}.$$
\item
Let $\alpha$ be an element of $\pi_\star \mathbb{MR}$, and $\mathbb{X}$ be a $\mathbb{MR}$-algebra equipped with trivial $\Sigma_2$ action. Assume that there exists $N>0$ such that no element in the kernel of
$$\alpha_\star \colon \mathbb{X}_\star \longrightarrow \mathbb{X}_\star,$$
or the kernel of
$$\alpha_\star \colon \pi_\star (\mathbb{X}^{t\Sigma_2}) \longrightarrow \pi_\star (\mathbb{X}^{t\Sigma_2}),$$
is $\alpha^N$-divisible. Then
$$(\mathbb{X}^{t\Sigma_2})_\alpha^\wedge=(\mathbb{X}_\alpha^\wedge)^{t\Sigma_2}.$$
\end{enumerate}
\end{prop}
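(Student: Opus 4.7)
The plan is to construct a natural comparison map $\phi \colon (X_a^\wedge)^{t\Sigma_2} \to (X^{t\Sigma_2})_a^\wedge$ and verify it is a $\pi_*$-equivalence using the $a^N$-divisibility hypotheses together with Lemma~\ref{lemma:completion}. Following Definition~\ref{definition:completion}, write $X_a^\wedge = \holim_k Ca^k$, where $Ca^k$ is the cofiber of $a^k \colon \Sigma^{k|a|} X \to X$. Since $\Sigma_2$ acts trivially on $X$, each multiplication map $a^k$ is automatically $\Sigma_2$-equivariant. The Tate construction is exact, being the cofiber of the additive norm, so applying $(-)^{t\Sigma_2}$ yields a cofiber sequence
\[
\Sigma^{k|a|} X^{t\Sigma_2} \xrightarrow{a^k} X^{t\Sigma_2} \longrightarrow (Ca^k)^{t\Sigma_2},
\]
identifying $(Ca^k)^{t\Sigma_2}$ with precisely the $k$-th cofiber used in the definition of $(X^{t\Sigma_2})_a^\wedge$. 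The canonical projections $X_a^\wedge \to Ca^k$ then induce compatible maps $(X_a^\wedge)^{t\Sigma_2} \to (Ca^k)^{t\Sigma_2}$ that assemble into $\phi$.

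To show $\phi$ is an equivalence, I would compute $\pi_*$ of both sides. Lemma~\ref{lemma:completion}, applied to the hypothesis on $\ker(a_* \colon \pi_* X^{t\Sigma_2} \to \pi_* X^{t\Sigma_2})$, gives
\[
\pi_*\bigl((X^{t\Sigma_2})_a^\wedge\bigr) \cong (\pi_* X^{t\Sigma_2})_a^\wedge,
\]
and the hypothesis on $\ker(a_* \colon X_* \to X_*)$ similarly gives $\pi_*(X_a^\wedge) \cong (X_*)_a^\wedge$. On the source of $\phi$, I would combine the Milnor $\lim$--$\lim^1$ sequence for the inverse system $\{(Ca^k)^{t\Sigma_2}\}$ with the homotopy limit formula $(-)^{t\Sigma_2} \simeq \holim_n \Sigma RP_{-n}^\infty \wedge (-)$ and the algebraic identity of Lemma~\ref{lemma:algebra} in order to rewrite $\pi_*((X_a^\wedge)^{t\Sigma_2})$ as the same $a$-adic completion. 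The $a^N$-divisibility hypotheses are exactly what is required to verify the Mittag--Leffler conditions guaranteeing that no $\lim^1$ obstructions arise in either of the two directions being compared, mirroring the structure of the proof of Lemma~\ref{lemma:completion}.

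The main obstacle will be carefully handling the interleaving of the two homotopy limits --- the one implicit in the Tate construction and the one defining $a$-adic completion --- and reducing the resulting double inverse system to Lemma~\ref{lemma:algebra} at the level of homotopy groups. In particular, one must rule out $\lim^1$ contributions from the Milnor sequence on the source, which requires both the divisibility hypothesis on $X_*$ (to control the inner completion) and the divisibility hypothesis on $\pi_*(X^{t\Sigma_2})$ (to control the outer limit), explaining why the statement contains two such assumptions. Part (2) of the proposition should follow by exactly the same argument carried out in the genuine $C_2$-equivariant setting: every step passes $RO(C_2)$-gradedly, invoking Proposition~\ref{Prop:TrivTateForm} in place of the nonequivariant homotopy limit formula and applying the hypotheses on $\alpha$ in place of those on $a$.
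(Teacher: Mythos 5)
Your construction of the comparison map is fine and is essentially the paper's map: applying $(-)^{t\Sigma_2}$ to the tower $\{Ca^k\}$ and using the projections $X_a^\wedge \to Ca^k$ produces the same $\phi \colon (X_a^\wedge)^{t\Sigma_2} \to (X^{t\Sigma_2})_a^\wedge$ that the paper builds by smashing $X_a^\wedge \to X/a^k$ with $RP^\infty_{-i}$ and using that completion commutes with homotopy limits; likewise your identification $\pi_*\bigl((X^{t\Sigma_2})_a^\wedge\bigr)\cong(\pi_*X^{t\Sigma_2})_a^\wedge$ via Lemma \ref{lemma:completion} matches the paper. The genuine gap is in the step where you must compute $\pi_*\bigl((X_a^\wedge)^{t\Sigma_2}\bigr)$ and match it with the target. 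The paper's engine here is the complex orientation coming from the $MU$-algebra structure: by the Ando--Morava--Sadofsky identification, $\pi_*(Y^{t\Sigma_2}) \cong \pi_*Y((x))/([2]_F(x))$ for the relevant complex-oriented $Y$, applied both to $Y = X_a^\wedge$ (whose homotopy is $(\pi_*X)_a^\wedge$ by Lemma \ref{lemma:completion}) and to $Y = X$; only then does Lemma \ref{lemma:algebra} apply, with the ``element'' in that lemma being the $2$-series $[2]_F(x)$, to identify $\bigl(\pi_*X((x))/[2]_F(x)\bigr)_a^\wedge$ with $(\pi_*X)_a^\wedge((x))/[2]_F(x)$. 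Your proposal never invokes the orientation, the formal group law, or the $2$-series, so it has no concrete mechanism for rewriting the source, and it is unclear what Lemma \ref{lemma:algebra} would even be applied to.

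Relatedly, two specific points in your plan do not hold up as stated. First, the Milnor sequence for the system $\{(Ca^k)^{t\Sigma_2}\}$ computes the \emph{target} $(X^{t\Sigma_2})_a^\wedge$, not the source; for the source you face the interchange of $RP^\infty_{-n}\wedge(-)$ with the completion limit $\limt{k} Ca^k$, which is not automatic since $X$ (e.g.\ $E(n)$ or $\bar v_{n-1}^{-1}BP$-type spectra) is not bounded below. Second, the $a^N$-divisibility hypotheses are not ``exactly what is required'' to control that interchange: they enter only through Lemma \ref{lemma:completion}, identifying homotopy groups of the two completions. The $\lim^1$-vanishing for the stunted-projective tower defining $(X_a^\wedge)^{t\Sigma_2}$ comes instead from the orientation computation (Thom isomorphisms, structure maps given by multiplication by Euler classes, and the non-zero-divisor condition on $[2]_F(x)$), which is precisely the ingredient your outline is missing. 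The same ingredient (Real orientation and the $RO(C_2)$-graded formula for the coefficients of the Tate construction) is what makes part (2) go through, not merely rerunning the argument ``$RO(C_2)$-gradedly.''
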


\begin{proof}
We prove (1); the proof of (2) is similar. We will construct a map $f$ from $(X^\wedge_a)^{t\Sigma_2}$ to $(X^{t\Sigma_2})^\wedge_a$ and show
that the induced map $f_*$ on homotopy groups is an isomorphism. Smashing the qoutient maps
$$q_k \colon X^\wedge_a \longrightarrow X/a^k$$
with $RP^\infty_{-i}$, we have maps
$$f_{i,k} \wedge \id \colon X^\wedge_\alpha \wedge RP^\infty_{-i} \rightarrow X/a^k \wedge RP^\infty_{-i} \simeq (X \wedge RP^\infty_{-i})/a^k.$$
Therefore, there exists
$$f_i \colon X^\wedge_a \wedge RP^\infty_{-i} \rightarrow \lim (X \wedge RP^\infty_{-i})/a^k = (X \wedge RP^\infty_{-i})^\wedge_a.$$
These maps induce a map
$$f \colon \holim(X^\wedge_a \wedge RP_{-i}^\infty) \rightarrow \holim((X \wedge RP^\infty_{-i})^\wedge_a).$$
By definition, the completion is a composition of a finite colimit followed by a limit. The category of spectra is stable and finite colimits are also finite limits. In particular, the completion commutes with all limits. Therefore, we have
$$\holim((X \wedge RP^\infty_{-i})^\wedge_a)=(\holim(X \wedge RP^\infty_{-i}))^\wedge_a=(X^{t\Sigma_2})^\wedge_a.$$
Now we have
$$f \colon (X^\wedge_a)^{t\Sigma_2}\longrightarrow (X^{t\Sigma_2})^\wedge_a.$$
By the assumption, we can apply Lemma \ref{lemma:completion} and have
$$\pi_*(X_a^\wedge)=(\pi_*X)_a^\wedge.$$
Note that the $MU$-module structure gives a formal group $F$ over $\pi_*X$. This allows us to compute that
$$\pi_*(X^\wedge_a)^{t\Sigma_2}=\pi_*(X^\wedge_a)((x))/[2]_F(x)
=(\pi_*X)^\wedge_a((x))/[2]_F(x).
$$
Similarly, we have
$$
\pi_*((X^{t\Sigma_2})^\wedge_a)=(\pi_*(X^{t\Sigma_2}))^\wedge_a=(\pi_*X((x))/[2]_F(x))^\wedge_a.$$
By the Lemma \ref{lemma:algebra}, we have
$$f_* \colon (\pi_*X((x))/[2]_F(x))^\wedge_a \longrightarrow 
(\pi_*X)^\wedge_a((x))/[2]_F(x)$$
is an isomorphism. This completes the proof.
\end{proof}

\subsection{The parametrized Tate construction}\label{SS:ParamTate}

We now recall some facts about the parametrized Tate construction. Let $(G,\sigma_G)$ be a Real Lie group. Then $\sigma_G$ is the image of the generator $\sigma \in C_2$ under some group homomorphism $\tau : C_2 \to Aut(G)$, and $\tau$ defines a semidirect product $G \rtimes C_2$. 

\begin{defin}
Let $\cf_\tau$ denote the family of all closed subgroups $H \subseteq G \rtimes C_2$ satisfying $H \cap G = \{e\}$. Let $E_{C_2}^\tau G := E\cf_\tau$ denote the unique $G \rtimes C_2$-equivariant homotopy type satisfying
$$(E_{C_2}^\tau G)^H \simeq \begin{cases} * \quad & \text{ if } H \in \cf_\tau, \\
\emptyset \quad & \text{ if } H \notin \cf_\tau.
\end{cases}
$$
Let $B_{C_2}^\tau G := (E_{C_2}^\tau G) / G.$ We say that $B_{C_2}^\tau G$ is the \emph{$C_2$-equivariant classifying space of $G$}. 
\end{defin}

\begin{conv}
Note that the $G \rtimes C_2$-equivariant homotopy type of $E^\tau_{C_2}G$ depends on a choice of group homomorphism $\tau: C_2 \to Aut(G)$, so we ought to always write $E_{C_2}^\tau G$. However, we are primarily interested in the case where $G$ is an abelian group, in which case we always take $\tau : C_2 \to Aut(G)$ to be the inversion involution. We will therefore suppress $\tau$ from the notation unless we specify a different involution. 
\end{conv}

\begin{exm}
There is an equivalence of $O(2)$-spaces
$$E_{C_2}S^1 \simeq S(\c^{\oplus \infty}),$$
where the $O(2) = S^1 \rtimes C_2$-action on the right-hand side is determined by letting $C_2$ act on $\c$ by complex conjugation and $S^1$ act by rotation. Consequently, there is an equivalence of $C_2$-spaces
$$B_{C_2}S^1 \simeq \c\p^\infty,$$
where $C_2$ acts on the right-hand side by complex conjugation. Compare with \cite[Def. 2.1]{HK01}. 

Similarly, there is an equivalence of $D_4$-spaces
$$E_{C_2}\Sigma_2 \simeq S(\c^{\oplus \infty}),$$
where the $D_4 = \Sigma_2 \times C_2$-action on the right-hand side is obtained by restriction of the $O(2)$-action above. This induces an equivalence of $C_2$-spaces
$$B_{C_2}\Sigma_2 \simeq \r\p^\infty,$$
where $\r\p^\infty$ is the space of real lines in $\c^{\oplus \infty}$ and $C_2$ acts by complex conjugation. Compare with \cite[Sec. 3]{KW08}. 
\end{exm}

We make the following definition in view of several results of the third author with Shah: \cite[Def. 1.6]{QS21a}, \cite[Obs. 1.9]{QS21a}, \cite[Obs. 1.10]{QS21a}, and \cite[Thm. C]{QS21a}. Alternatively, the following definition specializes \cite[Sec. 17]{GM95} to the family of graph subgroups of $C_2$ in $G$, cf. \cite[Rmk. 3.17]{QS21a}.

\begin{defin}\label{def:parametrizedTate}
Let $X$ be a $C_2$-spectrum with $G$-action. The \emph{parametrized $G$-homotopy orbits} of $X$ are defined by
$$X_{h_{C_2}G} := (F(E_{C_2}G_+,X) \wedge E_{C_2}G_+)^G$$
and the \emph{parametrized $G$-homotopy fixed points} of $X$ are defined by
$$X^{h_{C_2}G} := F(E_{C_2}G_+, X)^G.$$
There is a cofiber sequence
$$E_{C_2}G_+ \to S^0 \to \widetilde{E_{C_2}G}$$
which gives rise to a cofiber sequence of spectra
$$X_{h_{C_2}G} \to X^{h_{C_2}G} \to (F(E_{C_2}G_+,X) \wedge \widetilde{E_{C_2}G})^G.$$
We define the cofiber to be the \emph{parametrized $G$-Tate construction} $X^{t_{C_2}G}$. 
\end{defin}

When the extension $G \rtimes C_2$ determined by $\tau : C_2 \to Aut(G)$ satisfies a certain representation-theoretic hypothesis, the parametrized $G$-Tate construction for a $C_2$-spectrum equipped with a trivial $G$-action may be identified with an inverse limit. We obtain the following as a specialization of \cite[Thm. B]{QS21a}, which generalizes \cite[Thm. 16.1]{GM95} to families:

\begin{thm}\label{Thm:ThomGeneral}\cite[Thm. B]{QS21a}
Let $\tau: C_2 \to Aut(G)$ and suppose that there exists a $G \rtimes C_2$-representation $V$ with $V^H = 0$ for $H \notin \cf_\tau$ and $V^H \neq 0$ for $H \in \cf_\tau$. Let $U$ be a complete $G \rtimes C_2$-universe and let $i : U^{G} \to U$ be the inclusion of the $G$-fixed universe. Let $X$ be a $C_2$-spectrum. There is an equivalence of $C_2$-spectra
$$(i_*X)^{t_{C_2}G} \simeq \limt{n}(Th((-nV) \to B_{C_2}^{\tau}G) \wedge \Sigma X),$$
where $i_*(-)$ is the functor which associates a genuine $G \rtimes C_2$-spectrum to an $\cf(C_2,G)$-spectrum indexed on the complete $C_2$-universe $U^G$. 
\end{thm}

\begin{defin}\label{Exm:Qin}\cite[Def. A.19]{Qui21a}
When $G = \Sigma_2$, we will let $Q^\infty_{-n} := Th(-n\gamma \to B_{C_2}\Sigma_2)$ denote the $C_2$-equivariant Thom spectrum appearing in the previous theorem. 
\end{defin}

In particular, we obtain the following formula for the parametrized Tate construction as an inverse limit:

\begin{cor}\label{Cor:TLim}
Let $U$ be a complete $\Sigma_2 \rtimes C_2$-universe and let $i : U^{\Sigma_2} \to U$ be the inclusion of the $\Sigma_2$-fixed universe. Let $X$ be a $C_2$-spectrum. Then there is an equivalence of $C_2$-spectra
$$(i_*X)^{t_{C_2}\Sigma_2} \simeq \limt{i}(Q^\infty_{-i} \wedge \Sigma X).$$
\end{cor}

\subsection{The parametrized $\Sigma_2$-Tate construction}\label{Sec:ParamSigma2}


The following discussion adapts some results from \cite[Sec. 3]{GM95} to the $C_2$-equivariant setting; many similar ideas appear in \cite{QS21a}. 

\begin{defin}
Let $X$ be a $\Sigma_2 \rtimes C_2$-spectrum. We define the \emph{$\Sigma_2$-free $\Sigma_2 \rtimes C_2$-spectrum} associated to $X$ by
$$f_p(X) := X \wedge E_{C_2} {\Sigma_2}_+.$$
The \emph{parametrized geometric completion} of $X$ is defined by
$$c_p(X) := F(E_{C_2}{\Sigma_2}_+,X).$$
The \emph{parametrized $\Sigma_2$-Tate $\Sigma_2 \rtimes C_2$-spectrum} associated to $X$ is defined by 
$$t_p(X) := F(E_{C_2}{\Sigma_2}_+, X) \wedge \widetilde{E_{C_2}{\Sigma_2}}.$$
\end{defin}

The diagonal map of $E_{C_2} {\Sigma_2}_+$ and the $C_2$-equivariant equivalences
$$E_{C_2} {\Sigma_2}_+ \wedge E_{C_2} {\Sigma_2}_+ \simeq E_{C_2} {\Sigma_2}_+ \quad \text{ and } \quad \widetilde{E_{C_2} \Sigma_2} \wedge \widetilde{E_{C_2}\Sigma_2} \simeq \widetilde{E_{C_2} \Sigma_2}$$
give rise to a commutative diagram of associative and commutative natural pairings, called the \emph{parametrized norm pairing diagram}:
\[
\begin{tikzcd}
f_p(X) \wedge f_p(X') \arrow{r} \arrow{d} & c_p(X) \wedge c_p(X') \arrow{r} \arrow{d} & t_p(X) \wedge t_p(X') \arrow{d} \\
f_p(X \wedge X') \arrow{r} & c_p(X \wedge X') \arrow{r} & t_p(X \wedge X').
\end{tikzcd}
\]
This implies the following analog of \cite[Prop. 3.5]{GM95}. 

\begin{prop}\label{PropGM35}
If $X$ is a ring $\Sigma_2 \rtimes C_2$-spectrum, then $c_p(X)$ and $t_p(X)$ are ring $\Sigma_2 \rtimes C_2$-spectra and the following parametrized of the  norm-restriction diagram (cf. \cite[Eqn. D, Sec. 17]{GM95}) is a commutative diagram of ring $\Sigma_2 \rtimes C_2$-spectra:
\[
\begin{tikzcd}
X \arrow{r} \arrow{d} & X \wedge \widetilde{E_{C_2} \Sigma_2} \arrow{d} \\
c_p(X) \arrow{r} & t_p(X).
\end{tikzcd}
\]
If $X$ is commutative, then so are $c_p(X)$ and $t_p(X)$. If $M$ is an $X$-module $\Sigma_2 \rtimes C_2$-spectrum, then $c_p(M)$ is a $c_p(X)$-module $\Sigma_2 \rtimes C_2$-spectrum and $t_p(M)$ is a $t_p(M)$-module $\Sigma_2 \rtimes C_2$-spectrum. 
\end{prop}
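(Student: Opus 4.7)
The plan is to derive all four assertions formally from the parametrized norm pairing diagram displayed immediately before the proposition, by setting $X = X'$ and pre-composing with the multiplication $\mu\colon X \wedge X \to X$. This is entirely parallel to how Greenlees--May deduce \cite[Prop. 3.5]{GM95} from their (non-parametrized) norm pairing diagram, so the main work is to verify that the pairings, which have been asserted to be associative and commutative natural pairings of $\Sigma_2 \rtimes C_2$-spectra, interact correctly with $\mu$.

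First I would construct the ring structures. Using functoriality, the pairings from the norm pairing diagram give multiplications
\[
c_p(X) \wedge c_p(X) \longrightarrow c_p(X \wedge X) \xrightarrow{c_p(\mu)} c_p(X),
\]
and analogously for $f_p(X)$ and $t_p(X)$. Units for $c_p(X)$ and $t_p(X)$ come from applying the functors to the unit $S^0 \to X$, combined with the canonical $\Sigma_2 \rtimes C_2$-maps $S^0 \to F(E_{C_2}{\Sigma_2}_+, S^0)$ and $S^0 \to \widetilde{E_{C_2}\Sigma_2}$. Associativity and unitality of these new multiplications follow by combining the stated associativity of the pairings with associativity and unitality of $\mu$; when $X$ is commutative, commutativity of the pairings (which reflects the cocommutativity of the diagonal on $E_{C_2}{\Sigma_2}_+$ and the symmetry of $\widetilde{E_{C_2}\Sigma_2} \wedge \widetilde{E_{C_2}\Sigma_2} \simeq \widetilde{E_{C_2}\Sigma_2}$) transfers to commutativity of the new multiplications.

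Second I would verify that the square
\[
\begin{tikzcd}
X \arrow{r} \arrow{d} & X \wedge \widetilde{E_{C_2} \Sigma_2} \arrow{d} \\
c_p(X) \arrow{r} & t_p(X)
\end{tikzcd}
\]
is a commutative diagram of ring $\Sigma_2 \rtimes C_2$-spectra. The horizontal maps are induced by smashing with the cofibration $S^0 \to \widetilde{E_{C_2}\Sigma_2}$, and the vertical maps come from the unit $S^0 \to F(E_{C_2}{\Sigma_2}_+, S^0)$ applied in each factor. Commutativity of the underlying square is naturality; compatibility with the ring structures reduces to naturality of the parametrized norm pairing diagram, since the four multiplications are, by construction, simultaneously defined by the same diagram.

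Third, the module statements are proved by an identical formal argument: substitute $X' = M$ in the parametrized norm pairing diagram and post-compose with the action map $X \wedge M \to M$. This yields actions $c_p(X) \wedge c_p(M) \to c_p(M)$ and $t_p(X) \wedge t_p(M) \to t_p(M)$, and the module axioms follow by the same associativity and unitality arguments as for the ring structures. The main obstacle is purely bookkeeping: one must carefully check that the various associativity, commutativity, and unit coherence diagrams for the parametrized pairings commute in the $\Sigma_2 \rtimes C_2$-equivariant category. Since these properties were packaged into the assertion that the parametrized norm pairing diagram consists of associative and commutative natural pairings, the proof is a formal consequence once that package is in hand.
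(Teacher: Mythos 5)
Your proposal is correct and follows exactly the route the paper takes: the paper deduces this proposition formally from the parametrized norm pairing diagram displayed just before it, in direct analogy with how Greenlees--May deduce their Proposition 3.5, which is precisely the argument you spell out (ring structures via $c_p(X)\wedge c_p(X)\to c_p(X\wedge X)\to c_p(X)$ and the analogous composites, units and the commutative square from naturality, and module structures by substituting $M$ for one factor).
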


By \emph{ring $\Sigma_2 \rtimes C_2$-spectrum}, we mean a $\Sigma_2 \rtimes C_2$-spectrum with unit and multiplication for which the usual monoid diagrams hold in the homotopy category.





\subsection{Comparison between classical and parametrized Tate constructions}\label{SS:Compare}
Let $\e$ be a $C_2$-spectrum. If $U \cong \c^\infty$ is a complete $\Sigma_2 \rtimes C_2$-universe (where $C_2$ acts by complex conjugation), the inclusion of fixed points $U^{C_2} \cong \r^\infty \hookrightarrow \c^\infty \cong U$ induces a map of universal spaces 
$$E\Sigma_2 \longrightarrow E_{C_2}\Sigma_2.$$ 
We may view this as a $\Sigma_2 \rtimes C_2$-equivariant map with trivial $C_2$ action on the source. This map gives a $\Sigma_2$-equivariant equivalence upon forgetting the $C_2$-action. Taking orbits under the $\Sigma_2$-action, we get a $C_2$-equivariant map of classifying spaces (with trivial $C_2$-action on the source)
$$B\Sigma_2 \longrightarrow B_{C_2}\Sigma_2$$
which is an underlying nonequivariant equivalence.

We obtain some useful equivalences if we assume that the $C_2$-spectrum $\e$ is either free or cofree.

\begin{lem}\label{tatecomparison}
\begin{enumerate}
\item If $\e$ is $C_2$-free, then we have $C_2$-equivariant equivalences
$${E\Sigma_2}_+ \wedge \e \longrightarrow {E_{C_2}\Sigma_2}_+ \wedge \e,$$
$${B\Sigma_2}_+ \wedge \e \longrightarrow {B_{C_2}\Sigma_2}_+ \wedge \e.$$
\item If $\e$ is $C_2$-cofree, then we have $C_2$-equivariant equivalences
$$F({E_{C_2}\Sigma_2}_+, \e) \longrightarrow F({E\Sigma_2}_+, \e),$$
$$F({B_{C_2}\Sigma_2}_+, \e) \longrightarrow F({B\Sigma_2}_+, \e).$$
\end{enumerate}
\end{lem}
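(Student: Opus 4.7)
The plan is to deduce both parts of the lemma as direct applications of Lemma~\ref{freecofreecomparison} to the two $C_2$-equivariant maps
$$E\Sigma_2 \longrightarrow E_{C_2}\Sigma_2 \quad \text{and} \quad B\Sigma_2 \longrightarrow B_{C_2}\Sigma_2$$
constructed in the paragraph immediately preceding the lemma. The key input needed is that each of these maps is a $C_2$-equivariant map between $C_2$-spaces that happens to be an underlying nonequivariant weak equivalence. For the first map, both $E\Sigma_2$ (by definition) and $E_{C_2}\Sigma_2 \simeq S(\mathbb{C}^\infty)$ have contractible underlying spaces; moreover the setup explicitly notes that this map is even a $\Sigma_2$-equivariant equivalence upon forgetting the $C_2$-action, which is stronger than what is needed. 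For the second map, it is stated in the setup text (where it is produced by taking $\Sigma_2$-orbits of the first) to be an underlying nonequivariant equivalence, consistent with the fact that both $B\Sigma_2$ and $B_{C_2}\Sigma_2$ have underlying space $\mathbb{R}P^\infty$.

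With these verifications in hand, part $(1)$ follows from applying Lemma~\ref{freecofreecomparison}(1) twice: if $\e$ is $C_2$-free, then smashing with $\e$ carries any $C_2$-equivariant underlying equivalence of $C_2$-spaces to a $C_2$-equivariant equivalence of $C_2$-spectra, so both ${E\Sigma_2}_+ \wedge \e \to {E_{C_2}\Sigma_2}_+ \wedge \e$ and ${B\Sigma_2}_+ \wedge \e \to {B_{C_2}\Sigma_2}_+ \wedge \e$ are $C_2$-equivariant equivalences. Similarly, part $(2)$ follows by applying Lemma~\ref{freecofreecomparison}(2) twice: for $\e$ cofree, the functor $F(-,\e)$ converts any underlying equivalence of $C_2$-spaces into a $C_2$-equivariant equivalence of $C_2$-spectra (with the arrow reversed), yielding the two stated equivalences.

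There is no real obstacle to overcome; the substance of the argument has already been packaged into Lemma~\ref{freecofreecomparison}, and the work is simply to assemble the right inputs. The only minor subtlety worth flagging is that the maps $E\Sigma_2 \to E_{C_2}\Sigma_2$ and $B\Sigma_2 \to B_{C_2}\Sigma_2$ are typically \emph{not} $C_2$-equivariant equivalences between $C_2$-spaces: the source carries a trivial $C_2$-action while the target does not, so their $C_2$-fixed points generally differ. This is precisely why the free or cofree hypothesis on $\e$ is needed in order to smash with or map into $\e$ without detecting that discrepancy at the level of fixed points.
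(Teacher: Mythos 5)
Your proposal is correct and follows exactly the paper's argument: the proof in the paper likewise cites Lemma~\ref{freecofreecomparison} together with the fact that $E\Sigma_2 \to E_{C_2}\Sigma_2$ and $B\Sigma_2 \to B_{C_2}\Sigma_2$ are underlying nonequivariant equivalences. Your closing remark about why the free/cofree hypothesis is genuinely needed is a nice (and accurate) clarification, but it adds nothing beyond the paper's reasoning.
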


\begin{proof} This is an immediate consequence of Lemma \ref{freecofreecomparison} and the fact that the (equivariant) inclusions $B\Sigma_2 \rightarrow B_{C_2}\Sigma_2$ and $E\Sigma_2 \rightarrow E_{C_2}\Sigma_2$ are underlying nonequivariant equivalences.
\end{proof}

We have the following commutative square of $\Sigma_2 \rtimes C_2$-spectra which relates the ordinary and parametrized Tate constructions:
$$\xymatrix{\widetilde{E\Sigma_2} \wedge F({E\Sigma_2}_+, \e) \ar@{->}[r] & \widetilde{E_{C_2}\Sigma_2} \wedge F({E\Sigma_2}_+, \e) \\
\widetilde{E\Sigma_2} \wedge F({E_{C_2}\Sigma_2}_+, \e) \ar@{->}[r] \ar@{->}[u] & \widetilde{E_{C_2}\Sigma_2} \wedge F({E_{C_2}\Sigma_2}_+, \e).\ar@{->}[u]}$$
Applying $\Sigma_2$-fixed points to each corner, the left-top corner becomes $\e^{t\Sigma_2}$ and the bottom-right corner becomes $\e^{t_{C_2}\Sigma_2}$.  We therefore have a zigzag relating $\e^{t\Sigma_2}$ and $\e^{t_{C_2}\Sigma_2}$. 

In the case that $\e$ is $C_2$-cofree, the vertical maps are $C_2$-equivariant equivalences. However, to obtain a map $\e^{t\Sigma_2} \to \e^{t_{C_2}\Sigma_2}$, one of the vertical maps must actually be a $\Sigma_2 \rtimes C_2$-equivariant equivalence. 

\begin{lem}\label{tatecomparison2}
If $\e$ is $C_2$-cofree with trivial $\Sigma_2$-action, then there is a $\Sigma_2 \rtimes C_2$-equivalence
$$F({E_{C_2}\Sigma_2}_+, \e) \to F({E\Sigma_2}_+, \e).$$
\end{lem}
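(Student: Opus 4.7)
The plan is to use the $C_2$-cofreeness of $\e$ to replace both mapping spectra by function spectra out of smash products with ${EC_2}_+$, and then reduce to checking that a map of $\Sigma_2 \rtimes C_2$-equivariant pointed spaces is a weak equivalence by inspecting fixed points on each subgroup.

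First, since $\e$ is $C_2$-cofree, the canonical map $\e \to F({EC_2}_+,\e)$ is a $C_2$-equivariant equivalence. Viewing this equivalence in the category of $\Sigma_2 \rtimes C_2$-spectra with trivial $\Sigma_2$-action (so that ${EC_2}_+$ carries the trivial $\Sigma_2$-action), and using the smash/function-spectrum adjunction, one obtains natural $\Sigma_2 \rtimes C_2$-equivariant equivalences
$$F({E\Sigma_2}_+,\e) \simeq F({E\Sigma_2}_+ \wedge {EC_2}_+,\e), \quad F({E_{C_2}\Sigma_2}_+,\e) \simeq F({E_{C_2}\Sigma_2}_+ \wedge {EC_2}_+,\e).$$
Thus it suffices to prove that the map of pointed $\Sigma_2 \rtimes C_2$-spaces
$${E\Sigma_2}_+ \wedge {EC_2}_+ \longrightarrow {E_{C_2}\Sigma_2}_+ \wedge {EC_2}_+$$
is a weak equivalence.

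Since $\Sigma_2$ has trivial involution here, $\Sigma_2 \rtimes C_2 \cong \Sigma_2 \times C_2$, and a map of pointed $\Sigma_2 \times C_2$-spaces is a weak equivalence precisely when it induces weak equivalences on $H$-fixed points for each of the five subgroups $H \leq \Sigma_2 \times C_2$. Using the formula $(X \wedge Y)^H = X^H \wedge Y^H$ for pointed $G$-spaces, I would split the case analysis according to whether $H$ projects nontrivially to the $C_2$-factor. If it does—i.e.\ $H \in \{\{e\} \times C_2,\, \Delta,\, \Sigma_2 \times C_2\}$—then $EC_2$ has no $H$-fixed points since $C_2$ acts freely, so $({EC_2}_+)^H = *$ and both sides of the smash become contractible. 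If $H \leq \Sigma_2 \times \{e\}$, then $({EC_2}_+)^H = {EC_2}_+$, and after factoring out the common $EC_2_+$ factor the map reduces to the map of $H$-fixed point spaces $({E\Sigma_2}_+)^H \to ({E_{C_2}\Sigma_2}_+)^H$. For $H = \{e\}$ both spaces are nonequivariantly contractible; for $H = \Sigma_2 \times \{e\}$ both fixed-point sets are empty because $\Sigma_2$ acts freely on $E\Sigma_2$ and $(E_{C_2}\Sigma_2)^{\Sigma_2} = \emptyset$ by the defining property of $E_{C_2}\Sigma_2$ (since $\Sigma_2 \not\in \cf(C_2,\Sigma_2)$).

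The main subtlety is the first step: one must ensure that $C_2$-cofreeness, a statement about the $C_2$-spectrum structure of $\e$, promotes correctly to the $\Sigma_2 \rtimes C_2$-setting to yield the two identifications above. This works precisely because $\Sigma_2$ acts trivially on $\e$, so inflation from $C_2$-spectra to $\Sigma_2 \rtimes C_2$-spectra commutes with function spectra whose source carries trivial $\Sigma_2$-action. Once this is in hand, the remaining case analysis is a direct computation with universal spaces.
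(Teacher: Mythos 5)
Your proposal is correct and takes essentially the same route as the paper: both arguments use $C_2$-cofreeness together with the adjunction to replace $F(X_+,\e)$ by $F(X_+\wedge {EC_2}_+,\e)\simeq F(X_+,F({EC_2}_+,\e))$ and then reduce to a fixed-point analysis of the universal spaces. The only difference is bookkeeping: you check the space-level map ${E\Sigma_2}_+\wedge{EC_2}_+\to{E_{C_2}\Sigma_2}_+\wedge{EC_2}_+$ on all five subgroups of $\Sigma_2\rtimes C_2$ at once, whereas the paper disposes of the subgroups of $C_2$ via Lemma \ref{tatecomparison} and then treats $\Delta$, $\Sigma_2$, and $\Sigma_2\rtimes C_2$ separately.
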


\begin{proof}
The map is a $C_2$-equivalence by Lemma \ref{tatecomparison} since $\e$ is $C_2$-cofree. It remains to check that the map is an equivalence after taking fixed points with respect to $\Delta$, $\Sigma_2$, and $\Sigma_2 \rtimes C_2$ (where $\Delta$ is the diagonal subgroup of $\Sigma_2 \rtimes C_2$). 

We first check that
$$F({E_{C_2}\Sigma_2}_+,\e)^\Delta \to F({E\Sigma_2}_+,\e)^\Delta$$
is an equivalence. We have a sequence of $\Sigma_2 \rtimes C_2$-equivalences
$$F({E_{C_2}\Sigma_2}_+,\e)^\Delta \simeq F({E_{C_2}\Sigma_2}_+, F({EC_2}_+,\e))^\Delta \simeq F((E_{C_2}\Sigma_2 \times EC_2)_+, \e)^\Delta$$
which follow from adjunction along with the fact that $F({EC_2}_+,\e) \to \e$ is a $\Sigma_2 \rtimes C_2$-equivalence if $\e$ is cofree with trivial $\Sigma_2$-action. Now, we observe that the projection
$$E_{C_2}\Sigma_2 \times E\Sigma_2 \to E\Sigma_2$$
is an $\cf(C_2,\Sigma_2)$-equivalence of $\Sigma_2 \rtimes C_2$-spaces. Therefore the source and target are equivalent after taking $\Delta$ fixed points. 

A similar argument works for fixed points with respect to $\Sigma_2$ and $\Sigma_2 \rtimes C_2$. We may replace $F({E\Sigma_2}_+, \e)$ by $F((E\Sigma_2 \times EC_2)_+, \e)$, and the projection map $E\Sigma_2 \times EC_2 \to E\Sigma_2$ is an equivalence of spaces on $\Sigma_2$- and $\Sigma_2 \rtimes C_2$-fixed points. 
\end{proof}

Therefore when $\e$ is $C_2$-cofree with trivial $\Sigma_2$-action, the right-hand vertical map is an equivalence on $\Sigma_2$-fixed points and we obtain a map
$$E^{t\Sigma_2} \to E^{t_{C_2}\Sigma_2}.$$
This map is not generally an equivalence, but in the next section, we will show that it is an equivalence under further orientability assumptions for $\e$.

\section{Blueshift for Real oriented cohomology theories}\label{blueshift}

Our first main results, blueshift for the Tate constructions of Real Johnson--Wilson theories, are proven in this section. In Section \ref{SS:ClassicalBlue}, we summarize the proof of a splitting for $L_{K(n-1)}(E(n)^{t\Sigma_2})$ given by Ando, Morava, and Sadofsky in \cite{AMS98}. We modify their proof in Section \ref{C2blueshift} to prove splittings of the parametrized Tate construction of Real Johnson--Wilson theories and the ordinary Tate construction of their fixed points. 

\subsection{The classical Tate construction of $E(n)$}\label{SS:ClassicalBlue}
In \cite{AMS98}, Ando, Morava, and Sadofsky analyze the $K(n-1)$-localization of the Tate construction $E(n)^{tC_p}$ using complex orientations and formal group laws. Recall that if $E$ is a complex oriented cohomology theory, then there is an isomorphism
$$E^*(\c P^\infty) \cong E^*[[x]]$$
where $|x| = 2$. The map $\c P^\infty \times \c P^\infty \to \c P^\infty$ which classifies the tensor product of complex line bundles induces a map
$$E^*[[z]] \cong E^*(\c P^\infty) \to E^*(\c P^\infty) \otimes E^*(\c P^\infty) \cong E^*[[x,y]]$$
which determines a formal group law over $E^*$. There is then an isomorphism
$$E^*(B\Sigma_2) \cong E^*[[x]]/([2](x))$$
where $[2](x)$ is the $2$-series of the formal group law associated to $E$. 

The starting point of the work of Ando--Morava--Sadofsky was the following identification.

\begin{lem}\cite[Lem. 2.1]{AMS98}
Suppose that $E$ is a complex oriented spectrum and that its $2$-series $[2](x)$ is not a $0$ divisor in $E^*[[x]].$ Then there is an isomorphism of rings
$$\pi_{-*}(E^{t\Sigma_2}) \cong E^*((x))/([2](x)).$$
\end{lem}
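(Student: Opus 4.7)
The plan is to compute $\pi_*(E^{t\Sigma_2})$ by starting from the homotopy fixed points $E^{h\Sigma_2}$ and then inverting the appropriate Euler class, using the Greenlees--May localization description recalled in Section \ref{SS:ClassicalTate} together with the complex orientation.

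First I would compute the homotopy of the fixed points. Since $E$ carries the trivial $\Sigma_2$-action, $E^{h\Sigma_2} = F({E\Sigma_2}_+, E)^{\Sigma_2} \simeq F(B{\Sigma_2}_+, E)$, so $\pi_{-n}(E^{h\Sigma_2}) \cong E^n(B\Sigma_2)$. The complex orientation gives, via the complexification map $B\Sigma_2 \simeq \r P^\infty \to \c P^\infty$, a class $x \in E^2(B\Sigma_2)$, and a standard computation (collapse of the Atiyah--Hirzebruch spectral sequence, using that complex-orientability forces $E^*$ to be concentrated in even degrees) yields
$$E^*(B\Sigma_2) \cong E^*[[x]]/([2](x))$$
with $|x|=2$, where $[2](x)$ arises because pulling back from $\c P^\infty$ kills the class $[2]_F(x)$ which measures the vanishing of $2$ in the formal group law.

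Next I would invoke the Greenlees--May statement quoted in Section \ref{SS:ClassicalTate}: $\pi_*(E^{t\Sigma_2})$ is obtained from $\pi_*(E^{h\Sigma_2})$ by localizing at the Euler class $\alpha_V$ of the sign representation $V$. For complex-oriented $E$, I would identify $\alpha_V$ with $x$ up to a unit: the real sign line $V$ complexifies to the complex line $V \otimes_{\r} \c$, and the image of $\alpha_V$ in the integer-graded Borel cohomology $E^*(B\Sigma_2)$ is (up to unit) the Euler class of $V \otimes_{\r} \c$, which is precisely the orientation class $x$. Inverting $\alpha_V$ therefore becomes inverting $x$, giving
$$\pi_{-*}(E^{t\Sigma_2}) \cong \bigl(E^*[[x]]/([2](x))\bigr)[x^{-1}].$$

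Finally I would identify this localization with $E^*((x))/([2](x))$. Writing $[2](x) = x \cdot p(x)$ with $p(x) = 2 + O(x) \in E^*[[x]]$, the hypothesis that $[2](x)$ is not a zero divisor forces $p(x)$ to be a non-zero divisor as well, and hence $x$ is a non-zero divisor in the quotient $E^*[[x]]/([2](x))$. Consequently the $x$-localization is the honest Laurent series quotient $E^*[[x]][x^{-1}]/([2](x)) = E^*((x))/([2](x))$, with the ring structure inherited from that of $E^*(B\Sigma_2)$. The main obstacle I expect is the identification in the second step of $\alpha_V$ with the integer-graded class $x$: a priori $\alpha_V$ lives in the $RO(\Sigma_2)$-graded equivariant homotopy in the non-integer degree $V$, and bridging this with the integer-graded Borel picture requires the complex orientation to trivialize the grading through the complex line $V \otimes_{\r} \c$. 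Without complex-orientability (or without the non-zero-divisor hypothesis), this identification, and hence the clean Laurent-series description, would fail.
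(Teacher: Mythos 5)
Your overall route -- compute $\pi_{-*}(E^{h\Sigma_2}) \cong E^*(B\Sigma_2)$ and then invert the Euler class, identifying the result with Laurent series -- is essentially the same mechanism the paper uses for the Real analogues (the colimit-of-Thom-spectra computation of $\e(n)^{t\Sigma_2}$ and Theorem \ref{Thm:ParamTateCoeffs}), so the architecture is fine. The genuine gap is in your first step and in where you spend the hypothesis. The isomorphism $E^*(B\Sigma_2) \cong E^*[[x]]/([2](x))$ is \emph{not} a consequence of a collapsing Atiyah--Hirzebruch spectral sequence, and complex orientability does not force $E^*$ to be concentrated in even degrees (e.g.\ $MU \vee \Sigma MU$ is an $MU$-algebra, hence complex oriented, with odd homotopy); even for even theories the AHSS argument leaves extension problems unresolved (compare $KU^0(RP^\infty)$). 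The correct argument is the Gysin sequence for the circle bundle $RP^\infty \simeq S(L^{\otimes 2}) \to \c P^\infty$, whose Euler class is exactly $[2](x)$, and this is precisely the point where the hypothesis that $[2](x)$ is not a zero divisor must be invoked: it makes multiplication by $[2](x)$ injective on $E^*[[x]]$, so the long exact sequence splits and yields $E^*(B\Sigma_2) \cong E^*[[x]]/([2](x))$. Without it the lemma is simply false -- $H\f_2$ is complex orientable with $[2](x)=0$, and $H\f_2^*(B\Sigma_2) = \f_2[t]$ with $|t|=1$, not $\f_2[[x]]$.

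By contrast, the step where you do invoke the hypothesis -- passing from $\bigl(E^*[[x]]/([2](x))\bigr)[x^{-1}]$ to $E^*((x))/([2](x))$ -- needs no hypothesis at all: localization is exact and commutes with quotients, so $(A/I)[x^{-1}] \cong A[x^{-1}]/IA[x^{-1}]$ always. So the hypothesis is misplaced rather than merely underused. One smaller point: the image of $\alpha_V$ cannot literally be the Euler class of $V \otimes_\r \c$ (the degrees are $V$ and $2$ respectively); rather $\alpha_V^2$, which is the $RO(\Sigma_2)$-graded Euler class of $V \oplus V \cong V \otimes_\r \c$, corresponds to a unit multiple of $x$ under the Thom class of the complexified bundle, and inverting $\alpha_V$ is equivalent to inverting $\alpha_V^2$. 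This is the nonequivariant shadow of the paper's use of orientability of $2^{n+1}\xi_V$ in the $ER(n)$ computation; you flag the issue at the end, but the identification as written is off by a square, and making it precise is exactly the grading bridge your argument needs.
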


This allows them to analyze $E^{t\Sigma_2}$ using more algebraic methods. Using formal group law calculations, they prove the following. 

\begin{prop}\cite[Prop. 2.11]{AMS98}
There is an isomorphism
$$(\pi_*E(n)^{t\Sigma_2})^\wedge_{I_{n-1}} \to E(n-1)_*((x))^\wedge_{I_{n-1}}$$
where $|x| = -2$. 
\end{prop}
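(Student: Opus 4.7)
The plan is to reduce the claim to a purely algebraic statement about the $2$-series of the Johnson-Wilson formal group law, following the strategy of Ando-Morava-Sadofsky.

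First, by the preceding lemma I may identify $\pi_{-*}E(n)^{t\Sigma_2}$ with $E(n)^*((x))/([2](x))$, where $|x|=2$ and $[2](x)$ is the $2$-series of the formal group law carried by $E(n)$. The proposition then amounts to proving a purely algebraic isomorphism
$$\bigl(E(n)^*((x))/([2](x))\bigr)^{\wedge}_{I_{n-1}} \;\cong\; E(n-1)_*((x))^{\wedge}_{I_{n-1}}.$$

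Second, I would use the well-known expansion of the Araki/Hazewinkel $p$-series: reducing modulo $I_{n-1}=(p,v_1,\ldots,v_{n-2})$ one has
$$[2](x) \equiv v_{n-1}x^{2^{n-1}} + v_n x^{2^n} + \cdots \pmod{I_{n-1}},$$
where the omitted terms have strictly larger $x$-valuation. Since $x$ is invertible in the Laurent series ring and $v_n$ is already a unit in $E(n)^*$, the equation $[2](x)=0$ can be solved uniquely for $v_{n-1}$ in the $I_{n-1}$-adic completion of the quotient by a Hensel-type argument, and the resulting element is automatically invertible because modulo $I_{n-1}$ it equals a unit multiple of $v_n x^{2^{n-1}}$.

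Third, I would use this to build the desired map. Send $v_i\mapsto v_i$ for $i\leq n-2$, send $x\mapsto x$, and send $v_{n-1}^{\pm 1}$ to the invertible element produced in the previous step; this defines a continuous ring homomorphism
$$E(n-1)_*((x))^{\wedge}_{I_{n-1}} \lra \bigl(E(n)^*((x))/([2](x))\bigr)^{\wedge}_{I_{n-1}}.$$
Both sides are $I_{n-1}$-complete, so to check bijectivity it suffices to check the reduction modulo $I_{n-1}$; there, using the relation $[2](x)=0$ to eliminate $v_n$, both rings become $\mathbb{F}_2[v_{n-1}^{\pm 1}]((x))$ and the induced map is the identity.

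The main technical obstacle is the Hensel argument in the second step: one must simultaneously control the $I_{n-1}$-adic topology and the $x$-adic filtration so as to make sense of ``solving $[2](x)=0$ for $v_{n-1}$'' as a convergent process inside the Laurent-series completion. This is exactly the content of the formal group law computations in Section 2 of \cite{AMS98}, and the details reduce to properties of the Araki generators $v_i$ that are already documented there.
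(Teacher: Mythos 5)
Your outline is the same formal-group-law route as the source: note that the paper does not prove this proposition at all, it simply quotes \cite[Prop.~2.11]{AMS98}, so the only meaningful comparison is with Ando--Morava--Sadofsky's argument, and your steps (identify $\pi_{-*}E(n)^{t\Sigma_2}$ with $E(n)^*((x))/([2](x))$, analyze the $2$-series modulo $I_{n-1}$, deduce that $v_{n-1}$ becomes a unit, then compare with $E(n-1)_*((x))^\wedge_{I_{n-1}}$) are exactly that circle of ideas. Two cautions, though. First, AMS run the comparison in the direction stated in the proposition, out of the Tate ring: they keep $x\mapsto x$, $v_i\mapsto v_i$ for $i<n$, and send $v_n$ to the Laurent series obtained by solving $[2](x)=0$ for $v_n$ over $E(n-1)^\wedge_{I_{n-1}}$ (this is also how the present paper phrases its equivariant adaptation in Proposition \ref{C2eqvtBPRsplit}). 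Your reversed map, out of $E(n-1)_*((x))^\wedge_{I_{n-1}}$, has to assign values to genuinely infinite Laurent series, and the $I_{n-1}$-adic topology of the target gives no convergence for such sums because $x$ and (as you show) $v_{n-1}$ map to units; making this precise requires exactly the combined control of the $x$-adic (cellular) filtration and the $I_{n-1}$-adic one that you flag, so it cannot be waved through as ``continuity.'' Second, the step ``both sides are $I_{n-1}$-complete, so bijectivity can be checked mod $I_{n-1}$'' is not a valid principle as stated: completeness plus surjectivity mod $I_{n-1}$ does give surjectivity (topological Nakayama), but injectivity can fail --- e.g.\ $k[[t]]\to k[[t]]/(t^2)$ is an isomorphism mod $t$ between $t$-complete rings. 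What is actually needed is an isomorphism on the $I_{n-1}$-adic associated graded, which in turn uses the regularity statements ($[2](x)$ and then $2,v_1,\dots,v_{n-2}$ acting as non-zero-divisors in the appropriate quotients) that AMS establish in their Section 2. With those two repairs your sketch is the intended argument; the leading-term analysis of $[2](x)$ mod $I_{n-1}$ and the resulting invertibility of $v_{n-1}$ in the completed quotient are correct.
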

\begin{rem2}
We use $E^*=E_{-*}$ and change the degree of $x$ from $2$ to $-2$ to write
$$\pi_{-*}(E^{t\Sigma_2}) \cong E^*((x))/([2](x))$$
as
$$\pi_{*}(E^{t\Sigma_2}) \cong E_*((x))/([2](x)).$$
\end{rem2}

They lift this isomorphism to an equivalence of $K(n-1)$-local spectra in \cite[Sec. 3]{AMS98}. This equivalence relies on several ideas from chromatic homotopy theory, including explicit formulas for Bousfield localization with respect to $K(n)$ and certain $MU$-module structures. It also relies on understanding the relationship between the $I_{n-1}$-adic filtration of $\pi_*(E(n)^{t\Sigma_2})$ and the ``$x$-adic" filtration which arises from the identification $E^{t\Sigma_2} \simeq \limt{n} (E \wedge \Sigma RP^\infty_{-n})$ and the cellular filtration of $RP^\infty_{-n}$. In the end, they prove

\begin{thm}\cite[Thm. 3.10]{AMS98}
There is a map of spectra
$$\limt{i} \bigvee_{j \leq -i} \Sigma^{2i}E(n-1) \to (E(n)^{t\Sigma_2})^\wedge_{I_{n-1}}$$
that becomes an isomorphism on homotopy groups after completion at $I_{n-1}$, or equivalently after Bousfield localization with respect to $K(n-1)$.
\end{thm}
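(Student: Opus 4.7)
The plan is to realize the coefficient isomorphism of the preceding proposition at the spectrum level, using the Greenlees--May homotopy limit formula together with the $E(n-1)$-module decomposition from Proposition \ref{modulestructure}. First, I would replace the Tate construction by the homotopy limit
$$E(n)^{t\Sigma_2} \simeq \limt{i}\bigl(\Sigma E(n) \wedge RP^\infty_{-i}\bigr),$$
noting that $I_{n-1}$-completion, being itself a homotopy limit, commutes with $\limt{i}$. The task then reduces to producing, at each finite stage $i$, a map
$$\bigvee_{j} \Sigma^{2j} E(n-1) \to \bigl(\Sigma E(n) \wedge RP^\infty_{-i}\bigr)^{\wedge}_{I_{n-1}}$$
(with the indexing on $j$ truncated by $i$) compatible with the structure maps $RP^\infty_{-i-1} \to RP^\infty_{-i}$, and then passing to the homotopy limit.

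To construct the finite-stage maps I would use the $BP$-module map \eqref{equation:3.4} appearing in the proof of Proposition \ref{modulestructure}, which exhibits $(v_{n-1}^{-1}BP)^{\wedge}_{I_{n-1}}$ as a wedge of suspensions of $E(n-1)$. Smashing this decomposition with $E(n)$ (which is a $BP$-module) gives a corresponding splitting of $(E(n) \wedge BP)^{\wedge}_{I_{n-1}}$ as an $E(n-1)$-module. Smashing further with $\Sigma RP^\infty_{-i}$, and using that $RP^\infty_{-i}$ has a finite-type cell structure whose Atiyah--Hirzebruch spectral sequence is controlled by the formal group law of $BP$, yields after $I_{n-1}$-completion a splitting of $(\Sigma E(n) \wedge RP^\infty_{-i})^{\wedge}_{I_{n-1}}$ as a wedge of suspensions of $E(n-1)^{\wedge}_{I_{n-1}}$, truncated consistently with the cellular range of $RP^\infty_{-i}$. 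Passing to the homotopy limit in $i$ and invoking the preceding proposition identifies the induced map on homotopy groups, after $I_{n-1}$-completion, with the isomorphism
$$(\pi_* E(n)^{t\Sigma_2})^{\wedge}_{I_{n-1}} \cong E(n-1)_*((x))^{\wedge}_{I_{n-1}}.$$
The identification $L_{K(n-1)}M \simeq (v_{n-1}^{-1}M)^{\wedge}_{I_{n-1}}$ for $MU$-modules, as in \eqref{MUEqn}, then upgrades this to a $K(n-1)$-local equivalence.

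The main obstacle is reconciling two a priori different filtrations on the Tate construction: the $I_{n-1}$-adic filtration from the ideal $(2, v_1, \ldots, v_{n-2})$, and the ``$x$-adic'' filtration coming from the cellular filtration of the stunted projective spectra $RP^\infty_{-i}$. To see that the constructed map is an isomorphism on homotopy after $I_{n-1}$-completion, one needs the formal group law fact that modulo $I_{n-1}$ the $2$-series of $E(n)$ takes the form $[2](x) \equiv v_{n-1} x^{2^{n-1}} + \cdots$, which ties the ``$x$-adic'' structure coming from the projective spectra to the behavior of $v_{n-1}$. Verifying that this compatibility makes the two filtrations cofinal (so that the inverse limit of wedges maps isomorphically onto the completed Tate construction), rather than the formal construction of the maps themselves, is what carries the technical weight of the argument.
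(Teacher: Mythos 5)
Your overall scaffolding---the homotopy limit formula for $E(n)^{t\Sigma_2}$, the observation that $I_{n-1}$-completion commutes with homotopy limits, the appeal to \eqref{equation:3.4} and Proposition \ref{modulestructure}, the identification of the map on homotopy with the algebraic isomorphism, and the passage from $I_{n-1}$-completion to $K(n-1)$-localization via \eqref{MUEqn}---matches the argument this paper imports from \cite{AMS98}, and your closing remark that the weight is carried by comparing the $I_{n-1}$-adic and ``$x$-adic'' filtrations is also the paper's own assessment. However, the step where you actually produce the map fails: you claim that smashing \eqref{equation:3.4} with $E(n)$ and with $\Sigma RP^\infty_{-i}$ yields, after $I_{n-1}$-completion, a splitting of each finite stage $(\Sigma E(n)\wedge RP^\infty_{-i})^{\wedge}_{I_{n-1}}$ as a wedge of suspensions of $E(n-1)^{\wedge}_{I_{n-1}}$. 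No such finite-stage splitting exists. On any wedge of suspensions of $E(n-1)^{\wedge}_{I_{n-1}}$ the element $v_{n-1}$ acts invertibly on homotopy, but on a finite stage it does not: modulo $(I_{n-1},v_{n-1})$ the $2$-series of $E(n)$ becomes a unit multiple of $v_n x^{2^n}$, so the homotopy of the completed finite stage reduced modulo $(I_{n-1},v_{n-1})$ is a nonzero truncated free $K(n)_*$-module on powers of $x$, whereas it would have to vanish if your splitting held. The case $n=1$ makes the failure vivid: there the finite stage is the $2$-completion of $KU_{(2)}\wedge \Sigma RP^\infty_{-i}$, while $E(0)^{\wedge}_{I_0}=(H\mathbb{Q})^{\wedge}_2\simeq *$, so your claimed splitting would force every finite stage to be contractible. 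The blueshift is a phenomenon of the inverse limit only: $v_{n-1}$ becomes a unit in $\pi_*(E(n)^{t\Sigma_2})^{\wedge}_{I_{n-1}}$ precisely because $x$ is invertible in $E(n)_*((x))/([2](x))$, and Laurent series appear only after passing to the limit over $i$.

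The construction in \cite{AMS98} (mirrored in this paper's proof of the parametrized analogue, Theorem \ref{C2eqvtERsplit}) therefore runs in the opposite order. One first shows that the full completed Tate construction $(E(n)^{t\Sigma_2})^{\wedge}_{I_{n-1}}$ is a module over $(v_{n-1}^{-1}BP)^{\wedge}_{I_{n-1}}$, hence over $E(n-1)$ by Proposition \ref{modulestructure}; then, using the classes $x^j$ in its homotopy (detected by the spherical cells just above the Thom classes of the $RP^\infty_{-i}$) together with the $E(n-1)$-module action, one builds maps $\bigvee_{j\leq i}\Sigma^{2j}E(n-1)\to (E(n)^{t\Sigma_2})^{\wedge}_{I_{n-1}}$, composes with the projections to the finite stages to obtain a compatible tower of maps $\mu_{-i}$, and only then takes the inverse limit, using (as you note) that completion commutes with the limit. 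Thus all of the $E(n-1)$-linearity lives on the completed limit; the finite stages occur only as targets of projections, never as wedges of $E(n-1)$'s. If you replace your middle paragraph with this construction, the remainder of your outline---including the $2$-series and filtration comparison needed to see that the resulting map is a $\pi_*$-isomorphism after $I_{n-1}$-completion, and the identification with $K(n-1)$-localization---is in line with the cited proof.
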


\subsection{The  ordinary and parametrized $\Sigma_2$-Tate constructions for $\e(n)$}\label{Sec:RealSplitting}
The Real Johnson--Wilson theory $\e(n)$ is free and cofree, and so by Lemma \ref{tatecomparison}, we have a comparison map between the ordinary and parametrized  $\Sigma_2$-Tate constructions
$$\e(n)^{t\Sigma_2} \longrightarrow \e(n)^{t_{C_2}\Sigma_2}$$
In this section, we will show this map is an equivalence. Along the way, we will calculate the homotopy groups of both sides. 

We begin with the observation that the values of $\e(n)$-cohomology on the parametrized and ordinary classifying spaces $B_{C_2}\Sigma_2$ and $B\Sigma_2$ agree by Lemma \ref{tatecomparison}, since $\e(n)$ is cofree. These are given by the following Lemma from \cite{KW08}:\footnote{Kitchloo and Wilson prove the lemma for $\e = \e(n)$, but their proof works equally well for any Real oriented cohomology theory in which $[2](x)$ is not a zero divisor.}


\begin{lem}\cite[Lemma 3.1]{KW08}
Let $\e$ be a Real oriented cohomology theory in which $[2](x)$ is not a zero divisor. Then there is an $RO(C_2)$-graded isomorphism
$$\e^\star(B_{C_2}\Sigma_2) \cong \e^\star[[x]]/([2](x))$$
where $|x| = \rho$ and $[2](x)$ is the $2$-series of the formal group law associated to $E$. 
\end{lem}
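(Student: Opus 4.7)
The plan is to produce a class $x\in \e^\rho(B_{C_2}\Sigma_2)$ by pulling back the Real orientation along a natural classifying map, deduce the relation $[2](x)=0$ from triviality of the square of the tautological real line bundle, and verify that the resulting comparison map is an isomorphism using the equivariant Eilenberg--Moore spectral sequence.

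Since $\e$ is Real oriented, $\e^\star(\mathbb{C}P^\infty)\cong \e^\star[[u]]$ with $|u|=\rho$, and the formal group law $F$ is induced by the tensor product pairing on $\mathbb{C}P^\infty$. The tautological $C_2$-equivariant real line bundle $\gamma$ on $B_{C_2}\Sigma_2$ complexifies to a Real complex line bundle $\gamma_{\mathbb{C}}$, classified by a $C_2$-equivariant map $\iota\colon B_{C_2}\Sigma_2\to \mathbb{C}P^\infty$; set $x:=\iota^* u\in\e^\rho(B_{C_2}\Sigma_2)$. Choosing a $C_2$-equivariant Riemannian metric on $\gamma$ (by averaging), the formula $e\otimes e/\|e\|^2$ yields a nowhere-zero equivariant section of $\gamma\otimes_\mathbb{R}\gamma$, so $\gamma\otimes_\mathbb{R}\gamma\cong \underline{\mathbb{R}}$. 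Complexifying gives $\gamma_{\mathbb{C}}\otimes_\mathbb{C}\gamma_{\mathbb{C}}\cong \underline{\mathbb{C}}$ as Real complex line bundles, and its Real-oriented Euler class is $F(x,x)=[2]_F(x)$. Triviality forces $[2](x)=0$, producing a well-defined graded ring homomorphism
\[
\varphi\colon \e^\star[[x]]/([2](x))\longrightarrow \e^\star(B_{C_2}\Sigma_2).
\]

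To show $\varphi$ is an isomorphism, I would use the $C_2$-equivariant principal fibration
\[
B_{C_2}\Sigma_2\longrightarrow B_{C_2}S^1\xrightarrow{\ \cdot 2\ } B_{C_2}S^1
\]
coming from the short exact sequence of Real Lie groups $\Sigma_2\hookrightarrow S^1\xrightarrow{\cdot 2} S^1$ (with inversion involution throughout), together with the identification $B_{C_2}S^1\simeq \mathbb{C}P^\infty$. The doubling map acts on $\e^\star(\mathbb{C}P^\infty)=\e^\star[[u]]$ by $u\mapsto [2](u)$. Applying the $RO(C_2)$-graded equivariant Eilenberg--Moore spectral sequence to the associated homotopy pullback, the $E_2$-page is
\[
E_2=\operatorname{Tor}_{\e^\star[[u]]}\bigl(\e^\star,\ \e^\star[[u]]\bigr),
\]
where $u$ acts as zero on the first factor and as $[2](u)$ on the second. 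Using the Koszul resolution of $\e^\star$ by the regular element $u$, this Tor collapses to $\e^\star[[u]]/([2](u))$ in homological degree zero and to the $[2](u)$-annihilator in $\e^\star[[u]]$ in degree $-1$; the latter vanishes by the nonzerodivisor hypothesis, so the spectral sequence collapses at $E_2$ and $\varphi$ is identified with the edge map.

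The main technical obstacle is justifying the $RO(C_2)$-graded equivariant Eilenberg--Moore spectral sequence with the convergence properties needed here. A backup route is to filter $B_{C_2}\Sigma_2\simeq \mathbb{RP}(\rho^\infty)$ by its finite-dimensional $C_2$-subcomplexes $\mathbb{RP}(\rho^n)$ and inductively compute using the cofiber sequences $\mathbb{RP}(\rho^{n-1})_+ \to \mathbb{RP}(\rho^n)_+ \to \operatorname{Thom}(\gamma_{n-1}\otimes_\mathbb{R}\rho\to\mathbb{RP}(\rho^{n-1}))$, the Real Thom isomorphism for the rank-one Real complex bundle $\gamma_{n-1}\otimes_\mathbb{R}\rho$, and the formal group identity $F(x,x)=[2](x)$. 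The nonzerodivisor hypothesis ensures that the connecting maps behave as multiplication by $[2](x)/x$ times a unit, controlling the cohomology at each finite stage, and a Mittag--Leffler argument on the truncated quotients $\e^\star[[x]]/([2](x),x^n)$ yields the desired limit statement.
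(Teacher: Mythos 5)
Your construction of the class $x$ and of the relation $[2](x)=0$ is fine, but neither of your routes to the isomorphism goes through as written. The main route hangs on an $RO(C_2)$-graded equivariant Eilenberg--Moore spectral sequence for the homotopy pullback defining $B_{C_2}\Sigma_2$; you flag its existence and convergence as the technical obstacle, and that obstacle is genuine --- even nonequivariantly, Eilenberg--Moore convergence for generalized cohomology theories is delicate, and no genuine $C_2$-equivariant version with the convergence you need is available off the shelf, so this is a gap rather than a routine citation. The backup route is built on an incorrect cofiber sequence: the quotient $\r\p(\rho^{n})/\r\p(\rho^{n-1})$ is \emph{not} the Thom spectrum of the rank-$\rho$ bundle $\gamma\otimes\rho$ over $\r\p(\rho^{n-1})$; it is $\operatorname{Th}\bigl(\Hom(\gamma,\rho^{n-1})\to\r\p(\rho)\bigr)$, the Thom space of a rank-$(n-1)\rho$ bundle over $\r\p(\rho)\simeq S^{\sigma}$. (Nonequivariantly, $\r\p^{2n-1}/\r\p^{2n-3}$ has cells only in dimensions $2n-2$ and $2n-1$, whereas your claimed Thom space $\operatorname{Th}(2\gamma\to\r\p^{2n-3})=\r\p^{2n-1}/\r\p^{1}$ has cells in every dimension from $2$ to $2n-1$.) So the induction as you set it up does not run. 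A smaller issue: defining $\varphi$ on all of $\e^\star[[x]]$ already presupposes that $\e^\star(B_{C_2}\Sigma_2)$ is complete for the skeletal filtration (a $\lim{}^1$ statement), which is part of what is being proven.

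For comparison, the paper does not reprove this lemma; it cites Kitchloo--Wilson, whose argument uses exactly the fiber sequence you wrote down, but through a Gysin/Thom cofiber sequence rather than Eilenberg--Moore. The sequence $\Sigma_2\to S^1\xrightarrow{2}S^1$ exhibits $B_{C_2}\Sigma_2$ as the unit sphere bundle $S(\gamma_{\c}^{\otimes 2})$ of the square of the tautological Real line bundle over $B_{C_2}S^1\simeq\c P^\infty$, giving a cofiber sequence
$${B_{C_2}\Sigma_2}_+\longrightarrow \c P^\infty_+\longrightarrow (\c P^\infty)^{\gamma_{\c}^{\otimes 2}}.$$
Every Real line bundle is $\e$-orientable, so the Thom isomorphism identifies $\e^\star\bigl((\c P^\infty)^{\gamma_{\c}^{\otimes 2}}\bigr)\cong\e^{\star-\rho}[[\bar u]]$, and the composite with restriction to $\c P^\infty_+$ is multiplication by the Euler class $e(\gamma_{\c}^{\otimes 2})=[2](\bar u)$. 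Since $[2](\bar u)$ is not a zero divisor, this map is injective, the connecting maps in the long exact sequence vanish, and one gets short exact sequences identifying $\e^\star(B_{C_2}\Sigma_2)\cong\e^\star[[\bar u]]/([2](\bar u))$, with your class $x$ the restriction of $\bar u$; surjectivity of the restriction map comes for free, so no separate completeness argument for a map out of the power series ring is needed. Replacing your spectral-sequence and skeletal arguments by this Gysin argument is the natural repair.
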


The two Tate constructions $\e(n)^{t\Sigma_2}$ and $\e(n)^{t_{C_2}\Sigma_2}$ are homotopy limits of $\e(n)$ smashed with Thom spectra of bundles over $B\Sigma_2$ and $B_{C_2}\Sigma_2$, respectively. We now describe them more explicitly:

\begin{enumerate}
\item Let $\xi_V$ be the line bundle over $B\Sigma_2$ associated to the $\Sigma_2$ sign representation, $V$. Note that $\Sigma_2$ acts freely on the unit sphere in this representation. The total space of $\xi_V$ is given by $E\Sigma_2 \times_{\Sigma_2} V$.
\item Let $\xi_W$ be the bundle over $B_{C_2}\Sigma_2$ associated to the $\Sigma_2 \rtimes C_2$ representation $W:=\mathbb{C}$, where $C_2$ acts by complex conjugation and $\Sigma_2$ acts by rotation. The unit sphere $S(W)$ is a $\mathcal{F}(C_2, \Sigma_2)$ space.
\end{enumerate}

If we forget the $C_2$-action, then $W$ is just $2V$ as a $\Sigma_2$-representation. While $C_2$ does not act on $B\Sigma_2$, it still acts fiberwise on the pullback of $\xi_W$ along $B\Sigma_2 \rightarrow B_{C_2}\Sigma_2$. Using the inclusion of $\xi_V$ as the $C_2$-fixed points of the pullback of $\xi_W$, we have maps of Thom spectra
$$B\Sigma_2^{k\xi_V} \rightarrow B_{C_2}\Sigma_2^{k\xi_W}$$
which are $C_2$-equivariant, provided we take trivial $C_2$-action on the source.


We can compute the homotopy groups of each Tate construction by rewriting them as colimits. We start with the ordinary Tate construction. Note that $\widetilde{E\Sigma_2} =\hocolim_n S^{nV}$, and let $c(\e):=F({E\Sigma_2}_+, i_*\e)$. 




\begin{thm}\label{Thm:TateCoeffsE}
There is an isomorphism
$$\pi_{-*}(\e(n)^{t\Sigma_2}) \cong \e(n)^{\star}((\widehat{u}))/([2](\widehat{u}))$$
where $|\widehat{u}| = 1-\lambda$.
\end{thm}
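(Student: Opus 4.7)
The plan is to work directly from the homotopy limit formula
$$\e(n)^{t\Sigma_2}\;\simeq\;\limt{k}\bigl(\Sigma RP^\infty_{-k}\wedge \e(n)\bigr)$$
of Proposition \ref{Prop:TrivTateForm}, with trivial $C_2$-action on each $RP^\infty_{-k}$, and identify the resulting $C_2$-equivariant homotopy groups using Real orientability of $\e(n)$.

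First, I would compute $\e(n)^\star({B\Sigma_2}_+)$. Because $\e(n)$ is $C_2$-cofree (Section \ref{Sec:RJW}), Lemma \ref{tatecomparison} produces an equivalence
$$F({B_{C_2}\Sigma_2}_+,\e(n))\;\xrightarrow{\simeq}\;F({B\Sigma_2}_+,\e(n)),$$
and the Kitchloo--Wilson computation recalled just before the theorem identifies the common value with $\e(n)^\star[[x]]/([2](x))$, where $|x|=\rho$. The same cofreeness argument extends to the Thom spectra: for $k$ ranging over a cofinal subsequence (e.g.\ $k$ a multiple of $2^{n+1}$, so that Theorem \ref{Thm:Or} provides a Thom class), the Real-oriented Thom isomorphism yields
$$\e(n)^\star(RP^\infty_{-k})\;\cong\;x^{-k}\cdot\e(n)^\star[[x]]/([2](x)),$$
with the structure maps of the tower given by multiplication by the Euler class $x$.

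Next I would pass to the inverse limit. The Milnor exact sequence reduces the calculation to $\lim_k \e(n)^\star(RP^\infty_{-k})$ together with a $\text{lim}^1$-term. This tower is pro-isomorphic to the evident tower of truncations of $\e(n)^\star((x))/([2](x))$, which is Mittag--Leffler with vanishing $\text{lim}^1$ provided that $[2](x)$ is not a zero divisor; the latter is established by the same formal-group-law argument used in the proof of Theorem \ref{Thm:ParamTateCoeffs}. This gives
$$\pi_{-\star}\e(n)^{t\Sigma_2}\;\cong\;\e(n)^\star((x))/([2](x)).$$

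Finally I would translate this into integer degrees. Setting $\widehat u:=x\,y^{-1}$, where $y\in\pi_{\lambda+\sigma}\e(n)$ is the invertible class recalled in Section \ref{sec:coefficients}, produces a generator of degree $\rho-(\lambda+\sigma)=1-\lambda$. Since $y$ is a unit, multiplication by powers of $y$ induces a graded isomorphism of Laurent-series rings
$$\e(n)^\star((x))/([2](x))\;\cong\;\e(n)^\star((\widehat u))/([2](\widehat u)),$$
where the $2$-series on the right is the rescaled $2$-series, namely that of the formal group law expressed in the coordinate $\widehat u$. Restricting to integer-degree elements yields the stated description of $\pi_{-*}\e(n)^{t\Sigma_2}$. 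The principal technical hurdles will be (i) controlling Thom isomorphisms uniformly along the tower, handled by passing to a cofinal $\e(n)$-orientable subsystem, and (ii) verifying vanishing of $\text{lim}^1$, for which the key input is the non-zero-divisibility of $[2](x)$ in $\e(n)^\star[[x]]$.
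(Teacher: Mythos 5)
Your overall strategy (restrict to an $\e(n)$-orientable cofinal subsequence, use Thom isomorphisms, identify the transition maps with multiplication by an Euler class, invert it, and shift by the unit $y$) is in the same spirit as the paper's, but the concrete set-up has a genuine gap coming from the model you chose. If you start from the homotopy limit formula $\e(n)^{t\Sigma_2}\simeq \limt{k}(\Sigma RP^\infty_{-k}\wedge \e(n))$, then the homotopy groups of the terms in the tower are $\e(n)$-\emph{homology} groups $\e(n)_\star(\Sigma RP^\infty_{-k})$, not the cohomology groups $\e(n)^\star(RP^\infty_{-k})$ that you feed into the Milnor sequence. These are very different modules: the cohomology of $B\Sigma_2$ is the power series ring $\e(n)^\star[[x]]/([2](x))$, while the homology is a colimit over finite skeleta (dual, sum-like rather than product-like), and identifying the homology of the stunted projective spectra and controlling $\lim$ and $\lim^1$ there is a genuinely delicate problem (it is the kind of analysis that occupies \cite[Sec.~3]{AMS98}, not a formality). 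The paper sidesteps this entirely by \emph{not} using the inverse limit model for this computation: it takes the definition $\e(n)^{t\Sigma_2}=\widetilde{E\Sigma_2}\wedge F({E\Sigma_2}_+,\e(n))$, writes $\widetilde{E\Sigma_2}=\colim_k S^{kV}$, and obtains $\pi_{-\star}(\e(n)^{t\Sigma_2})\cong \colim_k \e(n)^\star(B\Sigma_2^{-k\xi_V})$ as a filtered colimit of cohomology groups, where no $\lim^1$ issue can arise. To make your argument work you should switch to that description (or else supply the missing homological identification, which is substantially harder).

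The second gap is in your identification of the transition maps as ``multiplication by the Euler class $x$.'' The bundle in the classical Tate tower is the rank-one \emph{real} bundle $\xi_V$, which is not $\e(n)$-orientable; only $2^{n+1}$-fold (in the paper, $2^{n+2}$-fold) multiples of it have Thom classes, so the structure maps along your cofinal subsequence are multiplication by the Euler class of $2^{n+2}\xi_V$, and it is not automatic what this class is. Note also the degree bookkeeping: the class of degree $\rho$ in $\e(n)^\star(B\Sigma_2)$ is the Real Euler class of the tautological Real (complex) line bundle, so it accounts for \emph{two} copies of $\xi_V$, not one; your formula $\e(n)^\star(RP^\infty_{-k})\cong x^{-k}\cdot\e(n)^\star[[x]]/([2](x))$ already conflates these. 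The technical heart of the paper's proof is precisely pinning down this Euler class: it uses the compatibility statement in Theorem \ref{Thm:Or} (the $ER(n)$-Euler class of $2^{n+2}\xi_V$ maps to a unit multiple of the $E(n)$-Euler class $u^{2^{n+1}}$) together with injectivity of the forgetful map $ER(n)^*(B\Sigma_2)\to E(n)^*(B\Sigma_2)$ in degrees divisible by $2^{n+1}$, to conclude the transition maps are multiplication by a unit times $\widehat{u}^{2^{n+1}}$, whence the colimit inverts $\widehat u$. Your sketch asserts this identification rather than proving it, and without it the final inversion step does not follow.
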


\begin{proof} 
By the definition of the Tate construction, we have that 
$$\pi_{\star}(\e(n)^{t\Sigma_2}) = \pi_\star (c(\e(n)) \wedge \widetilde{E\Sigma_2})^{\Sigma_2}.$$
We identify the right hand as $\colim_k \pi_\star (c(\e(n) \wedge S^{kV})^{\Sigma_2}$.
Since $i_*\e(n)$ is a split $\Sigma_2$-spectrum and ${E\Sigma_2}_+$ is $\Sigma_2$-free, there is an isomorphism
$$\pi_\star (c(i_*\e(n)) \wedge S^{kV})^{\Sigma_2} \cong \pi_\star F(S^{-k V} \wedge_{\Sigma_2} {E\Sigma_2}_+,i_*\e(n)).$$
Now $S^{-kV} \wedge_{\Sigma_2} {E\Sigma_2}_+$ is exactly the Thom spectrum $B\Sigma_2^{-k\xi_V}$, so we get
$$\pi_{-\star}(\e(n)^{t\Sigma_2}) \cong \colim \e(n)^\star(B\Sigma_2^{-k\xi_V}).$$
Now we identify the right-hand side. Recall from Theorem \ref{Thm:Or} that vector bundles that are multiples of $2^{n+1}$ are $\e(n)$-orientable. We choose a cofinal subsequence in the above colimit given by $k=2^{n+2}j$ (note that we give ourselves an extra factor of 2 to work with). Then there is a Thom isomorphism
$$\e(n)^*(B\Sigma_2^{-2^{n+2}j\xi_V}) \cong \e(n)^{*+2^{n+2}j}(B\Sigma_2)$$
Now we need to identify the maps in this colimit. The map $B\Sigma_2^{-2^{n+2}(j+1)\xi_V} \longrightarrow B\Sigma_2^{-2^{n+2}j\xi_V}$ induces a map
$$\e(n)^{*+2^{n+2}j}(B\Sigma_2) \longrightarrow \e(n)^{*+2^{n+2}(j+1)}(B\Sigma_2)$$
which raises degree by $2^{n+2}$ and is given by multiplication by the Euler class of $2^{n+2}\xi_V$. It remains to identify this Euler class. 

Recall that in integer degrees, we have $\e(n)^{\star}(B\Sigma_2)=\e(n)^\star[[\widehat{u}]]/([2](\widehat{u}))$, where $\widehat{u}$ is obtained from $\overline{u} \in \e(n)^\rho(B\Sigma_2)$ by multiplying by $y$, and the image of $\widehat{u}$ in (ordinary, nonequivariant) $E(n)$-cohomology is $v_n^{2^n-1}u$, where $u$ is the first Chern class of the tautological (complex) line bundle in degree 2. By Theorem \ref{Thm:Or}, we have that the Euler class of $2^{n+2}\xi_V$ in $ER(n)$-cohomology maps under the forgetful map to a unit power series multiple of the $E(n)$-Euler class of $2^{n+2}\xi_V$, which is given by $u^{2^{n+1}}$. 

We claim that $ER(n)^*(B\Sigma_2)$ injects into $E(n)^*(B\Sigma_2)$ in degrees that are multiples of $2^{n+1}$. Since $ER(n)^*(B\Sigma_2)$ is multiplicatively generated by a single class $\widehat{u}$ in degree divisible by $2^{n+1}$, the claim follows from the corresponding fact about the coefficients. The kernel of the map from $ER(n)^*$ to $E(n)^*$ is generated by the class $x$, and injectivity in degrees $2^{n+1}$ follows from the description of the coefficients of $ER(n)$, e.g. in \cite[Theorem 3.1]{KW15}

Thus, we identify the $ER(n)$ Euler class with a unit multiple of $v_n^{-2^{n+1}(2^n-1)}\widehat{u}^{2^{n+1}}$ since the map to $E(n)$-cohomology sends this to the $E(n)$-Euler class and we are in degrees in which the map from $ER(n)$ cohomology to $E(n)$-cohomology is injective.\footnote{Choosing the cofinal subsequence $k=2^{n+1}j$ at the beginning of the proof would have still given us Thom isomorphisms, but the $ER(n)$-Euler class would not  have the nice form that it does when we give ourselves an extra factor of 2 to work with.}

It follows that the maps in the direct limit are multiplication by a unit multiple of $\widehat{u}^{2^{n+1}}$. This has the effect of inverting $\widehat{u}$ in the colimit and this completes the computation.
\end{proof}


We now calculate the homotopy groups of the parametrized Tate construction. By mimicking the above calculations for the bundle $\xi_W$ over $B_{C_2}\Sigma_2$, we have an isomorphism
$$\pi_{-\star}\e(n)^{t_{C_2}\Sigma_2} \cong \colim_k \e(n)^\star(B_{C_2}\Sigma_2^{-k\xi_W}).$$
As in the ordinary Tate construction, we must identify the right-hand side. More generally, we have the following calculation for any Real oriented cohomology theory. 

\begin{thm}\label{Thm:ParamTateCoeffs}
Let $\e$ be a Real oriented cohomology theory. Then there is an $RO(C_2)$-graded isomorphism
$$\pi_{-\star}\e^{t_{C_2}\Sigma_2} \cong \e^{\star}((\overline{u}))/([2](\overline{u}))$$
where $|\overline{u}| = \rho$. 
\end{thm}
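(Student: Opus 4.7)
The plan is to closely adapt the strategy used just above for $\pi_{-\star}\e(n)^{t\Sigma_2}$, replacing the ordinary classifying space $B\Sigma_2$ with the parametrized classifying space $B_{C_2}\Sigma_2$ and the $\Sigma_2$-sign representation $V$ with the $\Sigma_2 \rtimes C_2$-representation $W = \mathbb{C}$. Since $\widetilde{E_{C_2}\Sigma_2} \simeq \hocolim_n S^{nW}$, substituting into Definition \ref{def:parametrizedTate} and commuting the colimit past homotopy yields
$$\pi_\star \e^{t_{C_2}\Sigma_2} \cong \colim_n \pi_\star \bigl(F({E_{C_2}\Sigma_2}_+, i_*\e) \wedge S^{nW}\bigr)^{\Sigma_2}.$$
Viewing $\e$ as a $\Sigma_2 \rtimes C_2$-spectrum with trivial $\Sigma_2$-action (so that it is split) and using that ${E_{C_2}\Sigma_2}_+$ is $\Sigma_2$-free, the standard manipulation identifies the $n$-th term with $\e^{-\star}\bigl(B_{C_2}\Sigma_2^{-n\xi_W}\bigr)$, where $\xi_W$ is the Real line bundle associated to $W$. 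Hence
$$\pi_{-\star}\e^{t_{C_2}\Sigma_2} \cong \colim_n \e^\star\bigl(B_{C_2}\Sigma_2^{-n\xi_W}\bigr),$$
with bonding maps induced by the inclusions $S^{nW} \hookrightarrow S^{(n+1)W}$.

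Because $\xi_W$ is a Real line bundle (Definition \ref{Def:RealVB}) and $\e$ is Real oriented, each $\xi_W^{\oplus k}$ carries a Thom class of degree $k\rho$, so we obtain a Thom isomorphism
$$\e^\star\bigl(B_{C_2}\Sigma_2^{-n\xi_W}\bigr) \cong \e^{\star+n\rho}\bigl(B_{C_2}\Sigma_2\bigr) \cong \e^{\star+n\rho}[[\overline{u}]]/([2](\overline{u})),$$
where the second isomorphism is \cite[Lem.~3.1]{KW08}, recalled above. Under these Thom isomorphisms, the bonding maps translate into multiplication by the Real Euler class of $\xi_W$, which is the restriction of the Real Thom class along the zero section. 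Since $\xi_W$ is the tautological Real line bundle over $B_{C_2}\Sigma_2$, this Euler class coincides with the canonical generator $\overline{u} \in \e^\rho(B_{C_2}\Sigma_2)$ used in the Kitchloo--Wilson description.

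Passing to the colimit inverts $\overline{u}$ and yields
$$\pi_{-\star}\e^{t_{C_2}\Sigma_2} \cong \e^\star[[\overline{u}]][\overline{u}^{-1}]/([2](\overline{u})) = \e^\star((\overline{u}))/([2](\overline{u})),$$
as required. The main point that needs verification is the identification of the Real Euler class of the tautological $\xi_W$ with the canonical generator $\overline{u}$; this is essentially a normalization check, since both classes are produced by applying the Real orientation of $\e$ to the universal Real line bundle. Notice that, in contrast to the parallel computation for $\e(n)^{t\Sigma_2}$, no auxiliary orientability factor of $2^{n+1}$ is needed here: in the parametrized setting $\xi_W$ is itself a Real line bundle and hence is canonically $\e$-oriented whenever $\e$ is Real oriented, which is precisely the structural advantage of replacing the classical Tate construction by its parametrized analog.
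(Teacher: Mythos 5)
Your proposal is correct and follows essentially the same route as the paper: identify $\e^{t_{C_2}\Sigma_2}$ with a colimit of $\e$-cohomology of Thom spectra $B_{C_2}\Sigma_2^{-k\xi_W}$, use the Real orientation to get Thom isomorphisms for $\xi_W$ itself (no $2^{n+1}$ factor needed), identify the bonding maps with multiplication by the Euler class $\overline{u}$ of the tautological Real line bundle, and invert $\overline{u}$ in the colimit via the Kitchloo--Wilson computation of $\e^\star(B_{C_2}\Sigma_2)$. The only caveat, shared with the paper's own citation of that lemma, is the standing hypothesis that $[2](\overline{u})$ is not a zero divisor.
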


\begin{proof}
Recall that $\xi_W$ has twice the (real) dimension of $\xi_V$. The bundle $\xi_W$  itself (not just a $2^{n}$ multiple of it) is orientable with respect to $\e$, so we have a Thom isomorphism
$$\e^\star(B_{C_2}\Sigma_2^{-k\xi_W}) \cong \e^{\star+k(1+\sigma)}(B_{C_2}\Sigma_2)$$
The map $B_{C_2}\Sigma_2^{-(k+1)\xi_W} \rightarrow B_{C_2}\Sigma_2^{-k\xi_W}$ induces in $\e$-cohomology the map
$$\e^{\star+k(1+\sigma)}(B_{C_2}\Sigma_2) \longrightarrow \e^{\star+(k+1)(1+\sigma)}(B_{C_2}\Sigma_2)$$
which is multiplication by the Euler class of $\xi_W$, given by $\overline{u} \in \e^{1+\sigma}(B_{C_2}\Sigma_2)$. (This is the first Chern class of the tautological Real bundle over $RP^\infty$, which pulls back from the Real bundle over $\mathbb{CP}^\infty$).

Since we know that
$$\e^{\star}(B_{C_2}\Sigma_2) \cong \e^{\star}[[\overline{u}]]/([2](\overline{u}))$$
it follows that in the colimit, the class $\overline{u}$ is inverted and we get
$$\pi_{-\star}\e^{t_{C_2}\Sigma_2} \cong \e^{\star}((\overline{u}))/([2](\overline{u}))$$
\end{proof}

We conclude this section by comparing the two Tate constructions. 

\begin{thm}\label{Thm:Agree}
The map
$$\e(n)^{t\Sigma_2} \longrightarrow \e(n)^{t_{C_2}\Sigma_2}$$
is a $C_2$-equivariant weak equivalence.
\end{thm}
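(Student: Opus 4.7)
The plan is to verify the hypotheses of Proposition \ref{HMLemma} for the comparison map $f \colon \e(n)^{t\Sigma_2} \to \e(n)^{t_{C_2}\Sigma_2}$ constructed in Lemma \ref{tatecomparison2} and the surrounding discussion. Both sides are $\mathbb{MR}$-modules, inheriting the structure from $\e(n)$, and $f$ is evidently a map of $\mathbb{MR}$-modules, so it suffices to check: (i) $f$ induces isomorphisms on $\pi_{k\rho}$, (ii) the underlying nonequivariant map induces isomorphisms on $\pi_{2k}$, and (iii) $f$ induces a monomorphism on $\pi_{k\rho-1}$.

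For the underlying map, the inclusion of fixed points $U^{C_2} \hookrightarrow U$ induces a $\Sigma_2$-equivariant weak equivalence $E\Sigma_2 \xrightarrow{\sim} E_{C_2}\Sigma_2$ (forgetting the $C_2$-action). Combined with the cofreeness of $\e(n)$ via Lemma \ref{tatecomparison}, this identifies both underlying Tate constructions with the classical $E(n)^{t\Sigma_2}$, so (ii) holds.

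For (i), we use the computations just completed. The parametrized side has $\pi_{-\star} = \e(n)^\star(\!(\overline{u})\!)/([2](\overline{u}))$ with $|\overline{u}| = \rho$, while the classical side has $\pi_{-\star} = \e(n)^\star(\!(\widehat{u})\!)/([2](\widehat{u}))$ with $|\widehat{u}| = 1-\lambda$. Since $y \in \pi_{\lambda + \sigma}\e(n)$ is a unit, multiplication by $y$ gives an $RO(C_2)$-graded isomorphism between these two rings under the substitution $\widehat{u} \leftrightarrow \overline{u} y^{-1}$. The map $f$ arises at finite stages from the inclusion of Thom spectra $B\Sigma_2^{-k\xi_V} \hookrightarrow B_{C_2}\Sigma_2^{-k\xi_W}$ induced by the fiberwise inclusion $\xi_V \hookrightarrow \xi_W|_{B\Sigma_2}$ of the $C_2$-fixed subbundle. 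Tracking Euler classes through the respective Thom isomorphisms---using the $ER(n)$-orientability of $2^{n+1}\xi_V$ from Theorem \ref{Thm:Or} (as in the proof of the classical Tate computation above) and the $\e(n)$-orientability of $\xi_W$ (used in the parametrized computation)---we see that $f$ realizes exactly this substitution $\widehat{u} \mapsto \overline{u} y^{-1}$ on the level of ring generators. Together with the fact that $f$ is an $\e(n)_\star$-algebra map, this gives an isomorphism on $\pi_{k\rho}$.

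For (iii), we use that $\pi_{k\rho-1}\e(n) = 0$ (see \cite{HK01} or the description in Section \ref{sec:coefficients}). A class in degree $k\rho-1$ on the parametrized side is a Laurent series $\sum a_j \overline{u}^j$ with $a_j \in \pi_{(k-j)\rho - 1}\e(n) = 0$, so $\pi_{k\rho - 1}(\e(n)^{t_{C_2}\Sigma_2}) = 0$. The identification $\widehat{u} = \overline{u} y^{-1}$ transports this vanishing to the classical side, yielding the monomorphism condition trivially. The main obstacle in this outline is the Euler class compatibility in step (i): verifying that the pulled-back $\e(n)$-Euler class of $\xi_W$ along $B\Sigma_2 \to B_{C_2}\Sigma_2$ matches, up to the unit $y$, the $ER(n)$-Euler class of $\xi_V$ used on the classical side. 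Once this bookkeeping is settled, the rest of the verification is formal and Proposition \ref{HMLemma} delivers the conclusion.
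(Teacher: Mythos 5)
Your reduction via Proposition \ref{HMLemma} is a reasonable framing, and mildly different from the paper, which does not invoke that criterion here but instead computes the induced map on the full $RO(C_2)$-graded coefficients directly; your points (ii) and (iii) are fine granted that $\e(n)$ is strongly even (i.e.\ $\pi_{k\rho-1}\e(n)=0$), an input the paper also uses implicitly, and granted the observation that $\e(n)^\star((\widehat u))$ and $\e(n)^\star((\overline u))$ are the same ring because $y$ is a unit. The problem is that the heart of the theorem is exactly the step you defer as ``bookkeeping,'' and as you state it the step is not even well-posed: $\xi_V$ itself is not $ER(n)$-orientable, so there is no ``$ER(n)$-Euler class of $\xi_V$'' to compare with the pullback of the Euler class of $\xi_W$; by Theorem \ref{Thm:Or} only $2^{n+1}$-fold multiples of $\xi_V$ carry Thom classes. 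The comparison therefore has to be organized along the cofinal subsequence $k=2^{n+2}j$, and at each such stage the map $B\Sigma_2^{-2^{n+2}j\xi_V}\to B_{C_2}\Sigma_2^{-2^{n+2}j\xi_W}$ does not implement a degreewise change of variables $\widehat u\mapsto \overline u y^{\pm 1}$: after the two Thom isomorphisms it is a map of $\e(n)^\star(B\Sigma_2)=\e(n)^\star(B_{C_2}\Sigma_2)$-modules given by multiplication by a relative Euler class, and one must identify that class as a unit multiple of $\overline u^{2^{n+1}j}$ (up to a power of $y$); only in the colimit, where $\widehat u$ and $\overline u$ are inverted, does this multiplication become an isomorphism.

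What is missing is a mechanism for actually computing that equivariant Euler class, and this is where the paper does the real work: since the map is a module map it suffices to find the image of $1$; the relevant degrees are multiples of $2^{n+1}$, and in those degrees $\e(n)^\star(B\Sigma_2)\cong\e(n)^\star(B_{C_2}\Sigma_2)$ injects under the forgetful map into $E(n)^*(B\Sigma_2)$ (the kernel of $ER(n)^*\to E(n)^*$ is generated by the torsion class $x$, and injectivity in these degrees is read off from the coefficient description in \cite{KW15}). This allows the image of $1$ to be computed nonequivariantly, where $B\Sigma_2\to B_{C_2}\Sigma_2$ is an equivalence and $\xi_W$ restricts to $2\xi_V$, so that the image is the $E(n)$-Euler class of $2^{n+2}j\xi_V$, a unit multiple of $u^{2^{n+1}j}$. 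Without this injectivity-plus-forgetful argument (or some substitute for it), your step (i) is an assertion rather than a proof, so the proposal has a genuine gap precisely at the point you flag.
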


\begin{proof}
Going back to the cofinal subsequence $k=2^{n+2}j$ for both $\xi_V$ and $\xi_W$, this boils down to identifying the map
\begin{align*}
\e(n)^\star(B\Sigma_2) \cong \e(n)^{\star-2^{n+2}j}(B\Sigma_2^{-2^{n+2}j\xi_V}) \longrightarrow & \e(n)^{\star-2^{n+2}j}(B_{C_2}\Sigma_2^{-2^{n+2}j\xi_W} ) \\
& \cong \e(n)^{\star+2^{n+2}j\sigma}(B_{C_2}\Sigma_2) \cong \e(n)^{\star-2^{n+2}j\lambda}(B_{C_2}\Sigma_2)
\end{align*}
where the last map is given by multiplying by $y^{2^{n+2}j}$ to shift into integer degrees. This is a map of $\e(n)^\star(B_{C_2}\Sigma_2)=\e(n)^\star(B\Sigma_2)$-modules, and so it suffices to calculate the image of $1$. In integer degrees that are multiples of $2^{n+2}$, we have that $\e(n)^\star(B\Sigma_2)\cong \e(n)^\star(B_{C_2}\Sigma_2)$ injects under the forgetful map into $E(n)^*(B\Sigma_2)\cong E(n)^*(B_{C_2}\Sigma_2)$. Since the target of $1 \in \e(n)^0(B\Sigma_2)$ will be in $\e(n)^{-2^{n+1}\lambda}(B_{C_2}\Sigma_2)$, we may calculate its image by forgetting to $E(n)$-cohomology.

Nonequivariantly, the corresponding composite is given by
\begin{align*}
E(n)^*(B\Sigma_2) \cong E(n)^{*-2^{n+2}j}(B\Sigma_2^{-2^{n+2}j\xi_V}) \longrightarrow & E(n)^{*-2^{n+2}j}(B_{C_2}\Sigma_2^{-2^{n+2}j\xi_W}) \\
&  \cong E(n)^{*+2^{n+2}j}(B_{C_2}\Sigma_2) \cong E(n)^{*-2^{n+2}j\lambda}(B_{C_2}\Sigma_2)
\end{align*}
where the last map is multiplication by $v_n^{2^{n+2}j(2^n-1)}$ (which underlies multiplication by $y^{2^{n+2}j}$).

Non-equivariantly, the map $B\Sigma_2 \longrightarrow B_{C_2}\Sigma_2$ is an equivalence, and the bundle $\xi_W$ may be identified with $2\xi_V$. It follows that the image of 1 under the above composite is given by the Euler class of $2^{n+2}j\xi_V$. We have already seen that the underlying nonequivariant Euler class of $2^{n+2}j\xi_V$ is given by a unit multiple of $u^{2^{n+1}j}$.

It follows that equivariantly, the map we are interested in sends $1$ to a unit multiple of $\overline{u}^{2^{n+1}}$. Thus, from the computation of both Tate constructions, it follows that in the inverse limit, this induces an isomorphism.
\end{proof}

\subsection{$C_2$-equivariant blueshift for $\e(n)$}\label{C2blueshift}

Now that we know the ordinary and parametrized Tate constructions of $\e(n)$ agree and we know their homotopy groups, we want to identify the homotopy type of the Tate construction after a certain completion. We begin by proving a splitting in completed $C_2$-equivariant homotopy groups, first for $BP\r \langle n \rangle$, and then for $\e(n)$. Since the analogous computations in \cite[Sec. 2]{AMS98} only depend on formal group law calculations, they carry over easily to the $C_2$-equivariant setting. In particular, all of the computations there are valid after replacing $v_k$ by $\bar{v}_k$. 

\begin{prop}\label{C2eqvtBPRsplit}
There is an isomorphism
\begin{align*}
\pi^{C_2}_\star(BP\r \langle n \rangle^{t_{C_2}\Sigma_2}) & \cong BP\r \langle n \rangle_\star ((x)) /([p](x)) \\
& \cong \prod_{k \in \z} \Sigma^{\rho k} BP \r \langle n-1 \rangle^\wedge_{p,\star} \quad &(*)\\
& \cong BP \r \langle n-1 \rangle^\wedge_{p,\star} ((x)).
\end{align*}
\end{prop}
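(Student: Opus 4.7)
The plan is to verify each of the three displayed isomorphisms in turn.

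The first isomorphism is a direct application of Theorem~\ref{Thm:ParamTateCoeffs} to $BP\r\langle n \rangle$, which is Real oriented as an $\mathbb{MR}$-module spectrum. Renaming the orientation class $\overline{u}$ (of degree $\rho$) to $x$ and using the sign convention $\pi_\star = \pi^{-\star}$, Theorem~\ref{Thm:ParamTateCoeffs} gives
$$\pi^{C_2}_\star\bigl(BP\r \langle n \rangle^{t_{C_2}\Sigma_2}\bigr) \cong BP\r \langle n \rangle_\star((x))/([p](x)),$$
where $[p](x)$ is the $p$-series of the formal group law associated to $BP\r\langle n \rangle$. (We work $p$-locally here; for the applications of the paper $p=2$.)

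For the middle isomorphism we adapt the formal group law argument of \cite[Prop.~2.11]{AMS98} to the $C_2$-equivariant setting. In $BP\r\langle n\rangle_\star$ we have the Real refinements $\overline{v}_k \in \pi_{(2^k-1)\rho}\mathbb{MR}$ of the classical generators $v_k$ (cf.\ Section~\ref{Sec:RJW}), and the formal group law lifts equivariantly, so that
$$[p](x) = p\,x +_F \overline{v}_1 x^p +_F \cdots +_F \overline{v}_n x^{p^n}.$$
The argument of Ando--Morava--Sadofsky is purely formal group law manipulation: modulo $(p,\overline{v}_1,\dots,\overline{v}_{n-1})^2$ the $p$-series is linear, and the relation $[p](x)=0$ lets one iteratively solve for $\overline{v}_n$ as a power series in $x$ with coefficients in $\mathbb{Z}_{(p)}[\overline{v}_1,\dots,\overline{v}_{n-1}]$. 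Since the relation also forces $p$ into the $x$-adic filtration (from $p\,x \equiv -\overline{v}_1 x^p - \cdots \pmod{\text{higher order}}$), passing to the quotient automatically $p$-completes the coefficient ring. Replacing $v_k$ by $\overline{v}_k$ throughout, the AMS argument transports verbatim and produces the identification with $\prod_{k\in \z}\Sigma^{\rho k} BP\r\langle n-1\rangle^{\wedge}_{p,\star}$. The last isomorphism is then the tautological rewriting of a Laurent series in a class of degree $\rho$: for any graded ring $R$, one has $R((x)) \cong \prod_{k \in \z}\Sigma^{\rho k} R$ in each $RO(C_2)$-degree.

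The main technical point to watch is that the AMS formal group law manipulations are compatible with the $RO(C_2)$-grading. Because each $\overline{v}_k$ lives in degree $(2^k-1)\rho$ and $|x| = \rho$, every monomial appearing in the $p$-series argument has degree a multiple of $\rho$, so the recursion that expresses $\overline{v}_n$ in terms of $p, \overline{v}_1,\dots,\overline{v}_{n-1}$ preserves degrees; this also ensures that $p$-completion commutes with the grading in the relevant pieces. All other ingredients of the AMS proof (e.g.\ Weierstrass preparation for $[p](x)/x$, faithful flatness at each stage) go through word for word because $BP\r\langle n\rangle_\star$ restricts on underlying homotopy to $BP\langle n\rangle_*$, on which the AMS computation is already known to apply.
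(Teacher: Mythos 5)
There is a genuine gap. Your formal group law argument (following \cite[Prop.~2.3]{AMS98} with $v_k$ replaced by $\bar{v}_k$) only produces the isomorphism in degrees that are integer multiples of $\rho$: since $|x|=\rho$ and $|\bar{v}_k|=(2^k-1)\rho$, the entire AMS recursion takes place in the subring $BP\r\langle n\rangle_{*\rho}((x))$, whose formal group law is literally isomorphic to the classical one. But the Proposition asserts an isomorphism of the full $RO(C_2)$-graded groups $\pi^{C_2}_\star$, and $BP\r\langle n\rangle_\star$ is much larger than its $*\rho$-part: it contains $a_\sigma$, $u_{2\sigma}$, and a substantial amount of $a_\sigma$-torsion (this is Hu's computation, which the paper's proof invokes at the outset). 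Your remark that ``every monomial in the $p$-series argument has degree a multiple of $\rho$'' is true but it is exactly why the AMS argument by itself says nothing about what happens in the remaining degrees; the phrase ``transports verbatim'' does not cover them, and restricting to the underlying nonequivariant homotopy does not either.

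The missing step — which is the genuinely equivariant content of the paper's proof — is the extension of the $*\rho$-graded isomorphism to the whole $RO(C_2)$-grading. One must define the map on the extra generators by $a_\sigma\mapsto a_\sigma$, $u_{2\sigma}\mapsto u_{2\sigma}$, and then verify that this is well defined and injective; surjectivity is easy, but injectivity requires a compatibility with the $a_\sigma$-torsion structure of both sides. Concretely, under the $*\rho$-graded isomorphism $\bar{v}_n$ is sent to a power series in $x$ with coefficients in $\bar{I}_n=(p,\bar{v}_1,\dots,\bar{v}_{n-1})$, and one must check that this image is $a_\sigma^{2^n-1}$-torsion, which follows because every element of $\bar{I}_n$ in $BP\r\langle n-1\rangle_\star$ is killed by $a_\sigma^{2^n-1}$. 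Without an argument of this kind your proof establishes Proposition~\ref{C2eqvtBPRsplit} only after restricting $\star$ to multiples of $\rho$. (Your first step, citing Theorem~\ref{Thm:ParamTateCoeffs}, and your sketch of the nonequivariant AMS mechanism, including the appearance of the $p$-completion, are fine and coincide with what the paper does; the secondary claim that $R((x))\cong\prod_k\Sigma^{\rho k}R$ is ``tautological'' also deserves a word of justification in the equivariant setting, since it uses vanishing of the relevant graded pieces for $k\ll 0$ rather than being a formal identity.)
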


\begin{proof}
We begin with the identification $\pi^{C_2}_\star(BP\r \langle n \rangle^{t_{C_2}\Sigma_2}) \cong BP\r \langle n \rangle^{\star}((\overline{u}))/([2](\overline{u}))$ from Theorem \ref{Thm:ParamTateCoeffs}. The $RO(C_2)$-graded homotopy groups $BP\r\langle n \rangle_\star$ can be found in \cite{Hu02}. The formal group law over $BP \r \langle n \rangle_{*\rho}$ associated to $BP \r \langle n \rangle$ is isomorphic to the formal group law over $BP \langle n \rangle_*$ associated to $BP \langle n \rangle$, with isomorphism given by $\bar{v}_k \mapsto v_k$. The proof of \cite[Prop. 2.3]{AMS98} depends only on computations with the formal group law over $BP \langle n \rangle_*$ associated to $BP \langle n \rangle$, so the proof carries over to the $C_2$-equivariant setting to give an isomorphism $\pi^{C_2}_{*\rho}(BP\r \langle n \rangle^{t_{C_2}\Sigma_2}) \simeq BP\r \langle n-1 \rangle^\wedge_{p, *\rho} ((x))$. We note that under this isomorphism, $x \mapsto x$, $\bar{v}_k \mapsto \bar{v}_k$ for $k < n$, and $\bar{v}_n$ maps to a certain power series in $x$ over $BP\r\langle n-1 \rangle_{*\rho}$. 

We can extend the isomorphism to $RO(C_2)$-grading by sending $u_{2\sigma} \mapsto u_{2\sigma}$ and $a_\sigma \mapsto a_\sigma$. Indeed, surjectivity is clear. To see the map is injective, we just need to check that the image of $\bar{v}_n$ in $BP\r\langle n-1 \rangle_\star$ is $a_\sigma^{2^n-1}$-torsion. This follows from the fact that the image is a power series with coefficients in $\bar{I}_n$, and every element of $\bar{I}_n$ is $a_\sigma^{2^n-1}$-torsion. 
\end{proof}

The next proposition follows from a similar modification of the proof of \cite[Prop. 2.11]{AMS98}.

\begin{prop}\label{AlgTateIso}
The map $(*)$ above extends to an isomorphism
$$\e(n)_\star ((x))/([2](x))^\wedge_{\bar{I}_{n-1}} \cong \e(n-1)_\star ((x))^\wedge_{\bar{I}_{n-1}}$$
where
$$\pi^{C_2}_\star((\bar{v}_n^{-1} BP \r \langle n \rangle)^{t_{C_2}\Sigma_2})^\wedge_{\bar{I}_{n-1}} \cong \pi^{C_2}_\star(\e(n)^{t_{C_2}\Sigma_2})^\wedge_{\bar{I}_{n-1}} \cong \e(n)_\star ((x))/([2](x))^{\wedge}_{\bar{I}_{n-1}}$$
and 
$$\e(n-1)_\star ((x))^\wedge_{\bar{I}_{n-1}} \cong (\bar{v}^{-1}_{n-1} BP\r \langle n -1 \rangle)((x))^\wedge_{\bar{I}_{n-1}}.$$
\end{prop}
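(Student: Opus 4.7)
The strategy is to adapt the formal-group-law computations in the proof of \cite[Prop.\ 2.11]{AMS98} to the Real setting, starting from Proposition \ref{C2eqvtBPRsplit}. The two outer identifications in the displayed chain of isomorphisms are essentially formal: the leftmost uses the definition $\e(n) = \bar v_n^{-1} BP\r\langle n\rangle$ together with Theorem \ref{Thm:ParamTateCoeffs} (which supplies $\pi^{C_2}_{-\star}(\e(n)^{t_{C_2}\Sigma_2}) \cong \e(n)_\star((x))/([2](x))$ with $|x|=\rho$), while the rightmost is just $\e(n-1) = \bar v_{n-1}^{-1} BP\r\langle n-1\rangle$. The real content is therefore to extend the map $(*)$ of Proposition \ref{C2eqvtBPRsplit} across the localization $BP\r\langle n\rangle \to \e(n)$ and across $\bar I_{n-1}$-completion, producing an isomorphism
\[
\bar v_n^{-1}\,BP\r\langle n-1\rangle^\wedge_{p,\star}((x))^\wedge_{\bar I_{n-1}} \;\cong\; \bar v_{n-1}^{-1}\,BP\r\langle n-1\rangle^\wedge_{p,\star}((x))^\wedge_{\bar I_{n-1}}.
\]

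To produce this map I would first track the image of $\bar v_n$ under the isomorphism of Proposition \ref{C2eqvtBPRsplit}. Since the Tate construction enforces $[2](x)=0$ and the universal $2$-series satisfies
\[
[2](x) \;\equiv\; \bar v_{n-1}\,x^{2^{n-1}} +_F \bar v_n\,x^{2^n} \pmod{\bar I_{n-1}\text{ and higher-order terms in } x},
\]
expanding the formal sum and using that $x$ is invertible in $(-)((x))$ yields a congruence of the form
\[
\bar v_n \;\equiv\; -\,\bar v_{n-1}\cdot x^{-2^{n-1}}\cdot u \pmod{\bar I_{n-1}},
\]
with $u$ a unit in the $\bar I_{n-1}$-adic completion. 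The degrees are consistent since $|\bar v_k|=(2^k-1)\rho$ and $|x|=\rho$. Because Proposition \ref{C2eqvtBPRsplit} sends $\bar v_k \mapsto v_k$, the congruence in gradings $k\rho$ is the classical computation of \cite[Prop.\ 2.11]{AMS98}, and the $RO(C_2)$-graded extension is obtained exactly as at the end of the proof of Proposition \ref{C2eqvtBPRsplit}, by extending via $u_{2\sigma}$ and $a_\sigma$ and noting that the higher-order contributions to the image of $\bar v_n$ are $a_\sigma$-torsion in the relevant range.

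With this congruence in hand, inverting $\bar v_n$ after $\bar I_{n-1}$-completion is the same as inverting $\bar v_{n-1}$, since the two differ by a unit and an invertible power of $x$; applying this to both sides of the isomorphism of Proposition \ref{C2eqvtBPRsplit} and completing at $\bar I_{n-1}$ produces the desired comparison. That $\bar v_n$-inversion, the parametrized Tate construction, and $\bar I_{n-1}$-completion may all be interchanged is supplied by Proposition \ref{prop:tatecompletion}, whose divisibility hypothesis is verified via Lemma \ref{lemma:completion} from the explicit description of the coefficient rings. The main obstacle is the careful algebraic bookkeeping needed to verify that, after $\bar I_{n-1}$-completion, the higher-order correction terms in the $2$-series truly collapse into the unit $u$ above; this is the technical heart of \cite[Prop.\ 2.11]{AMS98}, but since the Real formal group law over $BP\r\langle n\rangle_{*\rho}$ agrees with the classical one on $BP\langle n\rangle_*$ under $\bar v_k \leftrightarrow v_k$, the verification carries over without essential modification.
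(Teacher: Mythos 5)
Your proposal is correct and follows essentially the same route as the paper, which proves this proposition simply by importing the formal-group-law bookkeeping of \cite[Prop.~2.11]{AMS98} via the identification $\bar{v}_k \leftrightarrow v_k$ and extending to the full $RO(C_2)$-grading exactly as in the proof of Proposition \ref{C2eqvtBPRsplit}; your explicit congruence $\bar{v}_n \equiv -\bar{v}_{n-1}x^{-2^{n-1}}\cdot(\text{unit}) \pmod{\bar{I}_{n-1}}$ is precisely the mechanism being invoked there. The only cosmetic points are that the appeal to Proposition \ref{prop:tatecompletion} is not needed (the identification $\pi^{C_2}_{-\star}(\e(n)^{t_{C_2}\Sigma_2}) \cong \e(n)^\star((x))/([2](x))$ comes directly from Theorem \ref{Thm:ParamTateCoeffs} applied to the Real oriented theory $\e(n)$), and the degree check on $x$ should be done in cohomological grading (equivalently $|x|=-\rho$ homologically) for the exponents to match.
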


We now turn to proving the spectrum-level result, which is the main result of this section. 

\begin{thm}\label{C2eqvtERsplit}
There is a map of spectra
$$\limt{i} \bigvee_{j \leq i} \Sigma^{\rho j} \e(n-1) \to (\e(n)^{t_{C_2}\Sigma_2})^\wedge_{\bar{I}_{n-1}}$$
that becomes an isomorphism on homotopy groups after completion at $\bar{I}_{n-1}$ or equivalently after localization at $\k(n-1)$.
\end{thm}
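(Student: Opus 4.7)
The plan follows the strategy of Ando-Morava-Sadofsky \cite[Sec. 3]{AMS98}, with the classical Tate construction replaced by the parametrized one and all constructions promoted to the $C_2$-equivariant setting. By Theorem \ref{TheoremInverseLimit} we have
$$(\e(n)^{t_{C_2}\Sigma_2})^\wedge_{\bar{I}_{n-1}} \simeq \limt{i} \bigl(Q^\infty_{-i} \wedge \Sigma \e(n)\bigr)^\wedge_{\bar{I}_{n-1}},$$
so it suffices to construct compatible maps
$$\varphi_i\colon \bigvee_{j\leq i} \Sigma^{\rho j}\e(n-1) \longrightarrow \bigl(Q^\infty_{-i} \wedge \Sigma \e(n)\bigr)^\wedge_{\bar{I}_{n-1}}$$
indexed by $i \in \mathbb{N}^{\mathrm{op}}$ and then verify that the limiting map induces an isomorphism on $\pi^{C_2}_\star$ after completion at $\bar{I}_{n-1}$.

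The first step is to build the $\varphi_i$ by induction on the $RO(C_2)$-graded cellular filtration of $Q^\infty_{-i}$. The cells of $Q^\infty_{-i}$ are equivariant spheres $S^{\rho j}$ for $j\geq -i$ by the Thom-isomorphism description used in the proof of Theorem \ref{Thm:ParamTateCoeffs}, so after smashing with $\Sigma \e(n)$ the associated graded is a wedge of $RO(C_2)$-suspensions of $\e(n)$. By Proposition \ref{modulestructure}, the completed spectrum $(\bar{v}_{n-1}^{-1}\m\r)^\wedge_{\bar{I}_{n-1}}$ is an $\e(n-1)$-module, and since $\e(n)$ is an $\m\r$-algebra in which $\bar{v}_{n-1}$ acts invertibly after $\bar{I}_{n-1}$-completion (the usual argument: modulo $\bar{I}_{n-1}^k$, multiplication by $\bar{v}_{n-1}$ is an iso on a finite filtration), the completed Thom spectra $(Q^\infty_{-i}\wedge \Sigma \e(n))^\wedge_{\bar{I}_{n-1}}$ inherit an $\e(n-1)$-module structure. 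Using this module structure, one inductively extends over cells: the obstructions to extension live in groups of the form $\e(n-1)_\star\{\bar{v}_{n-1}, \ldots, 2\}$ by the formal group law calculation underlying Proposition \ref{AlgTateIso}, and thus vanish after $\bar{I}_{n-1}$-completion. The resulting maps $\varphi_i$ are natural in $i$ by construction, so they assemble into a map of pro-systems, and hence into a map of inverse limits.

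Once the map exists, verifying that it is an equivalence on $\bar{I}_{n-1}$-completed $C_2$-equivariant homotopy groups is formal. By Theorem \ref{Thm:ParamTateCoeffs} the homotopy of the target is $\e(n)_\star((\bar{u}))/([2](\bar{u}))^\wedge_{\bar{I}_{n-1}}$, which by Proposition \ref{AlgTateIso} is isomorphic to $\e(n-1)_\star((x))^\wedge_{\bar{I}_{n-1}}$; the source, after completion, has homotopy equal to the same ring by direct inspection of the limit of wedges. The construction of $\varphi_i$ from the $\e(n-1)$-module structure and the Thom-isomorphism identification of the cells of $Q^\infty_{-i}$ was arranged precisely to realize this algebraic isomorphism. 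The last clause of the theorem, identifying $\bar{I}_{n-1}$-completion with $\k(n-1)$-localization, is then a direct application of Theorem \ref{prop:localizationcomparison}(a): $\e(n)^{t_{C_2}\Sigma_2}$ is an $\m\r(n)$-module and its underlying spectrum $E(n)^{t\Sigma_2}$ satisfies $L_{K(n-1)}E(n)^{t\Sigma_2}\simeq (E(n)^{t\Sigma_2})^\wedge_{I_{n-1}}$ by \cite{AMS98}.

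The main obstacle will be the cell-by-cell construction of the $\varphi_i$: one must carefully keep track of the $RO(C_2)$-grading and verify that the inductive extension truly goes through on the completed parametrized Thom spectra, where the $\e(n-1)$-module structure is only available after $\bar{I}_{n-1}$-completion (and only on $\m\r$-modules). In particular, the extension requires knowing that the attaching maps in the cell filtration of $Q^\infty_{-i}\wedge \e(n)$, which encode the $2$-series $[2](\bar{u})$ via the formal group law, become $\bar{I}_{n-1}$-trivial after completion; this is exactly the content of the computation in Proposition \ref{C2eqvtBPRsplit} used in proving Proposition \ref{AlgTateIso}. Once this bookkeeping is in place, passage to the inverse limit and identification with $\k(n-1)$-localization are immediate from the results already in hand.
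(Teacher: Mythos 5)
Your high-level scaffolding (the inverse limit formula of Theorem \ref{TheoremInverseLimit}, the module structure from Proposition \ref{modulestructure}, the algebraic input of Proposition \ref{AlgTateIso}, and Theorem \ref{prop:localizationcomparison} for the completion/localization comparison) matches the paper, but the two technical steps have genuine problems. For the construction of the map: you claim each $(Q^\infty_{-i}\wedge\Sigma\e(n))^\wedge_{\bar{I}_{n-1}}$ inherits an $\e(n-1)$-module structure because ``$\bar{v}_{n-1}$ acts invertibly after $\bar{I}_{n-1}$-completion.'' That is false --- $v_{n-1}$ is not invertible in $(E(n)_*)^\wedge_{I_{n-1}}$ (for instance $E(n)_*/I_{n-1}=\f_2[v_{n-1},v_n^{\pm 1}]$), and it is not invertible on the homotopy of a single completed Thom spectrum. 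The invertibility driving Proposition \ref{AlgTateIso} comes from the Tate construction itself, via the relation $[2](\bar{u})=0$ with $\bar{u}$ a unit, so only the full inverse limit $(\e(n)^{t_{C_2}\Sigma_2})^\wedge_{\bar{I}_{n-1}}$ carries the $\e(n-1)$-module structure. Your cell-by-cell obstruction argument is also not right as stated: $Q^\infty_{-i}$ has one cell in every topological dimension $\geq -2i$, not only cells $S^{\rho j}$, and $\bar{I}_{n-1}$-completion does not kill elements of $\bar{I}_{n-1}$, so ``obstructions vanish after completion'' is unsupported. The actual construction uses one geometric fact: the Thom class of $Q^\infty_{-i}$ in degree $-i\rho$ is non-torsion, so the attaching map of the $(-i\rho+\sigma)$-cell is null and that cell is spherical; smashing its inclusion with the unit of $\e(n)$ gives classes $x^{j}\in\pi^{C_2}_{j\rho}\bigl((\e(n)^{t_{C_2}\Sigma_2})^\wedge_{\bar{I}_{n-1}}\bigr)$, the module structure of that limit spectrum extends them to maps from $\bigvee_{j\leq i}\Sigma^{j\rho}\e(n-1)$, and composing with the projections to $Q^\infty_{-i-1}\wedge\Sigma\e(n)^\wedge_{\bar{I}_{n-1}}$ produces the compatible system whose limit is the desired map.

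The second gap is the verification. You note that source and target have abstractly isomorphic completed homotopy (Theorem \ref{Thm:ParamTateCoeffs} plus Proposition \ref{AlgTateIso}) and assert that the construction ``realizes'' this isomorphism; an abstract isomorphism of homotopy groups does not show that your particular map induces one, and nothing in your argument pins this down. The paper closes this by observing that the underlying nonequivariant map of the completed map is exactly the map of \cite[Thm.\ 3.10]{AMS98}, hence an equivalence after completion, and that both sides are strongly even, so Proposition \ref{HMLemma} upgrades the underlying equivalence to a $C_2$-equivariant one (with the passage to all of $RO(C_2)$ as in Proposition \ref{C2eqvtBPRsplit}). Some such step --- either this reduction to the underlying map plus the strongly-even criterion, or a direct check in all $RO(C_2)$-degrees that your map induces the isomorphism of Proposition \ref{AlgTateIso} --- is required. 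Your final identification of $\bar{I}_{n-1}$-completion with $\k(n-1)$-localization via Theorem \ref{prop:localizationcomparison} is fine.
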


Our argument follows \cite[Sec. 3]{AMS98}, with the caveat that we must make sense of certain constructions $C_2$-equivariantly. In Section \ref{Sec:Completion}, we described $X^\wedge_{\overline{I}_{n-1}}$ for an $\mathbb{MR}(n)$-module and showed that $\pi_\star^{C_2}(X^\wedge_{\overline{I}_{n-1}}) \cong (\pi_\star^{C_2}X)^\wedge_{\overline{I}_{n-1}}$. In Proposition \ref{modulestructure}, we showed that $(\e (n)^{t _{C_2}\Sigma_2})^\wedge_{\overline{I}_{n-1}}$ has an $\e(n-1)$-module structure. We will use certain generators of $\pi_\star^{C_2}(\e(n)^{t_{C_2}\Sigma_2})^\wedge_{\overline{I}_{n-1}}$ as a $\pi_\star^{C_2}\e(n-1)$-module, then use them to construct a map which will induce an isomorphism in homotopy groups after completion at $\overline{I}_{n-1}$. 






\begin{proof}[Proof of \ref{C2eqvtERsplit}]
We construct the $C_2$-equivariant map whose underlying map is the map in \cite[Thm. 3.10]{AMS98} using a similar approach to Ando--Morava--Sadofsky. Because both side are strongly even and the underlying map is a nonequivariant weak equivalence, this map will be a $C_2$-equivariant weak equivalence by \cite[Lem. 3.4]{HM17}. 

Since $\e(n)$ is a $BP\r$-module and $\bar{v}_{n-1}$ is invertible in $\e(n)^{t_{C_2}\Sigma_2}$, we see that $(\e(n)^{t_{C_2}\Sigma_2})^\wedge_{\overline{I}_{n-1}}$ is a $(\bar{v}_{n-1}^{-1}BP\r)_{\bar{I}_{n-1}}^\wedge$-module. By Proposition \ref{modulestructure}, $(\e(n)^{t_{C_2}\Sigma_2})^\wedge_{\overline{I}_{n-1}}$ is thus an $\e(n-1)$-module. Recall that 
$$\e(n)^{t_{C_2}\Sigma_2} \simeq \limt{i} (Q^\infty_{-i} \wedge \Sigma \e(n)).$$


Now, we take $x^j \in \pi_{j\rho}((\e(n)^{t_{C_2}\Sigma_2})^\wedge_{\overline{I}_{n-1}})$ and use the $\e(n-1)$-structure to construct a sequence of maps
$$\bigvee_{j \leq i} \Sigma^{j\rho} \e(n-1)^\wedge_{\overline{I}_{n-1}} \to (\e(n)^{t_{C_2}\Sigma_2})^\wedge_{\overline{I}_{n-1}}.$$
We then make a map $\mu_{-i}$ by composing this map with the map
$$(\e(n)^{t_{C_2}\Sigma_2})^\wedge_{\overline{I}_{n-1}} \to \left(Q^\infty_{-i-1} \wedge \Sigma \e(n)\right)^\wedge_{\overline{I}_{n-1}}$$
given by Corollary ~\ref{Cor:TLim}.

Taking inverse limits of the maps $\mu_{-i}$ gives a map
$$\limt{i} \left( \bigvee_{ j \leq i} \Sigma^{j\rho} \e(n-1)^\wedge_{\overline{I}_{n-1}}) \right) \overset{f}{\to} \left(\e(n)^{t_{C_2}\Sigma_2}\right)^\wedge_{\overline{I}_{n-1}}.$$
Note that in the target, the parametrized Tate construction has been commuted with $\bar{I}_{n-1}$ completion; this is possible using the same argument as in the proof of Proposition \ref{prop:tatecompletion}. 

Denote the completion of $f$ with respect to $\overline{I}_{n-1}$ by $f^\wedge$. Then the underlying map is the map $f$ in the proof of \cite[Thm. 3.10]{AMS98} after completion, which is a weak equivalence. Because both sides are strongly even, the map is a $C_2$-equivariant equivalence. 

Since the left and right hand sides both satisfy the conditions of Theorem \ref{prop:localizationcomparison}, we have that their $\overline{I}_{n-1}$-completions and their $\k(n-1)$-localizations are equivalent.
\end{proof}

Using Theorem \ref{Thm:Agree} together with a key fact about the coefficients of $\e(n)$, the splitting of Theorem \ref{C2eqvtERsplit} can be extended to a splitting of the classical Tate construction on the fixed points $ER(n):=\e(n)^{C_2}$. 

Recall from Section \ref{sec:coefficients} that we may multiply by an appropriate power of invertible class $y \in \pi_{\lambda + \sigma}\e(n)$ to shift any class in $\pi_\star \e(n)$ into integer degree. In particular, when we do this to the $\bar{v}_i \in \pi_{(2^i-1)\rho}\e(n)$, we get the class $\widehat{v}_i \in \pi_{(2^i-1)(1-\lambda)}ER(n)$. Define the ideal $\widehat{I}_{n-1}:=(2, \widehat{v}_1, \dots, \widehat{v}_{n-2})$. As an ideal of $\pi_\star \e(n)$, it is the same as $\overline{I}_{n-1}=(2, \bar{v}_1, \dots, \bar{v}_{n-2})$.

\begin{thm}\label{ERtatesplitting}\ \\ 
\begin{enumerate}
\item
 For $n \geq 2$ there is a map of spectra
$$\limt{i} \bigvee_{j \leq i} \Sigma^{(1-\lambda_{n-1}) j} ER(n-1) \longrightarrow (ER(n)^{t\Sigma_2})^\wedge_{\widehat{I}_{n-1}}$$
that becomes an isomorphism on homotopy groups after competion at $\widehat{I}_{n-1}$ or equivalently after $K(n-1)$-localization.
\item
For $n=1$, we have an equivalence
$$\limt{i} \bigvee_{j \leq i} \Sigma^{4j} ER(0) \simeq ER(1)^{t\Sigma_2},$$
where $ER(0)\simeq H\mathbb{Q}$ and $ER(1) \simeq KO_{(2)}$.
\end{enumerate}
\end{thm}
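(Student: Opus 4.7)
The plan is to deduce Theorem \ref{ERtatesplitting} by applying $C_2$-fixed points to the parametrized blueshift equivalence of Theorem \ref{C2eqvtERsplit}, after first using the Tate comparison Theorem \ref{Thm:Agree} to bring the ordinary $\Sigma_2$-Tate construction into play. Specifically, I would start with the $C_2$-equivariant map
\[
\limt{i}\bigvee_{j\le i}\Sigma^{\rho j}\e(n-1)\longrightarrow (\e(n)^{t_{C_2}\Sigma_2})^\wedge_{\bar{I}_{n-1}},
\]
replace its target by $(\e(n)^{t\Sigma_2})^\wedge_{\bar{I}_{n-1}}$ using Theorem \ref{Thm:Agree} (completion preserves the $C_2$-equivalence, as it is a homotopy limit construction), and then apply $(-)^{C_2}$.

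On the target side, Proposition \ref{Tatefixed} gives $(\e(n)^{t\Sigma_2})^{C_2}\simeq ER(n)^{t\Sigma_2}$. Since fixed points commute with the homotopy limits that define completion, and the ideal $\bar{I}_{n-1}$ is carried to $\widehat{I}_{n-1}$ after $y$-rescaling the generators into integer degrees, the target becomes $(ER(n)^{t\Sigma_2})^\wedge_{\widehat{I}_{n-1}}$. On the source side, the invertible class $y\in\pi_{\lambda_{n-1}+\sigma}\e(n-1)$ (available because $\e(n-1)$ is an $\mathbb{MR}(n-1)$-module) yields $C_2$-equivariant equivalences $\Sigma^{\rho j}\e(n-1)\simeq\Sigma^{(1-\lambda_{n-1})j}\e(n-1)$, whose $C_2$-fixed points are $\Sigma^{(1-\lambda_{n-1})j}ER(n-1)$. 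Finally, Theorem \ref{prop:localizationcomparison}(b) converts the $\widehat{I}_{n-1}$-completion into $K(n-1)$-localization for $ER(n)$-module spectra, giving both equivalent forms of the conclusion.

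The principal obstacle is that $(-)^{C_2}$, being a right adjoint, commutes with limits but not in general with the infinite coproduct $\bigvee_{j\le i}$. My strategy to circumvent this is to verify the induced map is an equivalence by showing it induces an isomorphism on $\widehat{I}_{n-1}$-completed homotopy groups. The target's homotopy is controlled by the fixed-point analog of Proposition \ref{AlgTateIso}, obtained by combining Proposition \ref{Tatefixed} with the $y$-shift, while the source's $\widehat{I}_{n-1}$-completed homotopy coincides with a direct product of shifted copies of $(\pi_*ER(n-1))^\wedge_{\widehat{I}_{n-1}}$ indexed by $j\le i$ (the completion promotes the coproduct to the appropriate product in the $K(n-1)$-local category). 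The two sides then match via the Ando--Morava--Sadofsky formal group law computation, already established in the parametrized setting. For part (2), $n=1$ forces $\widehat{I}_0$ to be trivial and $ER(0)=H\q$ to be rational, so no completion is required; the direct equivalence then reproduces the classical Greenlees--May splitting of $KO_{(2)}^{t\Sigma_2}$.
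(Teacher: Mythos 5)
Your treatment of part (1) is essentially the paper's own argument: multiply by powers of the invertible class $y \in \pi_{\lambda_{n-1}+\sigma}\e(n-1)$ to replace $\Sigma^{\rho j}\e(n-1)$ by $\Sigma^{(1-\lambda_{n-1})j}\e(n-1)$, feed the map of Theorem \ref{C2eqvtERsplit} through the comparison of Theorem \ref{Thm:Agree}, take $C_2$-fixed points using Proposition \ref{Tatefixed} (with $\bar{I}_{n-1}$ and $\widehat{I}_{n-1}$ generating the same ideal of $\pi_\star\e(n)$), and invoke Theorem \ref{prop:localizationcomparison} to trade $\widehat{I}_{n-1}$-completion for $K(n-1)$-localization. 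The ``principal obstacle'' you flag is milder than you fear: genuine fixed points commute with arbitrary wedges because the equivariant spheres detecting $\pi_k^{C_2}$ are compact, so no detour through completed homotopy groups is needed (the paper passes over this point silently), though your workaround is harmless.

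Part (2), however, has a genuine gap. The content of the $n=1$ statement is precisely the identification of the fixed points of the source, $\bigl(\limt{i}\bigvee_{j\le i}\Sigma^{\rho j}\e(0)\bigr)^{C_2}$, and you do not address it: you only observe that $\widehat{I}_0$ is trivial and assert that the equivalence ``reproduces'' Greenlees--May. The shift-by-$y$ mechanism you use for $n\ge 2$ is unavailable at height $0$ --- there is no invertible class in degree $\lambda_0+\sigma$ in $\pi_\star\e(0)$ --- and if one formally applied it anyway one would predict summands $\Sigma^{(1-\lambda_0)j}ER(0)=\Sigma^{2j}H\q$, which is not what the theorem asserts. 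The paper instead computes directly from $\pi^{C_2}_\star\e(0)\cong\q[u_{2\sigma}^{\pm1}]$ (with $|u_{2\sigma}|=2-2\sigma$) that $(\Sigma^{(2k+1)\rho}\e(0))^{C_2}\simeq \ast$ while $(\Sigma^{2k\rho}\e(0))^{C_2}\simeq\Sigma^{-4k}H\q$; it is the vanishing of the odd-$\rho$ summands together with the $4$-fold jump on the even ones that produces the wedge of $\Sigma^{4j}ER(0)$'s in the statement. Your proposal needs this height-$0$ fixed-point computation (or some substitute for it) to close part (2).
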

\begin{proof} We begin with the case $n \geq 2$. Let $\lambda=\lambda_{n-1}$ and $y=y(n-1) \in \pi_{1-\lambda} \e(n-1)$. We may view $y^{-k}$ as a class in $\pi_{k(1-\lambda)}(\Sigma^{k\rho}\e(n-1))$. The multiplication by $y^{-k}$ map
$$\Sigma^{k(1-\lambda)}\e(n-1) \longrightarrow \Sigma^{k\rho}\e(n-1)$$
is an equivalence since $y$ is invertible. Applying this to each wedge summand (where we use different powers of $y$ for each suspension), we have an equivalence
$$\limt{i} \bigvee_{j \leq i} \Sigma^{k(1-\lambda)}\e(n-1)\simeq  \limt{i} \bigvee_{j \leq i} \Sigma^{\rho j} E\r (n-1).$$
Putting this together with the map of Theorem \ref{C2eqvtERsplit} and the equivalence with the classical Tate construction of Theorem \ref{Thm:Agree} and taking fixed points (applying Proposition \ref{Tatefixed}), we have a map
$$\limt{i} \bigvee_{j \leq i} \Sigma^{(1-\lambda) j} ER(n-1) \longrightarrow (ER(n)^{t\Sigma_2})^\wedge_{\widehat{I}_{n-1}}$$
which yields an isomorphism on homotopy groups after completion as desired. As in the parametrized case, the corresponding statement for $K(n-1)$ localization (or equivalently $KR(n-1)$-localization) follows from Theorem \ref{prop:localizationcomparison}.

We now prove the case $n=1$. We claim that
$$\left( \limt{i} \bigvee_{j \leq i} \Sigma^{\rho j} \e(0) \right)^{C_2} \simeq \limt{i} \bigvee_{j \leq i} \Sigma^{4j} H\q.$$
By \cite[Sec. 4]{KW07}, there is an isomorphism 
$$\pi^{C_2}_\star(\e(0)) \cong \z_{(2)}[2^{\pm 1}, u_{2\sigma}^{\pm 1}] \cong \q[u_{2\sigma}^{\pm 1}]$$
with $|u_{2\sigma}| = 2-2\sigma$. Therefore in integer-grading we have
$$\pi_*^{C_2}(\Sigma^{\rho j} \e(0)) \cong \begin{cases}
\q \quad & \text{ if } j=2k \text{ and } * = -4k, \\
0 \quad & \text{ if } \text{otherwise}. 
\end{cases}
$$
It follows that $(\Sigma^{(2k+1)\rho }\e(0))^{C_2} \simeq \ast$ and $(\Sigma^{2k\rho}\e(0))^{C_2} \simeq  \Sigma^{-4k}H\mathbb{Q}$. This proves the claim.
\end{proof}

\section{Tate vanishing}\label{Sec:TateVanishing}

Our second main result, vanishing for the parametrized Tate construction of Real Morava K-theory, is proven in this section. Let $G$ be any finite group and let $G$ act trivially on the $n$th Morava K-theory $K(n)$. Greenlees and Sadofsky proved the following vanishing result:

\begin{thm}\cite[Thm. 1.1]{GS96} 
The Tate construction of Morava K-theory vanishes, i.e. $K(n)^{tG} \simeq *$. 
\end{thm}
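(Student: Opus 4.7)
The plan is to reduce the statement to the case $G = C_p$ for $p$ a prime, and then carry out a direct computation using the complex orientation of $K(n)$ together with the explicit form of the $p$-series of the Honda formal group law.

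For the reduction to cyclic groups of prime order, I would argue in two steps. First, for any prime $q \neq p$, the norm map is an equivalence on the $q$-local part, so $K(n)^{tC_q} \simeq *$ automatically (since $K(n)$ is $p$-local and $|C_q|$ acts as a unit on the homotopy groups of $K(n)$). Second, for a general finite $G$, I would use an iterated Tate construction along a normal series $1 = G_0 \lhd G_1 \lhd \cdots \lhd G_r = G$ with cyclic quotients $G_i/G_{i-1}$ of prime order: vanishing at each stage propagates upward, so it suffices to know that $K(n)^{tC_p} \simeq *$ at every successive subquotient.

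For the cyclic $C_p$ case, I would use the complex orientation of $K(n)$ and the homotopy limit description $K(n)^{tC_p} \simeq \limt{k} \Sigma K(n) \wedge B{C_p}_{-k}^\infty$. The standard identification $K(n)^*(BC_p) \cong K(n)^*[[x]]/[p](x)$ with $|x|=2$ yields
$$\pi_{-*}(K(n)^{tC_p}) \cong K(n)^*((x))/[p](x),$$
where $[p](x)$ is the $p$-series of the Honda formal group law of height $n$ over $K(n)_* = \mathbb{F}_p[v_n^{\pm 1}]$. The key algebraic fact is the closed form $[p](x) = v_n x^{p^n}$. Since $v_n$ is a unit in $K(n)_*$ and $x$ is invertible in the Laurent series ring $K(n)^*((x))$, the element $v_n x^{p^n}$ is a unit, so quotienting by it gives the zero ring. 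Hence $\pi_*(K(n)^{tC_p}) = 0$ and $K(n)^{tC_p} \simeq *$.

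I expect the main obstacle to be the reduction step: justifying the iterated Tate construction formula along normal series requires care with the relevant generalized Tate spectral sequences and their convergence, and one must ensure that vanishing is preserved under each stage of the filtration. By contrast, the $C_p$ calculation itself is essentially formal once the complex orientation of $K(n)$ is in hand and the explicit form of $[p](x)$ is invoked.
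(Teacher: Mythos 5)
Your computation for $C_p$ is fine: for the Honda formal group law one indeed has $[p](x)=v_nx^{p^n}$, so $K(n)^*(BC_p)\cong K(n)^*[x]/(x^{p^n})$ is finite free over $K(n)^*$, the maps in the inverse system $\lim_k \Sigma K(n)\wedge BC_p^{-k\xi}$ are multiplication by the Euler class $x$ up to units, their composites are eventually zero, and both $\lim$ and $\lim^1$ vanish; equivalently $K(n)^*((x))/([p](x))=0$. This is a legitimate (and standard) alternative to the route Greenlees--Sadofsky take for the cyclic case, which is a finite-generation/dimension argument rather than an explicit $p$-series computation. The observation that $K(n)^{tC_q}\simeq *$ for $q\neq p$ is also correct.

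The genuine gap is the reduction of a general finite $G$ to the prime-order cyclic case. First, a normal series $1=G_0\lhd G_1\lhd\cdots\lhd G_r=G$ with cyclic quotients of prime order exists if and only if $G$ is solvable, so your argument says nothing about, e.g., $G=A_5$; the theorem is for all finite groups. Second, even for solvable $G$ the ``iterated Tate'' step is not justified: there is no equivalence $X^{tG}\simeq (X^{tN})^{t(G/N)}$. What one actually has is the octahedral cofiber sequence
$$(X_{hN})^{t(G/N)} \longrightarrow X^{tG} \longrightarrow (X^{hN}/X_{hN})^{h(G/N)} \simeq (X^{tN})^{h(G/N)},$$
so vanishing of $X^{tN}$ only kills the third term. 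The first term is the $G/N$-Tate construction of $K(n)\wedge BN_+$, where $G/N$ acts on $BN_+$ through conjugation (generally nontrivially), so the trivial-action prime-order case does not apply directly; and since $BN$ is infinite-dimensional one cannot simply run over skeleta, because the Tate construction does not commute with that colimit. So ``vanishing propagates up a normal series'' is exactly the missing idea, not a routine convergence check. The argument in \cite{GS96} (summarized in Section \ref{Sec:TateVanishing} of this paper) avoids both problems by inducting over \emph{all} proper subgroups $H<G$: if $K(n)^{tH}\simeq *$ for every proper $H$, then for $W$ the reduced regular representation one gets a model $K(n)^{tG}\simeq F(S^{\infty W},\Sigma K(n)\wedge EG_+)$, which is then shown to be contractible by a dimension argument resting on Ravenel's theorem that $K(n)_*(BG)$ is finitely generated over $K(n)_*$. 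That input, which your proposal never uses, is what makes the non-cyclic (and non-solvable) cases go through.
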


Their proof proceeds in two steps:
\begin{enumerate}
\item They show that the result holds for cyclic groups using the inverse limit formula for the Tate construction. Note that this inverse limit formula does not hold for the Tate construction of a general group.
\item To prove vanishing for general groups, they apply an inductive argument. The key point is that if the Tate constructions for the proper subgroups of $G$ vanish, then there is an understandable model for the Tate construction \cite[Prop. 3.2]{GS96}.
\end{enumerate}

Our proof for vanishing of the parametrized Tate construction follows a very similar line of reasoning.  In Section \ref{Sec:TVCyclic}, we show that $\k(n)^{t_{C_2}G} \simeq *$ for all finite cyclic groups $G$. In Section \ref{Sec:TVAll}, we extend this vanishing result to all finite groups. Along the way, we  prove finite generation for the Real Morava K-theory of $C_2$-equivariant classifying spaces; this result may be of independent interest.

\subsection{Parametrized Tate vanishing for cyclic groups}\label{Sec:TVCyclic}

Throughout this section, we assume that $G$ is a finite group. 

\begin{lem}\label{Lem:2.1}
Let $\xi$ be a positive dimensional Real vector bundle over $B_{C_2}G$ and let $\k$ be a Real oriented cohomology theory with $\k_\star(B_{C_2}G)$ finitely generated over $\e_\star$ (where $\mathbb{K}$ is an $\mathbb{E}$-module spectrum). Then
$$\limt{s} (\k \wedge B_{C_2}G^{(-s\xi)}) \simeq *.$$
\end{lem}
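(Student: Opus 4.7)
The plan is to adapt the Greenlees--Sadofsky argument for classical Morava $K$-theory \cite[Lem.~2.1]{GS96} to the Real oriented $C_2$-equivariant setting. Since $\mathbb{K}$ is Real oriented and $\xi$ is a Real vector bundle of complex dimension $d > 0$, the Real Thom isomorphism provides a $\mathbb{K}_\star$-linear identification
\[
\mathbb{K}_\star\bigl(B_{C_2}G^{(-s\xi)}\bigr) \cong \mathbb{K}_{\star + s d\rho}\bigl(B_{C_2}G\bigr),
\]
and under this identification the structure map of the tower, coming from Thom-ification of the zero section $S^0 \hookrightarrow S^\xi$, becomes cap product with the equivariant Euler class $\bar{e}(\xi) \in \mathbb{K}^{d\rho}(B_{C_2}G)$.

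The crux of the argument is to show that $\bar{e}(\xi)$ acts nilpotently on $\mathbb{K}_\star(B_{C_2}G)$. Since $\xi$ is positive dimensional, $\bar{e}(\xi)$ restricts to zero on a point and therefore lies in the augmentation ideal of the $RO(C_2)$-graded algebra $\mathbb{K}^\star(B_{C_2}G)$. Combined with the hypothesis that $\mathbb{K}_\star(B_{C_2}G)$ is finitely generated over $\mathbb{E}_\star$, an $RO(C_2)$-graded Nakayama/Artinian argument should supply a uniform exponent $N$ with $\bar{e}(\xi)^N \cdot \mathbb{K}_\star(B_{C_2}G) = 0$. Concretely, I would pick $\mathbb{E}_\star$-module generators $m_1,\ldots,m_r$ and use the strictly positive $\rho$-grading of $\bar e(\xi)$ to argue that each orbit $\{\bar e(\xi)^k m_i\}_{k \geq 0}$ must leave the bounded $\mathbb{E}_\star$-span of the $m_j$'s in finitely many steps.

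Once nilpotence is established, any composition of $N$ consecutive structure maps in the tower $\{\mathbb{K}_\star(B_{C_2}G^{(-s\xi)})\}_s$ vanishes. It follows at once that $\lim_s \mathbb{K}_\star(B_{C_2}G^{(-s\xi)}) = 0$ and that the Mittag--Leffler condition holds with eventual image zero, forcing ${\lim}^1_s = 0$. The Milnor exact sequence then gives $\pi_\star\bigl(\limt{s}(\mathbb{K}\wedge B_{C_2}G^{(-s\xi)})\bigr) = 0$, whence $\limt{s}(\mathbb{K}\wedge B_{C_2}G^{(-s\xi)}) \simeq *$ as required.

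The main obstacle is the nilpotence step. The classical augmentation-ideal argument for finite-dimensional commutative $K(n)^*$-algebras has to be re-run in the $RO(C_2)$-graded world, where $\mathbb{K}_\star$ mixes integer- and representation-graded classes (including $\sigma$-torsion elements like $a_\sigma$); one must also keep careful track of the distinction between finite generation over $\mathbb{E}_\star$ and the finer finite generation over $\mathbb{K}_\star$ that the ring-theoretic argument naturally wants to use.
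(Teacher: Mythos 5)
Your reduction is fine as far as it goes: since $\k$ is Real oriented and $\xi$ is a Real bundle, the Thom isomorphism identifies $\k \wedge B_{C_2}G^{(-s\xi)}$ with $\Sigma^{-sd\rho}\,\k \wedge {B_{C_2}G}_+$ and the structure maps with multiplication by the Euler class $\bar{e}(\xi)\in\k^{d\rho}(B_{C_2}G)$; and vanishing of \emph{all} $RO(C_2)$-graded homotopy groups of the limit would indeed force equivariant contractibility (the cofiber sequence ${C_2}_+\to S^0\to S^\sigma$ recovers the underlying homotopy). But the nilpotence step, which you correctly flag as the crux, is a genuine gap, and the mechanism you propose cannot close it. The degree count (``the orbit $\bar{e}(\xi)^k m_i$ must leave the bounded $\e_\star$-span of the generators'') fails because $\e_\star$ is not bounded in any direction: it contains units in nonzero $RO(C_2)$-degrees (e.g.\ $\bar{v}_n^{\pm1}$, and the class $y$ in degree $\lambda+\sigma$), so the $\e_\star$-span of finitely many elements meets essentially every degree and multiplication by a class of degree $d\rho$ never has to leave it. The same reasoning would ``prove'' that any positive-degree element acts nilpotently on any finitely generated $\e_\star$-module, which is false (consider $\bar{v}_{n-1}$ acting on $\e(n)_\star$ itself). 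Nakayama/Artinian arguments do not apply either: $\bar{e}(\xi)$ lies in $\k^\star(B_{C_2}G)$ rather than in $\e_\star$, $\e_\star$ is neither a graded field nor Artinian, and even classically, finite generation only gives stabilization $\bar{e}^N M=\bar{e}^{N+1}M$, not vanishing of the stable image. Killing that stable image requires a geometric finiteness input --- and since the lemma allows an arbitrary Real oriented $\e$-module $\k$ and an arbitrary positive-dimensional Real bundle $\xi$, the statement ``$\bar{e}(\xi)$ acts nilpotently'' is essentially equivalent to the lemma itself, so it cannot be taken for free.

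For comparison, the paper avoids Euler-class algebra altogether and runs the Greenlees--Sadofsky skeletal argument: finite generation of $\k_\star(B_{C_2}G)$ over $\e_\star$ means every class is carried by the Thom spectrum over a finite skeleton $B_{C_2}G^{\langle r\rangle}$, and the composite $\k\wedge (B_{C_2}G^{\langle r\rangle})^{(-(s+j)\xi)}\to \k\wedge B_{C_2}G^{(-s\xi)}$ is null for $j\gg 0$ by dimension/connectivity counts, checked once on underlying spectra (this is literally \cite[Lem.\ 2.1]{GS96}) and once on geometric fixed points, using that $\Phi^{C_2}$ of a Thom spectrum is the Thom spectrum of the fixed bundle over the fixed-point space and that passing to $C_2$-fixed points of an $r$-dimensional $C_2$-CW complex does not raise dimension. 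If you want to keep your Euler-class formulation, the honest way to prove the uniform nilpotence is to import exactly this skeletal finiteness (each generator, being supported on a finite skeleton, is killed by a bounded power of $\bar{e}(\xi)$ for connectivity reasons, and finite generation makes the bound uniform) --- at which point you have essentially reproduced the paper's proof rather than found an independent algebraic one.
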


\begin{proof}
By \cite[Lem. 2.1]{GS96}, the composites of the underlying maps of the structure maps in the inverse limit are eventually null for dimension reasons. Therefore it suffices to show that the geometric fixed points of the composites of the structure maps in the inverse limit are eventually null. We do so by following Greenlees and Sadofsky's proof of \cite[Lem. 2.1]{GS96}. 

The assumption that $\k_\star(B_{C_2}G)$ is finitely generated over $\e_\star$ implies that each generator of $\k_\star(B_{C_2}G)$ is supported on some finite skeleton of $B_{C_2}G$, say $B_{C_2}G^{\langle r \rangle}$. Recall that the geometric fixed points of a Thom spectrum may be computed by taking the fixed points of each (suspension of a) Thom space in the spectrum \cite{Man04}. More precisely, we have
$$\Phi^{C_2}(Th(-s\xi \to B_{C_2}G^{\langle r \rangle})) \simeq Th((-s\xi|_{(B_{C_2}G^{\langle r \rangle})^{C_2}})^{C_2} \to (B_{C_2}G^{\langle r \rangle})^{C_2}).$$
Let $\lambda$ be the complex dimension of $-s\xi$. Then $(-s\xi)^{C_2}$ has real dimension $\lambda$. On the other hand, the real dimension $d$ of $(B_{C_2} G)^{C_2}$ satisfies $0 \leq d \leq r$. Indeed, this follows from the facts that $B_{C_2}G$ admits a $C_2$-CW structure for any discrete group $G$ \cite{Luc05} and the $C_2$-fixed points of an $r$-dimensional $C_2$-CW complex consist of the cells of the form $C_2/C_2 \times D^k$. In particular, the dimension does not increase after taking fixed points. Given these constraints on dimension, a staightforward modification of the dimension argument appearing in the proof of \cite[Lem. 2.1]{GS96} shows that the geometric fixed points of the composite
$$\k \wedge (B_{C_2}G^{\langle r \rangle})^{(-(s+j)\xi)} \to \k \wedge B_{C_2}G^{(-(s+j)\xi)} \to \k \wedge B_{C_2}G^{(-s\xi)}$$
are null for $j$ sufficiently large.
\end{proof}

\begin{rem2}
It may be possible to obtain tighter dimension bounds using concrete geometric models for $B_{C_2}G$, but we only needed rough bounds to apply the argument from \cite{GS96}. 
\end{rem2}

\begin{lem}\label{Lem:2.2}
Let $V$ be some finite, positive dimensional Real representation of $G$. Let $\k$ be a Real oriented cohomology theory such that $\k_\star(B_{C_2}H)$ is finitely generated over $\k_\star$ for all $H \leq G$. Then the $G \rtimes C_2$-spectrum
$$F(S^{\infty V}, i_*\k \wedge E_{C_2}G_+)$$
is equivariantly contractible. 
\end{lem}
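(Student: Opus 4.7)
The strategy follows the proof of the classical analog by Greenlees--Sadofsky \cite[Lem.~2.2]{GS96}, adapted to the $C_2$-parametrized setting. Rewriting the spectrum as a homotopy limit
\[
F(S^{\infty V}, i_*\k \wedge E_{C_2}G_+) \;\simeq\; \limt{s}\, \Sigma^{-sV}\bigl(i_*\k \wedge E_{C_2}G_+\bigr),
\]
we will show equivariant contractibility by verifying that each $K$-fixed-point spectrum vanishes for every closed subgroup $K \leq G \rtimes C_2$.

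The central computation is the case $K = G$. Because $G$ acts freely on $E_{C_2}G_+$ and trivially on $i_*\k$, the Adams isomorphism gives an equivalence of $C_2$-spectra
\[
\bigl(i_*\k \wedge E_{C_2}G_+ \wedge S^{-sV}\bigr)^G \;\simeq\; \k \wedge B_{C_2}G^{(-s\xi_V)},
\]
where $\xi_V$ is the positive-dimensional Real vector bundle over $B_{C_2}G$ associated to $V$. Since categorical fixed points commute with homotopy limits, the $G$-fixed-point spectrum is identified with $\limt{s}\bigl(\k \wedge B_{C_2}G^{(-s\xi_V)}\bigr)$, which is contractible by Lemma~\ref{Lem:2.1} using that $\k_\star(B_{C_2}G)$ is finitely generated over $\k_\star$. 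Taking further $C_2$-fixed points handles $K = G \rtimes C_2$.

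For a general subgroup $K \leq G \rtimes C_2$ with $K_0 := K \cap G \subsetneq G$, we induct on $|G|$. The restriction $E_{C_2}G_+|_K$ is a model for the universal space of the family $\cf(C_2, K_0)$ inside $K$ (viewed as the extension of $K/K_0 \leq C_2$ by $K_0$), and $V|_K$ inherits the positive-dimensionality and the fixed-point conditions required to apply the lemma to the smaller group $K_0$. The inductive hypothesis---where the finite generation requirement on $\k_\star(B_{C_2}H)$ for $H \leq K_0$ is inherited from the ambient hypothesis for $H \leq G$---then yields contractibility of the $K$-fixed points. The base case $G = \{e\}$ reduces to showing $F(S^{\infty V}, i_*\k) \simeq *$ as a $C_2$-spectrum: underlyingly, $S^{\infty V}$ is contractible as a colimit of null-homotopic inclusions of spheres; equivariantly, $S^{\infty V|_{C_2}} \simeq \widetilde{EC_2}$ and $\k = \k(n)$ is $C_2$-cofree as an $\m\r(n)$-module (Section~\ref{Sec:RJW}), so $F(\widetilde{EC_2}, \k) \simeq *$.

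The main obstacle will be the bookkeeping in the induction step: verifying that $E_{C_2}G_+|_K$ genuinely models the universal space for $\cf(C_2, K_0)$ in $K$, checking that the Real representation-theoretic constraints on $V$ descend to its restrictions, and confirming that the interchange of fixed points with the homotopy limit over $s$ is valid (most cleanly via the Milnor sequence and the fact that the Euler class $e(V)$ acts nilpotently on the relevant $\k$-(co)homology thanks to finite generation and Lemma~\ref{Lem:2.1}). Once this is in place, the cases assemble to give equivariant contractibility of $F(S^{\infty V}, i_*\k \wedge E_{C_2}G_+)$.
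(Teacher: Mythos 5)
Your central computation is the same as the paper's: rewrite $F(S^{\infty V}, i_*\k \wedge {E_{C_2}G}_+)$ as $\limt{s}\, i_*\k \wedge {E_{C_2}G}_+ \wedge S^{-sV}$, identify fixed points of each term with $\k$ smashed against a Thom spectrum over a parametrized classifying space, and invoke Lemma \ref{Lem:2.1}. Where you diverge is in the reduction of full equivariant contractibility to that computation, and that is where the gaps are. The paper runs the Thom-spectrum identification uniformly for \emph{every} $H \leq G$, obtaining $F(S^{\infty V}, i_*\k\wedge {E_{C_2}G}_+)^H \simeq \limt{r}\,\k\wedge B_{C_2}H^{(-r\xi)}$ (this is exactly why the hypothesis demands finite generation of $\k_\star(B_{C_2}H)$ for all $H \leq G$, not just for $G$), and then cites a detection lemma \cite[Lem.~2.29]{Qui19b} to pass from the vanishing of these $H$-fixed points to contractibility as a $G\rtimes C_2$-spectrum. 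You instead treat only $H=G$ directly and propose to dispose of every other closed subgroup $K \leq G\rtimes C_2$ by an induction on $|G|$, re-applying the lemma to $K_0 = K\cap G$.

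That induction step has a genuine gap. To ``apply the lemma to $K_0$'' you need $K$ to be of the form $K_0 \rtimes C_2$ with an induced Real structure on $K_0$, so that $V|_K$ becomes a Real $K_0$-representation and ${E_{C_2}G}_+|_K$ becomes ${E_{C_2}K_0}_+$. But the lemma is stated for an arbitrary finite Real group $(G,\sigma_G)$, and a subgroup $K$ with $K\cap G = K_0$ is merely an extension of $K/K_0 \leq C_2$ by $K_0$, which need not split: for $G = C_4$ with trivial involution and $K = \langle (t,\sigma)\rangle \cong C_4$ one has $K_0 \cong C_2$ with nonsplit extension, so the restricted data is not of the shape your inductive hypothesis covers (for abelian $G$ with inversion the extension does split, but the splitting, the identification of $V|_K$ as a Real $K_0$-representation, and the compatibility of $i_*$ with restriction are all asserted rather than checked, and the lemma is later applied to arbitrary finite groups). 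Your base case is also argued incorrectly: a Real representation of the trivial group restricts to $C_2$ as a sum of regular representations, so $V^{C_2}\neq 0$ and $S^{\infty V}$ is already equivariantly contractible; it is not $\widetilde{E{C_2}}$, and the appeal to $C_2$-cofreeness of ``$\k = \k(n)$'' imports a hypothesis the lemma does not have (it concerns a general Real oriented $\k$ with the finite-generation assumptions). The base-case conclusion is true, but for this simpler reason. The repair is to do what the paper does: compute the $H$-fixed points for all $H\leq G$ at once via the Adams-isomorphism/Thom identification (the step you already carried out for $H=G$), and then invoke the Whitehead-type detection statement of \cite{Qui19b} rather than an induction on the order of $G$.
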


\begin{proof}
We modify the proof of \cite[Lem. 2.2]{GS96}. For any spectrum $K$, 
$$F(S^{\infty V}, i_*K \wedge E_{C_2}G_+) \simeq \limt{r} F(S^{rV},i_*K \wedge E_{C_2}G_+) \simeq \limt{r} i_*K \wedge E_{C_2}G_+ \wedge S^{-rV}.$$
If $H \leq G$, then $V$ is also a Real representation of $H$, and $E_{C_2}G$ is a model for $E_{C_2}H$. So
\begin{equation}\label{Eqn:Equiv8}
(i_*\k \wedge E_{C_2}G_+ \wedge S^{-rV})^H \simeq \k \wedge B_{C_2}H^{(-r\xi)}
\end{equation}
where $\xi$ is the Real bundle over $B_{C_2}H$ induced by the Real $H$-representation $V$ and $B_{C_2}H^{(-r\xi)}$ is the $C_2$-equivariant Thom spectrum. The equivalence \ref{Eqn:Equiv8} follows from the proof of \cite[Thm. B]{QS21a}. We see then that
$$F(S^{\infty V}, i_*\k \wedge E_{C_2}G_+)^H \simeq \limt{r} \k \wedge B_{C_2}H^{(-r\xi)}.$$
The right-hand side is contractible for all $H \leq G$ by Lemma \ref{Lem:2.1}, so the left-hand side is equivariantly contractible. 
\end{proof}

\begin{lem}
The group $\k(n)^\star(B_{C_2}G)$ is finitely generated over $\e(n)^\star$. Dually, $\k(n)_\star(B_{C_2}G)$ is finitely generated over $\e(n)_\star$. 
\end{lem}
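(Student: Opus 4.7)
The plan is to reduce to Ravenel's classical theorem that $K(n)^*(BG)$ is finite-dimensional over the graded field $K(n)^*$ for every finite group $G$, by means of the equivariant Kitchloo--Wilson cofiber sequence of Section~\ref{sec:kwfibration} applied to $\k(n)$. The $C_2$-cofiber sequence $C_{2+}\to S^0\to S^\sigma$, together with the invertibility of $y\in \pi_{\lambda+\sigma}\m\r(n)$ on $\k(n)$, produces a cofiber sequence of $C_2$-spectra
\[
\Sigma^\lambda \k(n) \xrightarrow{\,x\,} \k(n) \to F(C_{2+},\k(n)),
\]
in which $x^{2^{n+1}-1}=0$, so that $x$ acts nilpotently on any $\k(n)$-module.

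Applying $[B_{C_2}G_+,\Sigma^\star(-)]^{C_2}$ to this cofiber sequence yields a long exact sequence
\[
\cdots\to \k(n)^{\star+\lambda}(B_{C_2}G) \xrightarrow{\,x\,} \k(n)^\star(B_{C_2}G)\to K(n)^{|\star|}(BG)\to \k(n)^{\star+\lambda+1}(B_{C_2}G)\to\cdots,
\]
where the identification of the third term uses the adjunction $[B_{C_2}G_+\wedge C_{2+},\Sigma^\star\k(n)]^{C_2}\cong [B_{C_2}G_+,\Sigma^\star\k(n)]^e$ together with the fact that the underlying space of $B_{C_2}G$ is $BG$. Ravenel's theorem then gives that $K(n)^*(BG)$ is finitely generated over $K(n)^*$, and hence over $\e(n)^\star$ via the natural ring map $\e(n)^\star\to K(n)^*$ obtained by quotienting $\bar{v}_0,\ldots,\bar{v}_{n-1}$ and integer-grading via $y$.

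From the long exact sequence, both the kernel and cokernel of multiplication by $x$ on $\k(n)^\star(B_{C_2}G)$ are subquotients of $K(n)^*(BG)$, and so are finitely generated as $\e(n)^\star$-modules. Setting $N:=2^{n+1}-1$, the nilpotency $x^N=0$ yields a finite filtration
\[
0\subset \ker(x)\subset \ker(x^2)\subset\cdots\subset \ker(x^N)=\k(n)^\star(B_{C_2}G)
\]
whose successive quotients $\ker(x^{k+1})/\ker(x^k)$ embed into $\ker(x)$ via multiplication by $x^k$, and so are finitely generated. Hence $\k(n)^\star(B_{C_2}G)$, as an iterated extension of finitely generated $\e(n)^\star$-modules, is itself finitely generated. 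The dual statement for $\k(n)_\star(B_{C_2}G)$ follows from the analogous argument obtained by smashing the cofiber sequence with $B_{C_2}G_+$ and applying $\pi_\star^{C_2}$, together with the finite-dimensionality of $K(n)_*(BG)$ over the graded field $K(n)_*$.

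The main obstacle is verifying that the outer terms in the long exact sequence identify with $K(n)^*(BG)$ as $\e(n)^\star$-modules---not merely as abelian groups---which follows from the compatibility of the $\m\r(n)$-module structure on $\k(n)$ with restriction to the trivial subgroup. The remainder is a formal extension argument that works uniformly across the $RO(C_2)$-grading.
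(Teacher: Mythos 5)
Your proposal is correct and follows essentially the same route as the paper's proof: map $B_{C_2}G$ into the Kitchloo--Wilson cofiber sequence realizing multiplication by the nilpotent class $x$ with cofiber the underlying nonequivariant theory, identify the third term with $K(n)^*(BG)$, invoke Ravenel's finite generation over $K(n)^*$, and conclude by a finite filtration by kernels of powers of $x$. The only points the paper makes explicit that you pass over quickly are the $n=0$ case (treated there separately with the unshifted $a_\sigma$-fibration, since the class $x$ and the relation $x^{2^{n+1}-1}=0$ are only available for $n\geq 1$) and the justification that the kernel and cokernel of $x$, being sub(quotient)modules of $K(n)^*(BG)$, are finitely generated --- the paper secures this by reducing to integer grading via $(\lambda+\sigma)$-periodicity and using that $E(n)_*$ is Noetherian, while in your formulation one should note that these submodules are in fact $K(n)^*$-submodules of a finite-dimensional module over the graded field $K(n)^*$.
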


\begin{proof}


First suppose $n\geq 1$. Since both $\k(n)$ and $\e(n)$ are $(\lambda+\sigma)$-periodic, it suffices to show the statement for the integer-graded part. For any $\mathbb{MR}(n)$-module spectrum $\mathbb{E}$, we have a Kitchloo--Wilson fibration (Section \ref{sec:kwfibration})
$$\xymatrix{\Sigma^{\lambda}\mathbb{E}^{C_2} \ar@{->}[r]^-x &  \mathbb{E}^{C_2} \ar@{->}[r] & E.}$$
We plug in the $\mathbb{MR}(n)$-module spectrum $\mathbb{E}:=\k(n) \wedge B_{C_2}G$. Nonequivariantly, we have that $B_{C_2}G$ and $\k(n)$ are equivalent to $BG$ and $K(n)$, respectively. Thus, the underlying nonequivariant spectrum of $\k(n) \wedge B_{C_2}G$ is $K(n) \wedge BG$. We therefore have a fibration of spectra
$$\xymatrix{\Sigma^{\lambda}(\k(n) \wedge B_{C_2}G)^{C_2} \ar@{->}[r]^{x} &  (\k(n) \wedge B_{C_2}G)^{C_2}  \ar@{->}[r] & K(n) \wedge BG}.$$
Applying homotopy groups gives a long exact sequence of modules over $E(n)_*$. Note that $E(n)_*$ is finitely generated (with two generators) over $ER(n)_*$, so it suffices to show that $\pi_*(\k(n) \wedge B_{C_2}G)^{C_2}=\k(n)_*(B_{C_2}G)$ is finitely generated over $E(n)_*$.

Denote the kernel (resp. cokernel) of $x_* \colon \k(n)_*(B_{C_2}G) \overset{x_*}{\to} \k(n)_*(B_{C_2}G)$ by $\ker(x_*)$ (resp. $\coker(x_*)$). Then we have (we omit the grading shifts)
$$0 \rightarrow \ker(x_*) \rightarrow K(n)_*(BG) \rightarrow \coker(x_*) \rightarrow 0.$$
Because $K(n)_*(BG)$ is finitely generated over $K(n)_*$ \cite{Rav82}, it is finitely generated over $E(n)_*$. Since $E(n)_*$ is Noetherian, this implies that both $\ker(x_*)$ and $\coker(x_*)$ are finitely generated over $E(n)_*$. Note that
$\ker(x^2_*)/\ker(x_*)$ is a submodule of $\coker(x_*)$. Hence the short exact sequence of $E(n)_*$-modules
$$0 \rightarrow \ker(x_*) \rightarrow \ker(x^2_*) \rightarrow \ker(x^2_*)/\ker(x_*) \rightarrow 0$$
implies that $\ker(x^2_*)$ is finitely generated over $E(n)_*$. Inductively, we know $\ker(x^k_*)$ is finitely generated for all positive integers $k$. Because $x$ is nilpotent, $\k(n)_*(B_{C_2}G)$ equals $\ker(x^L_*)$ for $L$ large enough and it is finitely generated over $E(n)_*$. Thus, $\k(n)_*(B_{C_2}G)$ is finitely generated over $ER(n)_*$, and so $\k(n)_\star(B_{C_2}G)$ is finitely generated over $\e(n)_\star$, which completes the proof.

The proof of finite-generation of cohomology is completely analogous (one simply maps $B_{C_2}G$ into the Kitchloo--Wilson fibration rather than smashing with it).

In the case $n=0$, we use the (unshifted) fibration
$$\xymatrix{(\Sigma^{\sigma}B_{C_2}G \wedge \k(0))^{C_2} \ar@{->}[r]^-{a_{\sigma}} & (B_{C_2}G \wedge \k(0))^{C_2}  \ar@{->}[r] & BG \wedge K(0)}$$
and note that since mulitplication by $a_{\sigma}$ is null-homotopic on $\k(0)$, it follows that the homotopy of the middle term injects into the homotopy of the right hand term. Since $K(0)_*(BG)$ is finitely generated over $E(0)_*=\mathbb{Q}$, it follows that $\pi_*((B_{C_2}G \wedge \k(0))^{C_2})$ is as well.

\end{proof}

\begin{cor}\label{Cor:CycVanishing}
If $G$ is a finite cyclic group, then
$$(i_*\k(n))^{t_{C_2}G} \simeq *.$$
\end{cor}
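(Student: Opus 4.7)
The plan is to mimic Greenlees--Sadofsky's cyclic argument in the parametrized setting: rewrite $(i_*\k(n))^{t_{C_2}G}$ as a homotopy inverse limit of $\k(n)$ smashed with Thom spectra of negative multiples of a Real bundle over $B_{C_2}G$, and then apply Lemma~\ref{Lem:2.1} directly.

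First, I will produce a usable Real representation. Since $G$ is a finite (hence finitely generated) abelian group equipped with inversion involution, $(G,(-)^{-1})$ admits a free Real representation, and Lemma~\ref{Lem:FGA} then supplies a Real $G \rtimes C_2$-representation $V$, necessarily positive dimensional, whose infinite direct sum has unit sphere an $\cf(C_2,G)$-space; in particular $S(V^{\oplus\infty})$ is a model for $E_{C_2}G$. Plugging this $V$ into the inverse limit description of the parametrized Tate construction from \cite[Thm. 2.40]{Qui19b} (the $G=\Sigma_2$ case of which is Theorem~\ref{TheoremInverseLimit}), and using that the tangent representation $A$ at the identity of the finite group $G$ vanishes, I obtain
$$(i_*\k(n))^{t_{C_2}G} \;\simeq\; \limt{s}\bigl(B_{C_2}G^{(-sV)} \wedge \Sigma \k(n)\bigr).$$

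Second, I will feed this into Lemma~\ref{Lem:2.1}. The finite generation result established in the preceding lemma says that $\k(n)_\star(B_{C_2}G)$ is finitely generated over $\e(n)_\star$, which is exactly the hypothesis required. Applied to the positive dimensional Real bundle $\xi$ over $B_{C_2}G$ induced by $V$, Lemma~\ref{Lem:2.1} yields $\limt{s}(\k(n) \wedge B_{C_2}G^{(-s\xi)}) \simeq *$, and combining this with the previous equivalence gives $(i_*\k(n))^{t_{C_2}G} \simeq *$ as required.

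The substantive obstacle has already been cleared in the preceding finite generation lemma, where the Kitchloo--Wilson fibration was used to bootstrap from Ravenel's classical theorem on $K(n)_*(BG)$ to the equivariant analogue for $\k(n)_\star(B_{C_2}G)$; given that input and the general inverse limit formula, the corollary is a direct assembly with no further computation. The cyclicity hypothesis enters only as a convenient way to produce the positive dimensional Real $G \rtimes C_2$-representation $V$ via Lemma~\ref{Lem:FGA}; extending to arbitrary finite groups in Section~\ref{Sec:TVAll} will need the additional input of an inductive argument on subgroups of the type used by Greenlees--Sadofsky.
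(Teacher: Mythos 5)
Your proposal is correct and is essentially the paper's own argument: the paper chooses the standard free Real representation $V$ of the cyclic group (Example \ref{Exm:RealReps}(2)), identifies $(i_*\k(n))^{t_{C_2}G}$ with $F(S^{\infty V}, i_*\k(n)\wedge\Sigma S(\infty V)_+)$, and invokes Lemma \ref{Lem:2.2}, whose proof is exactly the inverse-limit-of-Thom-spectra identification combined with Lemma \ref{Lem:2.1} and the finite generation lemma that you use. Your only deviation is that you cite the packaged inverse-limit formula \cite[Thm.~2.40]{Qui19b} and apply Lemma \ref{Lem:2.1} once to $G$ itself, bypassing the subgroup-wise contractibility statement of Lemma \ref{Lem:2.2}; this is a harmless streamlining for cyclic $G$, the paper retaining Lemma \ref{Lem:2.2} in its stronger form because it is needed again in the inductive argument of Section \ref{Sec:TVAll}.
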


\begin{proof}
The proof is similar to the proof of \cite[Cor. 2.3]{GS96}. Let $(V,\overline{(-)})$ be the Real representation of $G$ described in part $(2)$ of Example \ref{Exm:RealReps} and let $\xi$ be the corresponding Real line bundle over $B_{C_2}G$.  Then there are $G \rtimes C_2$-equivariant equivalences $E_{C_2}G_+ \simeq S(\infty V)_+$  and $\widetilde{E_{C_2}G} \simeq S^{\infty V}$. We have
$$(i_*\k(n))^{t_{C_2}G} \simeq F(S^{\infty V}, i_*\k(n) \wedge \Sigma S(\infty V)_+),$$
and the right-hand side is equivariantly contractible by Lemma \ref{Lem:2.2}. 
\end{proof}

\begin{rem2}
A similar proof can be given if $G$ is any finite abelian group, but we omit this since we will give a proof for all finite groups below. 
\end{rem2}

\subsection{Parametrized Tate vanishing for all finite groups}\label{Sec:TVAll}

Suppose that $G$ is a finite group. The proofs of the following two propositions are similar to the proofs of \cite[Prop. 3.1-3.2]{GS96}. 

\begin{prop}\label{Prop:3.1}
If $\k$ is a Real oriented cohomology theory with $\k_\star({B_{C_2}G}_+)$ finitely generated over $\k_\star$ for all finite groups $G$, then $(i_*\k)^{t_{C_2} G} \simeq *$ for all finite groups $G$. 
\end{prop}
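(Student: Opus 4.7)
I would argue by induction on $|G|$, mirroring the strategy Greenlees--Sadofsky use for their Proposition~3.1 adapted to the parametrized setting. The base case consists of finite cyclic (or more generally, finite abelian) groups, which is Corollary~\ref{Cor:CycVanishing} together with the remark following it; note that the hypotheses of Lemma~\ref{Lem:2.2} are exactly supplied by the finite-generation assumption on $\k_\star(B_{C_2}G_+)$.

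For the inductive step, fix a non-cyclic finite group $G$ and assume $(i_*\k)^{t_{C_2}H}\simeq \ast$ for every proper subgroup $H<G$. Let $\mathcal{P}$ denote the family of proper subgroups of $G$. Smashing the cofiber sequence of based $G$-spaces
$$E\mathcal{P}_+\longrightarrow S^0\longrightarrow \widetilde{E\mathcal{P}}$$
with $i_*\k$ and applying the parametrized Tate functor $(-)^{t_{C_2}G}$ gives a cofiber sequence of $C_2$-spectra whose middle term is $(i_*\k)^{t_{C_2}G}$. It therefore suffices to prove that both of the outer terms are contractible.

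For the left-hand term $(i_*\k\wedge E\mathcal{P}_+)^{t_{C_2}G}$, I would use a skeletal filtration of $E\mathcal{P}$: each associated graded is a wedge of $G$-cells of the form $G/H_+\wedge S^n$ with $H\in\mathcal{P}$. A Wirthm\"uller-type identification, adapted to the parametrized setting of \cite{Qui19b} so as to track the ambient $C_2$-action, reduces each such contribution to a suspension of $(i_*\k)^{t_{C_2}H}$, which vanishes by the inductive hypothesis; passage to the colimit then yields contractibility of the whole left-hand term. For the right-hand term $(i_*\k\wedge\widetilde{E\mathcal{P}})^{t_{C_2}G}$, only stabilizers equal to $G$ contribute, so one can model it via the inverse-limit description of the parametrized Tate construction (Theorem~\ref{TheoremInverseLimit}) restricted to the $G$-fixed part. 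The finite-generation hypothesis on $\k_\star(B_{C_2}G_+)$ then supplies dimension bounds on the relevant cells, and a variant of the argument of Lemma~\ref{Lem:2.1} shows that the structure maps in the inverse system are eventually null on geometric fixed points, forcing contractibility.

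The main obstacle is executing the isotropy separation step cleanly in the parametrized equivariant setting: the subgroups relevant to $(-)^{t_{C_2}G}$ live inside $G\rtimes C_2$ rather than in $G$ alone, and the parametrized Wirthm\"uller identification must be verified while simultaneously tracking the $G$- and $C_2$-actions. Ensuring that the finite-generation hypothesis survives the restriction-induction reductions, and that the dimension bounds in the $\widetilde{E\mathcal{P}}$ step are uniform enough across cellular skeleta to force eventual nullity, are the key technical points that go beyond the classical Greenlees--Sadofsky argument.
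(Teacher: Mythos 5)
Your overall architecture (induction up the subgroup lattice with cyclic base case from Corollary \ref{Cor:CycVanishing}, and the finite-generation hypothesis feeding an ``eventually null'' dimension argument) matches the paper, but the way you implement the inductive step has two genuine gaps, and they occur exactly at the points the paper's Proposition \ref{Prop:3.2} is designed to circumvent. First, for the term $(i_*\k\wedge E\mathcal{P}_+)^{t_{C_2}G}$ you argue cell by cell and then say ``passage to the colimit then yields contractibility.'' The (parametrized) Tate construction involves $F(E_{C_2}G_+,-)$ and does not commute with infinite filtered colimits, so vanishing on each finite skeleton of $E\mathcal{P}$ does not pass to the colimit; this is not a technicality one can wave through (already nonequivariantly, $(-)^{t\Sigma_2}$ fails to commute with filtered colimits, e.g.\ for $\mathbb{S}[1/2]=\colim(\mathbb{S}\xrightarrow{2}\mathbb{S}\to\cdots)$ with trivial action). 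The paper avoids this by running the cellular induction on the \emph{source} of a function spectrum: in the proof of Proposition \ref{Prop:3.2} one maps out of $(S^{\infty W}/S^0)\wedge\widetilde{E_{C_2}G}$, so the skeletal filtration produces an inverse limit of contractible spectra, which is harmless.

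Second, for the term $(i_*\k\wedge\widetilde{E\mathcal{P}})^{t_{C_2}G}$ you propose to use the inverse-limit description of Theorem \ref{TheoremInverseLimit}. That theorem requires a $G\rtimes C_2$-representation sphere model for $E_{C_2}G$ (Restriction 2.36 of \cite{Qui19b}), which by Lemma \ref{Lem:FGA} is only available for finitely generated abelian $G$ with inversion; for a general finite group in the inductive step no such representation exists (non-cyclic groups do not act freely on spheres in general), and the paper explicitly flags the failure of the inverse-limit formula for general $G$ as the reason for the detour through Proposition \ref{Prop:3.2}. The correct replacement is precisely that proposition: once all proper-subgroup Tate constructions vanish, one identifies $\mathbb{K}^{t_{C_2}G}\simeq F(S^{\infty W},\Sigma\mathbb{K}\wedge E_{C_2}G_+)$ for a Real representation $W$ with only $W^G=0$ (e.g.\ the reduced regular representation, whose infinite sphere is a model for your $\widetilde{E\mathcal{P}}$), and then Lemma \ref{Lem:2.2} kills the right-hand side: mapping \emph{out} of $S^{\infty W}$ produces inverse limits of Thom spectra $\k\wedge B_{C_2}H^{(-r\xi)}$, where the finite-generation hypothesis and the dimension bounds (on underlying spectra and on geometric fixed points, as in Lemma \ref{Lem:2.1}) force the structure maps to be eventually null. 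Your smash-with-$\widetilde{E\mathcal{P}}$ term is not of this form, so the ``variant of Lemma \ref{Lem:2.1}'' you invoke is not available as stated. (The parametrized Wirthm\"uller identification you flag for the induced cells is also not needed in the paper's route, since induced cells appear there only in the source of a mapping spectrum, where the induction--restriction adjunction suffices.)
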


\begin{prop}\label{Prop:3.2}
If $W$ is a non-zero, finite dimensional Real $G$-representation with $W^G = 0$, and $\mathbb{K}$ is a $G \rtimes C_2$-spectrum such that for every proper subgroup $H < G$ one has $\mathbb{K}^{t_{C_2}H} \simeq *$, then
$$F(S^{\infty W}, \Sigma \mathbb{K} \wedge E_{C_2} G_+) \simeq \mathbb{K}^{t_{C_2}G}.$$
\end{prop}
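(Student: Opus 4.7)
The plan is to adapt the strategy of Greenlees and Sadofsky \cite{GS96} for \cite[Prop.~3.2]{GS96} to the parametrized setting, using a cofiber sequence derived from the unit sphere of $W$ together with a cell-by-cell reduction to proper subgroups.

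\textbf{Step 1.} Regarding $W$ as a $(G\rtimes C_2)$-representation on its underlying real vector space, the sphere $S(\infty W)$ is a $(G\rtimes C_2)$-space. The hypothesis $W^G = 0$ ensures that the $G$-isotropy of every point in $S(\infty W)$ is a proper subgroup of $G$, so any isotropy subgroup $H\subseteq G\rtimes C_2$ of a point in $S(\infty W)$ satisfies $H\cap G\lneq G$. Applying $F(-,\Sigma\mathbb{K}\wedge E_{C_2}G_+)$ to the based $(G\rtimes C_2)$-cofiber sequence
\begin{equation*}
S(\infty W)_+ \longrightarrow S^0 \longrightarrow S^{\infty W}
\end{equation*}
yields the cofiber sequence of $G\rtimes C_2$-spectra
\begin{equation*}
F(S^{\infty W},\Sigma\mathbb{K}\wedge E_{C_2}G_+) \longrightarrow \Sigma\mathbb{K}\wedge E_{C_2}G_+ \longrightarrow F(S(\infty W)_+,\Sigma\mathbb{K}\wedge E_{C_2}G_+).
\end{equation*}

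\textbf{Step 2.} The key step is to identify the right-hand term with $\Sigma F(E_{C_2}G_+,\mathbb{K})$ in such a way that the second map of the sequence corresponds, after desuspension, to the norm map $\mathbb{K}\wedge E_{C_2}G_+\to F(E_{C_2}G_+,\mathbb{K})$ appearing in Definition~\ref{def:parametrizedTate}. Taking cofibers, this would identify the left-hand term with $\mathbb{K}^{t_{C_2}G}$. To make this identification, I would perform a cellular induction on $S(\infty W)$: choose a $(G\rtimes C_2)$-CW structure with cells of the form $(G\rtimes C_2)/H_+\wedge D^n$, where by Step 1 each isotropy $H$ satisfies $H\cap G\lneq G$. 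For each such cell, the adjunction $F((G\rtimes C_2)/H_+,-)^{G\rtimes C_2} = \mathrm{res}_H(-)^H$, together with the inductive hypothesis $\mathbb{K}^{t_{C_2}(H\cap G)}\simeq *$ (i.e., parametrized Tate vanishing on proper subgroups), allows one to replace $\Sigma\mathbb{K}\wedge E_{C_2}G_+$ by $F(E_{C_2}G_+,\mathbb{K})$ on that cell. Assembling over the skeletal filtration via \cite[Lem.~2.29]{Qui19b} produces the desired equivalence of cofiber sequences.

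\textbf{Main obstacle.} The principal difficulty is carrying out Step 2 rigorously: the cellular reduction hinges on a precise understanding of what parametrized Tate vanishing for a subgroup $H\cap G\leq G$ implies for $(G\rtimes C_2)$-equivariant data with isotropy $H\leq G\rtimes C_2$, which in general does not split as $(H\cap G)\rtimes C_2$. Resolving this requires careful use of the restriction and induction formalism for the parametrized Tate construction developed in \cite[Sec.~2]{Qui19b} and \cite{QS19}, together with the compatibility between the $G$-action on $S(\infty W)$ arising from the representation structure and the $C_2$-action coming from the Real structure on $W$. Once this bookkeeping is in place, the argument follows the same template as \cite[Prop.~3.2]{GS96}, with the parametrized inverse-limit formula of Theorem~\ref{TheoremInverseLimit} playing the role of the classical Greenlees-May model.
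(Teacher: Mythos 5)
Your Step 1 and the general shape of the plan (reduce $S(\infty W)$ to cells with proper $G$-isotropy and feed in the inductive hypothesis) are in the right spirit, but the heart of the argument, your Step 2, is both unproven and, as sketched, not justified. The hypothesis $\mathbb{K}^{t_{C_2}(H\cap G)}\simeq *$ controls the cofiber of the norm map, i.e. the object $\bigl(F(E_{C_2}G_+,\mathbb{K})\wedge \widetilde{E_{C_2}G}\bigr)^{H\cap G}$; it says nothing about the cellwise terms you propose to modify. For a cell $(G\rtimes C_2)/H_+\wedge D^n$ of $S(\infty W)$, the adjunction gives $F(S^n,\Sigma\mathbb{K}\wedge E_{C_2}G_+)^{H}$, which is a homotopy-orbit--type object and is not killed by Tate vanishing, so ``replacing $\Sigma\mathbb{K}\wedge E_{C_2}G_+$ by $F(E_{C_2}G_+,\mathbb{K})$ on that cell'' has no justification. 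To bring the hypothesis to bear you must first insert the isotropy separation term $\widetilde{E_{C_2}G}$ (e.g. by comparing along $E_{C_2}G_+\wedge S(\infty W)_+\to S(\infty W)_+\to \widetilde{E_{C_2}G}\wedge S(\infty W)_+$, or, as the paper does, by using $S^{\infty W}\wedge E_{C_2}G_+\simeq *$ to get $S^{\infty W}\simeq S^{\infty W}\wedge\widetilde{E_{C_2}G}$), and you then need, for each cell with proper nontrivial isotropy, the identification $F(\widetilde{E_{C_2}G}\wedge S^n,\Sigma\mathbb{K}\wedge E_{C_2}G_+)^{H}\simeq \Sigma^{-n}\mathbb{K}^{t_{C_2}H}$. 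This Warwick-duality model for the parametrized Tate construction, imported from the proof of \cite[Prop.~2.40]{Qui19b}, is exactly the ingredient the whole argument rests on, and your outline never invokes it; you yourself flag the resulting step as an unresolved ``main obstacle,'' so what you have is a plan rather than a proof. You would also still need to check that your identification of the third term is compatible with the norm map of Definition \ref{def:parametrizedTate}, which the sketch leaves unaddressed.

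For comparison, the paper's proof avoids your norm-map bookkeeping entirely: it takes the dual model $\mathbb{K}^{t_{C_2}H}\simeq F(\widetilde{E_{C_2}G},\Sigma\mathbb{K}\wedge E_{C_2}G_+)^{H}$ as the target, observes that both sides of the desired equivalence have contractible $H$-fixed points for every proper $H<G$ (so the $G$-Whitehead theorem reduces everything to $G$-fixed points, which also sidesteps your worry about isotropy subgroups of $G\rtimes C_2$ that do not split), uses $S^{\infty W}\wedge E_{C_2}G_+\simeq *$ to replace $S^{\infty W}$ by $S^{\infty W}\wedge\widetilde{E_{C_2}G}$, and then kills $(S^{\infty W}/S^0)\wedge\widetilde{E_{C_2}G}$ cell by cell: since $(S^{\infty W})^G\simeq S^0$, every cell has proper isotropy and contributes a shifted $\mathbb{K}^{t_{C_2}H}$, which vanishes by hypothesis. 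Restriction along $S^0\to S^{\infty W}$ then yields the desired equivalence on $G$-fixed points. If you want to complete your route, you will need the same two ingredients --- isotropy separation and the duality model from \cite{Qui19b} --- at which point your argument essentially collapses into the paper's.
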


\begin{proof}[Proof of Prop. \ref{Prop:3.1}]
This follows from the chain of equivariant equivalences
$$(i_*\k)^{t_{C_2} G} \simeq F(S^{\infty W}, \Sigma i_* \k \wedge E_{C_2}G_+) \simeq *$$
where the first equivalence is Proposition \ref{Prop:3.2} and the second equivalence is Lemma \ref{Lem:2.2}. Proposition \ref{Prop:3.2} applies by induction up the subgroup lattice of $G$, where the base case of a cyclic subgroup was proven in Corollary \ref{Cor:CycVanishing}.
\end{proof}

\begin{proof}[Proof of Prop. \ref{Prop:3.2}]
By hypothesis, for any proper subgroup $H < G$, we have
$$\mathbb{K}^{t_{C_2}H} \simeq F(\widetilde{E_{C_2}G}, \Sigma \mathbb{K} \wedge E_{C_2}G_+)^{H} \simeq *$$
(note that $E_{C_2}G$ is a model for $E_{C_2} H$ by definition). We also have $F(S^{\infty W},\Sigma \mathbb{K} \wedge E_{C_2}G_+)^H \simeq *$ since if $W^H = 0$ we can apply Proposition \ref{Prop:3.2} inductively and if $W^H \neq 0$ then $S^{\infty W} \simeq *$. 

Therefore it suffices by the $G$-Whitehead theorem to produce a map
$$F(S^{\infty W},\Sigma \mathbb{K} \wedge E_{C_2}G_+) \to F(\widetilde{E_{C_2}G}, \Sigma \mathbb{K} \wedge E_{C_2}G_+)$$
which is an equivalence on $G$-fixed points. Smash $S^{\infty W}$ with the cofibration 
\begin{equation}\label{Eqn:9}
E_{C_2}G_+ \to S^0 \to \widetilde{E_{C_2}G}. 
\end{equation}
The $G \rtimes C_2$-spectrum $S^{\infty W} \wedge E_{C_2}G_+$ is equivariantly contractible. Therefore by smashing with \ref{Eqn:9} we have a $G \rtimes C_2$-equivalence
\begin{equation}\label{Eqn:10}
S^{\infty W} \to S^{\infty W} \wedge \widetilde{E_{C_2}G}.
\end{equation}

Now, we have $(S^{\infty W})^G \simeq S^0$ as spaces, so
$$(S^{\infty W}/S^0)^G \simeq *$$
and therefore $(S^{\infty W}/S^0)$ can be built from $G$-cells of the form $(G/H)_+ \wedge E^n$ where $H < G$ is a proper subgroup. Using equivalences from the proof of \cite[Thm. B]{QS21a} gives 
$$F(G/H_+ \wedge S^n \wedge \widetilde{E_{C_2}G}, \Sigma \mathbb{K}\wedge E_{C_2}G_+) \simeq F(S^n \wedge \widetilde{E_{C_2}G}, \Sigma \mathbb{K} \wedge E_{C_2} G_+)^H \simeq \Sigma^{-n} \mathbb{K}^{t_{C_2}H} \simeq *.$$
Taking the limit over skeleta of $S^{\infty W} /S^0$ gives
$$F((S^{\infty W}/S^0) \wedge \widetilde{E_{C_2}G}, \Sigma \mathbb{K} \wedge E_{C_2}G_+)^G \simeq *.$$
The desired map is then given by 
\begin{align*}
F(S^{\infty W}, \Sigma \mathbb{K} \wedge E_{C_2}G_+) & \simeq F(S^{\infty W} \wedge \widetilde{E_{C_2}G},\Sigma \mathbb{K} \wedge E_{C_2}G_+) \\
& \to F(S^0 \wedge \widetilde{E_{C_2}G},\Sigma \mathbb{K} \wedge E_{C_2}G_+) \simeq F(\widetilde{E_{C_2}G}, \Sigma \mathbb{K} \wedge E_{C_2}G_+).
\end{align*}
\end{proof}

Putting all of this together, we have the following:

\begin{thm}\label{Thm:TVMain}
Suppose $G$ is a finite abelian group and $\mathbb{K} = i_*\k(n)$. Then 
$$\mathbb{K}^{t_{C_2}G} \simeq *.$$
\end{thm}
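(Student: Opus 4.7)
The plan is to adapt the two-step inductive strategy of Greenlees--Sadofsky to the parametrized setting, using the machinery already developed in Section~\ref{Sec:TateVanishing}. The essential ingredients are already in place: Corollary \ref{Cor:CycVanishing} (vanishing for cyclic groups, which the remark following it extends to finite abelian groups via Lemma \ref{Lem:FGA}), the finite generation of $\k(n)_\star(B_{C_2}H)$ for subgroups $H\leq G$, and the induction step encapsulated in Propositions \ref{Prop:3.1} and \ref{Prop:3.2}. So the proof reduces to verifying that the hypotheses of these propositions are met and running the induction.

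The plan is to proceed by induction on $|G|$. The base case, where $G$ is cyclic, is exactly Corollary \ref{Cor:CycVanishing}. For the inductive step, let $G$ be a finite abelian group with $|G|>1$ and assume $\mathbb{K}^{t_{C_2}H}\simeq *$ for all proper subgroups $H<G$; this hypothesis is meaningful because every proper subgroup of a finite abelian group is again a finite abelian group. Next, I would invoke Lemma \ref{Lem:FGA} applied to $(G,(-)^{-1})$ to produce a real $G\rtimes C_2$-representation $W$ such that $S(W^{\oplus\infty})$ is an $\cf(C_2,G)$-space; in particular $W^G=0$ and $W$ is positive-dimensional. This representation furnishes the $G\rtimes C_2$-equivariant models $E_{C_2}G_+ \simeq S(\infty W)_+$ and $\widetilde{E_{C_2}G}\simeq S^{\infty W}$ needed to apply Proposition \ref{Prop:3.2}, which then yields the equivariant equivalence
$$\mathbb{K}^{t_{C_2}G}\simeq F(S^{\infty W},\,\Sigma\mathbb{K}\wedge E_{C_2}G_+).$$

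Finally, to conclude that the right-hand side is equivariantly contractible, I would apply Lemma \ref{Lem:2.2} to $W$ and $\k=\k(n)$. The hypothesis of Lemma \ref{Lem:2.2} is that $\k(n)_\star(B_{C_2}H)$ be finitely generated over $\k(n)_\star$ for every subgroup $H\leq G$; this is supplied by the finite generation lemma of Section \ref{Sec:TVCyclic}, whose proof via the Kitchloo--Wilson fibration and Ravenel's finiteness for $K(n)_*(BH)$ applies uniformly to all finite groups.

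I do not foresee a hard part, as all the real work has been done in the preceding lemmas. The only subtlety worth flagging is ensuring that the induction takes place \emph{within} the class of finite abelian groups, so that Lemma \ref{Lem:FGA} keeps producing appropriate free Real representations at each stage; this is automatic since subgroups of finite abelian groups remain finite abelian. If anything, the main conceptual obstacle was already cleared in proving the finite generation of $\k(n)_\star(B_{C_2}G)$, since this required manipulating the Bockstein-type sequence arising from the Kitchloo--Wilson fibration to transfer Ravenel's classical finiteness result to the $C_2$-equivariant setting. With that in hand, the present theorem is essentially a direct specialization of Proposition \ref{Prop:3.1} to $\k=\k(n)$ in the abelian case.
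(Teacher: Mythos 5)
Your proposal is correct and follows essentially the same route as the paper: the paper deduces Theorem \ref{Thm:TVMain} by combining the finite generation lemma with Proposition \ref{Prop:3.1}, whose proof is exactly your induction up the subgroup lattice using Proposition \ref{Prop:3.2} and Lemma \ref{Lem:2.2}, with Corollary \ref{Cor:CycVanishing} as the cyclic base case. Your explicit appeal to Lemma \ref{Lem:FGA} to produce the representation $W$ with $W^G=0$ is precisely the point that forces the abelian hypothesis, and you handle it correctly.
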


\begin{defin}
We will say that a $C_2$-spectrum $K$ is an \emph{integral Real Morava K-theory} if $K$ is Real oriented, has no torsion in its homotopy groups, and reduces to $\k(n)$ modulo $p$. 
\end{defin}

\begin{cor}
If $E = i_*K$, where $K$ is a $p$-local integral Real Morava K-theory, then $E^{t_{C_2}G}$ is rational.
\end{cor}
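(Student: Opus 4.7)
The plan is to exploit the defining cofiber sequence of an integral Real Morava K-theory together with Theorem \ref{Thm:TVMain} and the exactness of the parametrized Tate construction.

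Since $K$ has no torsion in its homotopy groups and reduces to $\k(n)$ modulo $p$, multiplication by $p$ fits into a cofiber sequence of $C_2$-spectra
\[ K \xrightarrow{\ p\ } K \longrightarrow \k(n). \]
Applying $i_*$ (which is exact) and then the parametrized $G$-Tate construction $(-)^{t_{C_2}G}$ (which, being the cofiber of a natural transformation of exact functors, preserves cofiber sequences of $G \rtimes C_2$-spectra) yields a cofiber sequence
\[ E^{t_{C_2}G} \xrightarrow{\ p\ } E^{t_{C_2}G} \longrightarrow (i_*\k(n))^{t_{C_2}G}. \]

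By Theorem \ref{Thm:TVMain}, the right-hand term is equivariantly contractible. Consequently the multiplication-by-$p$ map on $E^{t_{C_2}G}$ is an equivariant equivalence, so $p$ acts invertibly on the $RO(C_2 \times G)$-graded homotopy groups $\pi_\star^{G \rtimes C_2}(E^{t_{C_2}G})$. On the other hand, $K$ (and hence $E$, and hence $E^{t_{C_2}G}$) is $p$-local by hypothesis, so every prime $\ell \neq p$ already acts invertibly on these homotopy groups. Combining these observations, every prime acts invertibly on $\pi_\star^{G \rtimes C_2}(E^{t_{C_2}G})$, i.e.\ the homotopy groups are $\mathbb{Q}$-modules, so $E^{t_{C_2}G}$ is rational as claimed.

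The only point requiring care is the exactness claim for $(-)^{t_{C_2}G}$; but this is immediate from Definition \ref{def:parametrizedTate}, since the parametrized Tate construction is defined as the cofiber of the parametrized norm map, and both the parametrized homotopy orbits $(-)_{h_{C_2}G}$ and homotopy fixed points $(-)^{h_{C_2}G}$ preserve cofiber sequences of $G \rtimes C_2$-spectra (they are built out of smashing with and mapping out of $E_{C_2}G_+$, followed by genuine $G$-fixed points). There is no real obstacle here; the argument is simply a formal consequence of Theorem \ref{Thm:TVMain} plus the standard ``cofiber sequence kills a prime'' technique.
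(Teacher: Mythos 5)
Your argument is correct and is essentially the paper's own proof: the paper simply cites the proof of \cite[Cor. 1.4]{GS96}, which is exactly this ``cofiber sequence $K \xrightarrow{p} K \to \k(n)$, exactness of the (parametrized) Tate construction, vanishing from Theorem \ref{Thm:TVMain}, hence $p$ invertible, combined with $p$-locality'' argument. The only cosmetic slip is that $E^{t_{C_2}G}$ is a $C_2$-spectrum, so the relevant homotopy groups are $RO(C_2)$-graded (equivalently, the homotopy Mackey functors) rather than $RO(C_2\times G)$-graded; this does not affect the argument.
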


\begin{proof}
The proof is similar to the proof of \cite[Cor. 1.4]{GS96}. 
\end{proof}

\begin{rem2}
\begin{enumerate}
\item In \cite[Sec. 4]{GS96}, Greenlees and Sadofsky observed that the same proof holds for integral theories. This is true in the Real oriented case as well; for example, we see that $K\r^{t_{C_2}G}$ is rational. 
\item Greenlees and Sadofsky also showed that an analogous statement holds for $KO$ using the Wood cofiber sequence
$$\Sigma KO \overset{\eta}{\to} KO \to KU$$
and the fact that $\eta$ is nilpotent classically. Although there is a $C_2$-equivariant analog of the Wood cofiber sequence (involving the spectrum $KO_{C_2}$, see e.g. \cite[Sec. 10]{GHIR19}), the same argument does not work since $\eta$ is not nilpotent $C_2$-equivariantly. 
\end{enumerate}
\end{rem2}

We will now prove the real (i.e. fixed point) version of the Tate vanishing result.

\begin{thm}\label{Thm:realvanishing}
For a finite cyclic group $G$, $K\mathbb{R}(n)^{tG} \simeq *$ where $G$ acts trivially on $K\mathbb{R}(n)$. In particular, $KR(n)^{tG} =(K\mathbb{R}(n)^{tG})^{C_2}\simeq *$.
\end{thm}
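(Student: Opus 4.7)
The plan is to reduce to the Greenlees--Sadofsky classical vanishing theorem $K(n)^{tG} \simeq *$ using the Kitchloo--Wilson cofibration of Section \ref{sec:kwfibration}. A $C_2$-spectrum is contractible if and only if both its underlying spectrum and its $C_2$-fixed points are contractible. The universal spaces used to form the classical Tate construction carry trivial $C_2$-action, so the underlying spectrum of $K\mathbb{R}(n)^{tG}$ is simply $K(n)^{tG} \simeq *$ by \cite{GS96}, and the same argument as Proposition \ref{Tatefixed} yields $(K\mathbb{R}(n)^{tG})^{C_2} \simeq KR(n)^{tG}$. Thus it suffices to prove $KR(n)^{tG} \simeq *$, which is the ``in particular'' clause.

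For this, set $N = 2^{n+1} - 1$ and analyze the action of $x \in \pi_\lambda \mathbb{MR}(n)$ on $K\mathbb{R}(n)^{tG}$ in two ways. On the one hand, since $\mathbb{K}(n)$ is an $\mathbb{MR}(n)$-module, so is its Tate construction, and the Kitchloo--Wilson cofibration yields
\[
\Sigma^\lambda KR(n)^{tG} \xrightarrow{x} KR(n)^{tG} \to K(n)^{tG} \simeq *,
\]
so multiplication by $x$ is a self-equivalence of $KR(n)^{tG}$. On the other hand, $x$ acts nilpotently: generalizing Proposition \ref{Prop:TrivTateForm} to cyclic $G$ via a representation-sphere exhaustion of $\widetilde{EG}$, one writes $K\mathbb{R}(n)^{tG} \simeq \limt{k}(K\mathbb{R}(n) \wedge Y_k)$ with each $Y_k$ carrying trivial $C_2$-action. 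Since $C_2$-fixed points commute with inverse limits and with smashing against trivially-acted spectra (as in Proposition \ref{Tatefixed}), $KR(n)^{tG} \simeq \limt{k}(KR(n) \wedge Y_k)$. Because $x^N = 0$ in $\pi_\ast KR(n)$ by Section \ref{sec:coefficients}, the self-map $x^N$ of $KR(n)$ is null as a morphism of spectra; hence so is its smash product with each $Y_k$ and the resulting self-map on the limit.

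Combining the two, $x$ is simultaneously an equivalence and satisfies $x^N \simeq 0$ on $KR(n)^{tG}$; composing the inverse of $x^N$ with $x^N \simeq 0$ gives $\id \simeq 0$, so $KR(n)^{tG} \simeq *$, and hence $K\mathbb{R}(n)^{tG} \simeq *$. The principal technical obstacle is the inverse limit presentation for cyclic $G \ne \Sigma_2$: the argument of Proposition \ref{Prop:TrivTateForm} should adapt directly (replacing the $\Sigma_2$-sign representation by a faithful Real representation of $G$), but one must confirm that the stages $Y_k$ really do carry trivial $C_2$-action and that null self-maps at finite stages of the inverse system assemble to a null self-map of the limit, rather than merely to a class that vanishes up to $\lim^1$. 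This is the ``subtle analysis of Real Morava K-theory of inverse limits'' alluded to in the introduction.
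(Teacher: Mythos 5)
Your outline assembles exactly the ingredients of the paper's proof --- the reduction to fixed points, the Kitchloo--Wilson cofibration $\Sigma^{\lambda}KR(n)\xrightarrow{x}KR(n)\to K(n)$, the classical Greenlees--Sadofsky vanishing $K(n)^{tG}\simeq *$, and nilpotence of $x$ --- and your step (a) is fine: $(-)^{tG}$ is exact (it is $\widetilde{EG}\wedge F(EG_+,i_*(-))$), so $x$ does become a self-equivalence of $KR(n)^{tG}$. But the step you yourself flag is a genuine gap as written, and it is precisely where the paper's work lives: knowing that multiplication by $x^N$ is null on each stage $KR(n)\wedge BG^\infty_{-k}$ does \emph{not} by itself give a null (or even nilpotent) self-map of the homotopy limit, since the stagewise nullhomotopies need not be compatible; and your final ``$\mathrm{id}\simeq 0$'' argument uses nullity of the self-map of the limit. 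There is also a smaller imprecision: ``$x^N=0$ in $\pi_*KR(n)$'' is not the right statement --- $KR(n)$ is not known to be a ring (the paper even notes $\pi_*KR(n)$ cannot support a compatible ring structure), and Section \ref{sec:coefficients} gives $x^{2^{n+1}-1}=0$ in $\pi_*ER(n)$. What you need is nilpotence of $x$ in the homotopy of the ring that acts (e.g.\ the fixed points of $\mathbb{MR}(n)$), so that multiplication by $x^N$ on the $\mathbb{MR}(n)$-module $\k(n)$, hence on $KR(n)$ and on each stage, is a null self-map.

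Both issues are fixable, in two ways. First, as in the paper, use the module structure on the Tate construction itself: for a trivial $G$-action, $KR(n)^{tG}$ inherits a module structure over the fixed points of $\mathbb{MR}(n)$, and the self-map in question is multiplication by the element $x$, which is nilpotent there; this gives nilpotence of the self-map of the limit directly, with no assembly of homotopies needed, and combined with your invertibility step it finishes the proof. Second, even the stagewise statement suffices if you argue on homotopy groups: a self-map of an inverse system that is null at every stage induces zero on both $\limt{k}\pi_*$ and $\lim^1\pi_*$, so by naturality of the Milnor sequence its square induces zero on $\pi_*$ of the limit; since $x$ also acts invertibly on $\pi_*(KR(n)^{tG})$, this forces $\pi_*(KR(n)^{tG})=0$. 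The paper's proof is essentially the second route carried out by hand: it smashes the cofibration with the stunted classifying spectra $BG^\infty_{-k}$, uses $K(n)^{tG}\simeq *$ to kill $\limt{k}$ and $\lim^1$ of the $K(n)$-terms, and then combines nilpotence of $f$ with an injectivity/surjectivity analysis of $\lim^1 f_*$ and $\lim f_*$ to force both $\limt{k}\pi_*(KR(n)\wedge BG^\infty_{-k})$ and $\lim^1$ to vanish. So your packaging (invertibility via exactness of the Tate functor) is a clean reorganization of the same inputs, but the nilpotence-on-the-limit step must be closed by one of the two arguments above rather than deferred.
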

\begin{proof}
In the case that $n=0$, we have $KR(0)=H\mathbb{Q}$ and Tate vanishing is the same as the nonequivariant result. Suppose $n \geq 1$. We will show that the $RO(C_2)$-graded homotopy groups of $K\mathbb{R}(n)^{tG}$ vanish. Because $K\mathbb{R}(n)$ is $\lambda+\sigma$-periodic, it is enough to compute the integer part $\pi_*^{C_2} K\mathbb{R}(n)^{tG}=0$. We will show that
$$\pi_*^{C_2} K\mathbb{R}(n)^{tG}=\pi_* (K\mathbb{R}(n)^{tG})^{C_2}=\pi_*(KR(n))^{tG}=0.$$
We write $(KR(n))^{tG}$ as $\limt{n} (KR(n)\wedge \Sigma RP^\infty_{-n})$. The cofiber sequence \cite{KW07}
$$\Sigma^\lambda KR(n) \xrightarrow{y} KR(n) \rightarrow K(n)$$
gives cofiber sequences (we omit suspensions)
$$\Sigma^{\lambda} KR(n)\wedge BG^\infty_{-k} \xrightarrow{f_k} KR(n) \wedge BG^\infty_{-k} \rightarrow K(n) \wedge BG^\infty_{-k}.$$
Denote the cokernel of $(f_k)_* \colon \pi_{l-\lambda}(KR(n) \wedge BG^\infty_{-k}) \rightarrow \pi_l(KR(n) \wedge BG^\infty_{-k})$ by $A_{k,l}$, the kernel of $(f_k)_* \colon \pi_{l-\lambda-1}(KR(n) \wedge BG^\infty_{-k}) \rightarrow \pi_{l-1}(KR(n) \wedge BG^\infty_{-k})$ by $C_{k,l}$, and $\pi_l (K(n) \wedge BG^\infty_{-k})$ by $B_{k,l}$. The associated long exact sequence in homotopy groups
$$\cdots \pi_{l-\lambda}(KR(n) \wedge BG^\infty_{-k}) \xrightarrow{(f_k)_*} \pi_l(KR(n) \wedge BG^\infty_{-k}) \rightarrow \pi_l (K(n) \wedge BG^\infty_{-k}) \xrightarrow{\partial_l} \pi_{l-1-\lambda} (KR(n) \wedge BG^\infty_{-k}) \cdots$$
breaks into short exact sequences
$$0\rightarrow A_{k,l} \rightarrow B_{k,l} \rightarrow C_{k,l} \rightarrow 0.$$
We omit $l$ from the index. We have an exact sequence
\begin{equation}\label{equation:lim1}
0 \rightarrow \limt{k} A_k \rightarrow \limt{k} B_k \rightarrow \limt{k} C_k \rightarrow \text{lim}^1A_k \rightarrow \text{lim}^1B_k \rightarrow \text{lim}^1C_k \rightarrow  0
\end{equation}
On the other hand, we have
\begin{equation}\label{equation:lim12}
0 \rightarrow \text{lim}^1 B_k \rightarrow \pi_*(K(n)^{tG}) \rightarrow \limt{k} B_k \rightarrow 0.
\end{equation}
Classical Tate vanishing \cite[Prop. 3.1]{GS96} implies that in the short exact sequence \ref{equation:lim12} we have  $\text{lim}^1 B_k=\limt{k} B_k = 0$. In the exact sequence \ref{equation:lim1}, we have
\[
\limt{k} A_k= \text{lim}^1 C_k = 0.
\]
The maps
\[
(f_k)_* \colon \pi_{l-\lambda}(KR(n) \wedge BG^\infty_{-k}) \rightarrow \pi_l(KR(n) \wedge BG^\infty_{-k})
\]
induces maps
\[
\lim f_* \colon \limt{k} \pi_{l-\lambda}(KR(n) \wedge BG^\infty_{-k}) \rightarrow \limt{k} \pi_l(KR(n) \wedge BG^\infty_{-k});
\]
\[
\text{lim}^1 f_* \colon \text{lim}^1_k \, \pi_{l-\lambda}(KR(n) \wedge BG^\infty_{-k}) \rightarrow \text{lim}^1_k \, \pi_l(KR(n) \wedge BG^\infty_{-k}).
\]
Denote the map $KR(n)^{tG} \rightarrow KR(n)^{tG}$ from $f_k$ by $f$. Then $f$ is nilpotent because it is induced by multiplication by an element of $\pi_* \mathbb{MR}(n)$. 

Recall that $A_k$ is the cokernel of $(f_k)_* \colon \pi_{l-\lambda}(KR(n) \wedge BG^\infty_{-k}) \rightarrow \pi_l(KR(n) \wedge BG^\infty_{-k})$ and $C_k$ is the kernel of $(f_k)_* \colon \pi_{l-\lambda-1}(KR(n) \wedge BG^\infty_{-k}) \rightarrow \pi_{l-1}(KR(n) \wedge BG^\infty_{-k})$. The short exact sequence
\[
0 \rightarrow \text{lim}^1 \pi_{*+1}(KR(n) \wedge BG^\infty_{-k}) \rightarrow \pi_* (KR(n)^{tG} \rightarrow  \limt{k} \pi_* (KR(n) \wedge BG^\infty_{-k}) \rightarrow 0
\]
has a self map $( \text{lim}^1 (f_k)_*, f_*, \lim (f_k)_*)$. Because $f_*$ is nilpotent, both $\text{lim}^1 (f_k)_*$ and $\lim (f_k)_*$ are nilpotent.

The exact sequence
$$0 \rightarrow C_k \rightarrow \pi_*(KR(n) \wedge BG^\infty_{-k}) \xrightarrow{(f_k)_*}  \pi_*(KR(n) \wedge BG^\infty_{-k}) \rightarrow A_k \rightarrow 0$$
splits into two short exact sequences
$$0 \rightarrow C_k \rightarrow \pi_*(KR(n) \wedge BG^\infty_{-k}) \xrightarrow{(f_k)_*}  \text{im}(f_k)_* \rightarrow 0,$$
$$0 \rightarrow \text{im}(f_k)_*  \rightarrow  \pi_*(KR(n) \wedge BG^\infty_{-k}) \rightarrow A_k \rightarrow 0.$$
We then have exact sequences
\[
\begin{tikzcd}[row sep=small]
0 \arrow{r} &  \limt{k} C_k \arrow{r} &  \limt{k} \pi_*(KR(n) \wedge BG^\infty_{-k}) \arrow{r} &  \limt{k} \text{im}(f_k)_* \arrow{lld} \\
		&  \text{lim}^1 C_k  \arrow{r} &  \text{lim}^1 \pi_*(KR(n) \wedge BG^\infty_{-k}) \arrow{r} & \text{lim}^1 \text{im}(f_k)_* \arrow{r} & 0,
\end{tikzcd}
\]
\[
\begin{tikzcd}[row sep=small]
0 \arrow{r} & \limt{k} \text{im}(f_k)_*  \arrow{r} & \limt{k} \pi_*(KR(n) \wedge BG^\infty_{-k}) \arrow{r} & \limt{k} A_k  \arrow{lld} \\
		&  \text{lim}^1 \text{im}(f_k)_*  \arrow{r} &  \text{lim}^1 \pi_*(KR(n) \wedge BG^\infty_{-k}) \arrow{r} &  \text{lim}^1 A_k \arrow{r} & 0.
\end{tikzcd}
\]

Because $\text{lim}^1 C_k=0$, the map $\text{lim}^1 \pi_*(KR(n) \wedge BG^\infty_{-k}) \rightarrow \text{lim}^1 \text{im}(f_k)_*$ is injective (in fact, it is an isomorphism). Because $\limt{k} A_k=0$, the map $\text{lim}^1 \text{im}(f_k)_*  \rightarrow  \text{lim}^1 \pi_*(KR(n) \wedge BG^\infty_{-k})$ is injective. Therefore, the composition
$$\text{lim}^1 (f_k)_* \colon \text{lim}^1 \pi_*(KR(n) \wedge BG^\infty_{-k}) \rightarrow \text{lim}^1 \pi_*(KR(n) \wedge BG^\infty_{-k})$$
is injective. We have shown $\text{lim}^1 (f_k)_*$ is nilpotent and so $\text{lim}^1 \pi_*(KR(n) \wedge BG^\infty_{-k})=0$. With a similar argument, $\limt{k} (f_k)_*$ is surjective and nilpotent, which forces $\limt{k} \pi_*(KR(n) \wedge BG^\infty_{-k})=0$. Then $\pi_* KR(n)^{tG}=0$ and this completes the proof.
\end{proof}

We may now apply \cite[Props. 3.1 and 3.2]{GS96} to obtain Tate vanishing for general finite groups. 

\begin{cor}
For a finite group $G$, $K\r(n)^{tG} \simeq *$ where $G$ acts trivially on $K\r(n)$. In particular, $KR(n)^{tG} \simeq (K\r(n)^{tG})^{C_2} \simeq *.$
\end{cor}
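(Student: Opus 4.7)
The plan is to reduce to the cyclic case (Theorem \ref{Thm:realvanishing}) by the inductive scheme of Greenlees and Sadofsky \cite{GS96}, Prop.~3.1--3.2. The fixed-point identification $(K\r(n)^{tG})^{C_2} \simeq KR(n)^{tG}$ extends Proposition \ref{Tatefixed} verbatim: that proof only used that the acting group was trivial on $\e$ and not on $RP^\infty_{-k}$, and the same argument with $BG^\infty_{-k}$ in place of $RP^\infty_{-k}$, combined with the Adams isomorphism and the compatibility of geometric fixed points with smash products, gives the statement for any finite $G$ with trivial action. Hence it suffices to show $KR(n)^{tG} \simeq *$ as an ordinary spectrum.

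Before running the induction, I would establish finite generation of $KR(n)_\ast(BG)$ over $KR(n)_\ast$ by the same Kitchloo--Wilson argument used earlier in the section for $\k(n)_\ast(B_{C_2}G)$: smash the fibration $\Sigma^\lambda KR(n) \xrightarrow{x} KR(n) \to K(n)$ with $BG_+$, use Ravenel's finite generation of $K(n)_\ast(BG)$ \cite{Rav82} together with the Noetherian property of $E(n)_\ast$ to control kernels and cokernels of successive powers of $x$, and conclude using the nilpotence of $x$. This is essentially the cohomological version of the argument already written out in the preceding lemma.

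With finite generation in hand, the inductive step from \cite{GS96}, Prop.~3.2 applies: if $KR(n)^{tH} \simeq *$ for every proper subgroup $H < G$ and $W$ is a nonzero finite-dimensional real $G$-representation with $W^G = 0$, then
\[
KR(n)^{tG} \simeq F(S^{\infty W}, \Sigma KR(n) \wedge EG_+),
\]
and \cite{GS96}, Prop.~3.1 shows the right-hand side is contractible by a dimension-counting argument on skeleta of $BG$ that uses exactly the finite generation just established. The base case of the subgroup-lattice induction is the cyclic case, which is Theorem \ref{Thm:realvanishing}. The only delicate point is finite generation, which was the technical core of the analogous parametrized arguments in Section \ref{Sec:TVCyclic} and transfers to $BG$ without essential change; the rest of the proof is formal.
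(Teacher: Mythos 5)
Your overall skeleton (identify $(K\r(n)^{tG})^{C_2}$ with $KR(n)^{tG}$, then run the Greenlees--Sadofsky subgroup-lattice induction with Theorem \ref{Thm:realvanishing} as base case) is the right one, but the step you declare ``formal'' is exactly where the argument breaks for $KR(n)$. The dimension-counting behind \cite[Prop.~3.1]{GS96} (via their Lemmas 2.1--2.2, mirrored here in Lemma \ref{Lem:2.1} and Proposition \ref{Prop:3.1}) does not run on finite generation of $KR(n)_*(BG)$ alone: one needs uniform control of $\pi_*(KR(n)\wedge BG^{-s\xi})$ for \emph{all} the negative multiples $-s\xi$, and in \cite{GS96} this is obtained from finite generation of $k_*(BG)$ via Thom isomorphisms, i.e.\ it uses that the theory is (complex) orientable for $\xi$ --- which is why the paper's Lemma \ref{Lem:2.1} and Proposition \ref{Prop:3.1} carry the ``Real oriented'' hypothesis. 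The fixed-point theory $KR(n)$ is not complex orientable, is not even known to be a ring spectrum ($\pi_*KR(n)$ admits no ring structure compatible with $\pi_*K(n)$, so ``finitely generated over $KR(n)_*$'' already has to be replaced by finite generation over $ER(n)_*$ or $E(n)_*$), and by Theorem \ref{Thm:Or} only $2^{n+1}$-fold multiples of real bundles are $ER(n)$-orientable. One could try to patch this (cofinal orientable multiples, Euler classes acting through the $\mathbb{MR}(n)^{C_2}$-module structure), but that is genuine work you have not done, and it is not what the paper does: the paper never verifies the Greenlees--Sadofsky hypotheses for $KR(n)$. Instead, its engine --- already in Theorem \ref{Thm:realvanishing} --- is to take the classical statements for $K(n)$ (where Ravenel's finite generation and complex orientability are available), and transfer the vanishing of $\lim$ and $\lim^1$ across the cofiber sequence $\Sigma^{\lambda}KR(n)\xrightarrow{x}KR(n)\to K(n)$ using the nilpotence of $x$; the propositions of \cite{GS96} are then invoked only for their formal inductive content, never with $KR(n)$ inserted as the oriented theory.

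There is also a problem with your first step as justified. Proposition \ref{Tatefixed} does not extend ``verbatim with $BG^\infty_{-k}$ in place of $RP^\infty_{-k}$'': for a general finite group there is no stunted classifying space model, because the inverse-limit formula for the Tate construction holds only for groups acting freely on spheres --- the paper flags precisely this at the start of Section \ref{Sec:TateVanishing}. The identification $(K\r(n)^{tG})^{C_2}\simeq KR(n)^{tG}$ for a trivial action is still true, but it must be proved directly from the Tate diagram, commuting $C_2$-fixed points past $F(EG_+,-)$ and past smashing with the trivial-$C_2$-action spaces $\widetilde{EG}$ (the cellular argument in the proof of Proposition \ref{Tatefixed}), not through a nonexistent $\limt{k} BG^\infty_{-k}$ presentation. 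Finally, to conclude the genuine statement $K\r(n)^{tG}\simeq *$ you need the underlying vanishing $K(n)^{tG}\simeq *$ from \cite{GS96} in addition to the fixed-point vanishing; this is immediate, but it should be stated rather than absorbed into ``it suffices to show $KR(n)^{tG}\simeq *$.''
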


\bibliographystyle{alpha}
\bibliography{master}

\newcommand{\etalchar}[1]{$^{#1}$}
\begin{thebibliography}{LSWX19}

\bibitem[AMS98]{AMS98}
Matthew Ando, Jack Morava, and Hal Sadofsky.
\newblock Completions of $\mathbb{{Z}}/p$-{T}ate cohomology of periodic
  spectra.
\newblock {\em Geometry and Topology}, 2(145):174, 1998.

\bibitem[AS69]{AS69}
Michael~F Atiyah and Graeme~B Segal.
\newblock Equivariant {K}-theory and completion.
\newblock {\em J. Differential Geometry}, 3(1-18):9, 1969.

\bibitem[Ati66]{Ati66}
Michael~Francis Atiyah.
\newblock K-theory and reality.
\newblock {\em The Quarterly Journal of Mathematics}, 17(1):367--386, 1966.

\bibitem[Beh07]{Beh07}
Mark Behrens.
\newblock Some root invariants at the prime 2.
\newblock {\em Geometry \& Topology Monographs}, 10(1):1--40, 2007.

\bibitem[BHSZ21]{BHSZ20}
Agn\`es Beaudry, Michael~A. Hill, XiaoLin~Danny Shi, and Mingcong Zeng.
\newblock Models of {L}ubin-{T}ate spectra via real bordism theory.
\newblock {\em Adv. Math.}, 392:Paper No. 108020, 58, 2021.

\bibitem[Bou79]{Bou79}
Aldridge~K Bousfield.
\newblock The localization of spectra with respect to homology.
\newblock {\em Topology}, 18(4):257--281, 1979.

\bibitem[BR19]{BR19}
Scott~M Bailey and Nicolas Ricka.
\newblock On the {T}ate spectrum of $tmf$ at the prime 2.
\newblock {\em Mathematische Zeitschrift}, 291(3-4):821--829, 2019.

\bibitem[DHS88]{DHS88}
Ethan~S Devinatz, Michael~J Hopkins, and Jeffrey~H Smith.
\newblock Nilpotence and stable homotopy theory {I}.
\newblock {\em Annals of Mathematics}, 128(2):207--241, 1988.

\bibitem[DJK{\etalchar{+}}86]{DJKMW86}
Donald~M Davis, David~C Johnson, John Klippenstein, Mark Mahowald, and Steven
  Wegmann.
\newblock The spectrum $({P} \wedge {B}{P} \langle 2 \rangle)_{-\infty}$.
\newblock {\em Transactions of the American Mathematical Society},
  296(1):95--110, 1986.

\bibitem[DM84]{DM84}
Donald~M Davis and Mark Mahowald.
\newblock The spectrum ({P} $\wedge$ bo$)_\infty$.
\newblock In {\em Mathematical Proceedings of the Cambridge Philosophical
  Society}, volume~96, pages 85--93. Cambridge Univ Press, 1984.

\bibitem[Fok14]{Fok14}
Chi-Kwong Fok.
\newblock The {R}eal {K}-theory of compact {L}ie groups.
\newblock {\em Symmetry, Integrability and Geometry: Methods and Applications},
  10(0):22--26, 2014.

\bibitem[GHIR19]{GHIR19}
Bertrand~J Guillou, Michael~A Hill, Daniel~C Isaksen, and Douglas~C Ravenel.
\newblock The cohomology of ${C}_2$-equivariant {A}(1) and the homotopy of
  $ko_{{C}_2}$.
\newblock {\em Tunisian Journal of Mathematics, to appear}, 2019.

\bibitem[GM95]{GM95}
John Patrick~Campbell Greenlees and J~Peter May.
\newblock {\em Generalized {T}ate cohomology}, volume 543.
\newblock American Mathematical Soc., 1995.

\bibitem[GM97]{GM97}
John~PC Greenlees and J~Peter May.
\newblock Localization and completion theorems for {M}{U}-module spectra.
\newblock {\em Annals of Mathematics}, pages 509--544, 1997.

\bibitem[GS96]{GS96}
John~PC Greenlees and Hal Sadofsky.
\newblock The {T}ate spectrum of $v_n$-periodic complex oriented theories.
\newblock {\em Mathematische Zeitschrift}, 222(3):391--405, 1996.

\bibitem[HHR16]{HHR16}
Michael~A Hill, Michael~J Hopkins, and Douglas~C Ravenel.
\newblock On the nonexistence of elements of {K}ervaire invariant one.
\newblock {\em Annals of Mathematics}, pages 1--262, 2016.

\bibitem[HK01]{HK01}
Po~Hu and Igor Kriz.
\newblock Real-oriented homotopy theory and an analogue of the
  {A}dams--{N}ovikov spectral sequence.
\newblock {\em Topology}, 40(2):317--399, 2001.

\bibitem[HM17]{HM17}
Michael Hill and Lennart Meier.
\newblock The ${C}_2$-spectrum ${T}mf_1(3)$ and its invertible modules.
\newblock {\em Algebraic \& Geometric Topology}, 17(4):1953--2011, 2017.

\bibitem[Hov07]{Hov07}
Mark Hovey.
\newblock {\em Model categories}.
\newblock Number~63. American Mathematical Soc., 2007.

\bibitem[HS96]{HS96}
Mark Hovey and Hal Sadofsky.
\newblock Tate cohomology lowers chromatic {B}ousfield classes.
\newblock {\em Proceedings of the American Mathematical Society},
  124(11):3579--3585, 1996.

\bibitem[HS98]{HS98}
Michael~J. Hopkins and Jeffrey~H. Smith.
\newblock Nilpotence and stable homotopy theory. {II}.
\newblock {\em Ann. of Math. (2)}, 148(1):1--49, 1998.

\bibitem[HS20]{HS17}
Jeremy Hahn and XiaoLin~Danny Shi.
\newblock Real orientations of {L}ubin-{T}ate spectra.
\newblock {\em Invent. Math.}, 221(3):731--776, 2020.

\bibitem[Hu02]{Hu02}
Po~Hu.
\newblock On {R}eal-oriented {J}ohnson-{W}ilson cohomology.
\newblock {\em Algebr. Geom. Topol.}, 2:937--947, 2002.

\bibitem[KLW17]{KLW17}
Nitu Kitchloo, Vitaly Lorman, and W~Stephen Wilson.
\newblock Landweber flat real pairs and ${E}{R}(n)$-cohomology.
\newblock {\em Advances in Mathematics}, 322:60--82, 2017.

\bibitem[KLW18]{KLW18}
Nitu Kitchloo, Vitaly Lorman, and W.~Stephen Wilson.
\newblock Multiplicative structure on real {J}ohnson-{W}ilson theory.
\newblock In {\em New directions in homotopy theory}, volume 707 of {\em
  Contemp. Math.}, pages 31--44. Amer. Math. Soc., Providence, RI, 2018.

\bibitem[KW07]{KW07}
Nitu Kitchloo and W~Stephen Wilson.
\newblock On fibrations related to real spectra.
\newblock {\em Proceedings of the Nishida fest (Kinosaki 2003)}, 10:237--244,
  2007.

\bibitem[KW08]{KW08}
Nitu Kitchloo and W~Stephen Wilson.
\newblock The second real {J}ohnson-{W}ilson theory and nonimmersions of $
  {R}{P}^n$.
\newblock {\em Homology, Homotopy and Applications}, 10(3):223--268, 2008.

\bibitem[KW15]{KW15}
Nitu Kitchloo and W~Stephen Wilson.
\newblock The ${E}{R}(n)$-cohomology of ${B}{O}(q)$ and real {J}ohnson-{W}ilson
  orientations for vector bundles.
\newblock {\em Bulletin of the London Mathematical Society}, 47(5):835--847,
  2015.

\bibitem[Lan68]{Lan68}
Peter~S. Landweber.
\newblock Conjugations on complex manifolds and equivariant homotopy of {$MU$}.
\newblock {\em Bull. Amer. Math. Soc.}, 74:271--274, 1968.

\bibitem[LSWX19]{LSWX19}
Guchuan Li, XiaoLin~Danny Shi, Guozhen Wang, and Zhouli Xu.
\newblock Hurewicz images of real bordism theory and real {J}ohnson-{W}ilson
  theories.
\newblock {\em Advances in Mathematics}, 342:67--115, 2019.

\bibitem[L{\"u}c05]{Luc05}
Wolfgang L{\"u}ck.
\newblock Survey on classifying spaces for families of subgroups.
\newblock In {\em Infinite groups: geometric, combinatorial and dynamical
  aspects}, pages 269--322. Springer, 2005.

\bibitem[Man04]{Man04}
Michael~A Mandell.
\newblock Equivariant symmetric spectra.
\newblock {\em Contemporary Mathematics}, 346:399--452, 2004.

\bibitem[MR84]{MR84}
Mark~E Mahowald and Douglas~C Ravenel.
\newblock Toward a global understanding of the homotopy groups of spheres.
\newblock In {\em The Lefschetz Centennial Conference: Proceedings on Algebraic
  Topology}, volume~58, pages 57--74, 1984.

\bibitem[MR93]{MR93}
Mark~E Mahowald and Douglas~C Ravenel.
\newblock The root invariant in homotopy theory.
\newblock {\em Topology}, 32(4):865--898, 1993.

\bibitem[MRW77]{MRW77}
Haynes~R Miller, Douglas~C Ravenel, and W~Stephen Wilson.
\newblock Periodic phenomena in the {A}dams-{N}ovikov spectral sequence.
\newblock {\em Annals of Mathematics}, 106(3):469--516, 1977.

\bibitem[QS21]{QS21a}
J.D. Quigley and Jay Shah.
\newblock On the parametrized {T}ate construction.
\newblock {\em arXiv preprint arXiv:2110.07707}, 2021.

\bibitem[Qui21]{Qui21a}
J.D. Quigley.
\newblock Real motivic and {$C_2$}-equivariant {M}ahowald invariants.
\newblock {\em J. Topol.}, 14(2):369--418, 2021.

\bibitem[Qui22]{Qui19d}
J.D. Quigley.
\newblock $tmf$-based {M}ahowald invariants.
\newblock {\em Algebraic \& Geometric Topology, to appear}, 2022.

\bibitem[Rav82]{Rav82}
Douglas~C Ravenel.
\newblock Morava {K}-theories and finite groups.
\newblock {\em Contemp. Math}, 12:289--292, 1982.

\bibitem[Rav84]{Rav84}
Douglas~C Ravenel.
\newblock Localization with respect to certain periodic homology theories.
\newblock {\em American Journal of Mathematics}, 106(2):351--414, 1984.

\bibitem[Rav92]{Rav92}
Douglas~C. Ravenel.
\newblock {\em Nilpotence and periodicity in stable homotopy theory}, volume
  128 of {\em Annals of Mathematics Studies}.
\newblock Princeton University Press, Princeton, NJ, 1992.
\newblock Appendix C by Jeff Smith.

\bibitem[{Sta}19]{stacks-project}
The {Stacks project authors}.
\newblock The stacks project.
\newblock \url{https://stacks.math.columbia.edu}, 2019.

\end{thebibliography}

\end{document}